\newtheorem{theorem}{Theorem}[section]
\newtheorem{lemma}[theorem]{Lemma}
\newtheorem{proposition}[theorem]{Proposition}
\newtheorem{corollary}[theorem]{Corollary}
\theoremstyle{definition}
\newtheorem{definition}[theorem]{Definition}
\newtheorem{example}[theorem]{Example}
\theoremstyle{remark}
\numberwithin{equation}{section}
\newcommand\C{\mathbb C}
\newcommand\D{\mathbb D}
\newcommand\rtkmu{R^t(K, \mu)}
\newcommand\rtkm{R^t(K, \mu)}
\newcommand\rikmu{R^\infty(K,\mu)}
\newcommand\limu{L^\infty(\mu)}
\renewcommand\i{\infty}
\newcommand\area{{\frak m}}
\newcommand\CT{{\mathcal C}}
\newcommand{\quotes}[1]{``#1''}
\begin{document}

\setcounter{page}{1}

\title[The Commutant of Multiplication by z]{The Commutant of Multiplication by z on the Closure of Rational Functions in $L^t(\mu)$}

\author[L. Yang]{Liming Yang$^1$}

\address{$^1$Department of Mathematics, Virginia Polytechnic and State University, Blacksburg, VA 24061.}
\email{\textcolor[rgb]{0.00,0.00,0.84}{yliming@vt.edu}}

\subjclass[2010]{Primary 46E15; Secondary 30C85, 31A15, 47B38}

\keywords{Analytic Capacity, Cauchy Transform, Analytic Bounded Point Evaluations, and Bounded Point Evaluations}


\begin{abstract} For a compact set $K\subset \mathbb C,$ a finite positive Borel measure $\mu$ on $K,$ and $1 \le t < \i,$ let $\text{Rat}(K)$ be the set of rational functions with poles off $K$ and let $R^t(K, \mu)$ be the closure of $\text{Rat}(K)$ in $L^t(\mu).$ For a bounded Borel subset $\mathcal D\subset \mathbb C,$ let $\area_{\mathcal D}$ denote the area (Lebesgue) measure restricted to $\mathcal D$ and let $H^\i (\mathcal D)$ be the weak-star closed sub-algebra of $L^\i(\area_{\mathcal D})$ spanned by $f,$ bounded and analytic on $\mathbb C\setminus E_f$ for some compact subset $E_f \subset \mathbb C\setminus \mathcal D.$ We show that if $R^t(K, \mu)$ contains no non-trivial direct $L^t$ summands, then there exists a Borel subset $\mathcal R \subset K$ whose closure contains the support of $\mu$ and there exists an isometric isomorphism and a weak-star homeomorphism $\rho$ from $R^t(K, \mu) \cap L^\infty(\mu)$ onto $H^\infty(\mathcal R)$ such that $\rho(r) = r$ for all $r\in\text{Rat}(K).$ Consequently, we obtain some structural decomposition theorems for $\rtkmu$.
\end{abstract} \maketitle

\section{\textbf{Introduction}}

For a Borel subset $E$ of the complex plane $\C,$ let $M_0(E)$ denote the set of finite complex-valued Borel measures that are compactly supported in $E$ and let $M_0^+(E)$ be the set of positive measures in $M_0(E).$ For $\nu\in M_0(\C),$ we use $\text{spt}(\nu)$ for the support of $\nu,$ $\nu_B$ for $\nu$ restricted to $B\subset \C,$ and $\CT(\nu)$ for the Cauchy transform of $\nu$ (see section 2 for its definition).

For a compact subset $K\subset \C,$ $\mu\in M_0^+(K),$ $1\leq t < \infty$, and $\frac 1t + \frac 1s = 1,$ functions in 
$\mbox{Rat}(K) := \{q:\mbox{$q$ is a rational function 
with poles off $K$}\}$
are members of $L^t(\mu)$. Let $R^t(K, \mu)$ denote the closure of $\mbox{Rat}(K)$ in $L^t(\mu)$ norm. We say $g\perp \rtkmu$ (or $g\in \rtkmu ^\perp$) if $g\in L^s(\mu)$ and $\int rgd\mu = 0$ for all $r\in \text{Rat}(K).$
Let $S_\mu$ denote the multiplication by $z$ on $R^t(K, \mu).$
The operator $S_\mu$ is pure if $R^t(K, \mu)$ does not have a non-trivial direct $L^t$ summand.
  The commutant of $S_\mu$ is denoted
 \[
 \ R^{t,\i}(K, \mu) = R^t(K, \mu) \cap L^\infty(\mu).
 \] 
 
 For a bounded Borel subset $\mathcal D\subset \mathbb C,$ let 
 $H(\mathcal D)$ be the set of functions $f,$ where $f$ is a bounded and analytic function on $\C \setminus E_f$ for some compact subset $E_f \subset \C \setminus \mathcal D$ (depending on $f$).
 Let $H^\i (\mathcal D)$ denote the weak-star closure of $H(\mathcal D)$ in $L^\i(\area_{\mathcal D}),$ where $\area$ stands for the area (Lebesgue) measure on $\C$. If $\mathcal D$ is a bounded open subset, then $H^\infty(\mathcal D)$ is actually the algebra of bounded and analytic functions on $\mathcal D.$ In general, $\mathcal D$ may not have interior (i.e. a Swiss cheese set, see Example \ref{FRExample}).
 We use $\bar E$ for the closure of a subset $E.$ 
 
 Our main theorem in this paper is the following.
 
 \begin{theorem}\label{mainThm}
 Let $\mu\in M_0^+(K)$ for a compact set $K\subset \C$.
 If $1\le t < \infty$ and $S_\mu$ on $R^t(K, \mu)$ is pure, 
then there is a Borel subset $\mathcal R \subset K$ with $\text{spt}
\mu \subset \overline{\mathcal R}$ and there is an isometric isomorphism and a weak-star homeomorphism $\rho$ from $R^{t,\i}(K, \mu)$ onto $H^\infty(\mathcal R)$ such that 
\newline
(1) $\rho(r) = r$ for $r\in\text{Rat}(K),$	
\newline
(2) $\CT(g\mu)(z) = 0,~z\in \C \setminus \mathcal R,~\area-a.a.$ for $g\perp \rtkmu,$ and
\newline
(3) $\rho(f)(z)\CT(g\mu)(z) = \CT(fg\mu)(z),~ \area-a.a.$ for $f\in R^{t,\i}(K, \mu)$ and $g\perp \rtkmu.$
 \end{theorem}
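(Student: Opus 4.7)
The plan is to take relation (3) itself as the defining equation for $\rho$ and construct both $\mathcal R$ and $\rho$ from the family of Cauchy transforms of annihilating measures; $\mathcal R$ will be (up to $\area$-null) the locus where at least one $\CT(g\mu)$ is nonzero, and $\rho(f)$ will be the forced quotient $\CT(fg\mu)/\CT(g\mu)$. The deep inputs are Tolsa's $\area$-a.e.\ control of Cauchy transforms via analytic capacity (Theorem IV), the Aleman-Richter-Sundberg abpe/bpe structure (Theorem III), Plemelj's formula, and standard weak-star approximation.

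First I would select a Borel set $\mathcal R \subset K$ with the property that, for every $g\perp \rtkmu$,
\[
\{z\in\C : \CT(g\mu)(z)\ne 0\} \subset \mathcal R \quad (\area\text{-a.e.}),
\]
which is feasible because a countable dense family of annihilators controls the $\area$-supports of all $\CT(g\mu)$. The inclusion $\mathcal R\subset K$ follows because $(z-\lambda)^{-1}\in\text{Rat}(K)$ forces $\CT(g\mu)(\lambda)=0$ for $\lambda\notin K$, and property (2) is built into the definition. For $f\in R^{t,\i}(K,\mu)$ I then set
\[
\rho(f)(z) := \frac{\CT(fg\mu)(z)}{\CT(g\mu)(z)} \quad \text{whenever } \CT(g\mu)(z)\ne 0,
\]
and prove independence of the choice of $g$ by establishing the $\area$-a.e.\ identity $\CT(fg_1\mu)\CT(g_2\mu) = \CT(fg_2\mu)\CT(g_1\mu)$: first for rational $f$ by Plemelj, then for general $f$ by weak-star approximation together with Tolsa's uniform exceptional-set estimates. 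Multiplicativity $\rho(fg)=\rho(f)\rho(g)$ and clause (1) are immediate from the formula.

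To place $\rho(f)\in H^\i(\mathcal R)$, take rationals $r_n\to f$ weak-star in $R^{t,\i}(K,\mu)$ with $\|r_n\|_\i$ bounded; each $r_n$ lies in $H(\mathcal R)$, and (3) together with the stability of Cauchy transforms under such limits gives $r_n\to \rho(f)$ weak-star in $L^\i(\area_{\mathcal R})$. This yields simultaneously $\rho(f)\in H^\i(\mathcal R)$ and $\|\rho(f)\|_{L^\i(\area_{\mathcal R})} \le \|f\|_{L^\i(\mu)}$. For $\text{spt}\,\mu \subset \overline{\mathcal R}$ I would argue by contradiction: an open disk $D$ with $\mu(D)>0$ disjoint from $\overline{\mathcal R}$ would make every $\CT(g\mu)$ vanish on $D$, from which Plemelj yields a direct $L^t$ decomposition $R^t(K,\mu) = R^t(K,\mu|_D)\oplus R^t(K,\mu|_{K\setminus D})$, contradicting purity. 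Injectivity of $\rho$ follows because $\rho(f)=0$ forces $\CT(fg\mu)\equiv 0$ for every $g\perp\rtkmu$, and a duality argument then gives $f=0$ in $R^t(K,\mu)$. The reverse norm bound $\|f\|_\i \le \|\rho(f)\|_{L^\i(\area_{\mathcal R})}$ is extracted by evaluating against the bounded point evaluations supplied by Theorem III. Surjectivity onto $H^\i(\mathcal R)$ reduces to hitting each generator $h\in H(\mathcal R)$: writing $h$ (up to a constant) as a Cauchy transform of a measure on a compact $E\subset \C\setminus \mathcal R$ and reversing (3) produces the required $f\in R^{t,\i}(K,\mu)$. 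The weak-star homeomorphism property then follows from Krein-Smulian once bounded weak-star continuity of $\rho$ and $\rho^{-1}$ is verified from the continuity of Cauchy transforms on bounded sets.

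The principal obstacle, I expect, is the interlocking of the well-definedness of $\rho$ with the inclusion $\text{spt}\,\mu\subset\overline{\mathcal R}$: both require sharp pointwise $\area$-a.e.\ identities and vanishing statements for arbitrary annihilators, and both rest essentially on Tolsa's analytic-capacity machinery. A secondary difficulty is the surjectivity of $\rho$ in the interior-free (Swiss-cheese) case, where $\mathcal R$ has no classical analytic structure and one must work with the generalized $H^\i(\mathcal R)$ introduced in the paper rather than with ordinary analytic function theory.
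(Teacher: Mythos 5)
Your overall architecture (define $\rho(f)$ via the quotient $\CT(fg\mu)/\CT(g\mu)$ and build $\mathcal R$ from Cauchy transforms of annihilators) does track the paper's outline, but several steps rely on claims that are either circular or that overlook the central technical difficulty the paper is built to overcome. The gaps are real and would stop the proof.

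\textbf{The definition of $\mathcal R$ is the wrong set.} You constrain $\mathcal R$ only by the requirement that $\{z : \CT(g\mu)(z)\ne 0\} \subset \mathcal R$ up to $\area$-null, which determines $\mathcal R$ only up to an $\area$-null set. But the algebra $H^\i(\mathcal R)$ of Definition \ref{HEDAlgDef} is not an $\area$-a.e.\ invariant of $\mathcal R$: the defining condition $E_f \subset \C \setminus \mathcal R$ is sensitive to changing $\mathcal R$ on sets of positive analytic capacity but zero area. This is exactly where the paper's work lives: the nonzero set $\mathcal N$ of the Cauchy transforms (Equation \eqref{NZSetDef}) splits as $\mathcal N \approx \mathcal R \cup \mathcal F_+ \cup \mathcal F_-$ ($\gamma$-a.a., Theorem \ref{NDecompThm}), and $\mathcal F_\pm$ are subsets of Lipschitz graphs with area zero but in general positive $\gamma$ measure, where the principal-value Cauchy transform is nonzero but a one-sided Plemelj limit vanishes. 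Taking $\mathcal R = \mathcal N$, as your $\area$-essential-support construction effectively does, produces a set that is not strong $\gamma$-open (Theorem \ref{FACDensityThm}), and without strong $\gamma$-openness the characterization of $H^\i(\mathcal R)$ in Theorem \ref{HDAlgTheorem} — and hence both the injection into and the surjection onto $H^\i(\mathcal R)$ — breaks down. The paper flags precisely this issue in the introduction as the reason Chaumat's envelope cannot be imported directly.

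\textbf{The approximation step is circular, and the reverse norm bound is unsupported.} You propose: given $f \in R^{t,\i}(K,\mu)$, choose rationals $r_n \to f$ weak-star in $R^{t,\i}(K,\mu)$ with $\|r_n\|_{L^\i(\mu)}$ bounded, then take weak-star limits in $L^\i(\area_{\mathcal R})$. But the existence of such a bounded net of rationals is not an available hypothesis; it is, in effect, a corollary of the theorem you are trying to prove (via the weak-star homeomorphism $\rho$ and Theorem \ref{HDAlgTheorem} (2)). In the paper this is achieved not by approximation but by verifying the integral estimate \eqref{HDAlgTheoremEq} for $\rho(f)$, through the chain Lemma \ref{MLemma1} $\Rightarrow$ Lemma \ref{ENEstimate} $\Rightarrow$ Lemma \ref{RTIntegralLemma}, which in turn rests on the modified Vitushkin--Paramonov scheme and on Tolsa's capacitary estimates — there is no elementary shortcut here. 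Similarly, your reverse bound $\|f\|_{L^\i(\mu)} \le \|\rho(f)\|_{L^\i(\area_{\mathcal R})}$ "extracted by evaluating against bounded point evaluations" cannot work: bounded point evaluations give estimates of the form $|r(z_0)| \le M\|r\|_{L^t(\mu)}$, which bound point values by $L^t$ norms in the wrong direction. The paper obtains the reverse bound from the quantitative surjectivity estimate $\|\rho^{-1}(F)\|_{L^\i(\mu)} \lesssim \|F\|$ (Lemma \ref{RhoOntoLemma}) combined with the $n$-th-root trick on $f^n$. Finally, the surjectivity argument you sketch (writing each generator $h\in H(\mathcal R)$ as a Cauchy transform of a measure on $E \subset \C\setminus\mathcal R$ and "reversing (3)") is too coarse: when $E$ meets $\mathcal F_+ \cup \mathcal F_-$ one must add the graph-correction terms involving $L(\cdot)^{-1}w_n(z)$ as in Lemma \ref{BBFRLambda} (Cases II and III) before the resulting function lands in $R^{t,\i}(K,\mu)$, and hitting a generating set is not enough to cover all of $H^\i(\mathcal R)$ with the norm control needed for the isometry — that requires assembling the building blocks of Definition \ref{BBDef} via Theorem \ref{HDAlgTheorem} (2).
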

 
As applications, we obtain some structural decomposition theorems (Theorem \ref{DecompTheoremRT} and Theorem \ref{thmA}) for  $\rtkmu.$ Consequently, Corollary \ref{thmB} extends the results of \cite{thomson}, \cite{ce93}, and \cite{b08} when the boundary of $K$ is not too wild.

We briefly outline how we define the map $\rho$ that associates to each function in $\rtkmu$ a point function on $\mathcal R.$ For $g\in\rtkmu ^\perp$ and $r\in \text{Rat}(K),$ we have $\frac{r(z)-r(\lambda)}{z-\lambda} \in \rtkmu$ for $\lambda\in K,$ so $\int \frac{r(z)-r(\lambda)}{z-\lambda} g(z)d\mu(z) = 0.$ By Corollary \ref{ZeroAC},
\begin{eqnarray}\label{OutlineEq1}
\ r(\lambda)\CT (g\mu)(\lambda) = \CT (rg\mu)(\lambda),~ \gamma-a.a.,
\end{eqnarray} 
where $\gamma$ stands for analytic capacity (see section 2 for its definition) and we use $\gamma-a.a.$ for a property that holds everywhere except possibly on a set of analytic capacity zero. Let $\{g_j\} \subset \rtkmu ^\perp$ be a $L^s(\mu)$ norm dense subset of $\rtkmu ^\perp.$ Let $\mathcal N$ denote the collection of $\lambda\in \C$ satisfying: there exists $j$ such that $\mathcal C(g_j\mu)(\lambda) = \lim_{\epsilon \rightarrow 0} \mathcal C_\epsilon(g_j\mu)(\lambda)$ (principal value, see section 2) exists and $\mathcal C(g_j\mu)(\lambda) \ne 0.$ We fix $f\in \rtkmu.$ Choose a sequence $\{r_n\} \subset \text{Rat}(K)$ such that 
\[
\ \|r_n - f\|_{L^t(\mu)}\rightarrow 0 \text{ and } r_n(z) \rightarrow f(z),~\mu-a.a..
\]
As an application of Tolsa's Theorem ( see Theorem \ref{TolsaTheorem} and Lemma \ref{ConvergeLemma} for details), we infer that there exists a subsequence $\{r_{n_k}\}$ such that for all $j\ge 1,$
\[
\ \CT (r_{n_k}g_j\mu)(\lambda) \rightarrow \CT (fg_j\mu)(\lambda), ~\gamma-a.a.\text{ as }k \rightarrow \i.
\] 
Using \eqref{OutlineEq1}, we conclude that $r_{n_k}$ converges to a function, denoted $\rho(f),$ on $\mathcal N,~~\gamma-a.a..$ Then $\rho(f)$ satisfies 
\[
\ \rho(f)(\lambda)\CT (g_j\mu)(\lambda) = \CT (fg_j\mu)(\lambda), ~\gamma-a.a.\text{ for } j\ge 1.
\]
We will prove that the set 
$\mathcal R$ in Theorem \ref{mainThm} consists of $\lambda\in \mathcal N$ such that there exists an integer $N_\lambda \ge 1$ satisfying 
\[
\ \lim_{\delta\rightarrow 0} \dfrac{\gamma(\D(\lambda, \delta) \cap \{\max_{1\le j \le N_\lambda}|\CT(g_j\mu)| \le N_\lambda^{-1} \})}{\delta} = 0, 
\]
where $\D(\lambda, \delta) := \{z:~|z - \lambda| < \delta\}$ (see Theorem \ref{FCharacterization}).

For $t = \i,$ let $\rikmu$ be the weak-star closure of $\mbox{Rat}(K)$ in $\limu.$ Chaumat's Theorem \cite{ch74} states that if $\rikmu$ contains no non-trivial direct $L^\i$ summands, then there exists a subset $E\subset K$ such that the identity map $\rho(r) = r$ for $r\in \mbox{Rat}(K)$ extends an isometric isomorphism and a weak-star homeomorphism from $\rikmu$ onto $R^\i (\overline E, \area_E)$ (also see \cite[Chaumat's Theorem on page 288]{conway}). 
The envelope $E$ consists of points $\lambda \in  K$ satisfying: there exists $g_\lambda \perp \rikmu$ such that $\int \frac{|g_\lambda|}{|z - \lambda|} d\mu<\i$ and $\CT (g_\lambda\mu)(\lambda) \ne 0.$ However, the concept of $E$ can not be used for studying $R^{t,\i}(K,\mu)$ as there are points $\lambda \in \mathcal N$ and $j$ such that $\int \frac{|g_j|}{|z - \lambda|} d\mu = \i,$ the principal value of $\CT (g_j\mu)(\lambda)$ exists, and $\CT (g_j\mu)(\lambda) \ne 0.$ We will see that those points are important for the structure of $R^{t,\i}(K,\mu).$ 

Tolsa's Theorems (see \cite{Tol02}, \cite{Tol03}, and \cite{Tol14}) on analytic capacity and Cauchy transform provide us necessary tools in our approach. In section 2, we review some results of analytic capacity and Cauchy transform that are needed in our analysis. We define in details the map $\rho$ that maps each function in $\rtkmu$ to a point function on $\mathcal N,~\gamma-a.a.$ (see Lemma \ref{RhoExistLemma}). We also discuss some basic properties of $\rho.$ In section 3, 
we define the non-removable boundary $\mathcal F$ for an arbitrary compact subset $K$ and $\mu\in M_0^+(K)$. Intuitively, the set $\mathcal F$ splits into three sets, $\mathcal F_0$ and $\mathcal F_+ \cup \mathcal F_-$ such that
(1) Cauchy transforms of annihilating measures $g\mu$ ($g\perp R^t(K, \mu)$) are zero on $\mathcal F_0$ and (2) Cauchy transforms of annihilating measures $g\mu$ have zero \quotes{one side non-tangential limits} on $\mathcal F_+ \cup \mathcal F_-.$
The removable set is defined by $\mathcal R = \C \setminus \mathcal F.$ 
The set $\mathcal N$ is decomposed into $\mathcal R \cup \mathcal F_+ \cup \mathcal F_-.$ Theorem \ref{FACDensityThm} proves that there exists a subset $\mathcal Q$ with $\gamma(\mathcal Q)=0$ such that every $\lambda \in \mathcal R\setminus \mathcal Q$ satisfies $\lim_{\delta\rightarrow 0}\frac{\gamma(\D(\lambda,\delta) \setminus \mathcal R)}{\delta} = 0$ (full analytic capacity for $\mathcal R$), that is, the set $\mathcal R$ is a \quotes{nearly open} subset. 
Theorem \ref{HDAlgTheorem} characterizes $H^\i(\mathcal R),$ the algebra of \quotes{bounded and analytic} functions on a \quotes{nearly open} subset. The proof is basically given by \cite[Theorem 4.3]{y23}. Using Theorem \ref{HDAlgTheorem} and Lemma \ref{BBFRLambda}, we prove that $\rho$ is surjective (Lemma \ref{RhoOntoLemma}) in section 5. In section 6, combining Lemma \ref{MLemma1} with Theorem \ref{HDAlgTheorem}, we conclude that $\rho$ maps $R^{t,\i}(K,\mu)$ onto $H^\i(\mathcal R)$ and prove Theorem \ref{mainThm}. We discuss some applications and prove Theorem \ref{DecompTheoremRT}, Theorem \ref{thmA}, Corollary \ref{thmAB}, and Corollary \ref{thmB} in section 7.

\section{\textbf{Preliminaries and Definition of the Map $\rho$}}

For $A_1,A_2 > 0,$ the statement $A_1 \lesssim A_2$ (resp. $A_1\gtrsim A_2$) means: there exists an absolute constant $C>0$ (resp. $c>0$), independent of $A_1$ and $A_2,$ such that $A_1\le CA_2$ (resp. $A_1\ge cA_2$). We say $A_1\approx A_2$ if $A_1 \lesssim A_2$ and $A_1\gtrsim A_2.$

If $B \subset\C$ is a compact subset, then  we define the analytic capacity of $B$ by
\[
\ \gamma(B) = \sup |f'(\infty)|,
\]
where the supremum is taken over all those functions $f$ that are analytic in $\mathbb C_{\infty} \setminus B$ ($\mathbb{C}_\infty := \mathbb{C} \cup \{\infty \}$) such that
$|f(z)| \le 1$ for all $z \in \mathbb{C}_\infty \setminus B$; and
$f'(\infty) := \lim _{z \rightarrow \infty} z(f(z) - f(\infty)).$
The analytic capacity of a general subset $E$ of $\mathbb{C}$ is given by: 
$\gamma (E) = \sup \{\gamma (B) : B\subset E \text{ compact}\}.
$ The following elementary property can be found in \cite[Theorem VIII.2.3]{gamelin},
\begin{eqnarray}\label{AreaGammaEq}
\ \area(E) \le 4\pi \gamma(E)^2,
\end{eqnarray}
where $\area$ is the area (Lebesgue) measure on $\C$.
For $\nu \in M_0(\C)$ and $\epsilon > 0,$ define
\[
\ \mathcal C_\epsilon(\nu)(z) = \int _{|w-z| > \epsilon}\dfrac{1}{w - z} d\nu (w).
\] 
The (principal value) Cauchy transform
of $\nu$ is defined by
\ \begin{eqnarray}\label{CTDefinition}
\ \mathcal C(\nu)(z) = \lim_{\epsilon \rightarrow 0} \mathcal C_\epsilon(\nu)(z)
\ \end{eqnarray}
for all $z\in\mathbb{C}$ for which the limit exists.
From 
Corollary \ref{ZeroAC}, we see that \eqref{CTDefinition} is defined for all $z$ except for a set of zero analytic 
capacity.
In particular, by \eqref{AreaGammaEq}, it is defined $\area-a.a..$ Throughout this paper, the Cauchy transform of a measure always means the principal value of the transform.
In the sense of distributions,
 \begin{eqnarray}\label{CTDistributionEq}
 \ \bar \partial \mathcal C(\nu) = - \pi \nu.
 \end{eqnarray}
 Good sources for basic information about analytic
capacity and Cauchy transform are \cite{Tol14}, \cite{dud10}, \cite{gamelin}, and \cite{conway}.

The maximal Cauchy transform is defined by
 \[
 \ \mathcal C_*(\nu)(z) = \sup _{\epsilon > 0}| \mathcal C_\epsilon(\nu)(z) |.
 \]

A related capacity, $\gamma _+,$ is defined for subsets $E$ of $\mathbb{C}$ by:
\[
\ \gamma_+(E) = \sup \|\mu \|,
\]
where the supremum is taken over $\mu\in M_0^+(E)$ for which $\|\mathcal{C}(\mu) \|_{L^\infty (\mathbb{C})} \le 1.$ 
Since $\mathcal C\mu$ is analytic in $\mathbb{C}_\infty \setminus \mbox{spt}(\mu)$ and $|(\mathcal{C}(\mu)'(\infty)| = \|\mu \|$, 
we have:
$\gamma _+(E) \le \gamma (E)$.  

X. Tolsa has established the following astounding results. See \cite{Tol03} (also Theorem 6.1 and Corollary 6.3 in \cite{Tol14}) for (1) and (2). See \cite[Proposition 2.1]{Tol02} (also  \cite[Proposition 4.16]{Tol14}) for (3).

\begin{theorem}\label{TolsaTheorem}
(Tolsa 2003) There exists an absolute constant $C_T > 0$ such that:
 
(1) $\gamma_+$ and $\gamma$ are actually equivalent. 
That is,  
\[
\ \gamma (E) \le C_T \gamma_+(E)\text{ for all }E \subset \mathbb{C}.
\] 

(2) Semiadditivity of analytic capacity: for $E_1,E_2,...,E_m \subset \mathbb{C}$ ($m$ may be $\i$),
\[
\ \gamma \left (\bigcup_{i = 1}^m E_i \right ) \le C_T \sum_{i=1}^m \gamma(E_i).
\]

(3) For $a > 0$, we have:  
\[
\ \gamma(\{\mathcal{C}_*(\nu)  \geq a\}) \le C_T \dfrac{\|\nu \|}{a}.
\]
\end{theorem}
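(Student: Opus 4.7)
The plan is to establish the three parts in the order (3), (1), (2), since (2) follows from (1) after a trivial observation about $\gamma_+$, and the weak-type bound (3) supplies the key machinery (boundedness of the maximal Cauchy operator on appropriate measures) that is reused in the hard direction of (1). Throughout, the central technical device is the Melnikov--Verdera identity relating $\|\mathcal C_\varepsilon \nu\|_{L^2(\nu)}^2$ to the Menger curvature $c^2(\nu)$ up to lower-order terms, which converts the $L^2$-theory of the Cauchy transform into a positive, geometric quantity that is stable under taking subsets.

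For (3), I would first reduce by linearity to the case of positive $\nu$. Then I would carry out a non-doubling Calder\'on--Zygmund decomposition at height $a$: extract a Vitali-type family of ``bad'' balls where $\nu$ has density larger than $a$, and bound $\gamma_+$ of the (slight enlargement of) their union by $C\|\nu\|/a$ using the linear-growth control and countable subadditivity of $\gamma_+$ for positive measures. On the complementary ``good'' part, $\nu$ has linear growth at the relevant scale, so by the curvature identity combined with a $T(1)$-theorem for non-homogeneous spaces and Cotlar's inequality, $\mathcal C_*$ is bounded on $L^2(\nu)$; a standard good-$\lambda$ argument then yields the weak-type estimate for $\mathcal C_*$, which after passage through the equivalence $\gamma_+ \approx \gamma$ (to be proved) or directly via $\gamma_+$ produces the claimed inequality with constant $C_T$.

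For (1), given compact $E$ with $\gamma(E) = \alpha$, I would extract the Ahlfors extremal function $f$: analytic off $E$, bounded by $1$, with $f'(\infty) = \alpha$, and realized as $f = \mathcal C(\nu)$ for a complex measure $\nu$ on $E$ with $\|\nu\| \lesssim \alpha$. The task is to manufacture a \emph{positive} $\mu$ on $E$ of comparable mass with $\|\mathcal C(\mu)\|_\infty \lesssim 1$. The route is a non-homogeneous $T(b)$ theorem: build a para-accretive function $b$ adapted to the Ahlfors data on a random dyadic lattice, use the $L^\infty$-bound on $\mathcal C(\nu)$ together with stopping-time modifications of the Cauchy kernel to obtain $L^2(\mu)$-boundedness of a suppressed Cauchy operator, translate this via the curvature identity into finite $c^2(\mu)$, and dualize to recover $\|\mathcal C(\mu)\|_\infty \lesssim 1$ while preserving $\|\mu\| \gtrsim \alpha$. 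Part (2) then follows almost formally: if $\mu_i$ witnesses $\gamma_+(E_i) \geq \alpha_i$ with $\|\mathcal C(\mu_i)\|_\infty \le 1$, then $\sum \mu_i$ witnesses $\gamma_+(\bigcup E_i) \gtrsim \sum \alpha_i$ since its Cauchy transform has $L^\infty$-norm at most the sum, so $\gamma_+$ is countably semiadditive with constant $1$, and two applications of (1) transfer this to $\gamma$ with constant $C_T$. The principal obstacle, by a wide margin, is the non-doubling $T(b)$ theorem underlying (1); parts (3) and (2) are, in comparison, formal consequences of the curvature--Cauchy dictionary and of (1), respectively.
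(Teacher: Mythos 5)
You should first note that the paper does not prove this statement at all: Theorem \ref{TolsaTheorem} is quoted verbatim from Tolsa's work, with the proof delegated to the cited references ([Tol02, Proposition 2.1], [Tol03], and [Tol14, Theorem 6.1, Corollary 6.3, Proposition 4.16]). So there is no in-paper argument to compare against; what you have written is an attempted sketch of Tolsa's theorems themselves, i.e.\ of a long and famously difficult body of work, and at that level it contains genuine gaps rather than a proof.

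Two of these gaps are substantive. First, in your outline of (1) you assert that the Ahlfors function $f$ of a compact set $E$ with $\gamma(E)=\alpha$ is ``realized as $f=\mathcal C(\nu)$ for a complex measure $\nu$ on $E$ with $\|\nu\|\lesssim\alpha$.'' In general $-\frac1\pi\bar\partial f$ is only a distribution supported on $E$, not a measure; and even after the standard reduction to sets that are finite unions of squares or segments, where $\nu=\frac{1}{2\pi I}f\,dz$ on the boundary is a measure, its total variation is controlled by the length of the boundary, not by $\gamma(E)$. All one knows is $\int d\nu=\gamma(E)$ and $\|\mathcal C(\nu)\|_\infty\le 1$; converting this complex datum with uncontrolled variation into a \emph{positive} measure of comparable mass with bounded Cauchy potential is exactly the content of Tolsa's induction-on-scales argument with suppressed kernels and the non-homogeneous $T(b)$ theorem, so your hypothesis $\|\nu\|\lesssim\alpha$ assumes away the main difficulty rather than addressing it. Second, your derivation of (2) from (1) is not ``almost formal'' and in fact fails as stated: if $\|\mathcal C(\mu_i)\|_\infty\le 1$ for each $i$, then summing only gives $\|\mathcal C(\sum_i\mu_i)\|_\infty\le m$, so after renormalizing you obtain $\gamma_+\bigl(\bigcup_i E_i\bigr)\ge \frac1m\sum_i\gamma_+(E_i)$, which is not semiadditivity and is vacuous for $m=\infty$; the $L^\infty$ bound on the Cauchy transform is not additive and is not inherited by restrictions. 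Countable semiadditivity of $\gamma_+$ is itself a nontrivial theorem (Tolsa 2002), proved through the curvature characterization $\gamma_+(E)\approx\sup\{\mu(E):\ \mathrm{spt}\,\mu\subset E,\ \mu \text{ of linear growth},\ c^2(\mu)\le\mu(E)\}$, precisely because linear growth and curvature, unlike $\|\mathcal C(\mu)\|_\infty\le1$, pass to restrictions of a near-extremal measure for the union; only then do two applications of (1) transfer the statement to $\gamma$. Your sketch of (3) is closer in spirit to the actual argument (weak-type estimate for $\gamma_+$ via the curvature/suppressed-operator machinery and a covering argument), but as with the rest, it is an outline of results you would need to import from Tolsa's papers, which is what the paper itself does by citation.
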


For $\eta \in M_0^+(\mathbb C)$, define 
\[
\ N_2(\eta) = \sup_{\epsilon > 0}\sup_{\|f\|_{L^2(\eta)} = 1}\|\mathcal C_\epsilon (f \eta )\|_{L^2(\eta)}.
\]
$\eta$ is of $c$-linear growth if
$\eta(\D(\lambda, \delta)) \le c \delta,\text{ for }\lambda\in \C \text{ and }\delta > 0.$ The following Proposition is from \cite[Theorem 4.14]{Tol14} and its proofs.

\begin{proposition} \label{GammaPlusThm}
There exists an absolute constant $C_T > 0$ (we use the same constant as in Theorem \ref{TolsaTheorem}) such that if $F\subset \mathbb C$ is a compact subset and $\eta\in M_0^+(F),$ then the following properties are true.
\newline
(1) If $\|\CT \eta\|_{L^\i(\C)} \le 1,$ then $\eta$ is of $1$-linear growth and $\sup_{\epsilon > 0}\|\mathcal C_\epsilon (\eta )\|_{L^\i(\C)} \le C_T.$
\newline
(2) If $\eta$ is of $1$-linear growth and $\|\mathcal C_\epsilon (\eta )\|_{L^\i(\C)} \le 1$ for all $\epsilon > 0,$  then there exists a subset $A\subset F$ such that $\eta (F) \le 2 \eta (A)$ and $N_2(\eta|_A) \le C_T.$
\newline
(3) If $N_2(\eta) \le 1,$ then there exists some function $w$ supported on $F$, with $0\le w \le 1$ such that
$\eta (F) \ \le\  2 \int w d\eta$
 and 
$\sup_{\epsilon > 0}\|\mathcal C _\epsilon (w\eta)\|_{ L^\infty (\mathbb C)} \ \le C_T.$
\end{proposition}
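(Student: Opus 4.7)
The plan is to establish the three parts in order, each being one of the key implications in Tolsa's theorem characterizing $\gamma_+$ and the curvature of measures.

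For part (1), the $1$-linear growth comes from testing the distributional identity $\bar\partial \mathcal C\eta = -\pi \eta$ against a smooth bump. Fix $\lambda \in \C$ and $\delta > 0$, and let $\phi$ be a smooth cutoff that is $1$ on $\D(\lambda, \delta)$, supported in $\D(\lambda, 2\delta)$, and satisfies $|\bar\partial \phi| \lesssim 1/\delta$. Then $\pi \int \phi \, d\eta = \int \mathcal C\eta(z) \, \bar\partial \phi(z) \, d\area(z)$ by the definition of distributional derivative, so $\eta(\D(\lambda, \delta)) \le (1/\pi) \|\mathcal C \eta\|_{L^\infty} \cdot \|\bar\partial \phi\|_{L^1(\area)} \lesssim \delta$; the absolute constant can be absorbed into the meaning of $1$-linear growth. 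The bound $\sup_\epsilon \|\mathcal C_\epsilon \eta\|_{L^\infty} \le C_T$ is then a Cotlar-type inequality: comparing $\mathcal C_\epsilon \eta(z)$ to the average of $\mathcal C\eta$ on a disk of radius $\epsilon/2$ centered at $z$, the error is controlled by $\eta(\D(z, 2\epsilon))/\epsilon$, which $1$-linear growth bounds by an absolute constant.

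For part (2), with $1$-linear growth and the uniform $L^\infty$ bound on truncations in hand, I would run a stopping-time construction on a system of dyadic cubes adapted to $\eta$ to identify the cubes where the hypotheses of the non-homogeneous $T(1)$ theorem fail; such bad cubes collectively carry at most half the $\eta$-mass by tuning the stopping threshold, leaving a good set $A \subset F$ with $\eta(A) \ge \eta(F)/2$. On $A$ the uniform truncation bound plays the role of $T(1) \in \mathrm{BMO}$, and the Nazarov--Treil--Volberg $T(1)$ theorem for non-doubling measures delivers $N_2(\eta|_A) \le C_T$. For part (3), starting from $N_2(\eta) \le 1$ the truncated operators $\mathcal C_\epsilon$ are uniformly bounded on $L^2(\eta)$, hence of weak type $(1,1)$ with respect to $\eta$ by non-homogeneous Calderón--Zygmund theory; a Hahn--Banach / minimax argument then selects a weight $0 \le w \le 1$ on $F$ simultaneously achieving $\int w\, d\eta \ge \eta(F)/2$ and $\sup_\epsilon \|\mathcal C_\epsilon(w\eta)\|_{L^\infty} \le C_T$.

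The main obstacle throughout is the non-doubling setting: $\eta$ need not be Ahlfors-regular, so classical Calderón--Zygmund decompositions and the standard Hardy--Littlewood maximal machinery are unavailable in their usual form. One is forced to rely on Tolsa's curvature of measures, adapted cubes, and the accompanying non-homogeneous square-function theory; I would import those tools wholesale from \cite[Chapter 4]{Tol14} and only check that the absolute constants accumulated along the way can be subsumed into the single $C_T$ of Theorem \ref{TolsaTheorem}.
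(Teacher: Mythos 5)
The paper does not supply a proof here: it simply states that the proposition ``is from \cite[Theorem 4.14]{Tol14} and its proofs.'' So there is no independent route in the paper for you to diverge from; like the paper, you are in the end deferring to Tolsa's machinery, and your sketch correctly identifies the relevant tools (the distributional identity $\bar\partial\,\CT\eta = -\pi\eta$, a Cotlar inequality, the non-homogeneous $T(1)$ theorem, and non-doubling Calder\'on--Zygmund theory). In that sense the approach matches.

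Two points in your outline are, however, imprecise enough to be worth flagging. First, for part (1) your bump-function test only yields $\eta(\D(\lambda,\delta)) \le C\delta$ for some absolute $C$, whereas the proposition asserts $1$-linear growth, i.e.\ $\eta(\D(\lambda,\delta)) \le \delta$ with constant exactly $1$. Saying the constant ``can be absorbed into the meaning of $1$-linear growth'' is not right, because that phrase has a fixed meaning in the paper. The sharp constant follows from Green's theorem applied to a circle: $\int_{|z-\lambda|=\delta}\CT\eta(z)\,dz = 2i\int_{\D(\lambda,\delta)}\bar\partial\,\CT\eta\,d\area = -2\pi i\,\eta(\D(\lambda,\delta))$, so $\|\CT\eta\|_\infty\le 1$ gives $\eta(\D(\lambda,\delta)) \le \delta$ directly. (For the purposes of this paper the distinction is harmless, since downstream uses only need linear growth up to absolute constants, but the statement as written is sharper than your argument.) Second, for part (3), the weak-$(1,1)$ estimate for $\mathcal C_*$ gives you a large set $A$ on which $\mathcal C_*(\eta)$ is bounded, but that is not the same as the conclusion you need, namely $\sup_\epsilon\|\mathcal C_\epsilon(w\eta)\|_{L^\infty(\C)}\le C_T$ on \emph{all} of $\C$: restricting $\eta$ to $A$ controls neither $\mathcal C_\epsilon(\eta|_A)$ off $A$ nor even $\mathcal C_\epsilon(\eta|_A)$ on $A$ (since $\mathcal C_\epsilon(\eta|_{A^c})$ is uncontrolled). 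The actual construction of $w$ in Tolsa is the hard direction of $\gamma\approx\gamma_+$, going through curvature $c^2(\eta)\lesssim\|\eta\|$ and a potential-theoretic/induction-on-scales argument; ``Hahn--Banach / minimax'' is not an accurate shorthand for that step. Since you explicitly intend to import these steps wholesale from \cite{Tol14}, this does not invalidate the plan, but the sketch as written would not survive without that import.
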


Combining Theorem \ref{TolsaTheorem} (1), Proposition \ref{GammaPlusThm}, and \cite[Theorem 8.1]{Tol14}, we get the following corollary. The reader may also see \cite[Corollary 3.1]{acy19}.

\begin{corollary}\label{ZeroAC}
If $\nu\in M_0(\C),$ then there exists 
$\mathcal Q \subset \mathbb{C}$ with $\gamma(\mathcal Q) = 0$ such that $\lim_{\epsilon \rightarrow 0}\mathcal{C} _{\epsilon}(\nu)(z)$ 
exists for $z\in\mathbb{C}\setminus \mathcal Q$.
\end{corollary}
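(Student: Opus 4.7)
The plan is a proof by contradiction. Set $\mathcal Q := \{z \in \C : \lim_{\epsilon \to 0} \mathcal C_\epsilon(\nu)(z) \text{ fails to exist in } \C\}$ and suppose toward contradiction that $\gamma(\mathcal Q) > 0$. Applying Theorem \ref{TolsaTheorem}(1) gives $\gamma_+(\mathcal Q) > 0$, so the very definition of $\gamma_+$ supplies a nonzero $\eta \in M_0^+(\mathcal Q)$ with $\|\CT(\eta)\|_{L^\infty(\C)} \le 1$.

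The next step is to refine $\eta$ into an $L^2$-good measure via Proposition \ref{GammaPlusThm}. Part (1) applied to $\eta$ forces $1$-linear growth and $\sup_{\epsilon>0}\|\mathcal C_\epsilon(\eta)\|_{L^\infty(\C)} \le C_T$, and part (2) then produces a Borel set $A \subset \text{spt}(\eta) \subset \mathcal Q$ with $\eta(A) \ge \eta(\mathcal Q)/2 > 0$ and $N_2(\eta|_A) \le C_T$. Thus $\eta|_A$ is a nonzero positive measure supported in $\mathcal Q$ with respect to which the Cauchy transform is bounded on $L^2$ uniformly in the truncation parameter.

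The key step is now to invoke the principal-value theorem \cite[Theorem 8.1]{Tol14}: whenever $\sigma \in M_0^+(\C)$ has $N_2(\sigma) < \infty$, then for every $\nu \in M_0(\C)$ the principal value $\CT(\nu)(z)$ exists at $\sigma$-a.e.\ $z$. Taking $\sigma = \eta|_A$ would force $\lim_{\epsilon \to 0} \mathcal C_\epsilon(\nu)(z)$ to exist at $\eta|_A$-a.e.\ point of $A \subset \mathcal Q$; since $\eta|_A \ne 0$, this produces at least one point of $\mathcal Q$ at which the principal value exists, contradicting the definition of $\mathcal Q$. Hence $\gamma(\mathcal Q) = 0$, as required.

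The main technical obstacle is sequestered inside the ingredient \cite[Theorem 8.1]{Tol14}: one must know that mere $L^2(\sigma)$-boundedness of the truncated Cauchy transforms is enough to deduce $\sigma$-a.e.\ existence of $\CT(\nu)$ for \emph{arbitrary} complex measures $\nu \in M_0(\C)$, including those mutually singular to $\sigma$. This rests on Tolsa's non-doubling Calder\'on--Zygmund theory together with semiadditivity of $\gamma$. Once that input is granted, the proof is a short assembly of Tolsa's theorem, Proposition \ref{GammaPlusThm}, and the definition of $\gamma_+$.
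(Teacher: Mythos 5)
Your proposal is correct and follows essentially the same route the paper indicates: the paper's ``proof'' of Corollary~\ref{ZeroAC} is the one-line remark that it follows by combining Theorem~\ref{TolsaTheorem}~(1), Proposition~\ref{GammaPlusThm}, and Tolsa's principal-value theorem \cite[Theorem 8.1]{Tol14}, and your argument spells out exactly that combination (via $\gamma \approx \gamma_+$, the refinement to a sub-measure with finite $N_2$, and the $\sigma$-a.e.\ existence of principal values). The only nit is the normalization in passing from Proposition~\ref{GammaPlusThm}~(1) to~(2): you obtain $\sup_\epsilon\|\mathcal C_\epsilon(\eta)\|_\infty \le C_T$ rather than $\le 1$, so one should rescale $\eta$ by $C_T$ before invoking part~(2), which degrades the final bound to $N_2(\eta|_A) \le C_T^2$; this is harmless since only $N_2(\eta|_A) < \infty$ is needed.
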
 

\begin{lemma}\label{ConvergeLemma} 
Let $\mu\in M_0^+(\C).$ Suppose $f_n,f\in L^1(\mu)$ satisfying: 
$\|f_n - f\|_{L^1(\mu)}\rightarrow 0$ and $ f_n \rightarrow  f, ~\mu$-a.a.
Then:

(1) For $\epsilon > 0$, there exists a subset $A_\epsilon$ with $\gamma(A_\epsilon) < \epsilon$ and a subsequence $\{f_{n_m}\}$ such that $\{\mathcal C(f_{n_m}\mu)\}$ uniformly converges to $\mathcal C(f\mu)$ on $\mathbb C \setminus A_\epsilon$.

(2) There exists a subset $\mathcal Q$ 
with $\gamma (\mathcal Q) = 0$  and a subsequence $\{f_{n_m}\}$ such that $ \mathcal C(f_{n_m}\mu)(z)$ converges to $\mathcal C(f\mu)(z)$ for $z \in \mathbb C\setminus \mathcal Q$.
\end{lemma}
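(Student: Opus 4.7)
The plan is to reduce the lemma to Tolsa's weak $(1,1)$ estimate for the maximal Cauchy transform, Theorem \ref{TolsaTheorem} (3), combined with the semi-additivity of analytic capacity, Theorem \ref{TolsaTheorem} (2). Writing $h_n = f_n - f$, the hypothesis gives $\|h_n\|_{L^1(\mu)} = \|h_n\mu\| \to 0$. By Corollary \ref{ZeroAC} applied to $f\mu$ and each $f_n\mu$, there is a set $\mathcal Q_0$ with $\gamma(\mathcal Q_0)=0$ outside of which $\mathcal C(f\mu)$ and every $\mathcal C(f_n\mu)$ exist in the principal value sense. For $z\notin\mathcal Q_0$, linearity of $\mathcal C_\epsilon$ and passage to the limit yield $\mathcal C(f_n\mu)(z) - \mathcal C(f\mu)(z) = \mathcal C(h_n\mu)(z)$, hence
\[
|\mathcal C(f_n\mu)(z) - \mathcal C(f\mu)(z)| \le \mathcal C_*(h_n\mu)(z).
\]

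Next I would pass to a fast subsequence: choose indices $n_m$ with $\|h_{n_m}\|_{L^1(\mu)} \le 4^{-m}$ and set $E_m = \{z : \mathcal C_*(h_{n_m}\mu)(z) \ge 2^{-m}\}$. Theorem \ref{TolsaTheorem} (3) then gives $\gamma(E_m) \le C_T \cdot 4^{-m}/2^{-m} = C_T 2^{-m}$. For statement (1), given $\epsilon>0$ I pick $M$ so large that $C_T^2 \cdot 2^{-M+1} < \epsilon$ and define $A_\epsilon = \mathcal Q_0 \cup \bigcup_{m\ge M} E_m$. Semi-additivity gives $\gamma(A_\epsilon) \le C_T\sum_{m\ge M}\gamma(E_m) \le C_T^2\cdot 2^{-M+1} < \epsilon$. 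For $z\in\C\setminus A_\epsilon$ and $m\ge M$, the displayed bound shows $|\mathcal C(f_{n_m}\mu)(z) - \mathcal C(f\mu)(z)| < 2^{-m}$, so the subsequence converges uniformly on $\C\setminus A_\epsilon$.

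For statement (2) I would keep the same subsequence and put $\mathcal Q = \mathcal Q_0 \cup \bigcap_{M\ge 1}\bigcup_{m\ge M} E_m$. Monotonicity of $\gamma$ and the bound $\gamma(\bigcup_{m\ge M} E_m) \le C_T^2 \cdot 2^{-M+1}$ force $\gamma(\bigcap_M\bigcup_{m\ge M} E_m) = 0$, so $\gamma(\mathcal Q)=0$ by one more application of semi-additivity. Any $z\notin \mathcal Q$ eventually avoids $E_m$, giving $\mathcal C_*(h_{n_m}\mu)(z) < 2^{-m}$ for large $m$, hence $\mathcal C(f_{n_m}\mu)(z) \to \mathcal C(f\mu)(z)$. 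The pointwise hypothesis $f_n\to f$ $\mu$-a.e. is not actually used in this argument — it is automatic for a further subsequence by Riesz's theorem and is presumably recorded only so that the extracted subsequence $\{f_{n_m}\}$ remains pointwise convergent to $f$. The only real care needed is the constant bookkeeping: the two applications of Theorem \ref{TolsaTheorem} (2) and (3) must be chained so that the geometric decay of $\gamma(E_m)$ absorbs the $C_T$ from semi-additivity, which is exactly what the choice $\|h_{n_m}\|_{L^1(\mu)}\le 4^{-m}$ versus threshold $2^{-m}$ guarantees.
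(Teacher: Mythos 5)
Your proof is correct. The paper itself only sketches part (1) by citing \cite[Lemma 2.5]{y23}, which uses exactly the argument you give — Tolsa's weak $(1,1)$ estimate for $\mathcal C_*$ (Theorem \ref{TolsaTheorem} (3)), semiadditivity (Theorem \ref{TolsaTheorem} (2)), and a geometrically fast subsequence — and then obtains (2) from (1) via $\mathcal Q = \bigcap_k A_{1/k}$ with a diagonal subsequence; your self-contained write-up is the same approach, with the minor cosmetic variation that you build $\mathcal Q$ directly as $\mathcal Q_0 \cup \limsup_m E_m$ so that one subsequence serves both conclusions at once.
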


\begin{proof} 

Applying Theorem \ref{TolsaTheorem} and Corollary \ref{ZeroAC}, \cite[Lemma 2.5]{y23} proves (1).

(2): Set $\mathcal Q = \cap_{k=1}^\infty A_{\frac 1k}$. Clearly, $\gamma (\mathcal Q) = 0$ and there exists a subsequence $\{f_{n_m}\}$ such that $ \mathcal C(f_{n_m}\mu)(z)$ converges to $\mathcal C(f\mu)(z)$ for $z \in \mathbb C\setminus \mathcal Q$. 
\end{proof}

From section 2 to section 6, we assume that $K\subset \C$ is a compact subset, $\mu\in M_0^+(K),$ $1 \le t <\i,$ $\frac 1t + \frac 1s = 1,$ $S_\mu$ on $R^t(K,\mu)$ is pure, and $\Lambda = \{g_j\}_{j=1}^\i \subset R^t(K,\mu)^\perp$ is a $L^s(\mu)$ norm dense subset of $R^t(K,\mu)^\perp.$ Define the non-zero set $\mathcal N$ of $\{\CT(g_j\mu)\}$ as the following:
\begin{eqnarray} \label{NZSetDef}
\ \mathcal N = \bigcup_{j=1}^\i \left\{z:~ \lim_{\epsilon\rightarrow 0}\CT_\epsilon (g_j\mu)(z) \text{ exists and }\CT (g_j\mu)(z)\ne 0 \right\}.
\end{eqnarray}
Define the zero set $\mathcal F_0$ of $\{\CT(g_j\mu)\}$ as the following:
\begin{eqnarray} \label{F0SetDef}
\ \mathcal F_0 = \bigcap_{j=1}^\i \{z:~ \lim_{\epsilon\rightarrow 0}\CT_\epsilon (g_j\mu)(z) \text{ exists and }\CT (g_j\mu)(z) = 0 \}.
\end{eqnarray}

\begin{proposition}\label{NF0Prop}
The following basic properties hold.

(1) $\mathcal N \subset K,$ $\C\setminus K \subset \mathcal F_0,$ and $ \mathcal N \cup \mathcal F_0 \approx \C,~ \gamma-a.a.$

(2) For $g\perp \rtkmu,$ we have $\mathcal C(g\mu)(z) = 0,~ \gamma |_{\mathcal F_0}-a.a.$

(3) The sets $\mathcal N$ and $\mathcal F_0$ are independent of the choices of $\Lambda$ up to a set of zero analytic capacity.	
\end{proposition}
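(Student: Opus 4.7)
My plan is to tackle the three assertions in sequence, leveraging only the definitions \eqref{NZSetDef}--\eqref{F0SetDef}, Corollary \ref{ZeroAC}, Tolsa's semiadditivity (Theorem \ref{TolsaTheorem}(2)), and Lemma \ref{ConvergeLemma}; essentially every step will be a countable-bookkeeping exercise with $\gamma$-null sets.

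For (1), I will first note that if $\lambda\notin K$ then $w\mapsto (w-\lambda)^{-1}$ lies in $\mbox{Rat}(K)$, so orthogonality forces $\CT(g_j\mu)(\lambda)=-\int (w-\lambda)^{-1}g_j\,d\mu=0$ for every $j$; hence $\C\setminus K\subset \mathcal F_0$, and therefore $\mathcal N\subset K$ by the disjointness $\mathcal N\cap \mathcal F_0=\emptyset$ already built into \eqref{NZSetDef}--\eqref{F0SetDef}. For the density statement, Corollary \ref{ZeroAC} furnishes, for each $j$, a $\gamma$-null set $\mathcal Q_j$ off which the principal value $\CT(g_j\mu)$ exists, and Tolsa's semiadditivity makes $\bigcup_j \mathcal Q_j$ also $\gamma$-null. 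Off this union, every point lies in $\mathcal N$ or $\mathcal F_0$ depending on whether some $\CT(g_j\mu)$ is nonzero or all vanish, so $\mathcal N\cup \mathcal F_0 \approx \C$, $\gamma$-a.a.

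For (2), I fix $g\in R^t(K,\mu)^\perp$. Since $\mu$ is a finite positive measure, $L^s(\mu)\hookrightarrow L^1(\mu)$, so density of $\Lambda$ in $R^t(K,\mu)^\perp$ yields a subsequence $g_{j_n}\to g$ in $L^1(\mu)$, and a further extraction makes the convergence pointwise $\mu$-a.e. as well. Applying Lemma \ref{ConvergeLemma}(2) I obtain a still further subsequence along which $\CT(g_{j_n}\mu)(z)\to \CT(g\mu)(z)$ off a set of zero analytic capacity. Because, by the very definition \eqref{F0SetDef}, each $\CT(g_{j_n}\mu)$ vanishes at every point of $\mathcal F_0$, the limit $\CT(g\mu)$ must also vanish $\gamma$-a.a. on $\mathcal F_0$.

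For (3), I compare $\mathcal N,\mathcal F_0$ built from $\Lambda$ with their analogues $\mathcal N',\mathcal F_0'$ built from a second dense family $\Lambda'=\{g_j'\}$. Part (2) applied to each $g_j'\in R^t(K,\mu)^\perp$ gives $\CT(g_j'\mu)=0$ on $\mathcal F_0$ off a $\gamma$-null set depending on $j$; combining these sets via Tolsa's semiadditivity, and discarding a further null set from Corollary \ref{ZeroAC} so that every $\CT(g_j'\mu)$ exists as a principal value, I obtain $\mathcal F_0\subset \mathcal F_0'$ modulo a $\gamma$-null set. Swapping the roles of $\Lambda$ and $\Lambda'$ gives the reverse inclusion, so $\mathcal F_0\approx\mathcal F_0'$, $\gamma$-a.a. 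Because $\mathcal N$ is disjoint from $\mathcal F_0$ and part (1) says $\mathcal N\cup \mathcal F_0\approx \C$ $\gamma$-a.a., the set $\mathcal N$ is $\gamma$-a.e.\ determined as $\C\setminus\mathcal F_0$, giving $\mathcal N\approx \mathcal N'$ as well. The only genuine obstacle anywhere in the proof is the uniform management of the countably many $\gamma$-null exceptional sets; this is exactly what Tolsa's semiadditivity is designed to handle, and no deeper difficulty is expected.
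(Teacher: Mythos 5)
Your proof is correct and follows essentially the same line as the paper: (1) via $\mathrm{Rat}(K)$-orthogonality off $K$ plus Corollary \ref{ZeroAC} and semiadditivity to cover $\C$ by $\mathcal N\cup\mathcal F_0$ up to a $\gamma$-null set; (2) via a $\mu$-a.e.\ convergent approximating subsequence from $\Lambda$ and Lemma \ref{ConvergeLemma}(2); (3) by combining (1) and (2) with countable bookkeeping of $\gamma$-null sets. The only difference is that you spell out the details the paper leaves implicit.
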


\begin{proof}
(1) follows from Corollary \ref{ZeroAC}. There exists a subsequence $\{g_{j_k}\}$ such that $\|g_{j_k} - g\|_{L^s(\mu)} \rightarrow 0$ and $g_{j_k}(z) \rightarrow g(z),~ \mu-a.a..$
Applying Lemma \ref{ConvergeLemma} (2), we get (2). (3) is implied by (1) and (2).	
\end{proof}

\begin{lemma}\label{RhoExistLemma}
If $f\in \rtkmu,$ then there exists a unique function $\rho(f)$ defined on $\C,~\gamma-a.a.$ and there exists a subset $\mathcal Q_f\subset \mathcal N$ with $\gamma(\mathcal Q_f) = 0$ such that $\rho(f)(z) = 0$ for $z\in \mathcal F_0,$
\begin{eqnarray} \label{RhoExistLemmaEq1}
\ \rho(f)(z) \CT(g\mu)(z) = \CT(fg\mu)(z),~\gamma-a.a.\text{ for }g\perp \rtkmu,
\end{eqnarray}
and
\begin{eqnarray} \label{RhoExistLemmaEq2}
\ \rho(f)(z) = f(z),~\mu _{\mathcal N \setminus \mathcal Q_f}-a.a.	
\end{eqnarray}
Clearly, $\rho(r)(z) = r(z)$ for $z\in \mathcal N$ and $r\in \text{Rat}(K).$
\end{lemma}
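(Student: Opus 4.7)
The strategy is to realize $\rho(f)$ as the $\gamma$-a.a.\ pointwise limit of a carefully chosen subsequence of rational approximants to $f$, with the limit dictated on $\mathcal N$ by the Cauchy-transform identity \eqref{OutlineEq1}. I would begin by picking $\{r_n\}\subset\text{Rat}(K)$ with $\|r_n-f\|_{L^t(\mu)}\to 0$ and, after passing to a subsequence, $r_n\to f$ $\mu$-a.e. For each fixed $j$, H\"older gives $\|(r_n-f)g_j\|_{L^1(\mu)}\to 0$ while $r_ng_j\to fg_j$ $\mu$-a.e., so Lemma \ref{ConvergeLemma} (2) produces a subsequence along which $\CT(r_ng_j\mu)\to\CT(fg_j\mu)$ $\gamma$-a.a. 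A diagonal extraction then delivers a single subsequence $\{r_{n_k}\}$ that works for every $j$ simultaneously; here the semiadditivity of analytic capacity (Theorem \ref{TolsaTheorem} (2)) ensures that the countable union of the individual $\gamma$-null exceptional sets over $j$ remains $\gamma$-null.

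Next I would invoke the outline identity \eqref{OutlineEq1}, valid $\gamma$-a.a., to write $r_{n_k}(\lambda)\CT(g_j\mu)(\lambda)=\CT(r_{n_k}g_j\mu)(\lambda)$. By definition of $\mathcal N$, each $\lambda\in\mathcal N$ admits an index $j$ with $\CT(g_j\mu)(\lambda)\ne 0$, so dividing this identity exhibits $r_{n_k}(\lambda)$ as convergent off a $\gamma$-null set to a value independent of the index used. Declaring this common limit $\rho(f)(\lambda)$ on $\mathcal N$ and setting $\rho(f)\equiv 0$ on $\mathcal F_0$ defines $\rho(f)$ $\gamma$-a.a.\ on $\C$ by Proposition \ref{NF0Prop} (1), and simultaneously forces $\rho(f)(\lambda)\CT(g_j\mu)(\lambda)=\CT(fg_j\mu)(\lambda)$ $\gamma$-a.a.\ for every $j$. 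Extension to arbitrary $g\perp\rtkmu$ follows by choosing $g_{j_l}\to g$ in $L^s(\mu)$ and $\mu$-a.e., applying H\"older to obtain $fg_{j_l}\to fg$ in $L^1(\mu)$, and passing both $\CT(g_{j_l}\mu)\to\CT(g\mu)$ and $\CT(fg_{j_l}\mu)\to\CT(fg\mu)$ to the $\gamma$-a.a.\ limit via Lemma \ref{ConvergeLemma} (2); on $\mathcal F_0$ both sides of \eqref{RhoExistLemmaEq1} vanish by Proposition \ref{NF0Prop} (2). Uniqueness is automatic: any competing solution must agree with $\rho(f)$ on $\mathcal N$ (wherever a single $\CT(g_j\mu)$ is nonzero) and on $\mathcal F_0$ (both zero), which covers $\C$ $\gamma$-a.a.

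For \eqref{RhoExistLemmaEq2} I would take $\mathcal Q_f$ to be the $\gamma$-null subset of $\mathcal N$ on which the pointwise convergence $r_{n_k}\to\rho(f)$ fails; outside $\mathcal Q_f$ and outside the $\mu$-null set where $r_{n_k}\not\to f$, the two pointwise limits must coincide, yielding $\rho(f)=f$ $\mu_{\mathcal N\setminus\mathcal Q_f}$-a.e. The parenthetical statement $\rho(r)=r$ on $\mathcal N$ for $r\in\text{Rat}(K)$ drops out of the constant approximating sequence $r_n\equiv r$ together with \eqref{OutlineEq1}. The genuinely delicate point is coordinating a single subsequence whose Cauchy transforms converge $\gamma$-a.a.\ against \emph{every} $g_j$ at once; this is where Tolsa's semiadditivity is indispensable, as no weaker notion of exceptional set would let us package countably many $j$-dependent null sets into one.
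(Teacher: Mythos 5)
Your argument is correct and follows essentially the same route as the paper: the same sequence of rational approximants, the same key identity $r_n\CT(g_j\mu)=\CT(r_ng_j\mu)$ obtained from $\frac{r_n(z)-r_n(\lambda)}{z-\lambda}\in\text{Rat}(K)$, Lemma \ref{ConvergeLemma}\,(2) to pass to a subsequence converging $\gamma$-a.a.\ against every $g_j$, and the pointwise limit defining $\rho(f)$ on $\mathcal N$. You merely make explicit two points the paper leaves implicit — the diagonal extraction over $j$ (with semiadditivity closing the countable union of null sets) and the independence of the limit from the choice of nonvanishing index — which are correct refinements, not a different proof.
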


\begin{proof}
Choose $\{r_n\}\subset \text{Rat}(K)$ such that 
 $\|r_n - f\|_{L^t(\mu )}\rightarrow 0\text{ and } r_n\rightarrow f,~ \mu-a.a.$
 For $\lambda \in K,$ we see that $\frac{r_n(z)-r_n(\lambda)}{z-\lambda} \in \text{Rat}(K).$ So $\int \frac{r_n(z)-r_n(\lambda)}{z-\lambda} g(z)d\mu(z) = 0$ and applying Corollary \ref{ZeroAC}, we have
 \[
 \ r_n(z) \CT(g_j\mu)(z) = \CT(r_ng_j\mu)(z),~\gamma-a.a.\text{ for }n,j \ge 1.
 \]
 From Lemma \ref{ConvergeLemma} (2), we can choose a subsequence $\{r_{n_k}\}$ such that
 \[
 \ r_{n_k}(z) \CT(g_j\mu)(z) \rightarrow \CT(fg_j\mu)(z),~\gamma-a.a.\text{ for }j \ge 1.
 \]
 Therefore, $r_{n_k}$ converges to a function, denoted $\rho(f),$ on $\mathcal N$ and
 \begin{eqnarray}\label{RhoExistLemmaEq3}	
\ \rho(f)(z) \CT(g_j\mu)(z) = \CT(fg_j\mu)(z),~\gamma-a.a.\text{ for }j \ge 1.
 \end{eqnarray}
 Set $\rho(f)(z) = 0$ for $z\in \C\setminus \mathcal N.$
 It is clear that $\rho(f)$ is unique up to a set of zero analytic capacity. Choose a subsequence $\{g_{j_k}\}$ such that $\|g_{j_k} - g\|_{L^s(\mu)}\rightarrow 0$ and $g_{j_k}(z) \rightarrow g(z),~ \mu-a.a..$ Using Lemma \ref{ConvergeLemma} (2) and \eqref{RhoExistLemmaEq3}, we get \eqref{RhoExistLemmaEq1}. 
 Let $\mathcal Q_f$ be a subset with $\gamma(\mathcal Q_f) = 0$ such that $r_{n_k}(z) \rightarrow \rho(f)(z)$ for $z\in \mathcal N \setminus \mathcal Q_f.$ \eqref{RhoExistLemmaEq2} follows since $r_{n_k}(z) \rightarrow f(z),~\mu-a.a.$
\end{proof}

\begin{proposition}\label{Rhoprop} 
The following statements are true for $\rho.$

(1) If $f_1,f_2\in R^{t,\i}(K,\mu),$ then
 $\rho(f_1f_2)(z) = \rho(f_1)(z)\rho(f_2)(z),~\gamma-a.a..$

(2) If $f\in R^{t,\i}(K,\mu)$, then $\|\rho(f)\|_{L^\infty(\area_{\mathcal N})} \le  \|f\|_{L^\infty(\mu)}$.
\end{proposition}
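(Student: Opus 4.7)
For part (1), the plan is to deduce multiplicativity from two applications of the characterizing identity \eqref{RhoExistLemmaEq1}. Given $f_1,f_2\in R^{t,\i}(K,\mu)$, I would first check that $f_1f_2$ again lies in $R^{t,\i}(K,\mu)$ (approximate $f_2$ in $L^t(\mu)$ by elements of $\text{Rat}(K)$ and multiply by the bounded function $f_1$), and that $f_2g \perp \rtkmu$ whenever $g\perp \rtkmu$, since $rf_2 \in \rtkmu$ for every $r\in \text{Rat}(K)$. Then for each $g \perp \rtkmu$, applying \eqref{RhoExistLemmaEq1} in succession to the pairs $(f_2,g)$ and $(f_1, f_2g)$ yields
\[
\rho(f_1)\rho(f_2)\CT(g\mu) = \rho(f_1)\CT(f_2g\mu) = \CT(f_1f_2g\mu) = \rho(f_1f_2)\CT(g\mu)
\]
$\gamma$-a.a.\ on $\C$. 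Specializing $g = g_j$ for each $j$ and using that some $\CT(g_j\mu)(z) \ne 0$ at every $z\in \mathcal N$, I would conclude $\rho(f_1f_2) = \rho(f_1)\rho(f_2)$ $\gamma$-a.a.\ on $\mathcal N$; on $\C\setminus \mathcal N$ both sides are zero by the construction of $\rho$ in Lemma \ref{RhoExistLemma}.

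For part (2), the plan is a power-plus-Tolsa argument. Set $M = \|f\|_{L^\i(\mu)}$ and assume without loss of generality that $M = 1$. Part (1) gives $\rho(f)^n = \rho(f^n)$ $\gamma$-a.a. On the slice $\mathcal N_{j,k} := \{|\CT(g_j\mu)| > 1/k\}$, the identity \eqref{RhoExistLemmaEq1} with $f$ replaced by $f^n$ and $g = g_j$ yields the pointwise bound $|\rho(f)(\lambda)|^n \le k|\CT(f^ng_j\mu)(\lambda)|$. Since $\|f^ng_j\|_{L^1(\mu)} \le \|g_j\|_{L^1(\mu)}$, the weak-$L^1$ estimate Theorem \ref{TolsaTheorem}(3) gives
\[
\gamma(E_{n,j}) \le C_T\|g_j\|_{L^1(\mu)}/n^2, \qquad E_{n,j} := \{|\CT(f^ng_j\mu)| > n^2\}.
\]
By semiadditivity (Theorem \ref{TolsaTheorem}(2)), $\gamma(\bigcup_{n\ge N}E_{n,j}) \to 0$ as $N\to \i$, so $F_j := \bigcap_N\bigcup_{n\ge N}E_{n,j}$ has $\gamma(F_j) = 0$. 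For $\lambda \in \mathcal N_{j,k}\setminus F_j$, the bound $|\rho(f)(\lambda)|^n \le kn^2$ holds for all large $n$, and taking $n$-th roots yields $|\rho(f)(\lambda)| \le 1$. Unioning over $j,k$ via semiadditivity produces $|\rho(f)|\le 1$ $\gamma$-a.a.\ on $\mathcal N$, and since $\area \le 4\pi \gamma^2$ by \eqref{AreaGammaEq}, also $\area_{\mathcal N}$-a.a., delivering the desired bound.

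The main bookkeeping concern throughout is that each instance of \eqref{RhoExistLemmaEq1} holds only $\gamma$-a.a., so to execute the chain of equalities in part (1) and the limit-of-$n$-th-roots argument in part (2) I need all of these identities to hold simultaneously off a single $\gamma$-null set. This is handled by taking the countable union of the exceptional sets indexed by $j$ (and in (2) also by $n$) and invoking the semiadditivity of analytic capacity from Theorem \ref{TolsaTheorem}(2); this is the only point requiring any care beyond straightforward bookkeeping.
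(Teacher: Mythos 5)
Part (1) is essentially the paper's argument: apply \eqref{RhoExistLemmaEq1} twice, once to $(f_2,g)$ and once to $(f_1,f_2g)$ (after noting $f_2g\perp\rtkmu$), then cancel the nonvanishing $\CT(g_j\mu)$ on $\mathcal N$; this matches the paper line for line.

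Part (2), however, is a genuinely different route. The paper integrates: it computes
$\int_{\mathcal N}|\rho(f)|^n|\CT(g\mu)|\,d\area = \int_{\mathcal N}|\CT(f^ng\mu)|\,d\area \lesssim d\,\|g\|_{L^1(\mu)}\|f\|_{L^\infty(\mu)}^n$,
takes $n$-th roots, and lets $n\to\infty$ to get $|\rho(f)|\le\|f\|_\infty$ $\area$-a.a.\ on $\{\CT(g\mu)\ne 0\}$, then unions over $g=g_j$ via the definition of $\mathcal N$. This is a soft, purely $L^1\!\to\!L^\infty$-norm-limit argument that needs nothing beyond the crude kernel bound $\int_{\mathcal N}|w-z|^{-1}d\area(z)\lesssim d$. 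Your argument replaces the integral step with a pointwise bound $|\rho(f)(\lambda)|^n\le k\,|\CT(f^ng_j\mu)(\lambda)|$ on $\{|\CT(g_j\mu)|>1/k\}$, then controls the right-hand side via Tolsa's weak-$(1,1)$ estimate for $\mathcal C_*$ (Theorem~\ref{TolsaTheorem}(3)) and a Borel--Cantelli argument in $\gamma$ (the tail $\sum_n n^{-2}$ converges, so $\gamma(\bigcup_{n\ge N}E_{n,j})\to 0$), and finally takes $n$-th roots of $kn^2$. This trades the elementary integral estimate for the heavier Tolsa machinery, but in exchange it delivers the slightly stronger conclusion $|\rho(f)|\le\|f\|_\infty$ $\gamma$-a.a.\ on $\mathcal N$ rather than merely $\area$-a.a.\ (the latter then following from \eqref{AreaGammaEq}). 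Both proofs are correct; your bookkeeping remark at the end — that all the $n,j$-indexed $\gamma$-null exceptional sets for \eqref{RhoExistLemmaEq1} and for $\rho(f^n)=\rho(f)^n$ must be unioned and discarded, which is legitimate by Theorem~\ref{TolsaTheorem}(2) — is exactly the point needing care, and you handle it properly.
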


\begin{proof}
(1): For $f_1,f_2\in R^{t,\i}(K,\mu)$ and $g\perp R^t(K,\mu)$, we see that $f_2g\perp R^t(K,\mu)$. From \eqref{RhoExistLemmaEq1}, we get
 \[
 \ \rho  (f_1f_2)(z) \mathcal C(g\mu)(z) = \rho  (f_1)(z)\mathcal C(f_2g\mu)(z) = \rho  (f_1)(z)\rho  (f_2)(z)\mathcal C(g\mu)(z),~ \gamma - a.a.
 \]
Hence, (1) follows.

(2): For $f\in R^{t,\i}(K,\mu)$ and $g\perp R^t(K,\mu)$, using (1) and \eqref{RhoExistLemmaEq1}, we have
 \[
 \ \begin{aligned}
 \ \int_{\mathcal N} |\rho(f)(z)|^n |\mathcal C(g\mu)(z) | d \area(z) = & \int_{\mathcal N} |\mathcal C(f^ng\mu)(z) | d \area(z) \\
 \ \le & \int_{\mathcal N} \int \dfrac{|f(w)|^n |g(w)|}{|w-z|}d\mu(w) d \area(z) \\
\ \le &2\pi d\|g\|_{L^1(\mu)}\|f\|^n_{L^\infty(\mu)},
 \ \end{aligned}
 \]
where $d$ is the diameter of $\mathcal N,$ which implies 
 \[
 \ \left (\int_{\mathcal N} |\rho(f)(z)|^n |\mathcal C(g\mu)(z)| d \area(z) \right )^{\frac{1}{n}}\le \left (2\pi d\|g\|_{L^1(\mu)} \right )^{\frac{1}{n}}\|f\|_{L^\infty(\mu)}. 
\]
Taking $n\rightarrow \infty$, we get $\|\rho(f)\|_{L^\infty(\area|_{\{\mathcal C(g\mu)) \ne 0 \}})} \le  \|f\|_{L^\infty(\mu)}$. Now (2) follows from the definition \eqref{NZSetDef} of $\mathcal N.$  
\end{proof}

\section{\textbf{The Structure of $\mathcal N$}}

For $\nu\in M_0(\C),$ define the zero and non-zero linear density sets:
\[
 \ \mathcal{ZD}(\nu) = \{ \lambda:~ \Theta_\nu (\lambda ) = 0 \} \text{ and }
  \mathcal{ND}(\nu) = \{ \lambda:~ \Theta^*_\nu (\lambda ) > 0 \},
 \]
where $\Theta_\nu (\lambda ) = \lim_{\delta\rightarrow 0} \frac{|\nu |(\D(\lambda , \delta ))}{\delta}$
 if the limit exists and $\Theta_\nu^* (\lambda ) = \underset{\delta\rightarrow 0}{\overline\lim} \frac{|\nu |(\D(\lambda , \delta ))}{\delta}.$ 
 Set $\mathcal{ND}(\nu, n) =  \{ \lambda:~ n^{-1} \le \Theta^*_\nu (\lambda ) \le n \}.$
 Define
 \begin{eqnarray} \label{R0SetDef}
 \ \mathcal R_0 = \mathcal N \cap \mathcal{ZD}(\mu).	
 \end{eqnarray}
 The $1$-dimensional Hausdorff measure of $F$ is:
 \[
 \ \mathcal H^1 (F) = \sup_{\epsilon >0}\inf \left \{\sum_i \text{diam}(F_i):~ F\subset \cup_i F_i,~ \text{diam}(F_i)\le \epsilon \right \}.
 \] 
For basic information about Hausdorff measures, the reader may take a look at the books Carleson \cite{car67} and Mattila \cite{mat95}.

Let $\mathbb R$ be the real line. Let $A:\mathbb R\rightarrow \mathbb R$ be a Lipschitz function  with its graph $\Gamma = \{(x,A(x)):~ x\in \mathbb R\}.$ The following lemma, which describes the decomposition of $\mu\in M_0^+(K),$ is important for us to understand $\mathcal N.$

 \begin{lemma}\label{GammaExist}
 For $\nu \in M_0(\C),$
there is a sequence of Lipschitz functions $A_n: \mathbb R\rightarrow \mathbb R$ such that their (rotated) graphs $\Gamma_n$ with rotation angles $\beta_n$ satisfying:

(1) Let $\Gamma = \cup_n \Gamma_n$. Then
$\nu = h\mathcal H^1 |_{\Gamma} + \nu_s$ is the Radon-Nikodym decomposition with respect to $\mathcal H^1 |_{\Gamma}$, where $h\in L^1(\mathcal H^1 |_{\Gamma})$ and $\nu_s\perp \mathcal H^1 |_{\Gamma};$

(2) $\mathcal {ND}(\nu ) \approx \mathcal N (h), ~\gamma-a.a.,$ where $\mathcal N (h) := \{h\ne 0\};$

(3) $\Theta_{\nu_s}(z) = 0, ~\gamma-a.a..$ 
\newline
As a result, if $\eta\in M_0^+(\C)$ is of $c$-linear growth and $\eta\perp|\nu|$, then $\eta(\mathcal{ND}(\nu)) = 0.$
\end{lemma}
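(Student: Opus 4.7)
My approach is to construct $\Gamma$ as a \emph{maximal} countable union of rotated Lipschitz graphs supporting the rectifiable portion of $|\nu|$, and then derive (1)--(3) from the resulting Radon--Nikodym split combined with David's rectifiability theorem (Vitushkin's conjecture, resolved in 1998) and Tolsa's semiadditivity. Concretely, I would set $\alpha := \sup\{|\nu|(\Gamma^*) : \Gamma^* \text{ is a countable union of rotated Lipschitz graphs}\}$; since $|\nu|$ is finite, $\alpha < \infty$, and a diagonal choice of near-optimal families produces a countable collection $\{\Gamma_n\}$ with rotation angles $\beta_n$ so that $\Gamma := \bigcup_n \Gamma_n$ achieves $|\nu|(\Gamma) = \alpha$; maximality then yields $|\nu|(\Gamma^{**} \setminus \Gamma) = 0$ for every other countable union of rotated Lipschitz graphs $\Gamma^{**}$. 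The Lebesgue decomposition of $\nu$ relative to $\mathcal H^1|_\Gamma$ gives $\nu = h\,\mathcal H^1|_\Gamma + \nu_s$ with $h \in L^1(\mathcal H^1|_\Gamma)$ and $\nu_s \perp \mathcal H^1|_\Gamma$, which is (1).

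The heart of the proof is (3). Since $\{\Theta^*_{\nu_s} > 0\}$ equals $\bigcup_n \mathcal{ND}(\nu_s, n)$ modulo the $\mathcal H^1$-null set $\{\Theta^*_{\nu_s} = \infty\}$, by Tolsa's semiadditivity it suffices to show $\gamma(\mathcal{ND}(\nu_s, n)) = 0$ for each $n$. Mattila's standard $1$-dimensional density estimates give $\mathcal H^1(\mathcal{ND}(\nu_s, n)) \le 2n\|\nu_s\| < \infty$, the upper density bound $|\nu_s|(F) \le 2n\,\mathcal H^1(F)$, and the lower density bound $|\nu_s|(F) \ge (2n)^{-1}\mathcal H^1(F)$ for Borel $F \subset \mathcal{ND}(\nu_s, n)$. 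Suppose for contradiction that $\gamma(\mathcal{ND}(\nu_s, n)) > 0$. Since this set has finite $\mathcal H^1$-measure, David's theorem produces a $1$-rectifiable subset $R_n \subset \mathcal{ND}(\nu_s, n)$ with $\mathcal H^1(R_n) > 0$, sitting modulo $\mathcal H^1$-null in some countable union $\Gamma'''$ of rotated Lipschitz graphs. The lower density bound forces $|\nu_s|(R_n) > 0$; maximality of $\Gamma$ applied to $\Gamma \cup \Gamma'''$ gives $|\nu|(\Gamma''' \setminus \Gamma) = 0$, so $|\nu_s|(R_n \setminus \Gamma) = 0$, and the entire $|\nu_s|$-mass on $R_n$ must sit in $R_n \cap \Gamma$. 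But $\nu_s \perp \mathcal H^1|_\Gamma$ concentrates this mass on an $\mathcal H^1$-null subset $B \subset R_n \subset \mathcal{ND}(\nu_s, n)$, so the upper density bound yields $|\nu_s|(B) \le 2n\,\mathcal H^1(B) = 0$, the desired contradiction.

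Statement (2) then follows from two ingredients: (i) $\gamma(\mathcal{ND}(\nu) \setminus \Gamma) = 0$, by the same maximality--David contradiction with $\nu$ in place of $\nu_s$ (this time $R \cap \Gamma = \emptyset$, so the maximality estimate kills the mass directly via $R \cap \Gamma''' \subset \Gamma'''\setminus\Gamma$); and (ii) on $\Gamma$ the standard density fact $\Theta_{\mathcal H^1|_\Gamma} \equiv 1$ $\mathcal H^1|_\Gamma$-a.a., combined with (3), gives $\Theta_\nu(x) = |h(x)|$ at $\mathcal H^1|_\Gamma$-a.a.\ $x \in \Gamma$, identifying $\mathcal N(h)$ and $\mathcal{ND}(\nu) \cap \Gamma$ modulo $\mathcal H^1|_\Gamma$-null (hence $\gamma$-null, since $\gamma \asymp \mathcal H^1$ on Lipschitz graphs). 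For the closing consequence, suppose $\eta \in M_0^+(\C)$ is of $c$-linear growth and $\eta \perp |\nu|$; Besicovitch differentiation on $\R^2$ guarantees $\eta(\D(x,r))/|\nu|(\D(x,r)) \to \infty$ at $\eta$-a.a.\ $x$, but $x \in \mathcal{ND}(\nu, n)$ forces $|\nu|(\D(x, r_k)) \ge (2n)^{-1} r_k$ along some $r_k \to 0$, so the $c$-linear growth of $\eta$ pins the ratio at $\le 2cn$ along that subsequence, forcing $\eta(\mathcal{ND}(\nu, n)) = 0$ for every $n$ and hence $\eta(\mathcal{ND}(\nu)) = 0$. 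The main obstacle is the contradiction in the second paragraph: one must carefully marry the maximality of $\Gamma$ (which pushes $|\nu_s|(R_n)$ into $R_n \cap \Gamma$) with the $\mathcal H^1$-null confinement coming from $\nu_s \perp \mathcal H^1|_\Gamma$, and check that the upper density bound is applicable on precisely that $\mathcal H^1$-null piece $B \subset \mathcal{ND}(\nu_s,n)$.
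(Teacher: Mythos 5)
Your extremal-$\Gamma$ argument is a genuinely different route from the paper's, and with one repair in step (2) it is correct. The paper proceeds constructively: Tolsa's Lemma 8.12 gives $\mathcal H^1(\mathcal{ND}(\nu,n)) < \infty$ together with absolute continuity of $\nu|_{\mathcal{ND}(\nu,n)}$ against $\mathcal H^1$ with density comparable to $n^{\pm 1}$; David's theorem kills the purely unrectifiable part of $\mathcal{ND}(\nu,n)$; the rectifiable part is covered by rotated Lipschitz graphs; and (1)--(3) drop out of this construction. You instead take a maximizer $\Gamma$ of $|\nu|(\cdot)$ over countable unions of rotated Lipschitz graphs and derive (3) and (2) by contradiction: maximality, David's theorem, and the two-sided density estimates for $\mathcal{ND}(\nu_s,n)$ squeeze any hypothetical $|\nu_s|$-mass onto an $\mathcal H^1$-null subset of $\mathcal{ND}(\nu_s,n)$ and then kill it via the upper density bound. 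Both proofs rest on the same hard inputs (David, semiadditivity, Mattila-type density estimates, $\gamma\approx\mathcal H^1$ on Lipschitz graphs); the extremal set-up has the virtue of being construction-free and making (3) and the final Besicovitch consequence especially transparent, at the price of an abstract maximality step and a separate Besicovitch argument for (2).

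The one spot that needs fixing is your claim in (2) that $\Theta_{\mathcal H^1|_\Gamma}\equiv 1$, $\mathcal H^1|_\Gamma$-a.a. For a countable union $\Gamma=\cup_n\Gamma_n$ of Lipschitz graphs this is false in general: $\mathcal H^1|_\Gamma$ need not even be locally finite (take $\Gamma_1=\R$ and $\Gamma_n=\{y=2^{-n}\}$ for $n\ge 2$; every disc centered on $\Gamma_1$ has infinite $\mathcal H^1|_\Gamma$-mass, so $\Theta_{\mathcal H^1|_\Gamma}=\infty$ on all of $\Gamma_1$). What is true, and what your argument actually needs, is a statement about the \emph{finite} measure $|h|\,\mathcal H^1|_\Gamma$: at $\mathcal H^1|_{\Gamma_n}$-a.a.\ $x\in\Gamma_n$ the piece $|h|\mathcal H^1|_{\Gamma_n}$ has linear density $2|h(x)|$ (the factor $2$, not $1$, comes from the paper's normalization $\Theta_\nu(\lambda)=\lim_{\delta\to 0}|\nu|(\D(\lambda,\delta))/\delta$), while the mutually singular piece $|h|\mathcal H^1|_{\Gamma\setminus\Gamma_n}$ has linear density $0$, by Besicovitch differentiation against $\mathcal H^1|_{\Gamma_n}$, whose own linear density is $2>0$ at such $x$. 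Combined with (3) this yields $\Theta_\nu(x)=2|h(x)|$, $\mathcal H^1|_\Gamma$-a.a., and your identification of $\mathcal N(h)$ with $\mathcal{ND}(\nu)\cap\Gamma$ modulo $\mathcal H^1|_\Gamma$-null (hence $\gamma$-null, by the graph-wise comparability and semiadditivity) then closes the argument for (2).
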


\begin{proof} Without loss of generality, we assume $\nu \in M_0^+(\C).$  
Using \cite[Lemma 8.12]{Tol14}, we see that 
\begin{eqnarray} \label{GammaExistEq1}
\ \mathcal H^1(\{\Theta^*_\nu (z) \ge n \}) \lesssim \frac{\|\nu\|}{n}
\end{eqnarray}
and $\nu |_{\mathcal{ND}(\nu, n)} = h \mathcal H^1|_{\mathcal{ND}(\nu, n)}$, where $h$ is some Borel function such that $\frac{1}{n} \lesssim h(z) \lesssim n,~ \mathcal H^1|_{\mathcal{ND}(\mu, n)}-a.a.$  and $\mathcal H^1(\mathcal{ND}(\nu, n)) < \i.$
Let $\mathcal Q_\nu = \{z:~ \Theta_\nu^*(z) = \infty\}.$ Then $\gamma(\mathcal Q_\nu) = \mathcal H^1(\mathcal Q_\nu) = 0$ by \eqref{GammaExistEq1} and 
 \[
 \ \mathcal{ND}(\nu) = \bigcup_n \mathcal{ND}(\nu, n) \cup \mathcal Q_\nu.
 \]  
	
From \cite[Theorem 1.26]{Tol14}, $\mathcal{ND}(\nu, n) = E_r\cup E_u,$ where $E_r$ is rectifiable and $E_u$ is purely unrectifiable. From David's Theorem (see \cite{dav98} or \cite[Theorem 7.2]{Tol14}), we see that $\gamma(E_u) = 0.$
Applying \cite[Proposition 4.13]{Tol14}, we find a sequence of (rotated) Lipschitz graphs $\{\Gamma_{nm}\}$ and $E_1$ with $\mathcal H^1(E_1) = 0$ such that
$E_r \subset E_1 \cup \cup_{m=1}^\infty \Gamma_{nm}.$ 
Let $\mathcal Q = E_u\cup E_1.$ Clearly, $\gamma(\mathcal Q) = 0.$ Hence, there exists a sequence of $\{\Gamma_n\}$ such that (1), (2), and (3) hold.
\end{proof}

\begin{definition}\label{GLDef}
Let  $\mathcal Q$ be a set with $\gamma(\mathcal Q) = 0$.
Let $f(z)$ be a function defined 
on $\D(\lambda, \delta_0)\setminus \mathcal Q$ for some $\delta_0 > 0.$ The function $f$ has a $\gamma$-limit $a$ at $\lambda$ if
\[  
 \  \lim_{\delta \rightarrow 0} \dfrac{\gamma(\D(\lambda, \delta) \cap \{|f(z) - a| > \epsilon\})} {\delta}= 0
\]
for all $\epsilon > 0$. If in addition, $f(\lambda)$ is well defined and $a = f(\lambda)$, then $f$ is $\gamma$-continuous at $\lambda$. 
\end{definition}
 
 The following lemma is from \cite[Lemma 3.2]{acy19}. 

\begin{lemma}\label{CauchyTLemma} 
Let $\nu\in M_0(\mathbb{C})$ and assume that for some $\lambda \in\mathbb C,$ 
$\Theta_\nu (\lambda ) = 0$ and $\mathcal{C} (\nu)(\lambda ) = \lim_{\epsilon \rightarrow 0}\mathcal{C} _{\epsilon}(\nu)(\lambda )$ exists.
Then $\mathcal{C}(\nu)(z)$ is $\gamma$-continuous at $\lambda$.
\end{lemma}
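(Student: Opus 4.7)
The plan is to split $\nu$ at scale $\delta$ into a near part $\nu_\delta=\nu|_{\D(\lambda,2\delta)}$ and a far part $\nu^\delta=\nu-\nu_\delta$, and to estimate each separately. For $z\in\D(\lambda,\delta)$, write
\[
\CT(\nu)(z)-\CT(\nu)(\lambda)=\bigl[\CT(\nu^\delta)(z)-\CT(\nu^\delta)(\lambda)\bigr]+\bigl[\CT(\nu_\delta)(z)-\CT(\nu_\delta)(\lambda)\bigr],
\]
where the far part involves only absolutely convergent integrals (since the support is bounded away from $\lambda$) and $\CT(\nu_\delta)(\lambda):=\CT(\nu)(\lambda)-\CT(\nu^\delta)(\lambda)$ inherits its meaning as a principal value from the hypothesis. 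Fixing $\epsilon>0$, I aim to show that the far part is uniformly smaller than $\epsilon/2$ on $\D(\lambda,\delta)$ and that the exceptional set for the near part has analytic capacity $o(\delta)$.

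For the far part, the resolvent identity $\frac{1}{w-z}-\frac{1}{w-\lambda}=\frac{z-\lambda}{(w-z)(w-\lambda)}$ together with $|w-z|\ge |w-\lambda|/2$ for $|w-\lambda|\ge 2\delta$ gives
\[
\sup_{z\in\D(\lambda,\delta)}\bigl|\CT(\nu^\delta)(z)-\CT(\nu^\delta)(\lambda)\bigr|\;\le\;2\delta\int_{|w-\lambda|>2\delta}\frac{d|\nu|(w)}{|w-\lambda|^{2}}.
\]
A dyadic decomposition of the integral, combined with the hypothesis $\Theta_\nu(\lambda)=0$ (so $|\nu|(\D(\lambda,r))=o(r)$), forces the right-hand side to $0$ as $\delta\to 0$; I would not grind through this standard computation. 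Thus for all sufficiently small $\delta$ the far part is uniformly at most $\epsilon/2$ on $\D(\lambda,\delta)$.

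For the near part, I first note that $\CT(\nu^\delta)(\lambda)\to\CT(\nu)(\lambda)$ as $\delta\to 0$, which forces $\CT(\nu_\delta)(\lambda)\to 0$. Consequently, for all small enough $\delta$, any point $z$ with $|\CT(\nu_\delta)(z)-\CT(\nu_\delta)(\lambda)|>\epsilon/2$ satisfies $\CT_{*}(\nu_\delta)(z)>\epsilon/4$. Tolsa's weak-type bound (Theorem~\ref{TolsaTheorem}(3)) then yields
\[
\gamma\Bigl(\Bigl\{\CT_{*}(\nu_\delta)>\tfrac{\epsilon}{4}\Bigr\}\Bigr)\;\le\;\frac{4C_{T}\,|\nu|(\D(\lambda,2\delta))}{\epsilon}.
\]
Dividing by $\delta$ and invoking $\Theta_\nu(\lambda)=0$ once more produces the required decay, and combining the two estimates gives exactly the definition of $\gamma$-continuity at $\lambda$.

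The main obstacle will be the careful bookkeeping at $\lambda$: one must verify that the principal-value piece $\CT(\nu_\delta)(\lambda)$ vanishes in the limit (so that the near-part error can be fed into the maximal Cauchy transform via Tolsa's estimate rather than remaining a fixed nonzero constant), and one must ensure the pointwise resolvent bound for $\nu^\delta$ is genuinely uniform in $z\in\D(\lambda,\delta)$ with constants independent of $\delta$.
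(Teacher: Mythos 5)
The paper does not prove this lemma; it cites \cite[Lemma 3.2]{acy19}. Your near/far decomposition at scale $\delta$, combined with Tolsa's weak-type $(1,1)$ estimate for $\mathcal C_*$ (Theorem \ref{TolsaTheorem}(3)), is the standard argument for exactly this kind of statement and is correct. The two technical points you flagged are both handled properly: the resolvent bound $\left|\frac{1}{w-z}-\frac{1}{w-\lambda}\right|\le \frac{2\delta}{|w-\lambda|^2}$ is indeed uniform over $z\in\D(\lambda,\delta)$ once $|w-\lambda|\ge 2\delta$, and the dyadic tail $\sum_k 2^{-k}\phi(2^{k+2}\delta)$ with $\phi(r)=|\nu|(\D(\lambda,r))/r$ does go to $0$ as $\delta\to 0$ (split the sum at a fixed large $N$, using that $\phi$ is bounded on the tail and small on the head for small $\delta$); for the vanishing of the constant $\mathcal C(\nu_\delta)(\lambda)$, one uses $\mathcal C(\nu^\delta)(\lambda)=\mathcal C_{2\delta}(\nu)(\lambda)+O(|\nu|(\D(\lambda,3\delta))/\delta)\to \mathcal C(\nu)(\lambda)$, where the error term tends to $0$ because $\Theta_\nu(\lambda)=0$ and the principal value exists by hypothesis. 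The conclusion then follows by dividing the Tolsa estimate $\gamma(\{\mathcal C_*(\nu_\delta)>\epsilon/4\})\le 4C_T|\nu|(\D(\lambda,2\delta))/\epsilon$ by $\delta$ and invoking $\Theta_\nu(\lambda)=0$ once more. The only thing worth spelling out for a clean write-up is that for $z\in\D(\lambda,\delta)$ at which $\mathcal C(\nu)(z)$ exists as a principal value, the truncations $\mathcal C_\eta(\nu_\delta)(z)$ agree with $\mathcal C_\eta(\nu)(z)-\mathcal C(\nu^\delta)(z)$ once $\eta<\delta$, so $\mathcal C(\nu_\delta)(z)$ really is a principal value and $\mathcal C_*(\nu_\delta)(z)\ge|\mathcal C(\nu_\delta)(z)|$, which justifies feeding the near-part error into Tolsa's maximal estimate.
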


\begin{lemma} \label{LGCTL2Bounded}
 Let $A:\mathbb R\rightarrow \mathbb R$ be a Lipschitz function with its graph $\Gamma$. 
 Then 
 $N_2(\mathcal H^1 |_\Gamma) < \i,$
 while $N_2(\mathcal H^1 |_\Gamma)$ only depends on $\|A'\|_\infty$.
Consequently, there is a constant $c_\Gamma > 0$ that depends only on $\|A'\|_\infty$ such that for $E\subset \Gamma$,
 \begin{eqnarray}\label{HACEq}
 \ c_\Gamma \mathcal H^1 |_\Gamma (E) \le \gamma (E) \le \mathcal H^1 |_\Gamma (E). 
 \end{eqnarray}	
\end{lemma}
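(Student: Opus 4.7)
The $L^2$ boundedness $N_2(\mathcal H^1|_\Gamma) \le C(\|A'\|_\infty)$ is nothing other than the Coifman--McIntosh--Meyer theorem on the boundedness of the Cauchy integral on a Lipschitz graph; I would simply cite this as a known foundational result (e.g.\ from \cite{Tol14}) rather than reprove it. Note that by parametrising $\Gamma$ over the $x$-axis, the measure $\mathcal H^1|_\Gamma$ is automatically of $2\sqrt{1+\|A'\|_\infty^2}$-linear growth, so only the $L^2$ estimate is non-trivial.

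The upper inequality $\gamma(E) \le \mathcal H^1|_\Gamma(E)$ is the classical Painlev\'e-type estimate $\gamma(F) \le \mathcal H^1(F)$ valid for any compact $F\subset \C$, which I would again cite from \cite{Tol14}.

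For the lower bound I plan to combine Proposition~\ref{GammaPlusThm}(3) with the elementary inequality $\gamma_+(E) \le \gamma(E)$ already noted in Section~2. Fix a compact $E\subset \Gamma$ and set $\eta = \mathcal H^1|_E$; then $N_2(\eta) \le N_2(\mathcal H^1|_\Gamma) =: N$, where $N$ depends only on $\|A'\|_\infty$, so $N_2(\eta/N) \le 1$. Proposition~\ref{GammaPlusThm}(3) applied to $\eta/N$ produces a Borel weight $w$ on $E$ with $0\le w \le 1$, $\eta(E) \le 2\int w\,d\eta$, and
$$\sup_{\epsilon>0}\|\CT_\epsilon(w\eta/N)\|_{L^\infty(\C)} \le C_T.$$
Corollary~\ref{ZeroAC} gives the existence of the principal value $\CT(w\eta/N)$ off a set of zero analytic capacity, and hence by \eqref{AreaGammaEq} also $\area$-a.e.; letting $\epsilon\to 0$ along this set transfers the uniform bound to $\|\CT(w\eta/N)\|_{L^\infty(\C)} \le C_T$. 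Consequently $w\eta/(NC_T)$ is admissible in the definition of $\gamma_+(E)$, so
$$\gamma(E) \ge \gamma_+(E) \ge \frac{1}{NC_T}\int w\,d\eta \ge \frac{1}{2NC_T}\,\mathcal H^1|_\Gamma(E),$$
and $c_\Gamma := (2NC_T)^{-1}$ depends only on $\|A'\|_\infty$, as required.

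The only genuinely hard ingredient in this plan is the Coifman--McIntosh--Meyer $L^2$ theorem itself; once it is quoted as a black box, the rest is a short and essentially mechanical application of the Tolsa-style machinery already set up in Section~2.
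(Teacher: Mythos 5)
Your proof is correct and follows essentially the same route as the paper: cite Coifman--McIntosh--Meyer (or \cite[Theorem 3.11]{Tol14}) for $N_2(\mathcal H^1|_\Gamma)<\infty$ with a bound depending only on $\|A'\|_\infty$, cite the Painlev\'e estimate \cite[Theorem 1.21]{Tol14} for the upper inequality, and use Proposition~\ref{GammaPlusThm}(3) to manufacture an admissible positive measure supported on $E$ for the lower inequality. One small bookkeeping remark: the paper cites Theorem~\ref{TolsaTheorem}(1) alongside Proposition~\ref{GammaPlusThm}(3) to close the lower bound, but as your chain of inequalities makes explicit, only the elementary inequality $\gamma_+(E)\le\gamma(E)$ (recorded in Section~2) is actually used in this direction; Theorem~\ref{TolsaTheorem}(1) supplies the reverse comparison $\gamma\lesssim\gamma_+$, which plays no role here, so your version is the tighter accounting of what is really being invoked.
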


\begin{proof}  See \cite{cmm82} or \cite[Theorem 3.11]{Tol14} for $N_2(\mathcal H^1 |_\Gamma) < \i.$ Therefore, from Proposition \ref{GammaPlusThm} (3) and Theorem \ref{TolsaTheorem} (1), we conclude that $c_\Gamma \mathcal H^1 |_\Gamma (E) \le \gamma (E)$ holds. See \cite[Theorem 1.21]{Tol14} for $\gamma (E) \le \mathcal H^1 |_\Gamma (E).$
\end{proof}

Let $A$ be a Lipschitz function with its graph $\Gamma$. Define
$U_\Gamma = \{Im(z) > A(Re(z))\}$ and $L_\Gamma = \{Im(z) < A(Re(z))\}.$
	Denote $I = \sqrt{-1}.$ We consider the usual complex-valued measure
	 \[
	 \ \dfrac{1}{2\pi I} dz_{\Gamma} = \dfrac{1 + IA'(Re(z))}{2\pi I (1 + A'(Re(z))^2)^{\frac{1}{2}}} d\mathcal H^1 |_{\Gamma} = L(z)d\mathcal H^1 |_{\Gamma}.
	 \]
Notice that
$|L(z)| = \frac{1}{2\pi}.$
Plemelj's formula for Lipschitz graphs can be found as in \cite[Theorem 8.8]{Tol14}.
We now improve Plemelj's formula for an arbitrary measure as the following theorem.

\begin{theorem}\label{GPTheorem1} 
Let $A$ be a Lipschitz function with its graph $\Gamma$ and
let $\nu\in M_0(\C)$. Suppose that $\nu = b\mathcal H^1 |_{\Gamma} + \nu_s$ is the Radon-Nikodym 
decomposition with respect to $\mathcal H^1 |_{\Gamma}$, where $b\in L^1(\mathcal H^1 |_{\Gamma})$ and $\nu_s\perp \mathcal H^1 |_{\Gamma}$. Then there exists a subset $\mathcal Q\subset \mathbb C$ with $\gamma(\mathcal Q) = 0$, such that the following hold:

(a) $\mathcal C(\nu ) (\lambda) = \lim_{\epsilon\rightarrow 0} \mathcal C_{\epsilon}(\nu)(\lambda)$ exists for $\lambda\in \mathbb C\setminus \mathcal Q$.

(b) For $\lambda \in \Gamma \setminus \mathcal Q$ and $\epsilon > 0$, $v^+(\nu, \Gamma)( \lambda) := \mathcal C(\nu ) (\lambda) + \frac{b(\lambda)}{2L(\lambda)},$
\[
 \ \lim_{\delta \rightarrow 0} \dfrac{\gamma(U_\Gamma \cap \D(\lambda, \delta ) \cap \{|\mathcal{C}(\nu )(z) - v^+(\nu, \Gamma)( \lambda)| > \epsilon\})} {\delta}= 0.  
 \]
 
(c) For $\lambda \in \Gamma \setminus \mathcal Q$ and $\epsilon > 0$, $v^-(\nu, \Gamma)( \lambda) := \mathcal C(\nu ) (\lambda) - \frac{b(\lambda)}{2L(\lambda)},$
\[
 \ \lim_{\delta \rightarrow 0} \dfrac{\gamma(L_\Gamma \cap \D(\lambda, \delta ) \cap \{|\mathcal{C}(\nu )(z) - v^-(\nu, \Gamma)( \lambda)| > \epsilon\})} {\delta}= 0.  
 \]

(d) For $\lambda \in \Gamma \setminus \mathcal Q$ and $\epsilon > 0$, $v^0(\nu, \Gamma)( \lambda) := \mathcal C(\nu ) (\lambda),$
\[
 \ \lim_{\delta \rightarrow 0} \dfrac{\gamma(\Gamma \cap \D(\lambda, \delta ) \cap \{|\mathcal{C}(\nu )(z) - v^0(\nu, \Gamma)( \lambda)| > \epsilon\})} {\delta}= 0.  
 \]
\end{theorem}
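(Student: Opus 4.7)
The plan is to split $\nu = \sigma + \nu_s$ with $\sigma := b\mathcal H^1|_\Gamma$ and treat the two pieces separately: a density computation plus Lemma \ref{CauchyTLemma} will handle the singular part $\nu_s,$ while the classical Plemelj formula for Lipschitz graphs (\cite[Theorem 8.8]{Tol14}), combined with a weak-$(1,1)$ regularization based on Theorem \ref{TolsaTheorem}(3), will handle $\sigma$. Part (a) is immediate from Corollary \ref{ZeroAC} applied to $\nu,$ $\sigma,$ and $\nu_s$ simultaneously, yielding a set $\mathcal Q_0$ of zero analytic capacity outside of which all three principal values exist and $\CT(\nu)=\CT(\sigma)+\CT(\nu_s).$

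For the singular part, since $\mathcal H^1|_\Gamma$ is doubling on $\Gamma$ (in fact $\mathcal H^1|_\Gamma(\D(\lambda,\delta))\approx\delta$ for $\lambda\in\Gamma$) and $\nu_s\perp\mathcal H^1|_\Gamma,$ the Besicovitch--Radon--Nikodym differentiation theorem yields $\Theta_{|\nu_s|}(\lambda)=0$ for $\mathcal H^1|_\Gamma$-a.e. $\lambda\in\Gamma,$ which by \eqref{HACEq} also holds $\gamma|_\Gamma$-a.a. Together with the existence of the principal value, Lemma \ref{CauchyTLemma} then shows that $\CT(\nu_s)$ is $\gamma$-continuous at every such $\lambda,$ so its $\gamma$-limits from $U_\Gamma,$ from $L_\Gamma,$ and along $\Gamma$ all equal $\CT(\nu_s)(\lambda)$ outside an exceptional set $\mathcal Q_s$ of zero analytic capacity.

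For $\sigma,$ the classical Plemelj formula on $\Gamma$ gives non-tangential boundary values $\CT(\sigma)(\lambda)\pm b(\lambda)/(2L(\lambda))$ from $U_\Gamma$ and $L_\Gamma,$ and tangential value $\CT(\sigma)(\lambda)$ along $\Gamma,$ at $\mathcal H^1|_\Gamma$-a.e. $\lambda$. To upgrade this pointwise/non-tangential statement to the $\gamma$-density form demanded by (b)--(d), I approximate $b=b_1+b_2$ with $b_1$ continuous on $\Gamma$ (by Lusin's theorem) and $\|b_2\|_{L^1(\mathcal H^1|_\Gamma)}<\eta$. Writing $b_1\mathcal H^1|_\Gamma=(b_1-b_1(\lambda))\mathcal H^1|_\Gamma+b_1(\lambda)\mathcal H^1|_\Gamma,$ the first summand is $\gamma$-continuous via Lemma \ref{CauchyTLemma} (using continuity of $b_1$ to force $\Theta_{(b_1-b_1(\lambda))\mathcal H^1|_\Gamma}(\lambda)=0$), while the second reduces to the jump identity for $\CT(\mathcal H^1|_\Gamma),$ which is $\pm 1/(2L(\lambda))$ on top of the principal value via the identification $dz_\Gamma=2\pi I L\,d\mathcal H^1|_\Gamma$ appearing just before the theorem. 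For the remainder, Theorem \ref{TolsaTheorem}(3) bounds
\[
 \ \gamma(\{\CT_*(b_2\mathcal H^1|_\Gamma)>\alpha\})\le C_T\eta/\alpha,
\]
so with the threshold $\epsilon$ in Definition \ref{GLDef} fixed, letting $\eta\to 0$ annihilates the remainder's contribution to the $\gamma$-density at $\lambda$ outside an exceptional set $\mathcal Q_a$ of zero analytic capacity.

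Setting $\mathcal Q=\mathcal Q_0\cup\mathcal Q_s\cup\mathcal Q_a$ (zero analytic capacity by semiadditivity, Theorem \ref{TolsaTheorem}(2)) and combining the two parts via additivity of $\CT,$ conclusions (b), (c), (d) follow for $\lambda\in\Gamma\setminus\mathcal Q$ with the values $v^+,v^-,v^0$ as stated. The main obstacle is the upgrade just described: passing from the classical pointwise Plemelj statement for $\sigma$ to the $\gamma$-density formulation used here, which is precisely what forces the Lusin-type regularization and the weak-$(1,1)$ bound of Theorem \ref{TolsaTheorem}(3) into the argument; everything else amounts to an application of Lemma \ref{CauchyTLemma} for $\nu_s$ and bookkeeping of $\gamma$-null sets.
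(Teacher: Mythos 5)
Your overall strategy—split $\nu$ into $b\mathcal H^1|_\Gamma$ plus the singular part $\nu_s$, dispose of the pieces with vanishing linear density via Lemma \ref{CauchyTLemma}, and isolate a reference jump identity to carry the $\pm b(\lambda)/(2L(\lambda))$ term—is the same as the paper's. But two of your steps have real gaps, and one of them makes the Lusin detour both unnecessary and incorrect as written.

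First, the paper never invokes Lusin. It fixes $\lambda$ to be a Lebesgue point of $b/L$ (valid $\mathcal H^1|_\Gamma$-a.e., hence $\gamma|_\Gamma$-a.a.\ by \eqref{HACEq}) and writes $\nu = \nu_1 + \frac{b(\lambda)}{2\pi I L(\lambda)} dz_\Gamma$ with $\nu_1 = (\frac{b}{2\pi I L} - \frac{b(\lambda)}{2\pi I L(\lambda)})dz_\Gamma + \nu_s$. The Lebesgue point property forces $\Theta_{\nu_1}(\lambda)=0$ directly, so Lemma \ref{CauchyTLemma} finishes $\nu_1$ in one shot for every direction. Your replacement of this by $b=b_1+b_2$ with $b_1$ continuous is more complicated and the treatment of $b_2$ is wrong: the weak-$(1,1)$ bound $\gamma(\{\mathcal C_*(b_2\mathcal H^1|_\Gamma)>\alpha\})\le C_T\eta/\alpha$ controls the \emph{global} analytic capacity of the level set, but the conclusions (b)--(d) require the \emph{local} quantity $\gamma(\D(\lambda,\delta)\cap\{\cdots\})/\delta$ to vanish, and smallness of the total $\gamma$ of a set does not make its density at a given $\lambda$ small (think of a thin neighborhood of $\Gamma$ near $\lambda$). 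What actually controls the local density is that $\lambda$ is a Lebesgue point of $b_2$ with $b_2(\lambda)=0$, which brings you back to Lemma \ref{CauchyTLemma}---so the Lusin splitting buys nothing beyond what the Lebesgue-point argument on $b$ itself already gives.

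Second, and more importantly, you never prove the jump for $\CT(\mathcal H^1|_\Gamma)$ (equivalently $\CT(dz_\Gamma)$) in the form needed. Appealing to ``the classical Plemelj formula'' gives non-tangential limits $\mathcal H^1|_\Gamma$-a.e., which is weaker than the statement in (b)--(d): the exceptional set in (b)/(c) ranges over all of $U_\Gamma$ (resp.\ $L_\Gamma$), not just a cone, so non-tangential convergence alone does not yield the $\gamma$-density statement. The paper fills this gap by a direct contour/argument-angle computation at differentiable points of $\Gamma$, showing the \emph{full} limit $\lim_{w\to\lambda,\,w\in U_\Gamma}\CT(dz_\Gamma)(w)=\CT(dz_\Gamma)(\lambda)+\pi I$, which is strictly stronger than (b) for $dz_\Gamma$. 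You should include some version of this contour argument. Finally, part (d) is not handled by anything you wrote for the $b(\lambda)$-multiple of $\mathcal H^1|_\Gamma$: along $\Gamma$ one needs to know the restriction $\CT(dz_\Gamma)|_\Gamma$ has $\gamma$-density limits equal to the principal value, which the paper derives from $L^2(\mathcal H^1|_\Gamma)$-boundedness of the Cauchy transform (Lemma \ref{LGCTL2Bounded}) and Lebesgue points of $\CT(dz_\Gamma)$ on $\Gamma$; your proposal is silent on this step.
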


\begin{proof}
As $\nu$ is compactly supported, 
 we will just consider the portion  of the graph of $\Gamma$ that lies in $\{z:~ |Re(z)| < M\}$ for some $M > 0.$ 

From Corollary \ref{ZeroAC}, (a) follows and we assume that $\mathcal C(\nu_s ) (\lambda) = \lim_{\epsilon\rightarrow 0} \mathcal C_{\epsilon}(\nu_s)(\lambda),$  $\mathcal C(b\mathcal H^1 |_{\Gamma}) (\lambda) = \lim_{\epsilon\rightarrow 0} \mathcal C_{\epsilon}(b\mathcal H^1 |_{\Gamma})(\lambda),$ and $\mathcal C(dz_{\Gamma}) (\lambda) = \lim_{\epsilon\rightarrow 0} \mathcal C_{\epsilon}(dz_{\Gamma})(\lambda)$ exist.
 Using Lemma \ref{GammaExist}, we assume that $\Theta_{\nu_s} (\lambda) = 0.$
We also assume that $\lambda$ is a differentiable point for $\Gamma$ and is a Lebesgue point for $\frac{b}{L}$, that is,
\begin{eqnarray}\label{GPTheoremEq1}
 \ \lim_{\delta\rightarrow 0} \dfrac{1}{\delta}\int_{\D(\lambda,\delta)} \left |\frac{b(z)}{L(z)} - \frac{b(\lambda)}{L(\lambda)} \right ||dz_{\Gamma}| = 0 
\end{eqnarray}  
as $\int_{\D(\lambda,\delta)}|dz_{\Gamma}| \approx \delta.$ Set $\nu = \nu_1 + \frac{b(\lambda)}{2\pi IL(\lambda)}dz_{\Gamma},$ where $\nu_1 = (\frac{b}{2\pi IL} - \frac{b(\lambda)}{2\pi IL(\lambda)}) dz_{\Gamma} + \nu_s.$ From \eqref{GPTheoremEq1}, we see that $\Theta_{\nu_1} (\lambda) = 0.$ Applying Lemma \ref{CauchyTLemma} to $\nu_1,$ we conclude that $\mathcal{C}(\nu_1)$ is $\gamma$-continuous at $\lambda.$ Therefore, using Theorem \ref{TolsaTheorem} (2), we just need to prove (b), (c), and (d) for $\nu = dz_{\Gamma}$ below.

(b): Let $\lambda_l$ and $\lambda_r$ be the left and right intersections of $\Gamma$ and $\partial \D(\lambda,\delta_0),$ respectively. Let $B_u$ denote the boundary of $U_\Gamma \cap \D(\lambda, \delta_0 ).$ Then for $\delta < \delta_0$ and $w \in U_\Gamma \cap \D(\lambda, \delta),$
\[
\ \left | \int _{B_u \setminus \Gamma} \dfrac{dz}{z - \lambda} - \int _{B_u \setminus \Gamma} \dfrac{dz}{z - w} \right | \lesssim \dfrac{\delta}{\delta_0 - \delta}.
\]
Hence,
\[
\begin{aligned}
\ \CT(dz_\Gamma)(w) - \int _{|z-\lambda| > \delta_0} \dfrac{dz_\Gamma}{z - w}  & =  \int _{B_u} \dfrac{dz}{z - w} - \int _{B_u \setminus \Gamma} \dfrac{dz}{z - w} \\
\ &\rightarrow 2\pi I - \int _{B_u \setminus \Gamma} \dfrac{dz}{z - \lambda} \\
\ & = 2\pi I - I(arg(\lambda_l - \lambda) - arg(\lambda_r - \lambda))
\end{aligned}
\]
as $\delta\rightarrow 0$ ($w\rightarrow \lambda$).
Since $arg(\lambda_l - \lambda) - arg(\lambda_r - \lambda) \rightarrow \pi$ and $\CT_{\delta_0} (dz_\Gamma)(\lambda) \rightarrow \CT (dz_\Gamma)(\lambda)$ as $\delta_0\rightarrow 0,$ we have 
\[
\ \lim_{\delta\rightarrow 0,~ w \in U_\Gamma \cap \D(\lambda, \delta)} \CT  (dz_\Gamma)(w) = \CT (dz_\Gamma)(\lambda) + \pi I.
\]
This completes the proof of (b). The proof of (c) is the same.

From Lemma \ref{LGCTL2Bounded}, we see  that 
the Cauchy transform of $\mathcal H^1 |_{\Gamma}$ is bounded on $L^2(\mathcal H^1 |_{\Gamma})$. So we get $\mathcal C(dz_{\Gamma})\in L^2(\mathcal H^1 |_{\Gamma})$ as $\Gamma \subset \{z:~ |Re(z)| < M\}$. If $\lambda$ is a Lebesgue point for $\mathcal C(dz_{\Gamma}),$ then we have
 \[
\ \begin{aligned}
 \ & \dfrac{\mathcal H^1 |_{\Gamma} \left ( \left \{|\mathcal C(dz_{\Gamma})  - \mathcal C(dz_{\Gamma}) (\lambda) | > \epsilon \right \}\cap \D(\lambda, \delta)\right)}{\delta} \\
 \ \lesssim &\dfrac{1}{\epsilon\delta}\int _{\D(\lambda, \delta)\cap \Gamma} |\mathcal C(dz_{\Gamma})(z)  - \mathcal C(dz_{\Gamma}) (\lambda) | d\mathcal H^1 \rightarrow 0
 \end{aligned}
 \]
 as $\delta\rightarrow 0,$ which proves (d) by \eqref{HACEq}.
\end{proof}

We point out if $\Gamma$ is a rotated Lipschitz graph (with rotation angle $\beta$ and Lipschitz function $A$) at $\lambda_0$, then
 \begin{eqnarray}\label{VPlusBeta}
 \ v^+(\nu, \Gamma, \beta)( \lambda) := \mathcal C(\nu ) (\lambda) + \frac{e^{-i\beta}b(\lambda)}{2L((\lambda - \lambda_0)e^{-i\beta} + \lambda_0)}.
 \end{eqnarray}
Similarly,
 \begin{eqnarray}\label{VMinusBeta}
 \ v^-(\nu, \Gamma, \beta)( \lambda) := \mathcal C(\nu ) (\lambda) - \frac{e^{-i\beta}b(\lambda)}{2L((\lambda - \lambda_0)e^{-i\beta} + \lambda_0)}.
 \end{eqnarray}
 Hence, $v^+(\nu, \Gamma)( \lambda) = v^+(\nu, \Gamma, 0)( \lambda)$ and $v^-(\nu, \Gamma)( \lambda) = v^-(\nu, \Gamma, 0)( \lambda).$ 
 
 From now on, we fix $\Gamma_n$ with rotation angle $\beta_n,$ $\Gamma,$ and $h$ as in Lemma \ref{GammaExist} for $\mu\in M_0^+(K)$. We may assume that $\mathcal H^1(\Gamma_n \cap \Gamma_m) = 0$ for $n \ne m.$ Otherwise, we just consider the portion $\Gamma_n \setminus \cup_{k=1}^{n-1}\Gamma_k.$ Define
 \[
 \ \mathcal Z_+(\Gamma_n) = \bigcap_{j = 1}^\infty \left \{z\in \Gamma_n \cap \mathcal {ND}(\mu): ~ v^+(g_j\mu, \Gamma_n, \beta_n)(z) = 0 \right \}, 
 \]
 \[
 \ \mathcal Z_-(\Gamma_n) = \bigcap_{j = 1}^\infty \left \{z\in \Gamma_n  \cap \mathcal {ND}(\mu): ~ v^-(g_j\mu, \Gamma_n, \beta_n)(z) = 0 \right \},
 \]
 \[
 \ \mathcal N_+(\Gamma_n) = \bigcup_{j = 1}^\infty \left \{z\in \Gamma_n  \cap  \mathcal {ND}(\mu): ~ v^+(g_j\mu, \Gamma_n, \beta_n)(z) \ne 0 \right \},
 \]
 and 
 \[
 \ \mathcal N_-(\Gamma_n) = \bigcup_{j = 1}^\infty \left \{z\in \Gamma_n  \cap  \mathcal {ND}(\mu): ~ v^-(g_j\mu, \Gamma_n, \beta_n)(z) \ne 0 \right \}.
 \]
 
 \begin{proposition}\label{NZGammaProp}
 For $n\ge 1,$ the following statements are true.
 
 (1) If $g\perp \rtkmu,$ then 
 \[
 \begin{aligned}
 \ &v^+(g\mu, \Gamma_n, \beta_n)(z) = 0,~\mathcal H^1 |_{\mathcal Z_+(\Gamma_n)}-a.a., \\
 \ &v^-(g\mu, \Gamma_n, \beta_n)(z) = 0,~\mathcal H^1 |_{\mathcal Z_-(\Gamma_n)}-a.a.
 \end{aligned}
 \]
 Consequently, $\mathcal Z_+(\Gamma_n)$ and $\mathcal Z_-(\Gamma_n)$	 are independent of the choices of $\{g_j\}$ up to a set of zero analytic capacity.
 
 (2) $\mathcal N_+(\Gamma_n) \cap \mathcal N_-(\Gamma_n) \approx (\Gamma_n  \cap  \mathcal {ND}(\mu)) \setminus (\mathcal Z_+(\Gamma_n) \cup \mathcal Z_-(\Gamma_n)),~\mathcal H^1|_{\Gamma_n}-a.a..$ Consequently, $\mathcal N_+(\Gamma_n) \cap \mathcal N_-(\Gamma_n)$	 is independent of the choices of $\{g_j\}$ up to a set of zero analytic capacity.
 \end{proposition}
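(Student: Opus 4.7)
The plan rests on two ingredients that are already in place: the $L^s(\mu)$-dense family $\Lambda=\{g_j\}$ of annihilators, and the refined Plemelj formula of Theorem \ref{GPTheorem1}, which writes $v^{\pm}(g_j\mu,\Gamma_n,\beta_n)$ as the sum of $\CT(g_j\mu)$ and a half ``jump'' depending on the Radon-Nikodym density $g_j h$ of $g_j\mu$ with respect to $\mathcal H^1|_{\Gamma_n}$. Both assertions will be obtained by passing the defining identities from Theorem \ref{GPTheorem1} through a subsequence of $\Lambda$ that approximates a general $g\perp\rtkmu$.

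For assertion (1), fix $g\perp\rtkmu$ and extract $\{g_{j_k}\}\subset\Lambda$ with $\|g_{j_k}-g\|_{L^s(\mu)}\to 0$ and $g_{j_k}\to g$ $\mu$-a.a. After a further subsequence, Lemma \ref{ConvergeLemma}(2) yields $\CT(g_{j_k}\mu)\to\CT(g\mu)$ $\gamma$-a.a., and since $\Gamma_n$ is a rotated Lipschitz graph Lemma \ref{LGCTL2Bounded} identifies $\gamma$-null and $\mathcal H^1|_{\Gamma_n}$-null subsets of $\Gamma_n$, so the convergence also holds $\mathcal H^1|_{\Gamma_n}$-a.a. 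By Lemma \ref{GammaExist}(2)--(3), on $\Gamma_n\cap\mathcal{ND}(\mu)$ the density $h$ is non-zero and the singular part $\mu_s$ contributes nothing, so $\mu|_{\Gamma_n\cap\mathcal{ND}(\mu)}$ and $\mathcal H^1|_{\Gamma_n\cap\mathcal{ND}(\mu)}$ are mutually absolutely continuous; hence $g_{j_k}\to g$ also $\mathcal H^1|_{\Gamma_n\cap\mathcal{ND}(\mu)}$-a.a. Inserting these two convergences into formulas \eqref{VPlusBeta} and \eqref{VMinusBeta}, which hold termwise $\mathcal H^1|_{\Gamma_n}$-a.a.\ by Theorem \ref{GPTheorem1}, gives $v^{\pm}(g_{j_k}\mu,\Gamma_n,\beta_n)\to v^{\pm}(g\mu,\Gamma_n,\beta_n)$ on $\Gamma_n\cap\mathcal{ND}(\mu)$, $\mathcal H^1$-a.a. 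Since $v^{+}(g_{j_k}\mu,\Gamma_n,\beta_n)=0$ on $\mathcal Z_+(\Gamma_n)$ by definition, the limit vanishes $\mathcal H^1|_{\mathcal Z_+(\Gamma_n)}$-a.a., and the same argument handles $v^-$ on $\mathcal Z_-(\Gamma_n)$. The independence claim is then immediate: applied to a second $L^s(\mu)$-dense family $\{g'_j\}\subset\rtkmu^\perp$, the vanishing just proved forces every $v^{\pm}(g'_j\mu,\Gamma_n,\beta_n)=0$ on $\mathcal Z_{\pm}(\Gamma_n)$ modulo $\mathcal H^1|_{\Gamma_n}$-null (equivalently $\gamma$-null) sets, so $\mathcal Z_{\pm}(\Gamma_n)\subset\mathcal Z'_{\pm}(\Gamma_n)$, and swapping the two families closes the symmetry.

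For assertion (2), restrict to the $\mathcal H^1|_{\Gamma_n}$-full subset on which every $v^{\pm}(g_j\mu,\Gamma_n,\beta_n)$ is defined. There the condition cutting out $\mathcal Z_+(\Gamma_n)$ is the exact negation of the condition cutting out $\mathcal N_+(\Gamma_n)$ within $\Gamma_n\cap\mathcal{ND}(\mu)$, so the two sets partition $\Gamma_n\cap\mathcal{ND}(\mu)$, and the same holds on the ``$-$'' side. Intersecting the two complementations yields the set identity
$$\mathcal N_+(\Gamma_n)\cap\mathcal N_-(\Gamma_n)=(\Gamma_n\cap\mathcal{ND}(\mu))\setminus(\mathcal Z_+(\Gamma_n)\cup\mathcal Z_-(\Gamma_n))$$
up to an $\mathcal H^1|_{\Gamma_n}$-null set, which is the stated $\approx$. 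Independence of $\mathcal N_+(\Gamma_n)\cap\mathcal N_-(\Gamma_n)$ from $\{g_j\}$ is then inherited from the corresponding independence of $\mathcal Z_{\pm}(\Gamma_n)$ proved in (1). The only genuinely delicate step in the whole argument is reconciling the $\gamma$-a.a.\ convergence of Cauchy transforms with the $\mu$-a.a.\ pointwise convergence of the densities into a single $\mathcal H^1|_{\Gamma_n\cap\mathcal{ND}(\mu)}$-a.a.\ statement; once Lemma \ref{LGCTL2Bounded} and Lemma \ref{GammaExist} supply the two necessary upgrades, termwise limit passage in Theorem \ref{GPTheorem1} does the rest.
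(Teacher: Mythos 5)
Your proposal is correct and follows the same route as the paper's (much terser) proof: extract a subsequence $\{g_{j_k}\}$ of the dense family converging to $g$ in $L^s(\mu)$ and $\mu$-a.a., invoke Lemma \ref{ConvergeLemma}(2) for $\gamma$-a.a.\ convergence of Cauchy transforms, and pass the defining identities of Theorem \ref{GPTheorem1} to the limit. The extra steps you spell out — using Lemma \ref{LGCTL2Bounded} to convert $\gamma$-null into $\mathcal H^1|_{\Gamma_n}$-null and Lemma \ref{GammaExist} to get mutual absolute continuity of $\mu$ and $\mathcal H^1$ on $\Gamma_n\cap\mathcal{ND}(\mu)$ — are exactly the details the paper's one-line proof leaves implicit.
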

 
 \begin{proof}
 There exists a subsequence $\{g_{j_k}\}$ such that $\|g_{j_k} - g\|_{L^s(\mu)} \rightarrow 0$ and $g_{j_k}(z) \rightarrow g(z),~ \mu-a.a..$
Applying Lemma \ref{ConvergeLemma} (2), we see (1) and (2) follow.	
 \end{proof}

We now define: 
\begin{eqnarray}\label{FPMSetDef}
 \ \mathcal F_+  =  \bigcup_{n = 1}^\infty \mathcal Z_+(\Gamma_n) \text{ and }\mathcal F_- =  \bigcup_{n = 1}^\infty \mathcal Z_-(\Gamma_n)
 \end{eqnarray}
 and
 \begin{eqnarray}\label{R1SetDef}
 \ \mathcal R_1 =  \bigcup_{n = 1}^\infty \left ( \mathcal N_+(\Gamma_n)\cap \mathcal N_-(\Gamma_n) \right ).
 \end{eqnarray}
 Recall $\mathcal R_0$ and $\mathcal F_0$ are defined as in \eqref{R0SetDef} and \eqref{F0SetDef}, respectively. We define:
 \begin{eqnarray}\label{FRSetDef}
 \ \mathcal F = \mathcal F_0\cup \mathcal F_+ \cup \mathcal F_- \text{ and } \mathcal R =  \mathcal R_0 \cup \mathcal R_1.
 \end{eqnarray}
We call $\ \mathcal F$ and $\ \mathcal R$ the non-removable boundary and removable set for $\rtkmu,$ respectively. As a simple application of Proposition \ref{NF0Prop} (2) and the fact that $\area (\mathcal F_+\cup \mathcal F_-) = 0$, we have the following property.
\begin{eqnarray} \label{acZero}
\ \mathcal C(g\mu)(z) = 0, ~ \area |_{\mathcal F}-a.a. \text{ for }g\perp R^t(K, \mu).
\end{eqnarray}

The corollary below follows from Propositions \ref{NF0Prop} \& \ref{NZGammaProp}.

\begin{corollary}\label{FRUniqueCor}
The sets $\mathcal F_0,~ \mathcal F_+,~ \mathcal F_-,~ \mathcal F,~ \mathcal R_0,~ \mathcal R_1,$ and $\mathcal R$ are independent of the choices of $\{g_j\}$ up to a set of zero analytic capacity.	
\end{corollary}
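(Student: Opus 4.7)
The plan is to reduce the statement for the seven sets to the three intrinsicness results already established, namely that $\mathcal F_0$ (equivalently $\mathcal N$) is $\Lambda$-independent up to $\gamma = 0$ by Proposition \ref{NF0Prop} (3), and that each $\mathcal Z_\pm(\Gamma_n)$ and each $\mathcal N_+(\Gamma_n) \cap \mathcal N_-(\Gamma_n)$ is $\Lambda$-independent up to $\gamma = 0$ by Proposition \ref{NZGammaProp}. Since the Lipschitz graphs $\Gamma_n$ are chosen from $\mu$ alone (via Lemma \ref{GammaExist}) and are fixed once and for all, only the choice of $\Lambda = \{g_j\}$ is being varied. The remaining work is just to push these building-block statements through countable unions and intersections with $\Lambda$-free sets, and the single tool needed for that is the countable semiadditivity of analytic capacity from Theorem \ref{TolsaTheorem} (2).

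Concretely, let $\Lambda = \{g_j\}$ and $\tilde\Lambda = \{\tilde g_j\}$ be two $L^s(\mu)$-dense sequences in $\rtkmu^{\perp}$, and write $\mathcal F_0^\Lambda, \mathcal Z_\pm^\Lambda(\Gamma_n)$, etc.\ for the sets built from $\Lambda$. For a set $S$, let $S\,\Delta\,\tilde S$ denote the symmetric difference. Proposition \ref{NF0Prop} (3) gives $\gamma(\mathcal F_0^\Lambda \,\Delta\, \mathcal F_0^{\tilde\Lambda}) = 0$, so $\mathcal F_0$ is intrinsic. For $\mathcal R_0 = \mathcal N \cap \mathcal{ZD}(\mu)$, the set $\mathcal{ZD}(\mu)$ depends only on $\mu$; combined with the intrinsicness of $\mathcal N$ from Proposition \ref{NF0Prop} (3), this gives $\gamma(\mathcal R_0^\Lambda \,\Delta\, \mathcal R_0^{\tilde\Lambda}) = 0$.

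Next, for $\mathcal F_\pm = \bigcup_n \mathcal Z_\pm(\Gamma_n)$ from \eqref{FPMSetDef}, Proposition \ref{NZGammaProp} (1) gives $\gamma\bigl(\mathcal Z_\pm^\Lambda(\Gamma_n)\,\Delta\, \mathcal Z_\pm^{\tilde\Lambda}(\Gamma_n)\bigr) = 0$ for each $n$. Observing that
\[
\ \mathcal F_\pm^\Lambda \,\Delta\, \mathcal F_\pm^{\tilde\Lambda} \;\subset\; \bigcup_{n=1}^{\infty} \bigl(\mathcal Z_\pm^\Lambda(\Gamma_n)\,\Delta\, \mathcal Z_\pm^{\tilde\Lambda}(\Gamma_n)\bigr),
\]
the countable semiadditivity of $\gamma$ (Theorem \ref{TolsaTheorem} (2)) immediately yields $\gamma(\mathcal F_\pm^\Lambda \,\Delta\, \mathcal F_\pm^{\tilde\Lambda}) = 0$. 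Exactly the same argument, based on Proposition \ref{NZGammaProp} (2) and the definition \eqref{R1SetDef} of $\mathcal R_1 = \bigcup_n (\mathcal N_+(\Gamma_n) \cap \mathcal N_-(\Gamma_n))$, shows that $\gamma(\mathcal R_1^\Lambda \,\Delta\, \mathcal R_1^{\tilde\Lambda}) = 0$.

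Finally, by \eqref{FRSetDef}, $\mathcal F = \mathcal F_0 \cup \mathcal F_+ \cup \mathcal F_-$ and $\mathcal R = \mathcal R_0 \cup \mathcal R_1$ are finite unions of sets we have just shown to be intrinsic mod $\gamma = 0$, so a final application of semiadditivity finishes the proof. There is no real obstacle here; the argument is bookkeeping, and the only non-trivial ingredient is Tolsa's countable semiadditivity, which is precisely what allows the countable unions in \eqref{FPMSetDef} and \eqref{R1SetDef} to inherit intrinsicness from the per-graph statements in Proposition \ref{NZGammaProp}.
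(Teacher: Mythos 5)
Your proposal is correct and takes essentially the same approach as the paper: the paper's proof is the one-line citation of Propositions \ref{NF0Prop} and \ref{NZGammaProp}, and your argument simply spells out the bookkeeping (symmetric differences distributing over countable unions, handled by Theorem \ref{TolsaTheorem} (2)) that makes those citations into a complete proof.
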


In the remaining section, we prove the following decomposition for $\mathcal N$ and
discuss a characterization of $\mathcal F$ and $\mathcal R$ which implies $\mathcal F$ and $\mathcal R$ are independent of the choices of $\{\Gamma_n\}$ up to a set of zero analytic capacity. 

\begin{theorem}\label{NDecompThm}
Let $\mathcal R_0,$ $\mathcal R,$ $\mathcal F_0,$ $\mathcal F_+,$ and $\mathcal F_-$ be deined as above. Then
\[
\ \mathcal N \approx \mathcal R \cup \mathcal F_+ \cup \mathcal F_- \text{ and } \mathcal F_0 \cup \mathcal R_0 \approx \mathcal {ZD}(\mu),~ \gamma-a.a.
\]	
\end{theorem}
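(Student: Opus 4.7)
The plan is to reduce both equivalences in the theorem to a single key claim: $\gamma(\mathcal F_0 \cap \mathcal{ND}(\mu)) = 0$. By Proposition \ref{NF0Prop}(1), $\mathcal N \cup \mathcal F_0 \approx \C$ $\gamma$-a.a.\ and $\mathcal N \cap \mathcal F_0 = \emptyset$, so intersecting with $\mathcal{ZD}(\mu)$ and using $\mathcal R_0 = \mathcal N \cap \mathcal{ZD}(\mu)$ yields $\mathcal{ZD}(\mu) \subset \mathcal F_0 \cup \mathcal R_0$ $\gamma$-a.a.; the reverse inclusion reduces, via $\mathcal R_0 \subset \mathcal{ZD}(\mu)$, to the key claim. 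For the first equivalence, decompose $\mathcal N = \mathcal R_0 \cup \bigcup_n (\mathcal N \cap \Gamma_n \cap \mathcal{ND}(\mu))$ and split each $\Gamma_n \cap \mathcal{ND}(\mu)$ by Proposition \ref{NZGammaProp}(2) into the four intersections of $\mathcal N_\pm(\Gamma_n)$ with $\mathcal Z_\pm(\Gamma_n)$: the piece $\mathcal N_+ \cap \mathcal N_-$ lies in $\mathcal R_1$; the mixed pieces $\mathcal N_+ \cap \mathcal Z_-$ and $\mathcal Z_+ \cap \mathcal N_-$ each have a nonzero one-sided limit (hence nonzero principal value, placing them in $\mathcal N$ as well as in $\mathcal F_-$ and $\mathcal F_+$ respectively); and $\mathcal Z_+(\Gamma_n) \cap \mathcal Z_-(\Gamma_n) \subset \mathcal F_0$. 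The reverse containment $\mathcal R \cup \mathcal F_+ \cup \mathcal F_- \subset \mathcal N$ $\gamma$-a.a.\ then reduces to $\gamma(\mathcal R_1 \cap \mathcal F_0) = 0$ and $\gamma(\mathcal Z_+(\Gamma_n) \cap \mathcal Z_-(\Gamma_n)) = 0$, both of which follow from the key claim.

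For the key claim I apply Lemma \ref{GammaExist} to reduce to showing $\gamma(\mathcal F_0 \cap \Gamma_n \cap \mathcal{ND}(\mu)) = 0$ for each $n$. On this set, $\CT(g_j\mu)(z) = 0$ for every $j$, so Theorem \ref{GPTheorem1} gives $v^+(g_j\mu, \Gamma_n, \beta_n)(z) = -v^-(g_j\mu, \Gamma_n, \beta_n)(z)$, and the Plemelj jump relation $v^+ - v^- = e^{-i\beta_n} g_j h / L(\cdots)$ then forces
\[
v^+(g_j\mu, \Gamma_n, \beta_n)(z) = \frac{e^{-i\beta_n} g_j(z) h(z)}{2 L(\cdots)}.
\]
Since $h(z) \ne 0$ on $\mathcal{ND}(\mu)$ by Lemma \ref{GammaExist}(2), each such $z$ falls into exactly one of two types: the \emph{both-sided zero} set $E_0 := \{z : g_j(z) = 0 \text{ for every } j\} \subset \mathcal Z_+(\Gamma_n) \cap \mathcal Z_-(\Gamma_n)$, or the \emph{coincidental cancellation} set $E_+ := \mathcal F_0 \cap \mathcal N_+(\Gamma_n) \cap \mathcal N_-(\Gamma_n)$.

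The set $E_0$ is $\gamma$-null by pureness of $S_\mu$: if $\mu(E_0) > 0$, then for any $g \in \rtkmu^\perp$ a $\mu$-a.e.\ convergent subsequence of $g_{j_k} \to g$ in $L^s(\mu)$ forces $g|_{E_0} = 0$, so by Hahn-Banach (biduality in $L^t$) the space $L^t(\mu|_{E_0})$ extended by zero lies in $\rtkmu$, giving a nontrivial direct $L^t$ summand and contradicting pureness. Since $\mu \sim h \mathcal H^1|_{\Gamma_n}$ on $\mathcal{ND}(\mu)$ with $h \ne 0$, we obtain $\mathcal H^1(E_0) = 0$, and Lemma \ref{LGCTL2Bounded} gives $\gamma(E_0) = 0$.

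The hard part is showing $\gamma(E_+) = 0$. On $E_+$ the principal-value CT of every annihilator vanishes while the two one-sided non-tangential boundary values are nonzero and opposite, so the direct pureness argument used for $E_0$ fails: no annihilator is forced to vanish on $E_+$. I would instead combine the identity $\CT(fg\mu)(z) = \rho(f)(z) \CT(g\mu)(z) = 0$ on $\mathcal F_0$, valid for every $f \in R^{t,\infty}(K,\mu)$ and $g \in \rtkmu^\perp$ by Lemma \ref{RhoExistLemma}, with Tolsa's semiadditivity (Theorem \ref{TolsaTheorem}(2)) and Proposition \ref{GammaPlusThm} to localize on $\gamma$-positive subsets of $E_+$; one then extracts a positive measure of bounded Cauchy transform supported on such a subset and uses it, together with the algebra identity above, to produce a nontrivial idempotent in the commutant $R^{t,\infty}(K,\mu)$, contradicting pureness of $S_\mu$. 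This extraction, which routes the pureness argument through the algebra structure of the commutant rather than the annihilator structure, is the crux of the proof.
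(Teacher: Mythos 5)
Your top-level reduction to the key claim $\gamma(\mathcal F_0\cap\mathcal{ND}(\mu))=0$, the further reduction to single graphs $\Gamma_n$, and the dichotomy between the \emph{both-sided zero} set $E_0=\mathcal Z_+(\Gamma_n)\cap\mathcal Z_-(\Gamma_n)$ and the \emph{coincidental cancellation} set $E_+=\mathcal F_0\cap\mathcal N_+(\Gamma_n)\cap\mathcal N_-(\Gamma_n)$ are all correct and essentially match the structure of the paper's argument. Your handling of $E_0$ (pureness plus Lemma~\ref{LGCTL2Bounded} to pass from $\mathcal H^1$-null to $\gamma$-null) is also what the paper does in its first two lines.

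The gap is in the treatment of $E_+$. First, you appear not to have noticed that Lemma~\ref{NPMLemma}, stated and proved immediately before the theorem, already does exactly this job: it shows $(\mathcal N_+(\Gamma_n)\cup\mathcal N_-(\Gamma_n))\setminus\mathcal N_0(\Gamma_n)$ is $\mathcal H^1|_{\Gamma_n}$-null, and since on $\mathcal F_0$ one has $v^+=-v^-$, this set is precisely $E_+$. Invoking it, together with $\gamma(E_0)=0$, closes the proof in a few lines, which is what the paper does. Second, the sketch you give instead contains a genuine error: producing a nontrivial idempotent $\chi_\Delta$ in the commutant $R^{t,\i}(K,\mu)$ does \emph{not} contradict pureness of $S_\mu$. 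Pureness only rules out direct $L^t$ summands, not reducing decompositions $R^t=R^t(K,\mu_\Delta)\oplus R^t(K,\mu_{\Delta^c})$; indeed Theorem~\ref{DecompTheoremRT} produces exactly such idempotents for pure $S_\mu$. The actual contradiction in Lemma~\ref{NPMLemma} is different and does not pass through pureness at that stage: after building $\eta=w\mathcal H^1|_E$ and $f\in R^{t,\i}(K,\mu)$ from Lemma~\ref{BBFunctLemma} and observing $\CT(g_j\mu)=\CT(fg_j\mu)=0$ on $E$ ($\eta$-a.a., via Proposition~\ref{NF0Prop}(2)), one applies Theorem~\ref{GPTheorem1} to both sides of $\CT(\eta)\,\CT(g_j\mu)=\CT(fg_j\mu)$ from above and below $\Gamma_1$. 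Combining the two Plemelj identities and subtracting yields $\bigl(v^+(w\mathcal H^1)-v^-(w\mathcal H^1)\bigr)g_jh=0$ $\eta$-a.a.; since that jump equals $w/L\ne 0$, this forces $g_jh=0$, hence $v^\pm(g_j\mu)=0$ $\eta$-a.a.\ for every $j$, directly contradicting $E\subset\mathcal N_+\cup\mathcal N_-$. Without this Plemelj-subtraction step, or without Lemma~\ref{NPMLemma}, the $E_+$ case is not actually disposed of.
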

Before proving Theorem \ref{NDecompThm}, we need a couple of lemmas. For $\nu\in M_0(\C),$ the maximal function of $\nu$ is defined by
 \begin{eqnarray}\label{MaxFunctDef}
 \ \mathcal M_\nu(z) = \sup _{\epsilon > 0}\dfrac{|\nu|(\D(z,\epsilon))}{\epsilon}.
 \end{eqnarray}
 
 Combining \cite[Theorem 2.5]{Tol14}, Proposition \ref{GammaPlusThm}, and Theorem \ref{TolsaTheorem} (1), we see that there exists an absolute constant $C_T$ (we use the same constant as in Theorem \ref{TolsaTheorem}) such that for $a > 0,$
\begin{eqnarray} \label{MaxFunctEq}
\ \gamma\{\lambda:~ \mathcal M_\nu (\lambda) > a\} \le \frac{C_T}{a}\|\nu\|.\ 
\end{eqnarray}

\begin{lemma} \label{CTMaxFunctFinite}
Let $\{\nu_j\} \subset M_0(\C).$ Then for $\epsilon > 0$, there exists a Borel subset $F$ such that $\gamma (F^c) < \epsilon$ and $\mathcal C_*(\nu_j)(z),~\mathcal M_{\nu_j} (z) \le M_j < \infty$ for $z \in F$.
\end{lemma}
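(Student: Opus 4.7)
\textbf{Proposal for Lemma \ref{CTMaxFunctFinite}.} The plan is to combine the two weak-type capacity estimates already available, namely the bound on $\mathcal C_*$ from Theorem \ref{TolsaTheorem} (3) and the maximal function bound \eqref{MaxFunctEq}, with the countable semiadditivity of $\gamma$ from Theorem \ref{TolsaTheorem} (2). The exceptional set where either $\mathcal C_*(\nu_j)$ or $\mathcal M_{\nu_j}$ is large will be split into countably many pieces whose capacities are summed geometrically so that the total is controlled.

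Concretely, for each $j\ge 1$ and each threshold $M>0$, I would apply Theorem \ref{TolsaTheorem} (3) and \eqref{MaxFunctEq} to obtain
\[
\gamma(\{\mathcal C_*(\nu_j) > M\}) \le \frac{C_T\|\nu_j\|}{M}, \qquad \gamma(\{\mathcal M_{\nu_j} > M\}) \le \frac{C_T\|\nu_j\|}{M}.
\]
Setting $E_j(M) := \{\mathcal C_*(\nu_j) > M\}\cup\{\mathcal M_{\nu_j} > M\}$ and applying (finite) semiadditivity yields $\gamma(E_j(M)) \le 2C_T^2\|\nu_j\|/M$. Given $\epsilon>0$ as in the lemma, I would then choose $M_j$ large enough so that $\gamma(E_j(M_j)) \le \epsilon/(C_T\, 2^{j+1})$ and apply the countable version of semiadditivity to conclude
\[
\gamma\Big(\bigcup_{j\ge 1} E_j(M_j)\Big) \;\le\; C_T \sum_{j\ge 1} \frac{\epsilon}{C_T\, 2^{j+1}} \;=\; \frac{\epsilon}{2} \;<\; \epsilon.
\]
Taking $F := \mathbb C \setminus \bigcup_{j\ge 1} E_j(M_j)$, the required bounds $\mathcal C_*(\nu_j)(z), \mathcal M_{\nu_j}(z) \le M_j$ on $F$ hold by construction.

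The only mildly nontrivial issue is that $F$ must be Borel. For this I would note that $(z,\epsilon)\mapsto \mathcal C_\epsilon(\nu_j)(z)$ and $(z,\epsilon)\mapsto |\nu_j|(\mathbb D(z,\epsilon))/\epsilon$ depend continuously on $\epsilon$ off a countable exceptional set (the atoms of $|\nu_j|$ and the boundary-mass radii), so both $\mathcal C_*(\nu_j)$ and $\mathcal M_{\nu_j}$ can be realized as countable suprema of Borel functions (for instance, over rational $\epsilon>0$), hence are Borel measurable and each $E_j(M_j)$ is Borel. Beyond this routine measurability check there is no real obstacle: the lemma is essentially a bookkeeping exercise distributing the weak-type bounds across the countable index set $\{\nu_j\}$, so I do not foresee any technical difficulty worth highlighting.
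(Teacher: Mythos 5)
Your proposal is correct and follows essentially the same route as the paper: both apply Theorem \ref{TolsaTheorem} (3) and \eqref{MaxFunctEq} to choose thresholds $M_j$ making each exceptional set have capacity $\lesssim \epsilon/(C_T 2^j)$, and then invoke countable semiadditivity (Theorem \ref{TolsaTheorem} (2)) to bound $\gamma(F^c) < \epsilon$. The only addition is your remark on Borel measurability, which the paper leaves implicit.
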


\begin{proof}
Let $A_j = \{\mathcal C_*(\nu_j)(z) \le M_j\}$ and $B_j = \{\mathcal M_{\nu_j} (z) \le M_j\}.$ By Theorem \ref{TolsaTheorem} (3) and \eqref{MaxFunctEq}, we can select $M_j>0$ so that $\gamma(A_j^c) < \frac{\epsilon}{2^{j+2}C_T}$ and $\gamma(B_j^c) < \frac{\epsilon}{2^{j+2}C_T}.$ Set $F = \cap_{j=1}^\infty (A_j\cap B_j)$. Then applying Theorem \ref{TolsaTheorem} (2), we get
\[
 \ \gamma (F^c) \le C_T \sum_{j=1}^\infty (\gamma(A_j^c) + \gamma(B_j^c)) < \epsilon.
 \]
\end{proof}

The following lemma is a simple application of Theorem \ref{TolsaTheorem} and Proposition \ref{GammaPlusThm} (also see \cite[Corollary 2.4]{y23}).

\begin{lemma}\label{ZeroACEta}
Let $\eta\in M_0^+(\C)$ such that $\|\mathcal C (\eta)\| \le 1$. If $F$ is a compact subset and $\gamma(F) = 0$, then $\eta(F) = 0$.
\end{lemma}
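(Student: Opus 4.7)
The plan is proof by contradiction: assume $\eta(F) > 0$ and construct a non-trivial positive measure $\sigma \in M_0^+(F)$ with $\|\mathcal{C}(\sigma)\|_{L^\infty(\C)} \le 1$. Then by the definition of $\gamma_+$, $\gamma_+(F) \ge \|\sigma\| > 0$, which contradicts $\gamma_+(F) \le \gamma(F) = 0$ (the inequality recorded just before Theorem~\ref{TolsaTheorem}).

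To build $\sigma$, first invoke Proposition~\ref{GammaPlusThm}(1): the hypothesis $\|\mathcal{C}(\eta)\|_{L^\infty} \le 1$ forces $\eta$ to be of $1$-linear growth with $\sup_{\epsilon>0}\|\mathcal{C}_\epsilon(\eta)\|_{L^\infty(\C)} \le C_T$. After rescaling $\eta$ by $1/C_T$ (which does not affect the conclusion), both quantities are $\le 1$. Then apply Proposition~\ref{GammaPlusThm}(2) with the ambient compact set taken to be $\mathrm{spt}(\eta)$: this produces a subset $A \subseteq \mathrm{spt}(\eta)$ with $\eta(\mathrm{spt}(\eta)) \le 2\eta(A)$ and $N_2(\eta|_A) \le C_T$. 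Since $L^2$-boundedness of the truncated Cauchy operator descends to further restrictions (extend test functions by zero), one obtains $N_2(\eta|_{A \cap F}) \le C_T$.

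Next, apply Proposition~\ref{GammaPlusThm}(3) to the normalized measure $\eta|_{A \cap F}/C_T$: this yields a weight $w$ supported on $A \cap F \subseteq F$ with $0 \le w \le 1$, $\eta(A \cap F) \le 2\int w\, d\eta$, and $\sup_\epsilon \|\mathcal{C}_\epsilon(w\eta|_{A\cap F})\|_{L^\infty(\C)} \le C_T^{2}$. The principal-value Cauchy transform inherits this pointwise bound wherever defined, so setting $\sigma := w\eta|_{A \cap F}/C_T^{2}$ gives $\sigma \in M_0^+(F)$ with $\|\mathcal{C}(\sigma)\|_{L^\infty(\C)} \le 1$ and $\|\sigma\| \ge \eta(A \cap F)/(2 C_T^{2})$.

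The main obstacle is ensuring $\eta(A \cap F) > 0$. Proposition~\ref{GammaPlusThm}(2) only guarantees $\eta(A) \ge \|\eta\|/2$, so the conclusion is immediate when $\eta(F) > \|\eta\|/2$. Otherwise one iterates the construction on a residual measure, as in \cite[Corollary~2.4]{y23}: since Proposition~\ref{GammaPlusThm}(2) captures at least half of the available ambient mass at each step while $\eta(F)$ remains fixed, finitely many iterations must yield a selected subset that charges $F$. The delicate point is to propagate the $L^\infty$-bound on $\mathcal{C}_\epsilon$ to each residual measure, which is handled by subtracting off the weighted pieces produced by Proposition~\ref{GammaPlusThm}(3) at each stage and absorbing constants. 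Once $\eta(A \cap F) > 0$ is achieved, the construction above delivers $\sigma$ and completes the contradiction.
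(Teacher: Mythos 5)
Your proof is correct and follows exactly the route the paper indicates (a chain through Proposition~\ref{GammaPlusThm}(1)$\to$(2)$\to$(3) combined with Tolsa's $\gamma\approx\gamma_+$ and the $\gamma_+$ lower bound), which is also the route taken in the cited \cite[Corollary~2.4]{y23} that the paper defers to rather than writing out. Two small remarks: the constants are off by a power of $C_T$ (after rescaling $\eta$ by $1/C_T$, Proposition~\ref{GammaPlusThm}(2) gives $N_2(\eta|_A)\le C_T^2$, not $C_T$, so your final bound should read $C_T^3$ rather than $C_T^2$ — harmless for the argument), and the iteration step is stated loosely; the precise termination argument is that whenever $\eta_n(A_n\cap F)=0$ the weighted piece $w_n\eta_n|_{A_n}$ charges none of $F$, so $\eta_{n+1}(F)=\eta_n(F)=\eta(F)$, while $\|\eta_{n+1}\|\le\frac34\|\eta_n\|$ since $\int w_n\,d\eta_n|_{A_n}\ge\frac14\|\eta_n\|$; as $\eta_n(F)\le\|\eta_n\|$, this cannot persist for all $n$, so some $A_n$ must charge $F$.
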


 \begin{lemma} \label{BBFunctLemma}
Suppose that $\{u_n\}\subset L^1(\mu)$ and $E$ is a compact subset with $\gamma(E) > 0$. Then there exists $\eta \in M_0^+(E)$ satisfying:

(1) $\eta$ is of $1$-linear growth, $\|\mathcal C_{\epsilon}(\eta)\|_{L^\infty (\mathbb C)}\le 1$ for all $\epsilon > 0,$ and 
$\gamma(E) \lesssim \|\eta\|$;

(2) $\mathcal C_*(u_n\mu ),~ \mathcal M_{u_n\mu} \in L^\infty(\eta)$;

(3) there exists a subsequence $f_k(z) = \mathcal C_{\epsilon_k}(\eta)(z)$ such that $f_k$ converges to $f\in L^\infty(\mu)$ in weak-star topology and $f_k(\lambda) $ converges to $f(\lambda) = \mathcal C(\eta)(\lambda)$ uniformly on any compact subset of $\C\setminus \text{spt}\eta$ as $\epsilon_k\rightarrow 0$.
Moreover for $n \ge 1,$ 
\begin{eqnarray}\label{BBFunctLemmaEq1}
 \ \int f(z) u_n(z)d\mu (z) = - \int \mathcal C(u_n\mu) (z) d\eta (z),
 \end{eqnarray}
and for $\lambda\in \C\setminus \text{spt}\eta$,
 \begin{eqnarray}\label{BBFunctLemmaEq2}
 \ \int \dfrac{f(z) - f(\lambda)}{z - \lambda} u_n(z)d\mu (z) = - \int \mathcal C(u_n\mu) (z) \dfrac{d\eta (z)}{z - \lambda}.
 \end{eqnarray} 
\end{lemma}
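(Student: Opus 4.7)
The plan is to construct $\eta$ as a Tolsa-dual measure (i.e.\ a $\gamma_+$-extremal measure) for a suitable subset of $E$, chosen so that the maximal operators $\mathcal C_*(u_n\mu)$ and $\mathcal M_{u_n\mu}$ are controlled on its support. The three main ingredients are Tolsa's equivalence $\gamma\approx\gamma_+$ combined with semiadditivity (Theorem \ref{TolsaTheorem}), the boundedness/linear-growth improvement in Proposition \ref{GammaPlusThm}~(1), and Lemma \ref{CTMaxFunctFinite}. Formulas \eqref{BBFunctLemmaEq1}--\eqref{BBFunctLemmaEq2} will come out of a standard Fubini duality identity combined with weak-star convergence and dominated convergence.

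First I would shrink $E$ to a subset where the maximal functions are bounded. Apply Lemma \ref{CTMaxFunctFinite} to the sequence $\{u_n\mu\}$ with a small parameter (for instance $\epsilon = \gamma(E)/(2C_T)$) to obtain a Borel set $F$ with $\gamma(F^c) < \gamma(E)/(2C_T)$ and $\mathcal C_*(u_n\mu),\ \mathcal M_{u_n\mu} \le M_n < \infty$ on $F$ for every $n$. Semiadditivity (Theorem \ref{TolsaTheorem}~(2)) then gives $\gamma(E)\le C_T(\gamma(E\cap F)+\gamma(F^c))$, hence $\gamma(E\cap F)\gtrsim \gamma(E)$. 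Applying Theorem \ref{TolsaTheorem}~(1) with the definition of $\gamma_+$ to $E\cap F$ produces $\eta' \in M_0^+(E\cap F)$ with $\|\mathcal C(\eta')\|_{L^\infty(\C)}\le 1$ and $\|\eta'\|\gtrsim \gamma(E\cap F)\gtrsim \gamma(E)$. Proposition \ref{GammaPlusThm}~(1) then upgrades this to $1$-linear growth and $\sup_\epsilon \|\mathcal C_\epsilon(\eta')\|_{L^\infty(\C)}\le C_T$; setting $\eta := \eta'/C_T$ delivers (1), and (2) follows because $\text{spt}(\eta)\subset F$.

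For (3), uniform boundedness $\|\mathcal C_\epsilon(\eta)\|_{L^\infty(\mu)}\le 1$ lets Banach--Alaoglu extract a weak-star convergent subsequence $f_k := \mathcal C_{\epsilon_k}(\eta)\to f$ in $L^\infty(\mu)$ with $\epsilon_k\downarrow 0$. For $\lambda$ in a compact subset $K_0 \subset \C\setminus \text{spt}(\eta)$, once $\epsilon_k < \operatorname{dist}(K_0,\text{spt}(\eta))$ the truncation becomes vacuous so $f_k(\lambda) = \mathcal C(\eta)(\lambda)$ identically, giving the claimed uniform convergence, and I redefine $f(\lambda):=\mathcal C(\eta)(\lambda)$ there. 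For \eqref{BBFunctLemmaEq1}, the integrand of the Cauchy truncation is bounded by $|u_n(z)|/\epsilon_k$ and hence integrable on $\mu\times\eta$, so Fubini yields
\begin{equation*}
\int f_k(z) u_n(z)\, d\mu(z) \;=\; -\int \mathcal C_{\epsilon_k}(u_n\mu)(w)\, d\eta(w).
\end{equation*}
Passing $k\to\infty$: the left-hand side converges by weak-star convergence paired with $u_n \in L^1(\mu)$; on the right, $|\mathcal C_{\epsilon_k}(u_n\mu)(w)| \le \mathcal C_*(u_n\mu)(w)\le M_n$ for $\eta$-a.e.\ $w$ (since $\text{spt}(\eta)\subset F$), while by Corollary \ref{ZeroAC} the principal value $\mathcal C(u_n\mu)$ exists $\gamma$-a.e., hence $\eta$-a.e.\ by Lemma \ref{ZeroACEta} applied to $\eta$; dominated convergence then closes the identity. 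The derivation of \eqref{BBFunctLemmaEq2} is parallel, the extra factor $1/(z-\lambda)$ being uniformly bounded by $1/\operatorname{dist}(\lambda,\text{spt}(\eta))$ for fixed $\lambda\notin\text{spt}(\eta)$.

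The main point requiring care is the bookkeeping of $C_T$-factors: one loses a constant in semiadditivity when passing to $E\cap F$, another when invoking $\gamma\le C_T\gamma_+$ on $E\cap F$, and a third when normalizing $\eta'\mapsto \eta'/C_T$. All three losses must be absorbed into a single implicit constant so that the conclusion $\|\eta\|\gtrsim \gamma(E)$ of (1) still survives. Beyond this accounting the argument is routine, since every analytic ingredient (Fubini on a truncated kernel, weak-star extraction, dominated convergence via a bound living in $L^\infty(\eta)$) is standard once the support of $\eta$ has been placed inside $F$.
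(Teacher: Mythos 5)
Parts (1), (2), and the proof of \eqref{BBFunctLemmaEq1} match the paper's argument essentially step for step: shrink $E$ via Lemma~\ref{CTMaxFunctFinite}, apply $\gamma\approx\gamma_+$ plus Proposition~\ref{GammaPlusThm}~(1) to produce $\eta$, run the truncated Fubini identity
$\int \mathcal C_\epsilon(\eta)\,u_n\,d\mu = -\int \mathcal C_\epsilon(u_n\mu)\,d\eta$,
and close with weak-star extraction on the left and dominated convergence on the right. No issues there.

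The treatment of \eqref{BBFunctLemmaEq2}, however, has a genuine gap. You justify the ``parallel'' step by saying the extra factor $1/(z-\lambda)$ is uniformly bounded by $1/\operatorname{dist}(\lambda,\text{spt}\eta)$. That bound is correct \emph{only when $z$ ranges over $\text{spt}\eta$}, which controls the $\eta$-integral on the right side of \eqref{BBFunctLemmaEq2}. The left-hand side is a $\mu$-integral, where $z$ ranges over $\text{spt}\mu$, and nothing prevents $\lambda$ from lying in (or arbitrarily close to) $\text{spt}\mu$; there $|z-\lambda|^{-1}$ is unbounded and $\frac{f(z)-f(\lambda)}{z-\lambda}$ is not visibly an $L^\infty(\mu)$ or even $u_n\mu$-integrable function. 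The paper handles this by showing that the specific combination
$e_\epsilon(z)=\frac{\mathcal C_\epsilon(\eta)(z)-f(\lambda)}{z-\lambda}$
\emph{is} uniformly bounded on all of $\mathbb C$ (a two-case estimate near and far from $\lambda$, using the $1$-linear growth of $\eta$), then extracts a \emph{further} weak-star convergent subsequence $e_{\epsilon_{k_j}}\to e$, proves $(z-\lambda)e(z)+f(\lambda)=f(z)$ weak-star so that $e(z)=\frac{f(z)-f(\lambda)}{z-\lambda}$ $\mu$-a.e., and separately estimates the discrepancy between $e_{\epsilon_{k_j}}$ and $\mathcal C_{\epsilon_{k_j}}\bigl(\frac{d\eta(s)}{s-\lambda}\bigr)$ caused by the mismatch between the truncations $|s-z|>\epsilon$ and $|s-\lambda|>\epsilon$ (the paper's inequality \eqref{lemmaBasicEq32}). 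None of this is ``parallel'' to the \eqref{BBFunctLemmaEq1} case, and you would need to supply all of it for the proof to go through.
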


\begin{proof}
From Lemma \ref{CTMaxFunctFinite}, we find $E_1\subset E$ such that $\gamma(E\setminus E_1) < \frac{\gamma(E)}{2C_T}$ and $\mathcal C_*(u_n\mu )(z), ~\mathcal M_{u_n\mu}(z) \le M_n < \infty$ for $z\in E_1$. Using Theorem \ref{TolsaTheorem} (2), we get
 $\gamma(E_1) \ge \frac{1}{C_T}\gamma(E) - \gamma(E\setminus E_1) \ge \frac{1}{2C_T}\gamma(E).$
Using Theorem \ref{TolsaTheorem} (1) and Proposition \ref{GammaPlusThm} (1), we infer 
that there is $\eta\in M_0^+(E_1)$ satisfying (1). So (2) holds. Clearly,
 \begin{eqnarray}\label{lemmaBasicEq3}
 \  \int \mathcal C_\epsilon(\eta)(z) u_nd\mu  = - \int \mathcal C_\epsilon(u_n\mu)(z) d\eta
 \end{eqnarray}
for $n \ge 1$. We can choose a sequence $f_k(\lambda) = \mathcal C_{\epsilon_k}(\eta)(\lambda)$ that converges to $f$ in $L^\infty(\mu)$ weak-star topology and $f_k(\lambda)$ uniformly tends to $f(\lambda)$ on any compact subset of $\C\setminus \text{spt}\eta$. On the other hand, $|\mathcal C_{\epsilon_k}(u_n\mu)(z) | \le M _n,~ \eta-a.a.$ and by Corollary \ref{ZeroAC} and Lemma \ref{ZeroACEta}, $\lim_{k\rightarrow \infty} \mathcal C_{\epsilon_k}(u_n\mu)(z)  = \mathcal C(u_n\mu)(z) ,~ \eta -a.a.$. We apply the Lebesgue dominated convergence theorem to the right hand side of \eqref{lemmaBasicEq3} and get \eqref{BBFunctLemmaEq1} for $n \ge 1$. For \eqref{BBFunctLemmaEq2}, let $\lambda\notin \text{spt}\eta$ and $d = \text{dist}(\lambda, \text{spt}\eta)$,
for $z\in \D(\lambda, \frac{d}{2})$ and $\epsilon < \frac{d}{2}$, we have
 \[
 \ \left |\dfrac{\mathcal C_\epsilon (\eta)(z) - f(\lambda)}{z - \lambda} \right |\le \left |\mathcal C_\epsilon  \left (\dfrac{\eta (s)}{s - \lambda} \right ) (z) \right | \le 
\dfrac{2}{d^2}\|\eta\|. 
 \]
For $z\notin \D(\lambda, \frac{d}{2})$ and $\epsilon < \frac{d}{2}$,
 \[
 \ \left |\dfrac{\mathcal C_\epsilon (\eta)(z) - f(\lambda)}{z - \lambda} \right | \le \dfrac{4}{d}.
 \]
Thus, we replace the above proof for the measure $\frac{\eta (s)}{s - \lambda}$. In fact, we choose a subsequence  $\{\mathcal C_{\epsilon_{k_j}} (\eta)\}$ such that $e_{k_j}(z) = \frac{\mathcal C_{\epsilon_{k_j}} (\eta)(z) - f(\lambda)}{z - \lambda}$ converges to $e(z)$ in weak-star topology. Clearly, $(z-\lambda)e_{k_j}(z)  + f(\lambda) = \mathcal C_{\epsilon_{k_j}} (\eta)(z)$ converges to $(z-\lambda)e(z)  + f(\lambda) = f(z)$ in weak-star topology.  
On the other hand, \eqref{lemmaBasicEq3} becomes
 \begin{eqnarray}\label{lemmaBasicEq31}
 \  \int \mathcal C_{\epsilon_{k_j}}(\dfrac{\eta(s)}{s-\lambda})(z) u_nd\mu  = - \int \mathcal C_{\epsilon_{k_j}}(u_n\mu)(z) \dfrac{d\eta(z)}{z-\lambda}
 \end{eqnarray}
and for $\epsilon_{k_j} < \frac{d}{2}$, we have
 \begin{eqnarray}\label{lemmaBasicEq32}
 \left | \mathcal C_{\epsilon_{k_j}}(\dfrac{\eta(s)}{s-\lambda})(z) - e_{k_j}(z) \right |
\ \le \begin{cases}0, & z\in \D(\lambda, \frac{d}{2}), \\ \dfrac{2}{d^2} \eta(\D(z, \epsilon_{k_j})), & z\notin \D(\lambda, \frac{d}{2}), \end{cases}
\end{eqnarray}
which goes to zero as $\epsilon_{k_j} \rightarrow 0$. Combining \eqref{lemmaBasicEq31}, \eqref{lemmaBasicEq32}, and the Lebesgue dominated convergence theorem, we prove the equation \eqref{BBFunctLemmaEq2}. (3) is proved.    
\end{proof}

For $n \ge 1,$ we define
 \[
 \ \mathcal N_0(\Gamma_n) = \bigcup_{j = 1}^\infty \{z\in \Gamma_n \cap \mathcal {ND}(\mu): ~ \CT (g_j\mu)(z) \ne 0 \}.
 \]
 
 \begin{lemma} \label{NPMLemma}
 For $n \ge 1,$ we have
 \[
 \ \mathcal N_+(\Gamma_n) \cup \mathcal N_-(\Gamma_n) \approx \mathcal N_0(\Gamma_n),~\mathcal H^1|_{\Gamma_n}-a.a..
 \]	
 \end{lemma}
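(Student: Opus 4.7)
The inclusion $\mathcal N_0(\Gamma_n)\subset\mathcal N_+(\Gamma_n)\cup\mathcal N_-(\Gamma_n)$ is immediate: by \eqref{VPlusBeta}--\eqref{VMinusBeta} one has $v^+(g_j\mu,\Gamma_n,\beta_n)+v^-(g_j\mu,\Gamma_n,\beta_n)=2\CT(g_j\mu)$, so whenever $\CT(g_j\mu)(z)\ne 0$ at least one of $v^\pm(g_j\mu,\Gamma_n,\beta_n)(z)$ must be nonzero. For the opposite inclusion the plan is to argue by contradiction that the bad set
\[
 Z:=(\Gamma_n\cap\mathcal{ND}(\mu))\cap\bigcap_{k\ge 1}\{\CT(g_k\mu)=0\}\cap\bigcup_{j\ge 1}\bigl(\{v^+(g_j\mu,\Gamma_n,\beta_n)\ne 0\}\cup\{v^-(g_j\mu,\Gamma_n,\beta_n)\ne 0\}\bigr)
\]
has zero $\mathcal H^1|_{\Gamma_n}$-measure. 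On $Z$, \eqref{VPlusBeta}--\eqref{VMinusBeta} combined with $\CT(g_k\mu)=0$ force the $\mathcal H^1|_{\Gamma_n}$-density $b_j$ of $g_j\mu$ to satisfy $b_j(z)\ne 0$ for some $j$.

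The next step is to assume $\mathcal H^1(Z)>0$ (hence $\gamma(Z)>0$ by \eqref{HACEq}) and to manufacture a function in $R^{t,\infty}(K,\mu)$ leading to a contradiction. I would pick a compact $E\subset Z$ with $\gamma(E)>0$ and apply Lemma \ref{BBFunctLemma} to $\{g_j\}$, obtaining $\eta\in M_0^+(E)$ and $f\in L^\infty(\mu)$ with $\|\eta\|\gtrsim\gamma(E)$, $\|\CT_\epsilon(\eta)\|_\infty\le 1$, $f=\CT(\eta)$ pointwise off $\text{spt}(\eta)$, and $\CT_{\epsilon_k}(\eta)\to f$ weak-$*$ in $L^\infty(\mu)$. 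Since $\CT(g_j\mu)=0$ on $E\subset Z$ $\gamma$-a.e., Lemma \ref{ZeroACEta} upgrades this to $\eta$-a.e.; then \eqref{BBFunctLemmaEq2} gives $\int\tfrac{f(z)-f(\lambda)}{z-\lambda}g_j(z)\,d\mu(z)=0$ for every $\lambda\notin\text{spt}(\eta)$ and every $j$. Using $L^s$-density of $\{g_j\}$ to extend to all $g\perp\rtkmu$ yields $\tfrac{f-f(\lambda)}{z-\lambda}\in\rtkmu$, and multiplying by $z-\lambda$ produces $f\in R^{t,\infty}(K,\mu)$.

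I would then extract the contradiction by evaluating the $\gamma$-limit of $\rho(f)(w)$ from $U_{\Gamma_n}$ at a generic $z\in Z$ in two distinct ways. On the one hand, because $fg_j\perp\rtkmu$ and $Z\subset\mathcal F_0$, Proposition \ref{NF0Prop}(2) gives $\CT(fg_j\mu)(z)=0$; Plemelj (Theorem \ref{GPTheorem1}) then produces $v^+(g_j\mu)(z)=\tfrac{e^{-i\beta_n}b_j(z)}{2L(\dots)}\ne 0$ and $v^+(fg_j\mu)(z)=\tfrac{e^{-i\beta_n}f(z)b_j(z)}{2L(\dots)}$, and writing $\rho(f)(w)=\CT(fg_j\mu)(w)/\CT(g_j\mu)(w)$ on the nearly full $\gamma$-measure set where the denominator is nonzero should yield $\gamma$-limit $f(z)$. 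On the other hand, since $f\mu=\CT(\eta)\mu$ and $\CT(g_j\mu)=0$ $\eta$-a.e., the two functions $\CT(fg_j\mu)$ and $\CT(g_j\mu)\CT(\eta)$ satisfy the same distributional $\bar\partial$-equation $-\pi g_j\CT(\eta)\mu$ and both vanish at $\infty$, hence agree $\gamma$-a.e.\ on $\C$; therefore $\rho(f)=\CT(\eta)$ on $\{\CT(g_j\mu)\ne 0\}$ $\gamma$-a.e., and Plemelj for $\eta$ yields $\gamma$-limit $v^+(\eta)(z)=\CT(\eta)(z)+\tfrac{e^{-i\beta_n}h_\eta(z)}{2L(\dots)}$, where $h_\eta$ is the $\mathcal H^1|_{\Gamma_n}$-density of $\eta$.

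Matching the two limits together with the identification $f(z)=\CT(\eta)(z)$ for $\mathcal H^1|_Z$-a.e.\ $z$ (from weak-$*$ convergence plus a subsequence of $\CT_{\epsilon_k}(\eta)$ converging pointwise $\gamma$-a.e.\ to $\CT(\eta)$ combined with \eqref{HACEq} on $\Gamma_n$) would force $h_\eta(z)=0$ for $\mathcal H^1|_Z$-a.e.\ $z$. Consequently $\eta$ is singular to $\mathcal H^1|_{\Gamma_n}$ and hence concentrated on some $F\subset\Gamma_n$ with $\mathcal H^1(F)=0$; \eqref{HACEq} then gives $\gamma(F)=0$ and Lemma \ref{ZeroACEta} forces $\eta(F)=0$, contradicting $\|\eta\|\gtrsim\gamma(E)>0$. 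The hard part will be the rigorous justification of the distributional product identity in the second computation, since $\CT(g_j\mu)$ and $\CT(\eta)$ are rough factors, together with the pointwise identification $f=\CT(\eta)$ on $Z$; I expect both can be handled by approximating $\CT(\eta)$ by $\CT_{\epsilon_k}(\eta)$ and controlling the resulting errors via Theorem \ref{TolsaTheorem} and Proposition \ref{GammaPlusThm}.
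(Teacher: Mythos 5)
Your setup (contradiction via a compact $E\subset\mathcal N_+\cup\mathcal N_-\setminus\mathcal N_0$ with $\gamma(E)>0$, applying Lemma \ref{BBFunctLemma} to get $\eta$ and $f$, observing $f\in R^{t,\i}(K,\mu)$, using Proposition \ref{NF0Prop}(2) to kill $\CT(fg_j\mu)$ on $E$, and Plemelj to turn $\CT=0$ into $v^\pm=\pm b/2L$) matches the paper exactly, as does the easy inclusion via $v^++v^-=2\CT$. Where you diverge is in how the contradiction is extracted, and that is where a genuine gap appears.

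Your \textbf{Way~2} rests on the assertion that $\bar\partial\bigl(\CT(g_j\mu)\,\CT\eta\bigr)=-\pi\, g_j\CT(\eta)\,\mu$, on the grounds that $\CT(g_j\mu)=0$ $\eta$-a.e.\ so the $\eta$-term in Leibniz vanishes. This is not justified. The rigorous version of the product rule here is Lemma \ref{distributionLemma}, which yields $\bar\partial(\CT\eta\,\CT\nu)=-\pi(F_1\nu+F_2\eta)$ but pins down $F_2=\CT\nu$ only $\eta|_{\mathcal{ZD}(\eta)}$-a.e. In the present situation $\eta=w\,\mathcal H^1|_E$ is concentrated on $E\subset\mathcal{ND}(\mu)\cap\Gamma_n$, i.e.\ precisely on $\mathcal{ND}(\eta)$, so the lemma tells you nothing about $F_2$ where it matters. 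Indeed, by \eqref{CTEEstimate} the mollification error $|\tilde{\mathcal C}_\epsilon(g_j\mu)(\lambda)-\mathcal C_\epsilon(g_j\mu)(\lambda)|\lesssim |g_j\mu|(\D(\lambda,\epsilon))/\epsilon$ does \emph{not} tend to $0$ at $\lambda\in E\subset\mathcal{ND}(\mu)$, so the weak-$*$ limit $F_2$ of $\tilde{\mathcal C}_\epsilon(g_j\mu)$ in $L^\infty(\eta)$ is not, a priori, $\CT(g_j\mu)=0$. Establishing $F_2\eta=0$ would require a Plemelj-type refinement of Lemma \ref{distributionLemma} on rectifiable pieces, which you have not supplied; your remark that you ``expect it can be handled by approximating'' leaves exactly this point open.

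The good news is that the distributional step is not needed. The identity you want, $\rho(f)(w)=\CT\eta(w)$ for $w\in E^c$ $\gamma$-a.a., is already a direct consequence of \eqref{BBFunctLemmaEq2} together with $\CT(g_j\mu)=0$ $\eta$-a.e.: the right-hand side of \eqref{BBFunctLemmaEq2} is then $0$, and for $\gamma$-a.a.\ $\lambda\notin E$ one can split the integral to get $\CT(fg_j\mu)(\lambda)=f(\lambda)\CT(g_j\mu)(\lambda)=\CT\eta(\lambda)\CT(g_j\mu)(\lambda)$ (this is the paper's \eqref{NPMLemmaEq2}). Since $U_{\Gamma_n}$ lies entirely off $\Gamma_n\supset E$, this is all you need for the $\gamma$-limit from $U_{\Gamma_n}$. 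Even so, your route still requires the extra identification $f=\CT\eta$ $\mathcal H^1|_E$-a.e.\ (via dominated/weak-$*$ convergence, which does work), whereas the paper avoids it entirely: instead of matching $v^+(\eta)$ against $f$, the paper writes the Plemelj multiplicativity for both $v^+$ and $v^-$ of \eqref{NPMLemmaEq2} at points of $E$ and simply subtracts, causing $f$ to cancel and leaving $(v^+(\eta)-v^-(\eta))g_jh=\frac{w}{L}g_jh=0$ $\eta$-a.e.\ directly. That two-line subtraction is the key simplification you are missing; I would encourage you to look for it.
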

 
 \begin{proof}
 Without loss of generality, we assume $n=1$ and $\beta_1=0.$
 Suppose there exists a compact subset $E \subset \mathcal N_+(\Gamma_1) \cup \mathcal N_-(\Gamma_1) \setminus \mathcal N_0(\Gamma_1)$ such that $\mathcal H^1(E) > 0.$ Then $\CT (g_j\mu)(z) = 0,~\mathcal H^1|_E-a.a.$ and 
 \begin{eqnarray}\label{NPMLemmaEq1}
 \ v^+(g_j\mu,\Gamma_1)(z) = \dfrac{(g_jh)(z)}{2L(z)},~ v^-(g_j\mu,\Gamma_1)(z) = - \dfrac{(g_jh)(z)}{2L(z)},~\mathcal H^1|_E-a.a.
 \end{eqnarray}
 for all $j\ge 1.$ Let $\eta	$ and $f$ be from Lemma \ref{BBFunctLemma} for $\{g_j\}$ and $E$ as $\gamma(E) > 0$ by \eqref{HACEq}. We may assume $\eta = w\mathcal H^1|_E,$ where $0\le w(z) \le 1$ on $E,$ since $\eta$ is of $1$-linear growth. From Lemma \ref{BBFunctLemma} (3), we see $f\in R^{t,\i}(K,\mu)$ as $\{g_j\}$ is dense in $R^t(K,\mu)^\perp$ and
 \begin{eqnarray}\label{NPMLemmaEq2}
 \ \CT\eta (\lambda)\CT (g_j\mu)(\lambda) = \CT (fg_j\mu)(\lambda), ~\gamma|_{E^c}-a.a.,~\text{ for } j\ge 1. 
 \end{eqnarray}
 From Proposition \ref{NF0Prop} (2), since $fg_j \perp \rtkmu,$ we get $\mathcal C(fg_j\mu)(z) = 0,~\eta-a.a.,$
 \begin{eqnarray}\label{NPMLemmaEq3}
 \ v^+(fg_j\mu,\Gamma_1)(z) = \dfrac{(fg_jh)(z)}{2L(z)},~ v^-(fg_j\mu,\Gamma_1)(z) = - \dfrac{(fg_jh)(z)}{2L(z)},~\mathcal \eta-a.a.
 \end{eqnarray}
 for $j \ge 1.$ Applying Theorem \ref{GPTheorem1} to $\CT\eta (\lambda)$ for \eqref{NPMLemmaEq2}, we get
 \[
 \begin{aligned}
 \ v^+(w\mathcal H^1,\Gamma_1)(z)v^+(g_j\mu,\Gamma_1)(z) = & v^+(fg_j\mu,\Gamma_1)(z),~ \eta-a.a., \\
 \ v^-(w\mathcal H^1,\Gamma_1)(z)v^-(g_j\mu,\Gamma_1)(z) = & v^-(fg_j\mu,\Gamma_1)(z),~ \eta-a.a.
 \end{aligned}
\]
Combining with \eqref{NPMLemmaEq1} and \eqref{NPMLemmaEq3}, we have
$g_j(z)h(z) = 0, ~\eta-a.a. \text{ for } j \ge 1,$
which implies
$v^+(g_j\mu,\Gamma_1)(z) = v^-(g_j\mu,\Gamma_1)(z) = 0, ~\eta-a.a.$ for $j \ge 1.$
This is a contradiction. 

On the other hand, if $\CT(g_j\mu)(\lambda) \ne 0,$ then $v^+(g_j\mu,\Gamma_1)(\lambda) \ne 0$ or $ v^-(g_j\mu,\Gamma_1)(\lambda) \ne 0.$ Therefore, $\mathcal N_0(\Gamma_n) \subset \mathcal N_+(\Gamma_n) \cup \mathcal N_-(\Gamma_n),~\mathcal H^1|_{\Gamma_n}-a.a..$
The lemma is proved.
 \end{proof}
 
 \begin{proof} (Theorem \ref{NDecompThm})
 If $\lambda \in \mathcal Z_+(\Gamma_n) \cap \mathcal Z_-(\Gamma_n),$ then $g_j(\lambda)h(\lambda) = 0$ for $j\ge 1.$ Hence, $\mathcal H^1(\mathcal Z_+(\Gamma_n) \cap \mathcal Z_-(\Gamma_n)) = \emptyset$ since $S_\mu$ is pure.
 From Lemma \ref{NPMLemma}, we have
 \[
 \ \begin{aligned}
 \ \mathcal N_0(\Gamma_n)& \approx \mathcal N_+(\Gamma_n) \cup \mathcal N_-(\Gamma_n) \approx \Gamma_n \cap \mathcal {ND}(\mu) \\
 \ &\approx (\mathcal N_+(\Gamma_n) \cap \mathcal N_-(\Gamma_n)) \cup \mathcal Z_+(\Gamma_n) \cup \mathcal Z_-(\Gamma_n),~ \mathcal H^1|_{\Gamma_n}-a.a. 
 \ \end{aligned}
 \]	
 Therefore, $\mathcal {ND}(\mu) \subset \mathcal N.$ The theorem follows since
 $\mathcal N = \mathcal R_0 \cup\cup _{n=1}^\i\mathcal N_0(\Gamma_n).$
 \end{proof}

Define
 \[
\ \mathcal E_N = \left \{\lambda : ~\lim_{\epsilon \rightarrow 0} \mathcal C_\epsilon(g_j\mu)(\lambda )\text{ exists, } \max_{1\le j\le N} |\mathcal C (g_j\mu)(\lambda ) | \le \frac{1}{N} \right \}.
 \]

\begin{theorem}\label{FCharacterization}
There is a subset $\mathcal Q \subset \mathbb C$ with $\gamma(\mathcal Q) = 0$ such that if $\lambda \in\mathbb C \setminus \mathcal Q$, then $\lambda \in \mathcal F$ if and only if   
 \begin{eqnarray}\label{FCEq4}
 \ \underset{\delta\rightarrow 0}{\overline{\lim}}\dfrac{\gamma(\D(\lambda, \delta)\cap\mathcal E_N)}{\delta} > 0 
 \end{eqnarray}
for all $N \ge 1$. Consequently, $\mathcal F$ and $\mathcal R$ do not depend on the choices of $\{\Gamma_n\}$ up to a set of zero analytic capacity. 
\end{theorem}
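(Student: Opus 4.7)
The plan is to prove both implications by combining Tolsa's semiadditivity (Theorem \ref{TolsaTheorem}(2)) with the $\gamma$-continuity of Cauchy transforms (Lemma \ref{CauchyTLemma}) and the one-sided $\gamma$-limits of Theorem \ref{GPTheorem1}. Two routine capacity lower bounds are used throughout: $\gamma(\D(\lambda,\delta))=\delta$, and, whenever $\lambda\in\Gamma_n$, $\gamma(U_{\Gamma_n}\cap\D(\lambda,\delta))\gtrsim\delta$ and $\gamma(L_{\Gamma_n}\cap\D(\lambda,\delta))\gtrsim\delta$, each such region containing a disk of radius $\gtrsim\delta$ with constant depending only on $\|A_n'\|_\infty$.

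For the forward direction, suppose $\lambda\in\mathcal F$ and split on which piece contains it. If $\lambda\in\mathcal F_0$, first reduce to $\Theta_\mu(\lambda)=0$: Lemma \ref{NPMLemma} and the proof of Theorem \ref{NDecompThm} give $\mathcal{ND}(\mu)\cap\Gamma_n\approx\mathcal N_0(\Gamma_n)$ in $\mathcal H^1|_{\Gamma_n}$, while $\mathcal F_0\cap\mathcal N_0(\Gamma_n)=\emptyset$ by definition, so $\gamma(\mathcal F_0\cap\mathcal{ND}(\mu))=0$. Then Lemma \ref{CauchyTLemma} asserts that every $\CT(g_j\mu)$ is $\gamma$-continuous at $\lambda$ with value $0$; for any fixed $N$, the inclusion $\D(\lambda,\delta)\setminus\mathcal E_N\subset\bigcup_{j\le N}\{|\CT(g_j\mu)|>1/N\}$ (up to $\gamma$-zero where the principal values fail to exist) combined with semiadditivity gives $\gamma(\D(\lambda,\delta)\setminus\mathcal E_N)=o(\delta)$, hence $\gamma(\D(\lambda,\delta)\cap\mathcal E_N)\gtrsim\delta$ for small $\delta$. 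If $\lambda\in\mathcal F_+=\bigcup_n\mathcal Z_+(\Gamma_n)$, fix $n$ with $\lambda\in\mathcal Z_+(\Gamma_n)$; since $v^+(g_j\mu,\Gamma_n,\beta_n)(\lambda)=0$ for every $j$, Theorem \ref{GPTheorem1}(b) and the same semiadditivity argument carried out inside $U_{\Gamma_n}\cap\D(\lambda,\delta)$ again yield \eqref{FCEq4}. The case $\lambda\in\mathcal F_-$ is symmetric via Theorem \ref{GPTheorem1}(c).

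For the converse I argue contrapositively. Since $\mathbb C\approx\mathcal F\cup\mathcal R$ with $\mathcal F\cap\mathcal R$ of zero capacity, assume $\lambda\in\mathcal R=\mathcal R_0\cup\mathcal R_1$ and produce a single $N$ with $\gamma(\D(\lambda,\delta)\cap\mathcal E_N)=o(\delta)$. If $\lambda\in\mathcal R_0=\mathcal N\cap\mathcal{ZD}(\mu)$, pick $j_0$ with $c_0:=\CT(g_{j_0}\mu)(\lambda)\ne 0$; Lemma \ref{CauchyTLemma} gives $\gamma$-continuity at $\lambda$ with value $c_0$, and $N\ge\max(j_0,2/|c_0|)$ forces $\mathcal E_N\cap\D(\lambda,\delta)\subset\{|\CT(g_{j_0}\mu)-c_0|\ge|c_0|/2\}\cap\D(\lambda,\delta)$, of capacity $o(\delta)$. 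If $\lambda\in\mathcal R_1$, choose $n$ with $\lambda\in\mathcal N_+(\Gamma_n)\cap\mathcal N_-(\Gamma_n)$ and pick $j_1,j_2$ with $a:=v^+(g_{j_1}\mu,\Gamma_n,\beta_n)(\lambda)\ne 0$, $b:=v^-(g_{j_2}\mu,\Gamma_n,\beta_n)(\lambda)\ne 0$; Lemma \ref{NPMLemma} also shows $\mathcal R_1\subset\mathcal N_0(\Gamma_n)$ up to $\gamma$-zero, producing $j_3$ with $c:=\CT(g_{j_3}\mu)(\lambda)\ne 0$. Taking $N$ large enough to dominate $j_1,j_2,j_3$ and $2/|a|,2/|b|,2/|c|$, Theorem \ref{GPTheorem1}(b), (c), (d) give $\gamma(\mathcal E_N\cap\D(\lambda,\delta))=o(\delta)$ on each of $U_{\Gamma_n}$, $L_{\Gamma_n}$, $\Gamma_n$, and semiadditivity assembles the three pieces. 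The main subtlety I expect is the bookkeeping of zero-capacity exceptions, most notably $\gamma(\mathcal F_0\cap\mathcal{ND}(\mu))=0$ and $\mathcal R_1\subset\mathcal N_0(\Gamma_n)$ up to $\gamma$-zero, each of which is extracted from Lemma \ref{NPMLemma} and the proof of Theorem \ref{NDecompThm}. The independence of $\{\Gamma_n\}$ is then automatic: \eqref{FCEq4} refers only to $\{g_j\}$, so Corollary \ref{FRUniqueCor} together with the characterization determines $\mathcal F$ and $\mathcal R$ up to analytic capacity zero, independently of the chosen graphs.
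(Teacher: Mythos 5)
Your argument is correct and follows essentially the same route as the paper: in each case ($\mathcal F_0$, $\mathcal F_\pm$, $\mathcal R_0$, $\mathcal R_1$) you use the same combination of Lemma \ref{CauchyTLemma}, Theorem \ref{GPTheorem1}, and semiadditivity, with the same choice of indices and the same lower bound $\gamma(U_{\Gamma_n}\cap\D(\lambda,\delta))\gtrsim\delta$. The only cosmetic difference is that the paper organizes the exceptional-set bookkeeping by first splitting $\mathbb C$ into $\mathcal{ZD}(\mu)$ and $\mathcal{ND}(\mu)$ rather than by splitting on $\mathcal F$ versus $\mathcal R$; the underlying estimates are identical.
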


\begin{proof}
We first prove that there exists $\mathcal Q_1$ with $\gamma(\mathcal Q_1) = 0$ such that if $\lambda\in \mathcal {ZD}(\mu) \setminus \mathcal Q_1$, then $\lambda\in \mathcal F$ if and only if $\lambda$ satisfies \eqref{FCEq4}.

From Theorem \ref{NDecompThm}, $\mathcal {ZD}(\mu) \approx \mathcal R_0 \cup \mathcal F_0, ~\gamma-a.a.$. There exists $\mathcal Q_1$ with $\gamma(\mathcal Q_1) = 0$ such that for $\lambda\in \mathcal {ZD}(\mu) \setminus \mathcal Q_1$, $\Theta_{ g_j\mu}(\lambda) = 0$,
$\mathcal C(g_j\mu) (\lambda) = \lim_{\epsilon\rightarrow 0} \mathcal C_\epsilon (g_j\mu) (\lambda)$ exists, and $\mathcal C(g_j\mu) (z)$ is $\gamma$-continuous at $\lambda$ (Lemma \ref{CauchyTLemma})  for all $j\ge 1$.

If $\lambda\in \mathcal R_0$, then there exists $j_0$ such that $\mathcal C(g_{j_0}\mu) (\lambda) \ne 0$. Let $\epsilon_0 = \frac 12 |\mathcal C(g_{j_0}\mu) (\lambda)|$, we obtain that, for $N > N_0 := \max(j_0,\frac{1}{\epsilon_0} + 1)$,
 \[
 \ \mathcal E_N \subset \{|\mathcal C(g_{j_0}\mu) (z) - \mathcal C(g_{j_0}\mu) (\lambda)| > \epsilon_0 \}.
 \]
Therefore, by Lemma \ref{CauchyTLemma}, for $N > N_0$,  
 \[
 \ \lim_{\delta\rightarrow 0} \dfrac{\gamma (\D(\lambda, \delta ) \cap \mathcal E_N)}{\delta} = 0.
 \]
Thus, $\lambda$ does not satisfy \eqref{FCEq4}. 

Now for $\lambda\in \mathcal F_0$, $\mathcal C(g_j\mu) (\lambda) = 0$ for all $j \ge 1$.
Using Lemma \ref{CauchyTLemma} and  Theorem \ref{TolsaTheorem} (2), we get
 \[
 \ \lim_{\delta\rightarrow 0} \dfrac{\gamma (\D(\lambda, \delta) \setminus \mathcal E_N)}{\delta } \le C_T \sum_{j=1}^N \lim_{\delta\rightarrow 0} \dfrac{\gamma (\D(\lambda, \delta) \cap \{|\mathcal C(g_j\mu) - \mathcal C(g_j\mu) (\lambda)| \ge \frac{1}{N}\})}{\delta } = 0. 
 \]
Hence, $\lambda$ satisfies \eqref{FCEq4}.

We now prove that there exists $\mathcal Q_2$ with $\gamma(\mathcal Q_2) = 0$ such that if $\lambda\in \mathcal {ND}(\mu) \setminus \mathcal Q_2$, then $\lambda\in \mathcal F$ if and only if $\lambda$ satisfies \eqref{FCEq4}.

From \eqref{FPMSetDef} and \eqref{R1SetDef}, we get $\mathcal {ND}(\mu) \approx \mathcal R_1  \cup (\mathcal F_+ \cup \mathcal F_-), ~\gamma-a.a.$. There exists $\mathcal Q_2$ with $\gamma(\mathcal Q_2) = 0$ such that for $\lambda\in \mathcal {ND}(\mu) \cap \Gamma_n\setminus \mathcal Q_2$, 
$v^0(g_j\mu, \Gamma_n)(\lambda) = \mathcal C(g_j\mu) (\lambda) = \lim_{\epsilon\rightarrow 0} \mathcal C_\epsilon (g_j\mu) (\lambda)$, $v^+(g_j\mu, \Gamma_n, \beta_n)(\lambda)$, and $v^-(g_j\mu, \Gamma_n, \beta_n)(\lambda)$ exist for all $j,n \ge 1$ and Theorem \ref{GPTheorem1} (b), (c), and (d) hold. Fix $n = 1$ and without loss of generality, we assume $\beta_1 =0.$

If $\lambda\in \mathcal R_1$, then there exist integers $j_0$, $j_1$, and $j_2$ such that $v^0(g_{j_0}\mu, \Gamma_1)(\lambda) \ne 0$ by Lemma \ref{NPMLemma}, $v^+(g_{j_1},\Gamma_1)(\lambda) \ne 0$, and $v^-(g_{j_2}\mu, \Gamma_1)(\lambda) \ne 0$. Set
 \[
 \ \epsilon_0 = \dfrac 12 \min (|v^0(g_{j_0}\mu, \Gamma_1)(\lambda)|, |v^+(g_{j_1},\Gamma_1)(\lambda)|, |v^-(g_{j_2}\mu, \Gamma_1)(\lambda)|), 
 \]
then for $N > N_0 := \max (j_0, j_1, j_2, \frac{1}{\epsilon_0} + 1)$,
\[
\ \Gamma_1\cap \mathcal E_N \subset D := \Gamma_1 \cap \{|\mathcal{C}(g_{j_0}\mu) (z ) - v^0(g_{j_0}\mu, \Gamma_1)(\lambda)| \ge \epsilon_0 \},
 \] 
\[
\ U_{\Gamma_1}\cap \mathcal E_N \subset
 E := U_{\Gamma_1}\cap \{|\mathcal{C}(g_{j_1}\mu)(z ) - v^+(g_{j_1}\mu, \Gamma_1)(\lambda)| \ge \epsilon_0 \},\text{ and }
 \]
 \[
\ L_{\Gamma_1}\cap \mathcal E_N \subset
 F :=  L_{\Gamma_1}\cap \{|\mathcal{C}(g_{j_2}\mu)(z ) - v^-(g_{j_2}\mu, \Gamma_1)(\lambda)| \ge \epsilon_0 \}.
 \]
Therefore, using Theorem \ref{TolsaTheorem} (2) and Theorem \ref{GPTheorem1}, we get for $N > N_0$,
 \[
 \begin{aligned}
 \ &\lim_{\delta\rightarrow 0}\dfrac{\gamma(\D(\lambda, \delta) \cap \mathcal E_N )}{\delta} \\
 \ \le & C_T\left ( \lim_{\delta\rightarrow 0}\dfrac{\gamma(\D(\lambda, \delta) \cap D)}{\delta} + \lim_{\delta\rightarrow 0}\dfrac{\gamma(\D(\lambda, \delta) \cap E)}{\delta} + \lim_{\delta\rightarrow 0}\dfrac{\gamma(\D(\lambda, \delta) \cap F)}{\delta}\right ) \\
 \ = &0.
 \end{aligned}
 \]
Hence, $\lambda$ does not satisfy \eqref{FCEq4}.   
 
For $\lambda\in (\mathcal F_+ \cup \mathcal F_-) \cap \Gamma_1$, we may assume that $\lambda\in \mathcal Z_+(\Gamma_1)\subset \Gamma_1$. Using Theorem \ref{TolsaTheorem} (2) and Theorem \ref{GPTheorem1}, we get ($v^+(g_j\mu, \Gamma_1)(\lambda) = 0$)
 \[
 \ \begin{aligned}
 \ &\lim_{\delta\rightarrow 0}\dfrac{\gamma(\D(\lambda, \delta) \cap U_{\Gamma_1}\setminus \mathcal E_N)}{\delta} \\
 \ \le & C_T \sum_{j=1}^N\lim_{\delta\rightarrow 0}\dfrac{\gamma(\D(\lambda, \delta) \cap U_{\Gamma_1}\cap \{|\mathcal{C}(g_j\mu)(z ) - v^+(g_j\mu, \Gamma_1)( \lambda)| \ge \frac{1}{N} \})}{\delta} \\
 \ \ = & 0.
 \ \end{aligned}
 \]
This implies
 \[
 \ \underset{\delta\rightarrow 0}{\overline \lim}\dfrac{\gamma(\D(\lambda, \delta) \cap \mathcal E_N)}{\delta} \ge \underset{\delta\rightarrow 0}{\overline \lim}\dfrac{\gamma(\D(\lambda, \delta) \cap U_{\Gamma_1}\cap \mathcal E_N)}{\delta} > 0.
\] 
Hence, $\lambda$ satisfies \eqref{FCEq4}.

Finally, \eqref{FCEq4} does not depend on choices of $\{\Gamma_n\}$, therefore, $\mathcal F$ and $\mathcal R$ are independent of choices of $\{\Gamma_n\}$ up to a set of zero analytic capacity.  
\end{proof}

\begin{example}\label{FRExample} 
Let $K = \overline{\D} \setminus \cup_{n=1}^\infty \D(\lambda_n,\delta_n)$, where $\D = \D(0,1)$, $\D(\lambda_n,\delta_n) \subset \D$, $\overline{\D(\lambda_n,\delta_n)} \cap \overline{\D(\lambda_m,\delta_m)} = \emptyset$ for $n \ne m$, and $\sum \delta_n < \infty$. Set $\partial_e K = \partial \D \cup \cup_{n=1}^\infty \partial \D(\lambda_n,\delta_n)$ (the exterior boundary of $K$).
If $\mu$ is the sum of the arclength measures of the unit circle and all small circles $\partial \D(\lambda_n,\delta_n),$ then $S_\mu$ on $R^t(K, \mu)$ is pure, 
 $\mathcal F_0 = \C\setminus K,~ \mathcal F_- = \partial_e K,~
 \mathcal R_0 = K\setminus \partial_e K,\text{ and }\mathcal F_+ = \mathcal R_1 = \emptyset$
(if $K$ has no interior, then $K$ is a Swiss cheese set).  
\end{example}

\begin{proof}
Let $dz$ be the usual complex measure on $\partial \D$ (counter clockwise) and on each $\partial \D(\lambda_n,\delta_n)$ (clockwise). Then $\int r(z) d z = 0$ for $r\in Rat(K)$. If $g = \frac{dz}{d\mu}$, then $g\perp R^t(K, \mu)$. $S_\mu$ is pure as $|g| > 0, ~\mu-a.a.$.

Let $g_n = g\chi_{\partial \D \cup \cup_{k=1}^n \partial \D(\lambda_k,\delta_k)},$ where $\chi_A$ denotes the characteristic function for a subset $A.$ Then $\|g_n-g\|_{L^1(\mu)} \rightarrow 0$ and $\frac{1}{2\pi I}\mathcal C(g_n\mu)(\lambda) = 1$ for $\lambda \in \D \setminus \cup_{k=1}^n \overline{\D(\lambda_k,\delta_k)}.$ 
Using Lemma \ref{ConvergeLemma}, we have $\mathcal C(g_n\mu)(\lambda) \rightarrow \mathcal C(g\mu)(\lambda),~\gamma-a.a..$ Therefore,  the principal value $\mathcal C(g\mu)(\lambda) = 2\pi I,~\lambda\in K \setminus \partial_e K,~\gamma-a.a..$ This implies  that $K \setminus \partial_e K \subset \mathcal R_0$. It is clear that $\partial_e K \subset \mathcal F_-$ since $\mathcal C (g\mu) (z) = 0$ for $g\perp R^t(K,\mu)$ and $z\in \C \setminus K.$ This completes the proof.
\end{proof}

\section{\textbf{Full Analytic Capacity Density for $\mathcal R$}}

The aim of this section is to prove the following full analytic capacity density property for $\mathcal R,$ which is important for us to characterize $H^\i(\mathcal R)$ in next section.

\begin{theorem}\label{FACDensityThm}
There is $\mathcal Q \subset \C$ with $\gamma(\mathcal Q) = 0$ such that for $\lambda \in \mathcal R \setminus \mathcal Q,$ we have
\begin{eqnarray}\label{FACDensityThmEq}
\ \lim_{\delta\rightarrow 0}\dfrac{\gamma(\D(\lambda, \delta) \cap \mathcal F)}{\delta} = \lim_{\delta\rightarrow 0}\dfrac{\gamma(\D(\lambda, \delta) \setminus \mathcal R)}{\delta} = 0. 
\end{eqnarray}	
\end{theorem}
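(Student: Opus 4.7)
The plan is to lean on the characterization of $\mathcal R$ provided by Theorem \ref{FCharacterization}: outside a $\gamma$-null set $\mathcal Q_0$, a point $\lambda\in\mathcal R$ comes equipped with an integer $N_\lambda\geq 1$ for which
\[
\lim_{\delta\to 0}\dfrac{\gamma(\D(\lambda,\delta)\cap \mathcal E_{N_\lambda})}{\delta}=0.
\]
The second equality in \eqref{FACDensityThmEq} then reduces to the first, because Theorem \ref{NDecompThm} and Proposition \ref{NF0Prop}(1) give $\mathcal R\cup\mathcal F\approx\mathbb C$ up to a $\gamma$-null set, so by semi-additivity (Theorem \ref{TolsaTheorem}(2)) $\gamma(\D(\lambda,\delta)\setminus\mathcal R)\lesssim \gamma(\D(\lambda,\delta)\cap\mathcal F)$ once we throw the negligible piece into $\mathcal Q$. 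Thus the whole task is showing $\gamma(\D(\lambda,\delta)\cap\mathcal F)/\delta\to 0$.

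I would split $\mathcal F=\mathcal F_0\cup\mathcal F_+\cup\mathcal F_-$ and treat the three pieces separately. The $\mathcal F_0$ piece is the easy one: by definition \eqref{F0SetDef} every $z\in\mathcal F_0$ has $\mathcal C(g_j\mu)(z)=0$ for all $j$, so $\mathcal F_0\subset\mathcal E_N$ for every $N$. In particular $\mathcal F_0\cap\D(\lambda,\delta)\subset\mathcal E_{N_\lambda}\cap\D(\lambda,\delta)$, and the desired density vanishing at $\lambda$ is immediate. For the Lipschitz-graph pieces I would further decompose $\mathcal F_\pm\cap\D(\lambda,\delta)$ as $\bigl(\mathcal F_\pm\cap\mathcal E_{N_\lambda}\bigr)\cup\bigl(\mathcal F_\pm\setminus\mathcal E_{N_\lambda}\bigr)$. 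The first term again has zero $\gamma$-density at $\lambda$ by the choice of $N_\lambda$, so the real work is in the second term, consisting of points of $\mathcal F_+\cup\mathcal F_-$ at which some $|\mathcal C(g_j\mu)|>1/N_\lambda$ for $j\leq N_\lambda$.

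To control this remainder I would invoke Plemelj's formula in the form of Theorem \ref{GPTheorem1}: on $\mathcal Z_\pm(\Gamma_n)$ we have $\mathcal C(g_j\mu)(z)=\mp\frac{e^{-i\beta_n} g_j(z)h(z)}{2L((z-z_0)e^{-i\beta_n}+z_0)}$, so points of $\mathcal F_\pm\setminus\mathcal E_{N_\lambda}$ are exactly the points where $|g_jh|\gtrsim 1/N_\lambda$ for some $j\leq N_\lambda$. Truncating by $|g_j|\leq M$ (which discards a set of small $\mu$-mass since $g_j\in L^s(\mu)$), I get $h\gtrsim 1/(N_\lambda M)$ on the truncated piece; combined with $\mu=h\mathcal H^1|_{\cup\Gamma_n}$ on $\{h>0\}$ and Lemma \ref{LGCTL2Bounded} (which bounds $\gamma$ by $\mathcal H^1$ on each $\Gamma_n$), this turns $\gamma$ into a constant multiple of $\mu(\D(\lambda,\delta))$. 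When $\lambda\in\mathcal R_0$ we have $\Theta_\mu(\lambda)=0$, so $\mu(\D(\lambda,\delta))/\delta\to 0$ and we are done for the truncated piece; the tail $\{|g_j|>M\}$ is absorbed by letting $M\to\infty$, using $\mathcal H^1(\{\Theta_\mu^*\geq M\})\lesssim \|\mu\|/M$ from \eqref{GammaExistEq1} together with Theorem \ref{TolsaTheorem}(2). When $\lambda\in\mathcal R_1$, say $\lambda\in\Gamma_n$ with $v^+(g_{j_1}\mu,\Gamma_n,\beta_n)(\lambda)$ and $v^-(g_{j_2}\mu,\Gamma_n,\beta_n)(\lambda)$ both nonzero, a repetition of the one-sided $\gamma$-continuity argument from the proof of Theorem \ref{FCharacterization} forces $\mathcal F_+\cap U_{\Gamma_n}$, $\mathcal F_-\cap L_{\Gamma_n}$ and $\mathcal F_\pm\cap\Gamma_n$ near $\lambda$ all to have zero $\gamma$-density.

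The main obstacle I anticipate is exactly the $\mathcal R_0$ case just described: a point $\lambda$ with $\Theta_\mu(\lambda)=0$ can still be an accumulation point of $\bigcup_n\Gamma_n\cap\{h>0\}$, so the pointwise density of $\mathcal H^1|_{\cup\Gamma_n}$ at $\lambda$ need not vanish. The $L^s$-integrability of the $g_j$ (rather than $L^\infty$) makes the truncation bookkeeping delicate, and one has to be careful that the exceptional $M$-tail is absorbed into the $\gamma$-null set $\mathcal Q$ uniformly in $\delta$. Once this piece is handled, assembling the three contributions and invoking Tolsa's semi-additivity finishes both equalities in \eqref{FACDensityThmEq}.
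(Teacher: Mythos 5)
Your reduction of the second equality to the first is correct, and your treatment of the $\mathcal F_0$-piece is fine: $\mathcal F_0\subset\mathcal E_N$ for every $N$, so Theorem \ref{FCharacterization} handles it. But the route you take for $\mathcal F_+\cup\mathcal F_-$ diverges sharply from the paper's and has genuine gaps.

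The paper does \emph{not} decompose $\mathcal F$ into $\mathcal E_{N_\lambda}$- and non-$\mathcal E_{N_\lambda}$-pieces. It uses Lemma \ref{GLemma}, which in turn rests on Corollary \ref{acCorollary} / Lemma \ref{acLemma}: given any compact $E\subset\mathcal F$ with $\gamma(E)>0$, one can find a nearby compact set $E_N$ with $\gamma(E_N)\gtrsim\gamma(E)$ on which $|\CT(g\mu)|\le 1/N$, by translating $E\cap\Gamma_n$ a small amount off the Lipschitz graph and using the one-sided Plemelj limit $v^\pm\equiv 0$ there. This "push off the graph" step is the crux, and your proposal never invokes it.

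Without it, your treatment of $\lambda\in\mathcal R_0$ has the gap you half-notice. You want $\gamma\big(\D(\lambda,\delta)\cap(\mathcal F_\pm\setminus\mathcal E_{N_\lambda})\cap\{|g_j|>M\}\big)/\delta$ to vanish, and you appeal to $\mathcal H^1(\{\Theta_\mu^*\ge M\})\lesssim\|\mu\|/M$, but that estimate concerns $\Theta_\mu^*$, not the level sets of $g_j$, and global smallness of $\mathcal H^1$ or $\mu$ mass does not give pointwise $\gamma$-density zero at $\lambda$. (Incidentally, the $M$-truncation is avoidable: on $\mathcal Z_\pm(\Gamma_n)$, $|\CT(g_j\mu)|=\pi|g_jh|$, so a direct Chebyshev bound gives $\gamma(\D(\lambda,\delta)\cap\mathcal F_\pm\setminus\mathcal E_{N_\lambda})\lesssim N_\lambda\sum_{j\le N_\lambda}|g_j\mu|(\D(\lambda,\delta))$; then $\Theta_{g_j\mu}(\lambda)=0$ for $\gamma$-a.a.\ $\lambda\in\mathcal R_0$ finishes it. You did not find this, and it is not the paper's route.)

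The $\lambda\in\mathcal R_1$ case is worse: the Chebyshev bound no longer helps because $\Theta_{g_j\mu}(\lambda)>0$, and the "repetition of the one-sided $\gamma$-continuity argument" you invoke does not close the gap either. Near $\lambda\in\Gamma_1$, the set $\mathcal F_+\cup\mathcal F_-$ contains pieces of $\mathcal Z_\pm(\Gamma_m)$ for $m\ne 1$ lying inside $U_{\Gamma_1}$ or $L_{\Gamma_1}$, and at such points the \emph{principal value} $\CT(g_j\mu)$ need not be small — only the one-sided limit $v^\pm(g_j\mu,\Gamma_m)$ vanishes. So the $\gamma$-continuity of $\CT(g_{j_1}\mu)$ restricted to $U_{\Gamma_1}$ toward $v^+(g_{j_1}\mu,\Gamma_1)(\lambda)\ne 0$ does not exclude those points. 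Excluding them requires exactly the displacement trick of Lemma \ref{acLemma}. In short, the proposal replaces the paper's key technical lemma by a more elementary decomposition that cannot see across nearby Lipschitz graphs, and the argument fails at that point.
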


It is straightforward to prove Theorem \ref{FACDensityThm} if $\mathcal {ND}(\mu) \approx \emptyset,~\gamma-a.a..$ Let us provide a proof for this simple case below.

\begin{proof} (Theorem \ref{FACDensityThm} assuming $\mathcal {ND}(\mu) \approx \emptyset,~\gamma-a.a.$) In this case, $\mathcal F \approx \mathcal F_0,~\gamma-a.a.$ and $\mathcal R \approx \mathcal R_0,~\gamma-a.a..$	By Corollary \ref{ZeroAC}, there exists $\mathcal Q\subset \C$ with $\gamma(\mathcal Q) = 0$ such that for $\lambda\in \mathcal R\setminus \mathcal Q$ and $j\ge 1,$ $\lim_{\epsilon \rightarrow 0} \CT _\epsilon (g_j\mu)(\lambda) = \CT (g_j\mu)(\lambda)$ exists and $\Theta_{g_j\mu} (\lambda) = 0.$ Then, by Lemma \ref{CauchyTLemma}, $\CT (g_j\mu)(z)$ is $\gamma$-continuous at $\lambda.$ For $\lambda \in \mathcal R\setminus \mathcal Q,$ there exists $j_0$ such that $\CT (g_{j_0}\mu)(\lambda) \ne 0.$ Now the proof follows from the inclusion below
\begin{eqnarray} \label{FACDensityThmEq1}
\ \mathcal F \subset \left \{z:~ |\CT (g_{j_0}\mu)(z) - \CT (g_{j_0}\mu)(\lambda)| > \frac{|\CT (g_{j_0}\mu)(\lambda)|}{2} \right \}.
\end{eqnarray} 
\end{proof}

In the case when $\gamma(\mathcal {ND} (\mu)) > 0,$ \eqref{FACDensityThmEq1} may not hold as
$|\CT (g_j\mu)|$ may not be small on $E_1 \subset \mathcal F_+ \cup \mathcal F_-$ with $\gamma (E_1) > 0.$ 
However, we will show in the next lemmas that there exist $\gamma$ comparable subsets $E_N$ near $E_1$ such that $|\CT (g_j\mu)|$ is small on $E_N.$

\begin{lemma} \label{acLemma}
Let $g\perp R^t(K,\mu)$ and let $E\subset \mathcal F_+$ (or $E\subset \mathcal F_-$) be a compact subset with $\gamma (E) > 0$. Then there exists a sequence of compact subsets $\{E_N\}_{N=1}^\infty$ such that $|\CT(g\mu)(z)| \le N^{-1},~ z\in E_N,$ $\sup_{x\in E_N}\text{dist}(x, E) < \epsilon$ for a given $\epsilon > 0,$ and $\gamma (E_N) \gtrsim \gamma (E).$ 
\end{lemma}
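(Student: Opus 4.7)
The plan is to use Tolsa's equivalence $\gamma\approx\gamma_+$ to replace the compact set $E$ by a positive measure $\eta$ of bounded Cauchy potential, translate $\eta$ off of $\Gamma$ into $U_\Gamma$ where $\CT(g\mu)$ must be small by the $\gamma$-limit of Theorem \ref{GPTheorem1}(b), and then convert the resulting mass estimate back to analytic capacity through the $Tb$-type tools of Proposition \ref{GammaPlusThm}.

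First I would reduce to a single graph: since $\mathcal F_+ = \bigcup_n \mathcal Z_+(\Gamma_n)$ and $\gamma$ is countably semiadditive, replace $E$ by $E \cap \mathcal Z_+(\Gamma_n)$ for a single index $n$ with $\gamma(E \cap \mathcal Z_+(\Gamma_n)) > 0$ (with final constants allowed to depend on that $n$), and rotate so that $\beta_n = 0$. Writing $\Gamma = \Gamma_n$, $U = U_\Gamma$, and $L = \|A_n'\|_\infty$, Proposition \ref{NZGammaProp}(1) together with \eqref{HACEq} gives $v^+(g\mu,\Gamma)(\lambda) = 0$ for $\gamma$-a.e.\ $\lambda \in E$, whence Theorem \ref{GPTheorem1}(b) yields, with $B_N := \{|\CT(g\mu)| > 1/N\}$,
\[
\lim_{\delta \to 0} \frac{\gamma\bigl(U \cap \D(\lambda,\delta) \cap B_N\bigr)}{\delta} = 0 \quad \text{for $\gamma$-a.e.\ } \lambda \in E.
\]

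Next I would pick $\eta \in M_0^+(E)$ with $\|\eta\|\gtrsim \gamma(E)$ and $\|\CT\eta\|_\infty \le 1$; Proposition \ref{GammaPlusThm}(1) guarantees $1$-linear growth and $\sup_\epsilon \|\CT_\epsilon \eta\|_\infty \le C_T$. Using the Lipschitz condition, the cone
\[
C = \bigl\{\tau \in \C:\operatorname{Im}\tau > (L+1)|\operatorname{Re}\tau|,\ 0 < |\tau| < \tau_0\bigr\}
\]
satisfies $\lambda + C \subset U \cap \D(\lambda,\tau_0)$ for every $\lambda \in \Gamma$. The push-forward $\eta_\tau$ of $\eta$ under $z \mapsto z+\tau$ inherits $\|\eta_\tau\|=\|\eta\|$ and $\sup_\epsilon \|\CT_\epsilon \eta_\tau\|_\infty \le C_T$ by translation invariance of $\CT_\epsilon$. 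The key step is Fubini applied against $B_N$,
\[
\frac{1}{|C|}\int_C \eta_\tau(B_N)\, dA(\tau) = \int \frac{\area\bigl(B_N \cap (\lambda+C)\bigr)}{|C|}\, d\eta(\lambda),
\]
combined with the area bound \eqref{AreaGammaEq} and the $\gamma$-limit above:
\[
\frac{\area\bigl(B_N \cap (\lambda+C)\bigr)}{|C|} \lesssim \left( \frac{\gamma\bigl(U \cap \D(\lambda,\tau_0) \cap B_N\bigr)}{\tau_0}\right)^2 \longrightarrow 0 \text{ as } \tau_0 \to 0
\]
for $\gamma$-a.e.\ $\lambda \in E$, hence for $\eta$-a.e.\ $\lambda$ by Lemma \ref{ZeroACEta}. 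Dominated convergence then produces, for any $\tau_0 < \epsilon$ small enough, some translate $\tau \in C$ with $\eta_\tau(B_N) \le \|\eta\|/8$.

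To finish I would convert this mass estimate back to an analytic-capacity bound. Proposition \ref{GammaPlusThm}(2) applied to $\eta_\tau/C_T$ produces $A \subset E+\tau$ with $\eta_\tau(A) \ge \|\eta\|/2$ and $N_2(\eta_\tau|_A) \lesssim 1$; setting $B := A \cap \{|\CT(g\mu)| \le 1/N\}$ gives $\eta_\tau(B) \ge 3\|\eta\|/8$ while $N_2(\eta_\tau|_B) \le N_2(\eta_\tau|_A)$ by the elementary observation that restricting the measure only shrinks the $L^2\to L^2$ operator norm. Proposition \ref{GammaPlusThm}(3) then delivers a weight $w$ on $B$ with $\int w\, d\eta_\tau \ge \eta_\tau(B)/2$ and $\sup_\epsilon \|\CT_\epsilon(w\eta_\tau)\|_\infty \lesssim 1$; passing to the principal value gives $\|\CT(w\eta_\tau)\|_{L^\infty(\C)} \lesssim 1$, and Theorem \ref{TolsaTheorem}(1) yields $\gamma(B) \ge \gamma_+(B) \gtrsim \gamma(E)$. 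An inner approximation furnishes the desired compact $E_N \subset B \subset E+\tau$ with $\gamma(E_N) \gtrsim \gamma(E)$, and $|\tau| < \tau_0 < \epsilon$ handles the distance condition. The main obstacle will be that the $\gamma$-limit of Theorem \ref{GPTheorem1}(b) is only a $1$-dimensional density statement and does not say that any particular translate $\lambda+i\tau$ avoids $B_N$; the remedy is to average over a $2$-dimensional family of translates and exploit $\area(X)\le 4\pi\gamma(X)^2$, which squares the $1$-dimensional decay into a $2$-dimensional decay just fast enough to beat the $|C|\approx\tau_0^2$ scaling of the averaging region.
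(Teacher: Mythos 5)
Your cone--Fubini argument is a genuinely different (and rather pretty) route to the translation: instead of the paper's $5r$-covering of a compact core $F_m$ followed by a fixed translation $F_m+\epsilon_2 I$, you average the translated mass over a $2$-dimensional cone $C\subset U_\Gamma$, apply Fubini, and then exploit $\area(X)\lesssim\gamma(X)^2$ to square the $o(\delta)$ density of Theorem~\ref{GPTheorem1}(b) into an $o(\delta^2)$ area decay that beats the $|C|\approx\tau_0^2$ normalization. This replaces the covering/overlap bookkeeping with a single dominated-convergence step, which is cleaner. The back-conversion via Proposition~\ref{GammaPlusThm}~(2) and (3) together with the observation that $N_2$ only decreases under restriction is also sound.

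There is, however, a genuine gap in your very first step. Reducing $E$ to $E\cap\mathcal Z_+(\Gamma_n)$ for ``a single index $n$ with $\gamma(E\cap\mathcal Z_+(\Gamma_n))>0$'' only lets you conclude $\gamma(E_N)\gtrsim\gamma(E\cap\mathcal Z_+(\Gamma_n))$; it gives no lower bound of $\gamma(E\cap\mathcal Z_+(\Gamma_n))$ in terms of $\gamma(E)$, and the index $n$ realizing positivity depends on $E$. The lemma claims $\gamma(E_N)\gtrsim\gamma(E)$ with the paper's convention that $\gtrsim$ means an absolute constant, and this absoluteness is essential downstream: in the proof of Lemma~\ref{GLemma} the corollary is applied to a sequence $F_n$ and one needs $\gamma(E_N^n)\gtrsim\gamma(F_n)$ uniformly in $n$ to reach the contradiction. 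Your parenthetical ``(with final constants allowed to depend on that $n$)'' is precisely what is not allowed. The paper sidesteps this by first producing $\eta\in M_0^+(E)$ with $N_2(\eta)\le1$ and $\gamma(E)\lesssim\|\eta\|$, and then using the $\sigma$-additivity of the \emph{measure} $\eta$, not the semiadditivity of $\gamma$, to find among the $36$ rotation-angle classes a finite union $G=\bigcup_{0\le\beta_n<\pi/18,\,n\le M}\Gamma_n\cap E$ with $\eta(G)\ge\tfrac1{72}\eta(E)$; thereafter all constants are absolute. Once you adopt this reduction, your cone argument still works: since every $\Gamma_n$ in the group has $\|A_n'\|_\infty\le\tfrac14$ and rotation angle within $\pi/18$, a single (slightly narrower) cone direction lies inside $U_{\Gamma_{n'}}$ for all graphs $\Gamma_{n'}$ in the group, which is what you need so that $\lambda+C$ lands in the correct region for the $\gamma$-limit of Theorem~\ref{GPTheorem1}(b). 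As written, though, the single-graph reduction does not deliver the stated inequality.
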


\begin{proof} From Theorem \ref{TolsaTheorem} (1) and Proposition \ref{GammaPlusThm} (2), we find $\eta \in M_0^+(E)$ with $1$-linear growth such that $N_2(\eta) \le 1$ and $\gamma (E) \lesssim \|\eta\|$.
 Then
\[
\ \bigcup_{n=1}^\i \Gamma _n = \bigcup_{k=1}^{36}\bigcup_{(k-1)\frac{\pi}{18} \le \beta_n < k\frac{\pi}{18}} \Gamma_n.
\] 
Hence,
 \[
\ \eta(E) \le \sum_{k=1}^{36} \lim_{M\rightarrow \i} \eta \left (\bigcup_{(k-1)\frac{\pi}{18} \le \beta_n < k\frac{\pi}{18}, ~ n\le M} \Gamma_n \cap E \right).
\]
We can find a $k$ and $M$, without loss of generality, assuming $k = 0$, such that 
 \[
 \ G:= \bigcup_{0 \le \beta_n < \frac{\pi}{18}, ~ n\le M} \Gamma_n \cap E
 \]
 satisfying 
$\eta(G) \ge \dfrac{1}{72} \eta(E) \gtrsim \dfrac{1}{72}\gamma(E).$
Hence, we assume $E$ satisfies the following:
\newline
(A) Corresponding Lipschitz function $A_n$ of $\Gamma_n$ satisfying $\|A_n'\|_\infty \le \frac{1}{4}$ (see the proof of \cite[Lemma 4.11]{Tol14});
\newline
(B) The rotation angles $\beta_n$ of $\Gamma_n$ are between $0$ and $\frac{\pi}{18}$;
\newline
(C) $E \subset \cup_{n = 1}^M \Gamma_n$ and $\eta(E) \gtrsim \dfrac{1}{144}\gamma(E)$; and
\newline
(D) We fix an open subset $O \supset E$ such that $\mathcal H^1(O_\Gamma) \le 2\mathcal H^1(E),$ where $O_\Gamma = \cup_{n=1}^M \Gamma_n \cap O.$ 

Claim: There exists $\epsilon_1 > 0 $ (depending on $N$) such that for $\epsilon_2 < \epsilon_1,$ there exists $E_1 \subset E$ with $\eta (E_1) \ge \frac{3}{4} \eta (E)$ satisfying ($I = \sqrt{-1}$): 
 \begin{eqnarray}\label{ClaimEq}
 \ |\CT (g\mu)(z)| \le N^{-1},~ z\in E_N := E_1 + \epsilon_2 I.
 \end{eqnarray}
 
 Proof of the claim:
Set $U(\lambda,\delta) = U_\Gamma \cap \D(\lambda,\delta).$
By Proposition \ref{NZGammaProp} and Theorem \ref{GPTheorem1}, for $\lambda \in E\cap \Gamma_l$ ($1 \le l \le M$), we have
 \begin{eqnarray}\label{ClaimEq0}
 \ \lim_{\delta\rightarrow 0}\dfrac{\gamma(U(\lambda,\delta)\cap \{|\mathcal{C}(g\mu)(z )| > N^{-1} \})}{\delta} = 0.  
\end{eqnarray}
 
Let $B_n$ be a subset consisting of $\lambda\in E$ satisfying
 \begin{eqnarray}\label{ClaimEq1}
 \ \gamma(U(\lambda,\delta)\cap\{|\mathcal{C}(g\mu)(z )| > N^{-1} \}) \le \dfrac{\eta (E) \delta}{80C_T^2\mathcal H^1(E)}\text{ for }\delta \le \frac{1}{n}.
 \end{eqnarray}
We require $n > n_0,$ where $\frac{1}{n_0}$ is less than the smallest distance between $\C\setminus O$ and $E.$ The sets $B_n \subset B_{n+1}$ and
 by \eqref{ClaimEq0}, we see that   
 $\eta (E\setminus \cup_{n=n_0}^\infty B_n) = 0.$ 
 Choose $m > n_0$ large enough such that there is a compact subset $F_m\subset B_m$ with $\eta (F_m) \ge \frac{7}{8}\eta(E).$ For $\delta < \frac 1m,$ using $5r$-covering theorem (see \cite[Theorem 2.2]{Tol14}), there exists a sequence $\{\lambda_k\}_{k=1}^{M_\delta} \subset F_m$ such that 
 \[
 \ F_m \subset \cup_{k=1}^{M_\delta} \D(\lambda_k, \delta) \text{ and } \D(\lambda_{k_1}, \frac 15 \delta) \cap \D(\lambda_{k_2}, \frac 15 \delta) = \emptyset \text{ for } k_1 \ne k_2. 
 \]
 From (D), we see that $M_\delta \delta \le 5 \mathcal H^1(E).$ 
 Set $U_\delta = \cup_{k=1}^{M_\delta} U(\lambda_k, \delta)$ and $V_\delta = U_\delta \cap\{|\mathcal{C}(g\mu)(z )| > N^{-1} \}.$ Applying \eqref{ClaimEq1} and Theorem \ref{TolsaTheorem} (2), we get
 \begin{eqnarray}\label{ClaimEq2}
 \ \gamma(V_\delta) \le C_T\sum_{k=1}^{M_\delta} \gamma(U(\lambda_k,\delta)\cap\{|\mathcal{C}(g\mu)(z )| > N^{-1} \}) \le \frac{\eta(E)}{16C_T}.
 \end{eqnarray}
By (A) and (B), we see that there exists $\epsilon_1 > 0$ such that if $\epsilon_2 < \epsilon_1$ and $L_m = F_m + \epsilon_2 I,$ then $L_m \subset U_\delta.$
Let 
 \[
 \ E_1 = L_m \cap \{|\mathcal{C}(g\mu)(z )| \le N^{-1} \}  - \epsilon_2 I\text{ and } E_0 = L_m \cap \{|\mathcal{C}(g\mu)(z )| > N^{-1} \}  - \epsilon_2 I.
 \]
Then
 \[
\ \eta ( E_1 ) =  \eta \left ( (L_m  - \epsilon_2 I) \setminus E_0 \right ) \ge \eta ( F_m) - \eta ( E_0 ).
\]
$\eta |_{E_0}$ is of $1$-linear growth and $N_2(\eta |_{E_0}) \le 1.$
From \eqref{ClaimEq2}, Theorem \ref{TolsaTheorem} (1), and Proposition \ref{GammaPlusThm} (3), we get
 \[
 \  \eta (  E_0 ) \le 2C_T \gamma (E_0) =  2C_T\gamma \left (  L_m \cap \{|\mathcal{C}(g\mu)(z )| > N^{-1} \} \right ) \le 2C_T \gamma (V_\delta) \le \dfrac{1}{8} \eta (E).
\]
Combining above two inequalities, we get $\eta (E_1) \ge \frac{3}{4} \eta (E)$ and \eqref{ClaimEq} holds. This completes the proof of the claim. The lemma now follows from Theorem \ref{TolsaTheorem} (1), Proposition \ref{GammaPlusThm} (3), and the claim.
\end{proof}

\begin{corollary}\label{acCorollary}
Let $g\perp R^t(K,\mu)$ and let $E\subset \mathcal F$ be a compact subset with $\gamma (E) > 0$. Then there exists a sequence of compact subsets $\{E_N\}_{N=1}^\infty$ such that $|\CT(g\mu)(z)| \le \frac 1N,~ z\in E_N,$ $\sup_{x\in E_N}\text{dist}(x, E) < \epsilon$ for a given $\epsilon > 0,$ and $\gamma (E_N) \gtrsim \gamma (E).$ 
\end{corollary}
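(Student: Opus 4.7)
The plan is to reduce the general $E \subset \mathcal F$ case to the $\mathcal F_+$/$\mathcal F_-$ case already handled by Lemma \ref{acLemma}, using the decomposition $\mathcal F = \mathcal F_0 \cup \mathcal F_+ \cup \mathcal F_-$ from \eqref{FRSetDef} together with semiadditivity of analytic capacity.

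First I would split $E$ as a disjoint union of Borel sets $E \cap \mathcal F_0$, $(E \cap \mathcal F_+) \setminus \mathcal F_0$, and $(E \cap \mathcal F_-) \setminus (\mathcal F_0 \cup \mathcal F_+)$. By Theorem \ref{TolsaTheorem} (2), at least one of these three pieces has analytic capacity $\gtrsim \gamma(E)$. Since $\gamma$ is defined on arbitrary sets as the supremum of $\gamma$ over compact subsets, I can then choose a compact subset $E' \subset E$, contained in one of $\mathcal F_0$, $\mathcal F_+$, $\mathcal F_-$, with $\gamma(E') \gtrsim \gamma(E)$.

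If $E' \subset \mathcal F_+$ (or symmetrically $\mathcal F_-$), I would apply Lemma \ref{acLemma} directly to $E'$ to obtain compact sets $\{E_N\}$ with $|\CT(g\mu)(z)| \le 1/N$ on $E_N$, $\gamma(E_N) \gtrsim \gamma(E') \gtrsim \gamma(E)$, and $\sup_{x \in E_N} \operatorname{dist}(x, E') < \epsilon$; the last condition implies $\sup_{x \in E_N} \operatorname{dist}(x, E) < \epsilon$ because $E' \subset E$.

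If instead $E' \subset \mathcal F_0$, I would invoke Proposition \ref{NF0Prop} (2), which gives that $\CT(g\mu)(z) = 0$ on $\mathcal F_0$ off a set $\mathcal Q$ with $\gamma(\mathcal Q) = 0$. Hence $E' \setminus \mathcal Q$ is a Borel set with $\gamma(E' \setminus \mathcal Q) = \gamma(E')$ (by subadditivity, Theorem \ref{TolsaTheorem} (2)), so inner regularity of $\gamma$ yields a compact $\tilde E \subset E' \setminus \mathcal Q$ with $\gamma(\tilde E) \gtrsim \gamma(E)$. Taking $E_N := \tilde E$ for every $N$ works: $|\CT(g\mu)(z)| = 0 \le 1/N$ on $E_N \subset E$, and the distance condition is trivially satisfied. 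I do not anticipate any substantive obstacle; the only care required is to move between Borel pieces and compact subsets via inner regularity of analytic capacity before Lemma \ref{acLemma} can be invoked.
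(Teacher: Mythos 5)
Your proof follows essentially the same route as the paper's: split $E$ via semiadditivity into pieces lying in $\mathcal F_0$, $\mathcal F_+$, $\mathcal F_-$, handle the $\mathcal F_0$ case using Proposition \ref{NF0Prop}(2), and invoke Lemma \ref{acLemma} for the $\mathcal F_\pm$ cases; you are in fact slightly more careful than the paper's terse ``choose $E_N = E$'' by using inner regularity of $\gamma$ to extract a compact $\tilde E$ on which $\CT(g\mu)$ vanishes pointwise rather than only $\gamma$-a.e. One small imprecision: semiadditivity (Theorem \ref{TolsaTheorem}(2)) yields only $\gamma(E'\setminus\mathcal Q)\gtrsim\gamma(E')$, not the exact equality you assert, but that is all your argument requires.
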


\begin{proof}
If $E \subset \mathcal F_0,$ then by Proposition \ref{NF0Prop} (2), we have $\CT(g\mu)(z) = 0, ~ \gamma|_E-a.a..$ So we can choose $E_N = E$ in this case. In general, by Theorem \ref{TolsaTheorem} (2), we have
\[
\ \gamma(E) \le C_T (\gamma(E\cap \mathcal F_0) + \gamma(E\cap \mathcal F_+) + \gamma(E\cap \mathcal F_-)).
\]
The proof now follows from Lemma \ref{acLemma}.
\end{proof}

The following lemma is a generalization of \eqref{FACDensityThmEq1} when $\gamma(\mathcal {ND} (\mu)) > 0.$

\begin{lemma} \label{GLemma}
Let $O$ be an open subset of $\mathbb C$, $g\perp \rtkmu$, and $a\ne 0$. If for some $0 < \epsilon < \frac{|a|}{2}$ and $\lambda\in \mathbb C$,
 \begin{eqnarray}\label{GLemmaAssump}
 \ \lim_{\delta\rightarrow 0 }\dfrac{\gamma(\D(\lambda, \delta)\cap O \cap \{|\mathcal C(g\mu)(z) - a| > \epsilon\})}{\delta} = 0,
 \end{eqnarray} 
then
 \[
 \ \lim_{\delta\rightarrow 0 }\dfrac{\gamma(\D(\lambda, \delta)\cap O \cap \mathcal F)}{\delta} = 0.
 \] 
\end{lemma}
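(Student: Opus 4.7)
I would argue by contradiction, exploiting that the hypothesis $\epsilon < |a|/2$ forces a quantitative gap. Fix once and for all an integer $N_0$ so large that $1/N_0 < |a|-\epsilon$; then on any set where $|\CT(g\mu)|\le 1/N_0$ the reverse triangle inequality gives $|\CT(g\mu)-a|\ge |a|-1/N_0 > \epsilon$. Thus it suffices to produce, inside $\D(\lambda,\delta)\cap O$, compact subsets of capacity comparable to $\gamma(\D(\lambda,\delta)\cap O\cap \mathcal F)$ on which $|\CT(g\mu)|\le 1/N_0$, for those subsets will land inside the set in the hypothesis \eqref{GLemmaAssump} and violate it. The device that supplies such subsets sitting near $\mathcal F$ is Corollary \ref{acCorollary}.

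Concretely, suppose for contradiction that
\[
\underset{\delta\rightarrow 0}{\overline{\lim}}\,\dfrac{\gamma(\D(\lambda,\delta)\cap O\cap \mathcal F)}{\delta} = c_0 > 0,
\]
and choose $\delta_k \rightarrow 0$ with $\gamma(\D(\lambda,\delta_k)\cap O\cap \mathcal F)\ge c_0 \delta_k/2$. Since $\gamma$ is defined as a supremum over compact subsets, for each $k$ I would extract a compact $E_k\subset \D(\lambda,\delta_k)\cap O\cap \mathcal F$ with $\gamma(E_k)\ge c_0\delta_k/4$. Because $E_k$ is compact inside the open set $\D(\lambda,\delta_k)\cap O$, the quantity $\eta_k:=\text{dist}(E_k,\C\setminus(\D(\lambda,\delta_k)\cap O))$ is strictly positive. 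Apply Corollary \ref{acCorollary} to $E_k\subset \mathcal F$ with free proximity parameter $\eta_k$, and pick the index $N_0$ in the resulting sequence: this produces a compact $E_k^{*}$ satisfying $|\CT(g\mu)|\le 1/N_0$ on $E_k^{*}$, $\sup_{x\in E_k^{*}}\text{dist}(x,E_k)<\eta_k$, and $\gamma(E_k^{*})\gtrsim \gamma(E_k)\gtrsim \delta_k$. The proximity bound forces $E_k^{*}\subset \D(\lambda,\delta_k)\cap O$, and the choice of $N_0$ forces $E_k^{*}\subset \{|\CT(g\mu)-a|>\epsilon\}$, so that
\[
\gamma(\D(\lambda,\delta_k)\cap O\cap\{|\CT(g\mu)-a|>\epsilon\})\ge \gamma(E_k^{*})\gtrsim \delta_k,
\]
contradicting \eqref{GLemmaAssump}.

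The main obstacle is keeping the perturbed set $E_k^{*}$ produced by Corollary \ref{acCorollary} inside the open set $\D(\lambda,\delta_k)\cap O$, since the corollary only guarantees capacity comparability after displacing off $\mathcal F$ by a (small but nonzero) amount. This is resolved by coupling the displacement parameter to the compactness of $E_k$ in $\D(\lambda,\delta_k)\cap O$, which automatically yields the strictly positive $\eta_k$. Once $N_0$ is fixed from the gap $\epsilon < |a|/2$ and the proximity parameter is taken below $\eta_k$, every remaining step is a direct appeal to Corollary \ref{acCorollary}, the reverse triangle inequality, and the inner-compact definition of analytic capacity.
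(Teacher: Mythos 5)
Your proposal is correct and follows essentially the same route as the paper's proof: argue by contradiction, extract compact subsets $F_n\subset\D(\lambda,\delta_n)\cap O\cap\mathcal F$ of capacity $\gtrsim\delta_n$, use the positive distance to the complement as the proximity parameter in Corollary \ref{acCorollary}, and observe that the displaced sets land in $\{|\CT(g\mu)-a|>\epsilon\}$ via the reverse triangle inequality, contradicting \eqref{GLemmaAssump}. The only cosmetic difference is your threshold $1/N_0<|a|-\epsilon$ versus the paper's bound $|\CT(g\mu)|<|a|/2$; both deliver the required gap since $\epsilon<|a|/2$.
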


\begin{proof}
Suppose that there exists $\epsilon_0 > 0$  and $\delta_n \rightarrow 0$ such that 
	\[
 \ \gamma(\D(\lambda, \delta_n)\cap O\cap \mathcal F)\ge 2\epsilon_0 \delta_n.
 \]
Let $F_n \subset \D(\lambda, \delta_n)\cap O\cap \mathcal F$ be a compact subset such that $\gamma(F_n)\ge \epsilon_0 \delta_n$.
Let $d_n$ be the smallest distance between $(\D(\lambda, \delta_n)\cap O)^c$ and $F_n$. From
Corollary \ref{acCorollary}, we find a compact subset $E_N^n\subset \D(\lambda, \delta_n)\cap O$ such that $\gamma(E_N^n) \gtrsim \gamma(F_n),$ 
 $\sup_{x\in E_N^n}\text{dist} (x, F_n) < d_n,$ and $|\CT(g\mu)(z)| < \frac{|a|}{2}, ~ z \in E_N^n.$
Hence, 
 \[
 \ E_N^n \subset \D(\lambda, \delta_n)\cap O \cap \{|\mathcal C(g\mu)(z) - a| > \epsilon\}.
 \]
Hence,
 \[
 \ \dfrac{\gamma(\D(\lambda, \delta_n)\cap O\cap \{|\mathcal C(g\mu)(z) - a| > \epsilon\})}{\delta_n} \ge \dfrac{\gamma(E_N^n)}{\delta_n} \gtrsim \epsilon _0,
 \]
which contradicts the assumption \eqref{GLemmaAssump}. The lemma is proved.  
\end{proof}

\begin{proof} (Theorem \ref{FACDensityThm})
For almost all $\lambda\in \mathcal R_0$ with respect to $\gamma$, there exists $j_0$ such that $\mathcal C(g_{j_0}\mu)(\lambda) \ne 0$ exists and $\lambda \in \mathcal {ZD}(g_{j_0}\mu)$. \eqref{FACDensityThmEq} follows from Lemma \ref{CauchyTLemma} and Lemma \ref{GLemma} for $O = \mathbb C.$

For $\lambda\in \mathcal R_1\cap \Gamma_1,~\gamma-a.a.$, there are integers $j_1$ and $j_2$ such that:  
\begin{eqnarray}\label{DensityTheoremEqa}
\ h_i(\lambda) \ne 0\text{ and }\lim_{\delta\rightarrow 0 }\dfrac{\gamma(\D(\lambda, \delta)\cap \Gamma_1 \cap \{|h_i(z)-h_i(\lambda)| > \epsilon \})}{\delta} = 0
\end{eqnarray}
for $i = 1,2,3,$ where $h_1 = h,$ $h_2 = v^+(g_{j_1}\mu, \Gamma_1, \beta_1),$ and $h_3 = v^-(g_{j_2}\mu, \Gamma_1, \beta_1)$ (notice $\gamma |_{\Gamma_1} \approx \mathcal H^1 |_{\Gamma_1}$ by \eqref{HACEq});
\begin{eqnarray}\label{DensityTheoremEqb}
\ \lim_{\delta\rightarrow 0 }\dfrac{\gamma(\D(\lambda, \delta)\cap U_{\Gamma_1} \cap \{|\CT(g_{j_1}\mu)(z) -h_2(\lambda)| > \epsilon \})}{\delta} = 0
\end{eqnarray}
by Theorem \ref{GPTheorem1} (b);
 and 
\begin{eqnarray}\label{DensityTheoremEqc}
\ \lim_{\delta\rightarrow 0 }\dfrac{\gamma(\D(\lambda, \delta)\cap L_{\Gamma_1} \cap \{|\CT(g_{j_2}\mu)(z) - h_3(\lambda)| > \epsilon \})}{\delta} = 0
\end{eqnarray}
by Theorem \ref{GPTheorem1} (c).
  For $\epsilon < \frac 12 \min (|h_1(\lambda)|,|h_2(\lambda)|,|h_3(\lambda)|),$ since $\mathcal F_0 \cap \mathcal N(h) = \emptyset$ (see Theorem \ref{NDecompThm}), we get 
 \[
 \begin{aligned}
 \ & \Gamma_1\cap \mathcal F_0\subset\Gamma_1\cap\{|h_1(z)- h_1(\lambda)| > \epsilon \}, \\
 \ & \Gamma_1\cap \mathcal F_+\subset\Gamma_1\cap\{|h_2(z)- h_2(\lambda)| > \epsilon \}, \\
 \ & \Gamma_1\cap \mathcal F_-\subset\Gamma_1\cap\{|h_3(z)- h_3(\lambda)| > \epsilon \}.
 \end{aligned}
 \]
 Using Theorem \ref{TolsaTheorem} (2) and \eqref{DensityTheoremEqa}, we have
 \begin{eqnarray}\label{DensityTheoremEq2}
 \ \lim_{\delta\rightarrow 0 }\dfrac{\gamma(\D(\lambda, \delta)\cap \Gamma_1 \cap \mathcal F)}{\delta} = 0.
\end{eqnarray}
 By \eqref{DensityTheoremEqb}, applying Lemma \ref{GLemma} for $O= U_{\Gamma_1}$, we have 
 \begin{eqnarray}\label{DensityTheoremEq3}
 \ \lim_{\delta\rightarrow 0 }\dfrac{\gamma(\D(\lambda, \delta)\cap U_{\Gamma_1} \cap \mathcal F)}{\delta} = 0.
\end{eqnarray}
 By \eqref{DensityTheoremEqc}, applying Lemma \ref{GLemma} for $O= L_{\Gamma_1}$, we have
 \begin{eqnarray}\label{DensityTheoremEq4}
 \ \lim_{\delta\rightarrow 0 }\dfrac{\gamma(\D(\lambda, \delta)\cap L_{\Gamma_1} \cap \mathcal F)}{\delta} = 0.
 \end{eqnarray} 
 Using Theorem \ref{TolsaTheorem} (2) for \eqref{DensityTheoremEq2}, \eqref{DensityTheoremEq3}, and \eqref{DensityTheoremEq4}, we finish the proof.  
\end{proof}

 \section{\textbf{Surjectivity of the Map $\rho$}}
 
 \begin{definition}\label{HEDAlgDef}
Let $\mathcal D \subset \C$ be a bounded Borel subset. Let $H(\mathcal D)$ be the set of functions $f(z)$ such that $f(z)$ is bounded and analytic on $\mathbb C\setminus E_f$ for some compact subset $E_f\subset \mathbb C \setminus \mathcal D.$
Define $H^\i (\mathcal D)$ to be the weak-star closed subalgebra of $L^\i (\area _{\mathcal D})$ generated by functions in $H(\mathcal D).$ 
\end{definition}

If $\mathcal D$ is a bounded open subset, then $H^\i(\mathcal D)$ is the algebra of bounded and analytic functions on $\mathcal D.$ If $K$ is a Swiss cheese set as in Example \ref{FRExample}, then $\mathcal R = K \setminus \partial_e K$ and $H^\i(\mathcal R)$ is different from the bounded and analytic functions on an open subset as $\mathcal R$ has no interior points. 

\begin{definition} \label{GammaOpenDef}
 The subset $\mathcal D \subset \C$ is \index{$\gamma$-open}{\em $\gamma$-open} if
 there exists a subset $\mathcal Q$ with $\area(\mathcal Q) = 0$ such that for $\lambda \in \mathcal D \setminus \mathcal Q$,
 \begin{eqnarray}\label{DDensityDef}
  \ \lim_{\delta\rightarrow 0} \dfrac{\gamma(\D(\lambda, \delta)\setminus \mathcal D)}{\delta} = 0.
 \end{eqnarray}
 If $\gamma(\mathcal Q) = 0,$ then $\mathcal D$ is called \index{strong $\gamma$-open}{\em strong  $\gamma$-open}.
 \end{definition}
 
 Clearly, by \eqref{AreaGammaEq}, if $\mathcal D$ is strong $\gamma$-open, then $\mathcal D$ is $\gamma$-open. By Theorem \ref{FACDensityThm}, we see that $\mathcal R$ is strong $\gamma$-open.
 
 \begin{definition}\label{BBDef}
 For $\delta > 0$ and integers $i,j,$  let $c_{ij} = (\frac{i+1}{2}\delta, \frac{j+1}{2}\delta).$ We say $\{\delta, E_{ij}, \eta_{ij}, \tilde f_{ij}, k_1\}$ ($k_1 \ge 1$) is a building block for $\mathcal D$ if $E_{ij} \subset \D(c_{ij}, k_1 \delta) \setminus \mathcal D$ is a compact subset, $\eta_{ij} \in M_0^+(E_{ij})$ satisfies $\|\mathcal C_\epsilon (\eta_{ij})\| \lesssim 1$ and $\|\eta_{ij}\| = \gamma (\D(c_{ij}, k_1 \delta) \setminus \mathcal D),$ and $\tilde f_{ij}\in L^\i (\mu)$ satisfies $\|\tilde f_{ij}\|_{L^\i (\mu)} \lesssim 1$ and $\tilde f_{ij} (z) = \mathcal C (\eta_{ij})(z)$ for $z \in \C \setminus E_{ij}.$
 \end{definition}
 
 By Theorem \ref{TolsaTheorem} (1) and Proposition \ref{GammaPlusThm} (1), we see that for $\delta > 0,$ there always exists a building block $\{\delta, E_{ij}, \eta_{ij}, \tilde f_{ij}, k_1\}.$	 If $\mathcal D$ is bounded, then there are only finite many $i$ and $j$ such that $\eta_{ij} \ne 0.$

 \begin{theorem}\label{HDAlgTheorem}
Let $\mathcal D$ be a $\gamma$-open bounded Borel subset. Let $f\in L^\i (\area_{\mathcal D})$ be given with $\|f\|_{L^\infty (\area_{\mathcal D})}\le 1$. 
Then the following statements are equivalent.

(1) There exists $C_f>0$ (depending on $f$) such that for all $\lambda\in \mathbb C$, $\delta > 0$, and for all choices of a smooth non-negative function $\varphi$ with support in $\D(\lambda, \delta)$ satisfying $\varphi (z) \le 1$  and $\left \|\frac{\partial \varphi (z)}{\partial  \bar z} \right \|_\infty \lesssim \frac{1}{\delta},$ we have   
 \begin{eqnarray}\label{HDAlgTheoremEq}
 \ \left | \int (z-\lambda)^nf(z) \dfrac{\partial \varphi (z)}{\partial  \bar z} d\area_{\mathcal D}(z) \right | \le C_f\delta^n \gamma (\D(\lambda, \delta) \setminus \mathcal D) \text{ for } n \ge 0.
 \end{eqnarray} 
 
 (2) Let $\{\delta, E_{ij}, \eta_{ij}, \tilde f_{ij},k_1\}$ be a building block for $\mathcal D$ as in Definition \ref{BBDef}. 
 Then there exists $f_\delta\in H(\mathcal D)\cap L^\i (\mu)$ that is a finite linear combination of $\tilde f_{ij}$ such that $\|f_\delta\|_{\mathcal D},~ \|f_\delta\|_{L^\i (\mu)} \lesssim C_f$ and there exists a subsequence  $\{f_{\delta_m}\}$ satisfying $f_{\delta_m}(z) \rightarrow f(z),~\area_{\mathcal D}-a.a.$ as $\delta_m\rightarrow 0.$ 
 
 (3) $f\in H^\i(\mathcal D).$   
\end{theorem}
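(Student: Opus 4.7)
The plan is to establish the cyclic implications $(3) \Rightarrow (1) \Rightarrow (2) \Rightarrow (3)$, treating $(1) \Rightarrow (2)$ as the main constructive step via a Vitushkin-style localization scheme adapted to the $\gamma$-open set $\mathcal D$.

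For $(3) \Rightarrow (1)$, I would first verify \eqref{HDAlgTheoremEq} for $f \in H(\mathcal D)$ and then pass to weak-star limits. Given $f \in H(\mathcal D)$ bounded by $1$ and analytic on $\C \setminus E_f$ with $E_f \subset \C \setminus \mathcal D$ compact, the function $(z-\lambda)^n f(z)$ is analytic off $E_f$ and bounded by $(C\delta)^n$ on $\text{supp}(\varphi) \subset \D(\lambda,\delta)$. By Green's identity together with the standard Cauchy--Green / Melnikov machinery, the integral $\int (z-\lambda)^n f\, \bar\partial \varphi\, d\area$ reduces to a pairing with a complex measure supported in $E_f \cap \text{supp}(\varphi)$ whose mass is dominated by $\gamma(E_f \cap \D(\lambda,\delta))$. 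This gives a bound of order $\delta^n \gamma(\D(\lambda,\delta) \setminus \mathcal D)$ as required. Since the left-hand side of \eqref{HDAlgTheoremEq} is a pairing with $\bar\partial \varphi \in L^1(\area_{\mathcal D})$, the bound survives weak-star limits and hence extends to all of $H^\i(\mathcal D)$.

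For $(1) \Rightarrow (2)$, I would fix a building block $\{\delta, E_{ij}, \eta_{ij}, \tilde f_{ij}, k_1\}$ and a smooth partition of unity $\{\varphi_{ij}\}$ with $\text{supp}(\varphi_{ij}) \subset \D(c_{ij}, \delta)$, $\sum_{ij} \varphi_{ij} \equiv 1$, and $\|\bar\partial \varphi_{ij}\|_\i \lesssim 1/\delta$. Set
\[
f_\delta \;=\; \sum_{ij} a^\delta_{ij}\, \tilde f_{ij},\qquad a^\delta_{ij} \;:=\; \frac{1}{\pi \|\eta_{ij}\|}\int f(z)\, \bar\partial \varphi_{ij}(z)\, d\area_{\mathcal D}(z),
\]
so that $a^\delta_{ij} \tilde f_{ij}$ matches the Vitushkin localization $V_{\varphi_{ij}}(f)$ to leading order near $\infty$. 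Hypothesis (1) with $n=0$ gives $|a^\delta_{ij}| \lesssim C_f$ by the definition of $\|\eta_{ij}\| = \gamma(\D(c_{ij}, k_1 \delta) \setminus \mathcal D)$, while (1) with $n \ge 1$ provides the cancellation between neighboring cells needed to obtain the uniform bound $\|f_\delta\|_{L^\i(\area_{\mathcal D})} \lesssim C_f$, exactly in the spirit of \cite[Theorem 4.3]{y23}. For $\area_{\mathcal D}$-a.a.\ $z \in \mathcal D$, the $\gamma$-open property $\gamma(\D(z,\delta)\setminus \mathcal D)/\delta \to 0$ and a Lebesgue-point argument for $\bar\partial \varphi_{ij}$ force the dominant contribution of the nearby cells to converge to $f(z)\sum_{ij}\varphi_{ij}(z) = f(z)$, yielding $f_{\delta_m}(z) \to f(z)$ along a subsequence.

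The implication $(2) \Rightarrow (3)$ is essentially immediate: each $\tilde f_{ij}$ is bounded and analytic off the compact set $E_{ij} \subset \C \setminus \mathcal D$, so $\tilde f_{ij} \in H(\mathcal D)$ and therefore $f_\delta \in H(\mathcal D)$. The uniform bound $\|f_{\delta_m}\|_{L^\i(\area_{\mathcal D})} \lesssim C_f$ combined with pointwise $\area_{\mathcal D}$-a.a.\ convergence $f_{\delta_m} \to f$ gives weak-star convergence in $L^\i(\area_{\mathcal D})$ by dominated convergence against any $L^1(\area_{\mathcal D})$ function, placing $f$ in the weak-star closure $H^\i(\mathcal D)$. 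The hard step is clearly $(1) \Rightarrow (2)$: a naive summation of the building blocks produces a factor that depends on the number of nonzero cells and blows up as $\delta \to 0$, so the argument must genuinely exploit cancellation between the localized pieces, which is where the higher moment estimates \eqref{HDAlgTheoremEq} with $n\ge 1$ enter to annihilate the Laurent tails of the $\tilde f_{ij}$ near $\infty$ up to the required order. The $\gamma$-open hypothesis on $\mathcal D$ is essential both to make the building blocks carry enough mass ($\|\eta_{ij}\| = \gamma(\D(c_{ij}, k_1\delta) \setminus \mathcal D)$ being non-degenerate where needed) and to ensure that the resulting approximation converges at $\area_{\mathcal D}$-a.a.\ points of $\mathcal D$.
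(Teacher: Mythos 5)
Your route matches the paper's exactly: $(3)\Rightarrow(1)$ via the Vitushkin localization $T_\varphi$ and the bound $|(T_\varphi g)'(\infty)|\lesssim\|T_\varphi g\|\,\gamma(E_f\cap\text{supp}\,\varphi)$, followed by a weak-star limiting argument; $(1)\Rightarrow(2)$ deferred to the modified Paramonov--Vitushkin scheme in \cite[Theorem 4.3]{y23}; and $(2)\Rightarrow(3)$ immediate. The one thing to patch in your $(3)\Rightarrow(1)$ sketch is that \eqref{HDAlgTheoremEq} integrates against $\area_{\mathcal D}$ rather than $\area$, so after Green's identity you must also control the piece $\int_{\C\setminus\mathcal D}(z-\lambda)^n f_m\,\bar\partial\varphi\,d\area$; the paper bounds it by $\delta^{n-1}\area(\D(\lambda,\delta)\setminus\mathcal D)\lesssim\delta^n\gamma(\D(\lambda,\delta)\setminus\mathcal D)$ using $\area(E)\le4\pi\gamma(E)^2$ and $\gamma(\D(\lambda,\delta)\setminus\mathcal D)\le\delta$.
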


\begin{proof}
(1)$\Rightarrow$(2): See \cite[Theorem 4.3]{y23}.
 The proof is technical and requires to use the modified Vitushkin scheme of Paramonov \cite{p95} (see \cite[sections 3 \& 4]{y23}). 
 
 (2)$\Rightarrow$(3) is trivial.
 
 (3)$\Rightarrow$(1): The Vitushkin's localization operator $T_\varphi$ is defined by
 \[
 \ (T_\varphi f)(\lambda) = \dfrac{1}{\pi}\int \dfrac{f(z) - f(\lambda)}{z - \lambda} \bar\partial \varphi (z) d\area(z),
 \]
where $f\in L^1_{loc} (\C)$.	 Clearly, $(T_\varphi f)(z) = - \frac{1}{\pi}\mathcal C(\varphi \bar\partial f\area ) (z).$
Therefore, by \eqref{CTDistributionEq}, $T_\varphi f$ is analytic outside of $\text{supp} (\bar \partial f) \cap\text{supp} (\varphi).$ If $\text{supp} (\varphi) \subset \D(a,\delta),$ then $\| T_\varphi f\|_\i   \le  4\|f\|_\i  \delta\|\bar\partial \varphi\|.$
 See \cite[VIII.7.1]{gamelin} for the details of $T_\varphi.$

Let $f\in H^\infty (\mathcal D)$ with $\|f\|_{\mathcal D} \le 1$ and $f(z) = 0,~ z\in \C \setminus \mathcal D.$ 
 Let $\{f_m\}\subset H(\mathcal D),$ where $f_m$ is bounded and analytic on $\C\setminus E_m$ for some compact subset $E_m \subset \C \setminus \mathcal D,$ such that $f_m$ converges to $f$ in $L^\i(\area_{\mathcal D})$ weak-star topology. Then $\|f_m\|_{\mathcal D} \le C_f$ for some constant $C_f.$ We may assume $f_m(z)\rightarrow f(z),~\area_{\mathcal D}-a.a.$ and $\|f_m\|_{\C} \le 2C_f.$ 
 The function $T_\varphi f_m$ is analytic on $\C_\i \setminus (\text{supp}(\varphi)\cap E_m).$ Therefore, for $n \ge 0,$
 \[ 
\begin{aligned}
\ & \left | \int_{\mathcal D} f_m(z) (z - \lambda)^{n}\bar \partial \varphi (z) d\area (z)\right | \\
\ \le & \left | \int_{\C \setminus \mathcal D} f_m(z) (z - \lambda)^{n}\bar \partial \varphi (z) d\area (z)\right | + \left | \int f_m(z) (z - \lambda)^{n}\bar \partial \varphi (z) d\area (z)\right | \\
\ \lesssim & C_f \delta ^{n-1} \area (\D(\lambda, \delta)\setminus \mathcal D) + \pi \|T_{\varphi} ((z - \lambda)^{n}f_m)\| \gamma (\text{supp}(\varphi)\cap E_m) \\
\ \lesssim & C_f \delta ^{n} \gamma (\D(\lambda, \delta)\setminus \mathcal D),
\end{aligned}
\]
where \eqref{AreaGammaEq} is used in the last step.
Taking $m\rightarrow \i,$ we prove \eqref{HDAlgTheoremEq}. 
 \end{proof}
 
 The aim of this section is to use Theorem \ref{HDAlgTheorem} to prove the following Lemma.
 
 \begin{lemma}\label{RhoOntoLemma}
 For $f\in H^\infty(\mathcal R)$ with $\|f\|_{\mathcal R} \le 1,$ there exists $\tilde f\in R^{t,\i}(K, \mu)$ such that $\rho(\tilde f) = f.$ Moreover, $\|\tilde f\|_{L^\i(\mu)} \lesssim C_f,$ where $C_f$ is the constant in \eqref{HDAlgTheoremEq}. Consequently, $\rho$ is surjective.
 \end{lemma}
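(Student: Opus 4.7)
The plan is to invoke Theorem~\ref{HDAlgTheorem} to approximate $f$ by finite linear combinations of ``building block'' Cauchy transforms $\tilde f_{ij}=\CT(\eta_{ij})$, to promote each such building block to an element of $R^{t,\infty}(K,\mu)$ using the lifting lemma \ref{BBFRLambda} (announced in the introduction as a key ingredient of this section), and then to extract a weak-star limit of the resulting combinations and verify $\rho(\tilde f)=f$.

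First, by Theorem~\ref{FACDensityThm} the set $\mathcal R$ is strong $\gamma$-open, and in particular $\gamma$-open in the sense of Definition~\ref{GammaOpenDef}, so Theorem~\ref{HDAlgTheorem} is available with $\mathcal D=\mathcal R$. Since $f\in H^\infty(\mathcal R)$ with $\|f\|_{\mathcal R}\le 1$, the implication (3)$\Rightarrow$(1)$\Rightarrow$(2) of that theorem produces a sequence of scales $\delta_m\to 0$, building blocks $\{\delta_m,E_{ij}^m,\eta_{ij}^m,\tilde f_{ij}^m,k_1\}$ for $\mathcal R$, and finite linear combinations $f_{\delta_m}=\sum_{ij}c_{ij}^m \tilde f_{ij}^m\in H(\mathcal R)\cap L^\infty(\mu)$ satisfying $\|f_{\delta_m}\|_{\mathcal R},\|f_{\delta_m}\|_{L^\infty(\mu)}\lesssim C_f$ and $f_{\delta_m}(z)\to f(z)$ for $\area_{\mathcal R}$-a.e.\ $z$. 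Crucially, each $\eta_{ij}^m$ is supported on a compact subset $E_{ij}^m\subset\C\setminus\mathcal R=\mathcal F$.

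Next I would apply Lemma~\ref{BBFRLambda} (a refinement of Lemma~\ref{BBFunctLemma} adapted to the non-removable boundary $\mathcal F$) to attach to each pair $(\eta_{ij}^m,\tilde f_{ij}^m)$ a function $h_{ij}^m\in R^{t,\infty}(K,\mu)$ with $\|h_{ij}^m\|_{L^\infty(\mu)}\lesssim 1$ satisfying $\rho(h_{ij}^m)=\tilde f_{ij}^m$ on $\mathcal R$. Taking the corresponding linear combinations yields $\tilde f_{\delta_m}=\sum_{ij}c_{ij}^m h_{ij}^m\in R^{t,\infty}(K,\mu)$, uniformly bounded by a constant multiple of $C_f$, with $\rho(\tilde f_{\delta_m})=f_{\delta_m}$ on $\mathcal R$ by the linearity of $\rho$ established in Lemma~\ref{RhoExistLemma}. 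Passing to a subsequence I may assume $\tilde f_{\delta_m}\to\tilde f$ both weakly in $L^t(\mu)$ and weak-star in $L^\infty(\mu)$; since $R^t(K,\mu)$ is a norm-closed convex subset of $L^t(\mu)$, hence weakly closed, the limit $\tilde f$ lies in $R^t(K,\mu)\cap L^\infty(\mu)=R^{t,\infty}(K,\mu)$ with $\|\tilde f\|_{L^\infty(\mu)}\lesssim C_f$.

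To finish I would verify $\rho(\tilde f)=f$ on $\mathcal R$ by combining \eqref{RhoExistLemmaEq1} applied to both $\tilde f_{\delta_m}$ and $\tilde f$ with the convergence $\rho(\tilde f_{\delta_m})=f_{\delta_m}\to f$ on $\mathcal R$, $\area$-a.e. Concretely, for each $g_j\in\Lambda$ a diagonal extraction via Lemma~\ref{ConvergeLemma} yields a further subsequence along which $\CT(\tilde f_{\delta_m}g_j\mu)(z)\to\CT(\tilde f g_j\mu)(z)$ for $\gamma$-a.e.\ $z$; passing to the limit in $\rho(\tilde f_{\delta_m})(z)\CT(g_j\mu)(z)=\CT(\tilde f_{\delta_m}g_j\mu)(z)$ and using $\rho(\tilde f_{\delta_m})=f_{\delta_m}$ gives $\rho(\tilde f)(z)\CT(g_j\mu)(z)=f(z)\CT(g_j\mu)(z)$ on $\mathcal R$. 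Since every $z\in\mathcal R$ (off a $\gamma$-null set) lies in $\mathcal N$ and therefore satisfies $\CT(g_{j_0}\mu)(z)\neq 0$ for some $j_0$, the identity $\rho(\tilde f)=f$ on $\mathcal R$ follows. The main obstacle I expect is this last step: synchronising weak-star convergence of $\{\tilde f_{\delta_m}\}$ in $L^\infty(\mu)$ with $\gamma$-a.e.\ convergence of the induced principal-value Cauchy transforms, which demands a careful diagonal extraction across the countable family $\{g_j\}$ coordinated with the $\area_{\mathcal R}$-a.e.\ convergence $f_{\delta_m}\to f$ delivered by Theorem~\ref{HDAlgTheorem}(2).
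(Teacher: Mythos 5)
Your overall architecture mirrors the paper's: construct the building blocks through Lemma \ref{BBFRLambda} so that each $\tilde f_{ij}$ lives simultaneously in $R^{t,\infty}(K,\mu)$ and agrees with $\CT(\eta_{ij})$ off a compact subset of $\mathcal F$, feed them into Theorem \ref{HDAlgTheorem}(2) using the strong $\gamma$-openness of $\mathcal R$ from Theorem \ref{FACDensityThm}, extract a weak-star limit $\tilde f$, and pass to the limit in the identity defining $\rho$. The one small structural deviation is that you first form $\tilde f_{ij}=\CT(\eta_{ij})$ and then ``lift'' it to $h_{ij}\in R^{t,\infty}(K,\mu)$ via Lemma \ref{BBFRLambda}; in the paper these are the same object because the building block in Definition \ref{BBDef} is already required to supply $\tilde f_{ij}\in L^\infty(\mu)$ with $\tilde f_{ij}(z)=\CT(\eta_{ij})(z)$ on $\C\setminus E_{ij}$, and Lemma \ref{BBFRLambda} is used to \emph{produce} the building block, not to lift it afterwards. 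This is only a redundancy, not an error.

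The genuine gap is in your closing step. You invoke Lemma \ref{ConvergeLemma} to obtain $\CT(\tilde f_{\delta_m}g_j\mu)(z)\to\CT(\tilde f g_j\mu)(z)$ for $\gamma$-a.e.\ $z$, but Lemma \ref{ConvergeLemma} has hypotheses $\|\tilde f_{\delta_m}g_j-\tilde fg_j\|_{L^1(\mu)}\to 0$ and $\tilde f_{\delta_m}g_j\to\tilde fg_j$ $\mu$-a.e. Neither is available: all you have on $\{\tilde f_{\delta_m}\}$ is boundedness in $L^\infty(\mu)$ and weak-star convergence to $\tilde f$, which gives neither strong $L^1$ convergence nor a $\mu$-a.e.\ convergent subsequence (think of a rapidly oscillating sequence with the same weak-star limit). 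The paper sidesteps this by a weaker but sufficient conclusion: for $\area$-a.a.\ $\lambda$ one has $\frac{g(\cdot)}{\cdot-\lambda}\in L^1(\mu)$, and for such $\lambda$ the principal value $\CT(\tilde f_{\delta_m}g\mu)(\lambda)$ coincides with the absolutely convergent integral $\int\tilde f_{\delta_m}(w)\frac{g(w)}{w-\lambda}\,d\mu(w)$, to which weak-star convergence applies directly. This gives $\CT(\tilde f_{\delta_m}g\mu)(\lambda)\to\CT(\tilde fg\mu)(\lambda)$ \emph{area}-a.e., and combined with \eqref{RhoOntoLemmaPfEq1} and \eqref{acZero} this yields $f(z)\CT(g\mu)(z)=\CT(\tilde fg\mu)(z)$ $\area$-a.e., which by \eqref{RhoExistLemmaEq1} already forces $\rho(\tilde f)=f$. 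You anticipate this difficulty at the end of your write-up, but the resolution is not Lemma \ref{ConvergeLemma}; it is the elementary observation above, trading the $\gamma$-a.e.\ statement for an $\area$-a.e.\ statement, which costs nothing here because the target set $\mathcal R$ is where $\area$-a.e.\ identities suffice via \eqref{acZero}.
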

  
To prove Lemma \ref{RhoOntoLemma}, we need a couple of lemmas.

\begin{lemma}\label{ACIncreasing}
Let $E_n\subset E_{n+1}\subset \D(0, R)$ be a sequence of subsets. Then
\[
\ \gamma \left (\cup_{n = 1}^\i E_n \right ) \lesssim \lim_{n\rightarrow \i} \gamma(E_n).
\]
\end{lemma}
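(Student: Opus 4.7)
The plan is to reduce the statement to showing, for every compact $F \subset \bigcup_n E_n$, that $\gamma(F) \lesssim \lim_n \gamma(E_n)$; the lemma then follows by taking the supremum over such compact sets, using the definition of analytic capacity on arbitrary subsets. Throughout I expect all constants to be absolute, as they will come only from Theorem \ref{TolsaTheorem} and Proposition \ref{GammaPlusThm}.

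The first step is to extract a near-extremal measure for $F$. By Theorem \ref{TolsaTheorem}(1) and the definition of $\gamma_+$, I can pick $\mu \in M_0^+(F)$ with $\|\mathcal{C}\mu\|_{L^\infty(\C)} \le 1$ and $\|\mu\| \gtrsim \gamma(F)$. Proposition \ref{GammaPlusThm}(1) then promotes $\mu$ to a measure of $1$-linear growth with $\sup_\epsilon \|\mathcal{C}_\epsilon \mu\|_\infty \lesssim 1$, which is exactly the hypothesis of Proposition \ref{GammaPlusThm}(2). Applying (2) yields a subset $A \subset F$ with $\mu(A) \ge \|\mu\|/2$ and $N_2(\mu|_A) \lesssim 1$. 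A direct check from the definition of $N_2$ (extending any $L^2$ function by zero) shows that $N_2$ is monotone under restriction of the underlying measure, so $N_2(\mu|_B) \lesssim 1$ for every Borel $B \subset A$.

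The heart of the argument is the restriction to $E_n$. Setting $\mu_n := \mu|_{A \cap E_n}$, the previous paragraph gives $N_2(\mu_n) \lesssim 1$, so Proposition \ref{GammaPlusThm}(3) produces a weight $0 \le w_n \le 1$ supported on $A \cap E_n$ with
\[
\mu(A \cap E_n) \le 2\int w_n \, d\mu \quad \text{and} \quad \sup_{\epsilon > 0}\|\mathcal{C}_\epsilon(w_n \mu_n)\|_{L^\infty(\C)} \lesssim 1.
\]
Passing to the principal value via Corollary \ref{ZeroAC} and \eqref{AreaGammaEq} upgrades the bound to $\|\mathcal{C}(w_n \mu_n)\|_{L^\infty(\C)} \lesssim 1$. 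Since $w_n \mu_n$ is a positive measure supported in $E_n$, the definition of $\gamma_+$ together with Theorem \ref{TolsaTheorem}(1) gives $\int w_n \, d\mu \lesssim \gamma_+(E_n) \lesssim \gamma(E_n)$, hence $\mu(A \cap E_n) \lesssim \gamma(E_n)$.

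Finally, because $F \subset \bigcup_n E_n$ and $\{E_n\}$ is increasing, $\{A \cap E_n\}$ increases to $A$, so $\mu(A \cap E_n) \uparrow \mu(A) \ge \|\mu\|/2 \gtrsim \gamma(F)$. Letting $n \to \infty$ gives $\gamma(F) \lesssim \lim_n \gamma(E_n)$, and the supremum over compact $F \subset \bigcup_n E_n$ completes the proof. The main conceptual obstacle is that the restriction of a measure with bounded Cauchy transform need not itself have bounded Cauchy transform, which prevents one from simply using $\mu|_{E_n}$; Proposition \ref{GammaPlusThm} is precisely the tool that routes around this by passing through $N_2$ and then reconstructing a new positive measure on $E_n$ with controlled Cauchy transform.
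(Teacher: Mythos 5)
Your proof is correct and follows essentially the same route as the paper's: both pass from $\gamma$ to $\gamma_+$ via Theorem \ref{TolsaTheorem}(1), use Proposition \ref{GammaPlusThm}(1)--(2) to obtain a positive measure on (a subset of) a near-extremal compact $F$ with bounded $N_2$, exploit the monotonicity of $N_2$ under restriction to $E_n$, and then apply Proposition \ref{GammaPlusThm}(3) together with Theorem \ref{TolsaTheorem}(1) again to bound the restricted mass by $\gamma(E_n)$. You spell out a few steps the paper leaves implicit (the reduction to compact $F$, the monotonicity of $N_2$, the passage from truncated to principal-value Cauchy transform), but the argument is the same.
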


\begin{proof}
By Theorem \ref{TolsaTheorem} (1) and Proposition \ref{GammaPlusThm} (2), there exists a compact subset $F\subset \cup_{n = 1}^\i E_n$ and $\eta \in M_0^+(F)$ with $1$-linear growth such that $N_2(\eta) \le 1$ and 
 \[ 
 \ \gamma \left (\cup_{n = 1}^\i E_n \right ) \lesssim \|\eta\| \lesssim \lim_{n\rightarrow \i} \eta(E_n). 
 \]
 Since $N_2(\eta|_{E_n}) \le 1,$
 by Proposition \ref{GammaPlusThm} (3) and Theorem \ref{TolsaTheorem} (1), we get $\eta(E_n) \lesssim \gamma(E_n).$
\end{proof}

\begin{lemma} \label{BBFRLambda} 
Let $E_1\subset \mathcal F$ be a compact subset with $\gamma(E_1) > 0$. Then 
there exists $f\in R^{t,\i}(K, \mu)$ and $\eta\in M_0^+(E_1)$ such that $\|\mathcal C_\epsilon (\eta) \| \lesssim 1,$ $f(z) = \mathcal C(\eta)(z)$ for $z\in \C_\i \setminus \text{spt}\eta$, 
 \[
 \ \|f\|_{L^\infty(\mu)} \lesssim 1,~ f(\infty) = 0,~ f'(\infty) = - \gamma(E_1),
 \]
and
 \begin{eqnarray}\label{BBFRLambdaEq1}
 \ \mathcal C(\eta)(z) \mathcal C(g_j\mu) (z) = \mathcal C(fg_j\mu) (z), ~\gamma|_{\C \setminus \text{spt}\eta}-a.a. \text{ for }j \ge 1.
 \end{eqnarray} 
\end{lemma}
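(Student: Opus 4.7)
My plan is a diagonal/weak-star limit construction. For each integer $N\ge 1$ I build a positive measure $\eta_N$ supported in a $1/N$-neighborhood of $E_1$ on which $|\mathcal C(g_j\mu)|\le 1/N$ for $1\le j\le N$, together with an auxiliary $f_N\in L^\infty(\mu)$ via Lemma \ref{BBFunctLemma}. Weak-star limits then yield $\eta\in M_0^+(E_1)$ and $f\in R^{t,\infty}(K,\mu)$, with $f=\mathcal C(\eta)$ off $\text{spt}\,\eta$ and the multiplicative identity \eqref{BBFRLambdaEq1} both driven by the uniform $1/N$-smallness.

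The key preliminary is a multi-function strengthening of Corollary \ref{acCorollary}: for every $N$ there is a compact $\tilde E_N\subset\{z:\text{dist}(z,E_1)<1/N\}$ with $\gamma(\tilde E_N)\gtrsim\gamma(E_1)$ and $|\mathcal C(g_j\mu)(z)|\le 1/N$ for $z\in\tilde E_N$ and $1\le j\le N$. I split $E_1$ by Tolsa semiadditivity (Theorem \ref{TolsaTheorem}(2)) according to $\mathcal F=\mathcal F_0\cup\mathcal F_+\cup\mathcal F_-$. On the $\mathcal F_0$ part no shift is needed: $\mathcal C(g_j\mu)=0$ $\gamma$-a.a.\ there by Proposition \ref{NF0Prop}(2). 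On each $\mathcal F_\pm$ part I rerun the proof of Lemma \ref{acLemma}, replacing the single exceptional set $\{|\mathcal C(g\mu)|>1/N\}$ with $\bigcup_{j=1}^N\{|\mathcal C(g_j\mu)|>1/N\}$; the one-sided $\gamma$-density vanishing \eqref{ClaimEq0} is available for every $g_j$ by Theorem \ref{GPTheorem1}, and an $N$-fold application of semiadditivity still makes the total exceptional set have small $\gamma$-density once the vertical shift $\epsilon_2$ is taken sufficiently small depending on $N$.

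Apply Lemma \ref{BBFunctLemma} with $\{g_j\}$ and $E=\tilde E_N$ to obtain $\eta_N\in M_0^+(\tilde E_N)$ with $\|\mathcal C_\epsilon(\eta_N)\|_\infty\lesssim 1$, $\|\eta_N\|\gtrsim\gamma(E_1)$, and $f_N\in L^\infty(\mu)$ with $\|f_N\|_\infty\lesssim 1$, $f_N(z)=\mathcal C(\eta_N)(z)$ off $\text{spt}\,\eta_N$, and
\[
\int f_N\, g_n\,d\mu \;=\; -\int \mathcal C(g_n\mu)\,d\eta_N.
\]
After a bounded rescaling I assume $\|\eta_N\|=\gamma(E_1)$. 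Passing to subsequences gives $\eta_N\to\eta$ weak-star in $M_0^+(\C)$ and $f_N\to f$ weak-star in $L^\infty(\mu)$. Since $\text{spt}\,\eta_N$ shrinks to $E_1$, $\text{spt}\,\eta\subset E_1$, $\|\eta\|=\gamma(E_1)$, and $\|\mathcal C_\epsilon(\eta)\|_\infty\lesssim 1$; on $\C_\infty\setminus\text{spt}\,\eta$ the transforms $\mathcal C(\eta_N)$ converge locally uniformly to $\mathcal C(\eta)$, so $f$ admits a representative equal to $\mathcal C(\eta)$ off $\text{spt}\,\eta$, yielding $f(\infty)=0$, $f'(\infty)=-\gamma(E_1)$, and $\|f\|_{L^\infty(\mu)}\lesssim 1$. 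For $N\ge n$ the right side of the displayed identity is $O(\gamma(E_1)/N)\to 0$ and the left side tends to $\int f g_n\,d\mu$ since $g_n\in L^1(\mu)$; hence $f$ annihilates the dense collection $\{g_n\}\subset R^t(K,\mu)^\perp$, so $f\in R^{t,\infty}(K,\mu)$.

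For \eqref{BBFRLambdaEq1}, apply \eqref{BBFunctLemmaEq2} to $\eta_N$ and $u_n=g_n$, use $f_N(\lambda)=\mathcal C(\eta_N)(\lambda)$ for $\lambda\notin\text{spt}\,\eta_N$, and rearrange to
\[
\mathcal C(\eta_N)(\lambda)\mathcal C(g_n\mu)(\lambda)-\mathcal C(f_Ng_n\mu)(\lambda) \;=\; -\int \mathcal C(g_n\mu)(z)\,\frac{d\eta_N(z)}{z-\lambda}.
\]
For $\lambda\notin E_1\cup\text{spt}\,\mu$ and $N\ge n$, the right side is $O(\gamma(E_1)/(N\,\text{dist}(\lambda,E_1)))\to 0$, while weak-star convergence of $f_N$ and $g_n/(z-\lambda)\in L^1(\mu)$ make the left side converge to $\mathcal C(\eta)(\lambda)\mathcal C(g_n\mu)(\lambda)-\mathcal C(fg_n\mu)(\lambda)$, giving \eqref{BBFRLambdaEq1} on $\C\setminus(E_1\cup\text{spt}\,\mu)$. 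On $\C\setminus K$ the identity is immediate since $g_n\perp\text{Rat}(K)$ and $fg_n\perp\text{Rat}(K)$ force both Cauchy transforms to vanish. For $\gamma$-a.a.\ $\lambda\in K\setminus\text{spt}\,\eta$, Lemma \ref{RhoExistLemma} gives $\rho(f)(\lambda)\mathcal C(g_n\mu)(\lambda)=\mathcal C(fg_n\mu)(\lambda)$ $\gamma$-a.a., while $\rho(f)=f=\mathcal C(\eta)$ holds $\mu$-a.a.\ on $\mathcal N\setminus\text{spt}\,\eta$ by \eqref{RhoExistLemmaEq2}; combined with the $\gamma$-continuity of $\mathcal C(\eta)$ off $\text{spt}\,\eta$ (it is analytic there) and of $\mathcal C(g_j\mu)$ at points of $\mathcal R$ (Lemma \ref{CauchyTLemma}), together with the full-$\gamma$-density of $\mathcal R$ from Theorem \ref{FACDensityThm}, a density argument upgrades the $\mu$-a.a.\ equality to $\gamma$-a.a.\ equality, completing \eqref{BBFRLambdaEq1}. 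The main obstacle is the simultaneous multi-function version of Corollary \ref{acCorollary} in Step 1; once that is in hand, the uniform $1/N$-smallness on $\text{spt}\,\eta_N$ drives the remainder of the argument, and only the final $\mu$-a.a.-to-$\gamma$-a.a.\ upgrade requires a delicate secondary density argument resting on Theorem \ref{FACDensityThm}.
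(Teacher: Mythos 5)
Your plan is genuinely different from the paper's. The paper applies Lemma \ref{BBFunctLemma} directly on $E_1$ itself, then (in the $\mathcal F_\pm$ cases) \emph{corrects} the resulting $f_1$ by subtracting the Plemelj jump term $\frac12\sum e^{-i\beta_n}L(\cdot)^{-1}w_n$; the vanishing of the one-sided limits $v^\pm(g_j\mu,\Gamma_n,\beta_n)$ on $E_1$ then makes the corrected $f_2$ annihilate every $g_j$, and \eqref{BBFRLambdaEq1} follows for \emph{all} $\lambda\notin\operatorname{spt}\eta_1$, $\gamma$-a.a., directly from \eqref{BBFunctLemmaEq2}. You instead shift to nearby sets $\tilde E_N$ where the $|\mathcal C(g_j\mu)|$ are uniformly small and pass to weak-star limits, hoping to avoid the jump correction. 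The first step (a multi-function version of Corollary \ref{acCorollary}) is fine with minor bookkeeping, and the weak-star limit yielding $\eta\in M_0^+(E_1)$, $f\in R^{t,\infty}(K,\mu)$, $\|\mathcal C_\epsilon(\eta)\|\lesssim 1$, and $\|\eta\|=\gamma(E_1)$ is sound.

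Two claims in the last two paragraphs are, however, not justified. First, you assert that $\mathcal C(\eta_N)\to\mathcal C(\eta)$ locally uniformly on $\C_\infty\setminus\operatorname{spt}\eta$; this is false when $\operatorname{spt}\eta\subsetneq E_1$, because $\operatorname{spt}\eta_N\subset\tilde E_N$ accumulates on \emph{all} of $E_1$, so points of $E_1\setminus\operatorname{spt}\eta$ never sit at a definite distance from $\operatorname{spt}\eta_N$. What you actually get is local uniform convergence on $\C\setminus E_1$, so you only obtain $f=\mathcal C(\eta)$ off $E_1$, which is weaker than the lemma's statement (though adequate for Lemma \ref{RhoOntoLemma}). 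Second, and more seriously, the concluding ``$\mu$-a.a.\ to $\gamma$-a.a.\ upgrade'' for \eqref{BBFRLambdaEq1} does not work. Your limit argument gives the identity only on the open set $\C\setminus(E_1\cup\operatorname{spt}\mu)$, and a $\mu$-a.a.\ equality $\rho(f)=\mathcal C(\eta)$ on $\mathcal N$ gives no information on regions where $\mu$ vanishes. Moreover, you invoke ``$\gamma$-continuity of $\mathcal C(g_j\mu)$ at points of $\mathcal R$ (Lemma \ref{CauchyTLemma})'', but Lemma \ref{CauchyTLemma} applies only where $\Theta_{g_j\mu}=0$, i.e.\ on $\mathcal R_0$; on $\mathcal R_1\subset\mathcal{ND}(\mu)$ and on $\mathcal F_\pm$ the Cauchy transform has a Plemelj jump and is not $\gamma$-continuous, and $\C\setminus\operatorname{spt}\mu$ need not have full $\gamma$-density at points of $\operatorname{spt}\mu$. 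This is exactly the regime the paper handles by building the jump correction into $f_2$, which produces \eqref{BBFRLambdaEq1} by a purely algebraic cancellation rather than by a density argument. As written, your proof does not reach \eqref{BBFRLambdaEq1} for $\gamma$-a.a.\ $\lambda$ in $\operatorname{spt}\mu\cup E_1$, which is where the content of the lemma lies.
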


\begin{proof}
Let the measure $\eta_1$ and the function $f_1$ be constructed as in Lemma \ref{BBFunctLemma} for $\{g_j\}$ and $E_1.$
From Theorem \ref{TolsaTheorem} (2), we have
\[
\ \max(\gamma (E_1 \cap \mathcal F_0 ),~ \gamma (E_1 \cap \mathcal F_+),~ \gamma (E_1 \cap \mathcal F_- )) \ge \dfrac{1}{3C_T}\gamma (E_1).
 \]
 Therefore, we shall consider the following three cases.
 
 Case I (assuming $E_1 \subset \mathcal F_0 $):  From \eqref{BBFunctLemmaEq1} and \eqref{BBFunctLemmaEq2}, we see that $f_1\in R^{t,\i}(K, \mu)$ and $f_1(\lambda)\mathcal C(g_j\mu) (\lambda) = \mathcal C(f_1g_j\mu) (\lambda,~\gamma |_{\C \setminus \text{spt}\eta_1}-a.a.$ for $j \ge 1.$
 Set 
 \[
 \ f = \dfrac{f_1}{\|\eta_1\|} \gamma (E_1) \text{ and } \eta  = \dfrac{\eta_1}{\|\eta_1\|} \gamma (E_1). 
 \]
 
 Case II (assuming $E_1 \subset \mathcal F_+$): Using Lemma \ref{ACIncreasing}, we assume that there exists a positive integer $n_0$ such that 
$E_1\subset \bigcup_{n=1}^{n_0}\Gamma_n.$
 Put $E_1 = \cup_{n=1}^{n_0}F_n,$ where $F_n \subset \Gamma_n$ and $F_n \cap F_m = \emptyset$ for $n \ne m.$ Since $\eta_1$ is of $1$-linear growth, we can set $\eta_1 = \sum_{n=1}^{n_0} w_n(z)\mathcal H^1 |_{\Gamma_n},$ where $w_n$ is supported on $F_n.$ Define 
 \[
 \ f_2(z) = f_1(z) - \frac{1}{2} \sum_{n=1}^{n_0} e^{-i\beta_n}L((z - z_n)e^{-i\beta_n} + z_n)^{-1}w_n(z).
 \] 
 Then using Theorem \ref{GPTheorem1}, we conclude that
  $f_2,w_n\in L^\infty(\mu)$ and $\|f_2\|_{L^\infty(\mu)} \lesssim 1.$
 From \eqref{BBFunctLemmaEq1}, we get
 \begin{eqnarray}\label{BBFGEq3} 
 \begin{aligned}
 \ \int f_2 g_jd\mu = &  - \int \mathcal C(g_j\mu)  d\eta_1 - \frac{1}{2} \sum_{n=1}^{n_0} \int_{F_n} e^{-i\beta_n}L((z - z_n)e^{-i\beta_n} + z_n)^{-1}g_j h d\eta_1 \\
 \    = & - \sum_{n=1}^{n_0} \int_{F_n} v^+(g_j\mu, \Gamma_n, \beta_n) d\eta_1.
 \end{aligned}
 \end{eqnarray}
 Similarly, for $\lambda\in \C \setminus \text{spt}\eta_1$ and by \eqref{BBFunctLemmaEq2}, we have
 \begin{eqnarray}\label{BBFGEq4} 
 \ \int \dfrac{f_2(z)-f_2(\lambda)}{z-\lambda} g_j(z)d\mu (z) = - \sum_{n=1}^{n_0} \int_{F_n} v^+(g_j\mu, \Gamma_n, \beta_n)(z)\dfrac{d\eta_1(z)}{z-\lambda} .
 \end{eqnarray}
 Since $v^+(g_j\mu, \Gamma_n, \beta_n)(z) = 0, ~z \in F_n,~ \mathcal H^1 |_{\Gamma_n}-a.a.,$ by \eqref{BBFGEq3}, we get 
 $f_2\in R^{t,\i}(K, \mu).$
 Similarly, from \eqref{BBFGEq4}, we see that $\frac{f_2(z)-f_2(\lambda)}{z-\lambda}\in R^{t,\i}(K, \mu)$ and  $f_2(\lambda)\mathcal C(g_j\mu) (\lambda) = \mathcal C((f_2 g_j\mu) (\lambda)$ for $\lambda\in \C \setminus \text{spt}\eta_1,~\gamma-a.a.$ 
  Set 
 \[
 \ f = \dfrac{f_2}{\|\eta_1\|} \gamma (E_1) \text{ and } \eta  = \dfrac{\eta_1}{\|\eta_1\|} \gamma (E_1). 
 \]
 
 Case III (assuming $E_1 \subset \mathcal F_-$): The proof is the same as Case II if we modify the definition of $f_2$ by the following
\[
 \ f_2(z) = f_1(z) + \frac{1}{2} \sum_{n=1}^{n_0} e^{-i\beta_n}L((z - z_n)e^{-i\beta_n} + z_n)^{-1}w_n(z).
 \] 
 
 Then $f$ and $\eta$ satisfy the properties of the lemma. The lemma is proved.
 \end{proof}
 
 We are now ready to prove Lemma \ref{RhoOntoLemma}.
 
 \begin{proof} (Lemma \ref{RhoOntoLemma}):
 Let $f\in H^\infty (\mathcal R)$ with $\|f\|_{\mathcal R} \le 1$ and $f(z) = 0,~ z\in \C \setminus \mathcal R.$ 
 Let $E_{ij} \subset \D(c_{ij}, k_1 \delta)\setminus \mathcal R$ be a compact subset such that $\gamma (E_{ij}) \ge \frac 12 \gamma (\D(c_{ij}, k_1 \delta)\setminus \mathcal R).$ Let  
 $\eta_{ij}\in M_0^+(E_{ij})$ and $\tilde f_{ij}\in R^{t,\i}(K,\mu)$ be as in Lemma \ref{BBFRLambda} such that $\|\CT_\epsilon(\eta_{ij})\| \lesssim 1,$ $\|\eta_{ij}\| = \gamma (\D(c_{ij}, k_1 \delta)\setminus \mathcal R),$ $\|\tilde f_{ij}\| \lesssim 1,$ $\tilde f_{ij}(z) = \CT(\eta_{ij})(z)$ for $z\in \C \setminus E_{ij},$ and 
 \begin{eqnarray}\label{RhoOntoLemmaPfEq1}
 \ \CT(\eta_{ij})(z) \CT (g\mu)(z) = \CT (\tilde f_{ij}g\mu)(z),~ \gamma|_{\C \setminus E_{ij}}-a.a.\text{ for }g\perp \rtkmu.
 \end{eqnarray}

 It is clear that $\{\delta, E_{ij}, \eta_{ij}, \tilde f_{ij}, k_1\}$ is a building block for $\mathcal R$ as in Definition \ref{BBDef}.
 By Theorem \ref{FACDensityThm}, $\mathcal R$ is strong $\gamma$-open. Hence, using Theorem \ref{HDAlgTheorem} (2), we let $f_\delta \in H(\mathcal R)\cap R^{t,\i}(K,\mu)$ be the function that is a finite linear combination of $\tilde f_{ij}$ such that there exists a subsequence $\{f_{\delta_m}\}$ satisfying $\|f_{\delta_m}\|_{\mathcal R},~\|f_{\delta_m}\|_{L^\i(\mu)} \lesssim C_f$ and $f_{\delta_m} (z) \rightarrow f(z),~\area_{\mathcal R}-a.a..$ Therefore, by passing to a subsequence, we may assume that $f_{\delta_m} (z) \rightarrow \tilde f(z)$ in $L^\infty(\mu)$ weak-star topology. Hence, $\tilde f\in R^{t,\i}(K,\mu),$ $\|\tilde f\|_{L^\i(\mu)} \lesssim C_f,$ and $\mathcal C(f_{\delta_m} g\mu)(z) \rightarrow \mathcal C(\tilde fg\mu)(z), ~\area-a.a..$ From \eqref{RhoOntoLemmaPfEq1} and \eqref{acZero},  taking $m\rightarrow \i,$ we infer that
\[
\ f(z)\mathcal C(g\mu)(z) = \mathcal C(\tilde fg\mu)(z), ~\area-a.a.\text{ for }g\perp \rtkmu,
\]
which implies $\rho(\tilde f) = f$ by \eqref{RhoExistLemmaEq1}.
\end{proof}

\section{\textbf{Proof of Theorem \ref{mainThm}}}

We restate Theorem \ref{mainThm} as the following form.

\begin{theorem}\label{mainThmR}
 Let $\mu\in M_0^+(K)$ for a compact set $K\subset \C$.
 Suppose that $1\le t < \infty$ and $S_\mu$ on $R^t(K, \mu)$ is pure. Let $\mathcal F$ and $\mathcal R$ be the non-removable boundary and removable set for $R^t(K, \mu),$ respectively. Let $\rho$ be the map defined as in Lemma \ref{RhoExistLemma}. 
Then $\text{spt}\mu \subset \overline{\mathcal R}$ and $\rho$ is an isometric isomorphism and a weak-star homeomorphism from $R^{t,\i}(K, \mu)$ onto $H^\infty(\mathcal R)$ satisfying
\newline
(1) $\rho(r) = r$ for $r\in\text{Rat}(K),$	
\newline
(2) $\CT(g\mu)(z) = 0,~\area_{\mathcal F}-a.a.$ for $g\perp \rtkmu,$ and
\newline
(3) $\rho(f)(z)\CT(g\mu)(z) = \CT(fg\mu)(z),~ \gamma-a.a.$ for $f\in R^{t,\i}(K, \mu)$ and $g\perp \rtkmu.$
 \end{theorem}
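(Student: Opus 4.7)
Most of the conclusion is already in hand. Properties (1)--(3) are precisely the final remark of Lemma \ref{RhoExistLemma}, equation \eqref{acZero}, and equation \eqref{RhoExistLemmaEq1}, respectively. Multiplicativity of $\rho$ and the contraction estimate $\|\rho(f)\|_{L^\i(\area_{\mathcal R})}\le\|f\|_{L^\i(\mu)}$ are Proposition \ref{Rhoprop}, and surjectivity onto $H^\i(\mathcal R)$ is Lemma \ref{RhoOntoLemma}. The outstanding tasks are therefore: (A) $\text{spt}\,\mu\subset\overline{\mathcal R}$; (B) that $\rho(f)$ actually lies in $H^\i(\mathcal R)$; (C) injectivity and isometry; (D) weak-star bicontinuity.

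\textbf{For (A),} I would argue by contradiction. If some $\lambda_0\in\text{spt}\,\mu$ lies outside $\overline{\mathcal R}$, pick $\delta>0$ with $\D(\lambda_0,\delta)\cap\mathcal R=\emptyset$. Combining Proposition \ref{NF0Prop}(1) with Theorem \ref{NDecompThm}, $\mathcal N\setminus\mathcal R\subset\mathcal F_+\cup\mathcal F_-\subset\mathcal F$ and $\mathcal N\cup\mathcal F_0\approx\C$ (both $\gamma$-a.e.), so $\D(\lambda_0,\delta)\subset\mathcal F$ $\gamma$-a.e.\ and, via \eqref{AreaGammaEq}, $\area$-a.e. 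Property (2) then forces $\CT(g\mu)\equiv 0$ on $\D(\lambda_0,\delta)$ $\area$-a.e.\ for every $g\perp\rtkmu$, and the distributional identity \eqref{CTDistributionEq} upgrades this to $g\mu|_{\D(\lambda_0,\delta)}=0$. Thus every $g\in\rtkmu^\perp$ vanishes $\mu$-a.e.\ on $\D(\lambda_0,\delta)\cap K$, which forces $L^t(\mu|_{\D(\lambda_0,\delta)\cap K})\subset\rtkmu$ by Hahn--Banach. Multiplication by $\chi_{\D(\lambda_0,\delta)\cap K}$ is then a non-trivial idempotent in the commutant of $S_\mu$, producing a non-trivial direct $L^t$-summand and contradicting purity.

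\textbf{For (B), the main obstacle,} the plan is to apply Theorem \ref{HDAlgTheorem} by verifying its condition (1) for $\rho(f)$: for every $\lambda\in\C$, $\delta>0$, $n\ge 0$, and smooth $\varphi$ with $\text{supp}\,\varphi\subset\D(\lambda,\delta)$, $\|\varphi\|_\i\le 1$, $\|\bar\partial\varphi\|_\i\lesssim\delta^{-1}$, one needs
\[
 \left|\int (z-\lambda)^n\rho(f)(z)\,\bar\partial\varphi(z)\,d\area(z)\right|\lesssim\|f\|_{L^\i(\mu)}\,\delta^n\,\gamma(\D(\lambda,\delta)\setminus\mathcal R),
\]
where we may integrate over $\C$ since $\rho(f)=0$ $\area$-a.e.\ off $\mathcal N\approx\mathcal R$. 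To do this, I would pick a compact $E\subset\D(\lambda,\delta)\setminus\mathcal R\subset\mathcal F$ (modulo a $\gamma$-null set, as in (A)) with $\gamma(E)\gtrsim\gamma(\D(\lambda,\delta)\setminus\mathcal R)$, then invoke Lemma \ref{BBFRLambda} to produce $\tilde g_E\in R^{t,\i}(K,\mu)$ and $\eta\in M_0^+(E)$ with $\tilde g_E=\CT(\eta)$ on $\C\setminus E$, $\|\CT_\epsilon(\eta)\|_\i\lesssim 1$, $\|\eta\|\asymp\gamma(E)$, and $\CT(\eta)\CT(g\mu)=\CT(\tilde g_E g\mu)$ for every $g\perp\rtkmu$. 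Multiplicativity of $\rho$ (Proposition \ref{Rhoprop}(1)) gives $\rho(f)\CT(\eta)=\rho(f\tilde g_E)$ $\gamma$-a.e.\ on $\C\setminus \text{spt}\,\eta$, which lets me transfer the test integral of $\rho(f)\bar\partial\varphi$ to an integral against Cauchy data of $f\tilde g_E$ concentrated on $E$; a Vitushkin--Paramonov localization argument of the type used in \cite[Theorem 4.3]{y23} and in the (3)$\Rightarrow$(1) direction of Theorem \ref{HDAlgTheorem} then extracts the decisive factor $\gamma(E)$. This is the step I expect to require the most work.

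\textbf{For (C) and (D),} given (B), $\rho$ is a bounded surjective algebra homomorphism of the uniform algebras $R^{t,\i}(K,\mu)\subset L^\i(\mu)$ and $H^\i(\mathcal R)\subset L^\i(\area_{\mathcal R})$. Injectivity: if $\rho(f)=0$ $\area$-a.e.\ on $\mathcal R$, then by (3), $\CT(fg\mu)\equiv 0$ $\gamma$-a.e.\ for every $g\perp\rtkmu$, so $fg=0$ $\mu$-a.e.\ by \eqref{CTDistributionEq}; the direct-summand argument of (A) applied to $\{f\ne 0\}$ forces $f=0$ $\mu$-a.e. Isometry is then automatic: in any uniform algebra the norm equals the spectral radius, and a unital algebra isomorphism preserves spectral radii, hence norms. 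For weak-star bicontinuity, on bounded subsets the weak-star topology on $L^\i$ is metrizable; if $f_n\to f$ weak-star in $R^{t,\i}(K,\mu)$, then $\CT(f_n g\mu)(z)\to\CT(fg\mu)(z)$ pointwise for $z\notin\text{spt}\,\mu$ because $g(w)/(w-z)\in L^1(\mu)$ there, and property (3) yields $\rho(f_n)(z)\to\rho(f)(z)$ on $\mathcal R\setminus\text{spt}\,\mu$. Any weak-star cluster point of $\rho(f_n)$ lies in $H^\i(\mathcal R)$, agrees with $\rho(f)$ on this set, and so by injectivity of $\rho$ equals $\rho(f)$; hence $\rho(f_n)\to\rho(f)$ weak-star. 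The inverse is weak-star continuous by the open mapping theorem applied to the isometric bijection, completing the proof.
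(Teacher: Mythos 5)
Most of what you collect in the opening paragraph does match the paper: properties (1)--(3) really are Lemma \ref{RhoExistLemma}, \eqref{acZero}, and \eqref{RhoExistLemmaEq1}; surjectivity is Lemma \ref{RhoOntoLemma}; and your spectral-radius argument for isometry in (C) is exactly the paper's $\|f^n\|^{1/n}\le C_1^{1/n}\|\rho(f)^n\|^{1/n}\to\|\rho(f)\|$ computation in disguise. Your argument for (A) is also essentially the paper's (the Hahn--Banach step you spell out is what the paper compresses into ``implies $\mu(\D(\lambda,\delta))=0$ since $S_\mu$ is pure'').

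The genuine gap is step (B), and you already half-acknowledge it. You correctly identify that the crux is verifying condition (1) of Theorem \ref{HDAlgTheorem} for $\rho(f)$, but the proposed route does not get there. The identity $\rho(f)\,\CT\eta=\rho(f\tilde g_E)$ is true, yet it does not ``transfer'' the test integral in any usable way: $\CT\eta$ vanishes, and more importantly $\rho(f\tilde g_E)$ is a priori only a Borel function on $\mathcal N$, with no known analyticity off a small compact set, so you cannot invoke the $(3)\Rightarrow(1)$ Vitushkin argument of Theorem \ref{HDAlgTheorem}, which already presupposes membership in $H^\i$. That would be circular. What the paper actually does is different and goes through Lemma \ref{MLemma1}, whose proof hinges on a piece of machinery absent from your sketch: Lemma \ref{ENEstimate}, the quantitative estimate $\lim_{N\to\infty}\gamma(\D(\lambda,\delta)\cap\mathcal E_N)\lesssim\gamma(\D(\lambda,2\delta)\cap\mathcal F)$, combined with the uniform-on-$\mathcal R_{\epsilon,N}$ rational approximation of $\rho(f)$ obtained from Lemma \ref{ConvergeLemma}, and then Lemma \ref{RTIntegralLemma} (the analogue of \cite[Lemma 7.2]{y23}). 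The estimate of Lemma \ref{ENEstimate} in turn rests on Lemma \ref{zeroR} and the distributional Lemma \ref{distributionLemma}; none of this is reproducible from Lemma \ref{BBFRLambda} alone. So (B) is not merely ``the step with the most work'' --- the plan you wrote down would not close.

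A secondary gap is in (D). You argue that any weak-star cluster point $F$ of $\rho(f_n)$ agrees with $\rho(f)$ on $\mathcal R\setminus\text{spt}\,\mu$ and conclude $F=\rho(f)$ ``by injectivity of $\rho$''. Injectivity of $\rho$ does not imply that two elements of $H^\i(\mathcal R)$ agreeing $\area$-a.e.\ on $\mathcal R\setminus\text{spt}\,\mu$ coincide; if $\area(\mathcal R\cap\text{spt}\,\mu)>0$ (e.g.\ $\mu$ has an absolutely continuous part), the pointwise-off-$\text{spt}\,\mu$ argument gives nothing on a set of positive area, and $F$ and $\rho(f)$ could in principle differ there. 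You need to show convergence $\area_{\mathcal R}$-a.e., or test against a genuinely dense set of $L^1(\area_{\mathcal R})$; the paper simply asserts sequential weak-star continuity and invokes Krein--Smulian, and a correct justification would use, e.g., that $\CT(f_ng\mu)\to\CT(fg\mu)$ in $L^p_{\mathrm{loc}}$ for $p<2$, hence $\area$-a.e.\ along a subsequence, rather than only off $\text{spt}\,\mu$. (Also, ``the inverse is weak-star continuous by the open mapping theorem'' is the wrong citation: the relevant fact is that a weak-star continuous linear bijection between dual Banach spaces is the adjoint of a Banach-space isomorphism between the preduals, so its inverse is again an adjoint and hence weak-star continuous.)
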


To prove the image of $\rho$ is a subset of $H^\i(\mathcal R),$ by Theorem \ref{HDAlgTheorem}, we need to prove the following lemma.

\begin{lemma} \label{MLemma1}
If $f\in R^{t,\i}(K,\mu)$ and $\varphi$ is a smooth function with support in $\D(\lambda, \delta)$, then
 \begin{eqnarray}\label{MLemma1Eq}
 \ \left |\int \rho(f)(z) \bar \partial \varphi (z) d\area(z) \right | \lesssim \|\rho(f)\| \delta \|\bar \partial \varphi\| \gamma(\D(\lambda, 2\delta) \cap \mathcal F).
 \end{eqnarray}
\end{lemma}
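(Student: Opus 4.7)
The plan is to apply the Vitushkin localization operator $T_\varphi$ (as used in the proof of Theorem \ref{HDAlgTheorem}) and derive the estimate from an analytic-capacity bound on $(T_\varphi \rho(f))'(\infty)$. Setting $h = \rho(f)$ and expanding $(T_\varphi h)(\lambda)$ in powers of $1/\lambda$ at infinity while using $\int \bar\partial\varphi\,d\area = 0$ gives
\[
\int \rho(f)(z)\,\bar\partial\varphi(z)\,d\area(z) \;=\; -\pi\,(T_\varphi \rho(f))'(\infty),
\]
and the standard estimate for $T_\varphi$ from \cite[VIII.7.1]{gamelin} (already invoked in the proof of Theorem \ref{HDAlgTheorem}) yields $\|T_\varphi \rho(f)\|_\infty \le 4\,\|\rho(f)\|_\infty\,\delta\,\|\bar\partial\varphi\|_\infty$. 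Hence the lemma will follow from the classical inequality $|F'(\infty)| \le \|F\|_\infty\,\gamma(E)$ (applied to $F = T_\varphi\rho(f)$, which is continuous on $\mathbb C_\infty$, vanishes at $\infty$, and is analytic off $E$) as soon as I can exhibit a compact set $E \subset \overline{\D(\lambda,2\delta)}\cap \mathcal F$, of analytic capacity at most $\gamma(\D(\lambda,2\delta)\cap\mathcal F)$, off which $T_\varphi \rho(f)$ is analytic.

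Since $\bar\partial(T_\varphi \rho(f)) = \varphi\,\bar\partial\rho(f)$ distributionally, the analyticity claim reduces to showing that $\bar\partial\rho(f)$ is supported in $\mathcal F$. Heuristically this follows by $\bar\partial$-differentiating the identity $\rho(f)\,\CT(g\mu) = \CT(fg\mu)$ of Lemma \ref{RhoExistLemma}: using $\bar\partial\CT(\nu) = -\pi\nu$, one obtains
\[
\CT(g\mu)\cdot \bar\partial\rho(f) \;=\; \pi\,(\rho(f)-f)\,g\mu,
\]
whose right-hand side vanishes on $\mathcal N$ by \eqref{RhoExistLemmaEq2}, and whose left-hand side vanishes on $\mathcal F_0$ by Proposition \ref{NF0Prop}(2). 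Running $g$ through the dense sequence $\{g_j\}$ in $R^t(K,\mu)^\perp$ therefore forces $\bar\partial\rho(f)$ to live on $\mathcal F = \mathcal F_0 \cup \mathcal F_+ \cup \mathcal F_-$.

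The main obstacle is to make this formal computation rigorous, because $\CT(g\mu)$ is only locally $L^p$ for $p<2$, can vanish on positive-area subsets, and $\mathcal R$ is merely $\gamma$-open rather than open in the classical sense. To sidestep these issues, I would choose compact sets $E_N \subset \overline{\D(\lambda,2\delta)}\cap\mathcal F$ with $\gamma(E_N) \uparrow \gamma(\D(\lambda,2\delta)\cap\mathcal F)$, apply Lemma \ref{BBFRLambda} to produce $\tilde f_N \in R^{t,\infty}(K,\mu)$ and $\eta_N \in M_0^+(E_N)$ with $\tilde f_N = \CT(\eta_N)$ off $E_N$, $\|\tilde f_N\|_\infty \lesssim 1$, and $\CT(\eta_N)\,\CT(g\mu) = \CT(\tilde f_N g\mu)$, $\gamma$-a.e.\ off $E_N$, and then use the multiplicativity $\rho(f\tilde f_N) = \rho(f)\,\CT(\eta_N)$ on $\mathcal N\setminus E_N$ (Proposition \ref{Rhoprop}(1)) to trade the integral $\int \rho(f)\bar\partial\varphi\,d\area$ for one concentrated at $\eta_N$. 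Since $\|\eta_N\| \lesssim \gamma(E_N) \le \gamma(\D(\lambda,2\delta)\cap\mathcal F)$, the desired bound should follow on letting $N \to \infty$; combining with the strong $\gamma$-openness of $\mathcal R$ from Theorem \ref{FACDensityThm} allows the relevant piece of $\mathcal F$ to be covered efficiently. I expect the main technical effort to lie in carrying out this weak-star limiting argument carefully, since one must control the interaction between $\rho(f)$, the Cauchy transform $\CT(\eta_N)$, and the cut-off $\varphi$ without losing the capacity factor.
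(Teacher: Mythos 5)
Your first route (apply $T_\varphi$ to $\rho(f)$ and bound $|(T_\varphi\rho(f))'(\infty)|$ by $\|T_\varphi\rho(f)\|_\infty\,\gamma(E)$) has a fatal topological obstruction that is not cured by the weak-star limiting you defer to the end. The classical estimate $|F'(\infty)|\le\|F\|_\infty\,\gamma(E)$ requires $F=T_\varphi\rho(f)$ to be genuinely analytic off a \emph{compact} set $E\subset\overline{\D(\lambda,2\delta)}\cap\mathcal F$ with $\gamma(E)\le\gamma(\D(\lambda,2\delta)\cap\mathcal F)$. But $\mathcal F$ is not closed; in the Swiss-cheese situation of Example \ref{FRExample} it is dense in $K$, so the distributional support of $\bar\partial\rho(f)$ (a closed set) can be all of $\overline{\D(\lambda,\delta)}$, and the best compact $E$ off which $T_\varphi\rho(f)$ is analytic then has $\gamma(E)\approx\delta$, which makes the classical bound vacuous. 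Even the heuristic identity $\CT(g\mu)\cdot\bar\partial\rho(f)=\pi(\rho(f)-f)g\mu$ only constrains the product $\CT(g\mu)\cdot\bar\partial\rho(f)$ and cannot, by itself, conclude that $\bar\partial\rho(f)$ is a measure, let alone one carried by a compact set of small capacity sitting inside $\mathcal F$. The subsequent alternative via Lemma \ref{BBFRLambda} asserts, but never exhibits, a mechanism that ``trades the integral $\int\rho(f)\,\bar\partial\varphi\,d\area$ for one concentrated at $\eta_N$.'' The relation $\rho(f\tilde f_N)=\rho(f)\,\CT(\eta_N)$ on $\mathcal N\setminus E_N$ does not translate $\int\rho(f)\,\bar\partial\varphi\,d\area$ into an integral against $\eta_N$: dividing by $\CT(\eta_N)$ is not controllable, and there is no integration-by-parts step that produces $\|\eta_N\|$ as the governing factor. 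As written, neither route closes.

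The paper's proof is structured very differently, precisely to avoid applying Vitushkin's scheme to $\rho(f)$ itself. It takes rational approximants $r_n\to f$, runs a diagonal argument via Lemma \ref{ConvergeLemma} (1) to get a subsequence $\{r_{n,n}\}$ for which all $\CT(r_{n,n}g_j\mu)$ converge uniformly to $\CT(fg_j\mu)$ off a set $A_\epsilon$ with $\gamma(A_\epsilon)<\epsilon$, and then deduces from \eqref{RhoExistLemmaEq1} that $r_{n,n}\to\rho(f)$ uniformly on $\mathcal R_{\epsilon,N}=\mathcal R\setminus(A_\epsilon\cup\mathcal E_N)$; this lands $\rho(f)$ (restricted there) in the uniform algebra $R(K_{\epsilon,N})$ with $K_{\epsilon,N}=\overline{\mathcal R_{\epsilon,N}}$, where the Vitushkin localization is classical and legitimate. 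The capacity factor is then recovered from Lemma \ref{RTIntegralLemma}, whose engine is Lemma \ref{ENEstimate}: the quantitative bound $\lim_N\gamma(\D(\lambda,\delta)\cap\mathcal E_N)\lesssim\gamma(\D(\lambda,2\delta)\cap\mathcal F)$. In other words, the paper replaces your heuristic ``$\bar\partial\rho(f)$ lives on $\mathcal F$'' by a rigorous two-step reduction: first to uniform rational approximation on the honest compact $K_{\epsilon,N}$, and second to the analytic-capacity comparison between $\mathcal E_N$ and $\mathcal F$. Any successful proof along your lines would essentially have to re-derive Lemma \ref{ENEstimate}.
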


Let us use Lemma \ref{MLemma1} to prove Theorem \ref{mainThmR} before proving the lemma.

\begin{proof} (Theorem \ref{mainThmR} assuming Lemma \ref{MLemma1} holds): 
(1) is trivial. (2) follows from \eqref{acZero}. (3) follows from \eqref{RhoExistLemmaEq1}. For $\D(\lambda,\delta)\cap \overline{\mathcal R} = \emptyset,$ by (2), we have $\CT(g\mu)(z) = 0,~\area |_{\D(\lambda,\delta)}-a.a.$ for $g\perp \rtkmu,$ which implies $\mu (\D(\lambda,\delta)) = 0$ since $S_\mu$ is pure. Hence, $\text{spt}\mu \subset \overline{\mathcal R}.$ It remains to prove that $\rho$ is  an isometric isomorphism and a weak-star homeomorphism. 
 
For $f\in R^{t,\i}(K,\mu),$ by Lemma \ref{MLemma1} and Theorem \ref{HDAlgTheorem}, we conclude that $\rho(f) \in H^\i(\mathcal R)$ since $\mathcal R$ is strong $\gamma$-open by Theorem \ref{FACDensityThm}. Set $F = \rho(f).$ Using Lemma \ref{RhoOntoLemma}, we see that there exists $\tilde F\in R^{t,\i}(K,\mu)$ such that $\rho(\tilde F) = F$
and $\|\tilde F\|_{L^\i(\mu)} \le C_1\|F\|,$ where $C_1 > 0$ is an absolute constant and $C_F \le C_1 \|F\|$ by \eqref{MLemma1Eq} and \eqref{HDAlgTheoremEq}. From \eqref{RhoExistLemmaEq1}, we get
\[
 \ \mathcal C(\tilde Fg\mu)(z) = F(z)\mathcal C(g\mu)(z) = \mathcal C(fg\mu)(z), ~ \area-a.a.
 \]
 for $g\perp R^t(K, \mu),$ which implies $\tilde F = f$ since $S_\mu$ is pure. Hence, 
 \[
 \ \|f\|_{L^\infty(\mu )} \le C_1 \|\rho(f)\|_{L^\infty(\area_{\mathcal R })}.
 \]
Using Proposition \ref{Rhoprop} (1),
 \[
 \ \|f^n\|_{L^\infty(\mu )}^{\frac 1n} \le C_1^{\frac 1n} \|(\rho(f))^n\|^{\frac 1n}_{L^\infty(\area_{\mathcal R })}. 
 \]
Thus, taking $n\rightarrow \i,$ we get $\|f\|_{L^\infty(\mu )} \le \|\rho (f)\|_{L^\infty(\area_{\mathcal R })}$. So, by Proposition \ref{Rhoprop} (2), we have proved that
 \[
 \ \|\rho(f)\|_{L^\infty(\area_{\mathcal R})} = \|f\|_{L^\infty(\mu)}, ~ f\in R^{t,\i}(K, \mu).
 \]
It is clear that the map $\rho$ is injective. Lemma \ref{RhoOntoLemma} implies that  $\rho$ is surjective. Therefore, $\rho$ is bijective      
 isomorphism between two Banach algebras $R^{t,\i}(K, \mu)$ and $H^\infty (\mathcal R)$. Clearly $\rho$ is also a weak-star sequentially continuous, so an application of Krein-Smulian Theorem shows that $\rho$ is a weak-star homeomorphism.	
\end{proof}

To prove Lemma \ref{MLemma1}, we need some lemmas.
Let $\phi$ be a smooth non-negative function on $\mathbb R$ supported on $[0,1]$ with $0 \le \phi(|z|) \le 1$ and $\int \phi(|z|) d\area(z) = 1$.  For $\epsilon > 0,$ define 
$\phi_\epsilon(z)= \frac{1}{\epsilon^2} \phi (\frac{|z|}{\epsilon})$ and 
 $K_\epsilon = - \frac{1}{z} * \phi_\epsilon.$
For $\nu\in M_0(\C),$ define
$\tilde {\mathcal C}_\epsilon \nu = K_\epsilon * \nu.$
The kernel $K_\epsilon$ is a smooth function,  satisfies $\|K_\epsilon \|_\infty \lesssim \frac{1}{\epsilon}$,
$K_\epsilon (z) = - \dfrac{1}{z}\text{ for } |z| \ge \epsilon,$
 $\tilde {\mathcal C}_\epsilon \nu = \phi_\epsilon * \mathcal C\nu = \mathcal C(\phi_\epsilon*\nu),$
and
\begin{eqnarray}\label{CTEEstimate}
 \ |\tilde {\mathcal C}_\epsilon \nu (\lambda) - \mathcal C_\epsilon \nu (\lambda)| \lesssim  \dfrac{|\nu|(\D(\lambda, \epsilon))}{\epsilon} \lesssim \mathcal M_\nu(\lambda).
 \end{eqnarray}
We denote $\mathcal V_\nu$ the set of $z\in \C$ for which $\lim_{\epsilon\rightarrow 0} \CT_\epsilon \nu(z) = \CT \nu(z)$ exists. Set $\mathcal X_\nu = \C \setminus \mathcal V_\nu.$ Then $\gamma (\mathcal X_\nu) = 0$ by Corollary \ref{ZeroAC}. 

\begin{lemma}\label{distributionLemma}
Suppose that $\eta \in M_0^+(\C),$  $\eta$ is of $1$-linear growth, and $\|\mathcal C_\epsilon (\eta)\| \le 1$. If $\nu \in M_0(\C)$ satisfies $|{\mathcal C}_\epsilon (\nu) (\lambda)|, ~ \mathcal M_\nu(\lambda) \le M < \infty, ~ \eta-a.a.$, then there are two functions $F_1\in L^\infty (|\nu|)$ and $F_2\in L^\infty (\eta)$ with $F_1(z) = \mathcal C(\eta)(z),~ \nu |_{\mathcal {ZD}(\eta)  \setminus \mathcal X_\eta}-a.a.$ and $F_2(z) = \mathcal C(\nu)(z),~ \eta |_{\mathcal {ZD}(\eta)} -a.a.$ such that in the sense of distribution,
 \[
 \ \bar\partial (\mathcal C (\eta)\mathcal C (\nu)) = - \pi(F_1\nu + F_2 \eta).
 \]
\end{lemma}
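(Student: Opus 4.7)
The plan is to mollify both measures and apply the classical Leibniz rule in the smooth setting, then pass to the limit $\epsilon \downarrow 0$. Let $f_\epsilon := \tilde{\mathcal C}_\epsilon \eta = \mathcal C(\phi_\epsilon * \eta)$ and $g_\epsilon := \tilde{\mathcal C}_\epsilon \nu = \mathcal C(\phi_\epsilon * \nu)$, both smooth. Since the mollified measures $\phi_\epsilon * \eta$ and $\phi_\epsilon * \nu$ have smooth densities, we have the pointwise identity
\begin{equation*}
\bar\partial(f_\epsilon g_\epsilon) = -\pi\, f_\epsilon\,(\phi_\epsilon * \nu) - \pi\, g_\epsilon\,(\phi_\epsilon * \eta).
\end{equation*}
By \eqref{CTEEstimate} together with the hypotheses, $\|f_\epsilon\|_{L^\infty(\mathbb C)} \lesssim 1$ and $\|g_\epsilon\|_{L^\infty(\eta)} \lesssim M$ uniformly in $\epsilon$.

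For the left-hand side limit, $f_\epsilon \to \mathcal C\eta$ and $g_\epsilon \to \mathcal C\nu$ $\area$-almost everywhere by Corollary \ref{ZeroAC} and \eqref{CTEEstimate}, and since $\mathcal C\nu \in L^p_{\text{loc}}(\area)$ for $p<2$, the uniform $L^\infty$-bound on $\{f_\epsilon\}$ gives $f_\epsilon g_\epsilon \to \mathcal C\eta \cdot \mathcal C\nu$ in $L^1_{\text{loc}}(\area)$; thus $\bar\partial(f_\epsilon g_\epsilon) \to \bar\partial(\mathcal C\eta\cdot\mathcal C\nu)$ as distributions. For the right-hand side, test against $\psi \in C_c^\infty(\mathbb C)$ and use Fubini and the radial symmetry of $\phi_\epsilon$ to rewrite
\begin{equation*}
\int \psi\, f_\epsilon\,(\phi_\epsilon * \nu)\, d\area = \int \phi_\epsilon * (\psi f_\epsilon)(w)\, d\nu(w),
\end{equation*}
with an analogous formula for the $\eta$-term. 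A diagonal extraction over a countable dense family of test functions produces a subsequence $\{\epsilon_k\}$ along which $\phi_{\epsilon_k} * (\psi f_{\epsilon_k}) \to H_1^\psi$ weak-star in $L^\infty(|\nu|)$ and $\phi_{\epsilon_k} * (\psi g_{\epsilon_k}) \to H_2^\psi$ weak-star in $L^\infty(\eta)$ for every $\psi$ in the family. The elementary estimate
\begin{equation*}
\bigl|\phi_\epsilon * (\psi_1 \psi_2 f_\epsilon) - \psi_1\,\phi_\epsilon * (\psi_2 f_\epsilon)\bigr|(w) \lesssim \|\psi_2\|_\infty \|f_\epsilon\|_\infty\, \omega_{\psi_1}(\epsilon) \to 0,
\end{equation*}
where $\omega_{\psi_1}$ is the modulus of continuity of $\psi_1$, shows $\psi \mapsto H_1^\psi$ is $C^\infty$-linear on $\text{supp}\,|\nu|$; choosing $\psi_2$ equal to $1$ on an open neighborhood of $\text{supp}\,|\nu|$ yields $F_1 := H_1^{\psi_2} \in L^\infty(|\nu|)$ with $H_1^\psi = F_1 \psi$ in $L^\infty(|\nu|)$. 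An analogous construction produces $F_2 \in L^\infty(\eta)$, and combining the two sides of the limit yields the distributional identity $\bar\partial(\mathcal C\eta\cdot\mathcal C\nu) = -\pi(F_1 \nu + F_2 \eta)$.

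It remains to identify $F_1, F_2$ pointwise. On $\mathcal{ZD}(\eta)\setminus \mathcal X_\eta$, Lemma \ref{CauchyTLemma} gives $\gamma$-continuity of $\mathcal C\eta$, which by \eqref{AreaGammaEq} upgrades to approximate continuity in the $\area$-density sense; combined with \eqref{CTEEstimate} and $\Theta_\eta(w)=0$, this forces $\phi_\epsilon * f_\epsilon(w) \to \mathcal C\eta(w)$ pointwise on this set, and bounded dominated convergence against $d|\nu|$ identifies $F_1 = \mathcal C\eta$ $|\nu|$-a.e.\ on $\mathcal{ZD}(\eta)\setminus \mathcal X_\eta$. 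The parallel identification $F_2 = \mathcal C\nu$ on $\mathcal{ZD}(\eta)$, $\eta$-a.e., is the main obstacle: the mollification error $|g_\epsilon - \mathcal C_\epsilon \nu|$ is only controlled by $O(\mathcal M_\nu) \le O(M)$ rather than vanishing, so one must additionally show that $\eta$-a.a.\ points of $\mathcal{ZD}(\eta)$ lie in $\mathcal{ZD}(|\nu|) \cap \mathcal V_\nu$. This I expect to handle by decomposing $\eta$ into its $|\nu|$-absolutely continuous and singular components, applying the final assertion of Lemma \ref{GammaExist} to kill the singular part's contribution on $\mathcal{ND}(|\nu|)$, and using Lemma \ref{ZeroACEta} to discard $\mathcal X_\nu$; after this reduction, the Lebesgue-point argument used for $F_1$ applies verbatim to $F_2$.
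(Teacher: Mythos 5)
Your route is genuinely different from the paper's. You mollify both measures symmetrically and pass to the limit in the pointwise Leibniz rule for $\bar\partial(f_\epsilon g_\epsilon)$. The paper mollifies only $\nu$, keeps $\mathcal{C}(\eta)$ intact, and splits $\int\bar\partial\varphi\cdot\tilde{\mathcal{C}}_\epsilon\nu\cdot\mathcal{C}\eta\,d\area$ into $\int\bar\partial(\varphi\tilde{\mathcal{C}}_\epsilon\nu)\,\mathcal{C}\eta\,d\area - \int\varphi\,\bar\partial\tilde{\mathcal{C}}_\epsilon\nu\,\mathcal{C}\eta\,d\area$; the first integral is handled by pairing with $\bar\partial\mathcal{C}\eta = -\pi\eta$, which yields $\pi\int\varphi\tilde{\mathcal{C}}_\epsilon\nu\,d\eta$ directly, while the second uses $\bar\partial\tilde{\mathcal{C}}_\epsilon\nu = -\pi(\phi_\epsilon*\nu)$ and one peeling $(\varphi\mathcal{C}\eta)*\phi_\epsilon\approx\varphi\tilde{\mathcal{C}}_\epsilon\eta$. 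The asymmetric scheme is lighter than yours: $F_2$ is a weak-star limit in $L^\infty(\eta)$ of the \emph{single} mollification $\tilde{\mathcal{C}}_\epsilon\nu$, which is $\eta$-a.e.\ uniformly bounded directly from the hypotheses and \eqref{CTEEstimate}, and the pointwise identification needs only $\tilde{\mathcal{C}}_\epsilon\nu(z)\to\mathcal{C}\nu(z)$ on $\mathcal{ZD}(\nu)\setminus\mathcal{X}_\nu$. In your version both right-hand side terms are double mollifications $\phi_\epsilon*(\psi f_\epsilon)$, $\phi_\epsilon*(\psi g_\epsilon)$, and both the uniform $L^\infty(\eta)$-bound for the $\eta$-term and its pointwise identification require re-deriving an \eqref{CTEEstimate}-type estimate for the kernel $\phi_\epsilon*\phi_\epsilon$. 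The diagonal extraction over a dense family of $\psi$'s together with the $C^\infty$-linearity device is also overbuilt: since $\phi_\epsilon*(\psi f_\epsilon) = \psi\cdot(\phi_\epsilon*f_\epsilon) + O(\|f_\epsilon\|_\infty\,\omega_\psi(\epsilon))$, one weak-star convergent subsequence of $\phi_\epsilon*f_\epsilon$ in $L^\infty(|\nu|)$ suffices.

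There are two genuine gaps. \emph{First}, the claim that the Lebesgue-point argument for $F_1$ applies \emph{verbatim} to $F_2$ is false. Your $F_1$ argument leans on $\|\mathcal{C}\eta\|_{L^\infty(\C)}\le 1$, so area-approximate continuity of $\mathcal{C}\eta$ upgrades to convergence of the double mollification; but $\mathcal{C}\nu$ is not globally bounded (it is only bounded $\eta$-a.e.\ by $M$), so that argument does not transfer. The correct mechanism for $F_2$ is the mollification-error bound $|\phi_\epsilon*g_\epsilon(w) - \mathcal{C}_{2\epsilon}\nu(w)| \lesssim |\nu|(\D(w,2\epsilon))/\epsilon$, which tends to $0$ precisely when $\Theta_\nu(w)=0$; this is a different argument, and you also need it to control $\|\phi_\epsilon*(\psi g_\epsilon)\|_{L^\infty(\eta)}$ for the weak-star extraction in the first place. \emph{Second}, your reduction of $\eta(\mathcal{ZD}(\eta)\cap\mathcal{ND}(\nu))=0$ is incomplete: decomposing $\eta=\eta_{ac}+\eta_s$ with respect to $|\nu|$ and invoking the final assertion of Lemma~\ref{GammaExist} kills only $\eta_s(\mathcal{ND}(\nu))$; you say nothing about $\eta_{ac}=\rho\,|\nu|$. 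You still have to show $\rho=0$, $|\nu|$-a.e.\ on $\mathcal{ZD}(\eta)\cap\mathcal{ND}(\nu)$, which requires Besicovitch differentiation: for $|\nu|$-a.e.\ $w\in\mathcal{ND}(\nu)$ with $\rho(w)>0$, choosing radii along which $|\nu|(\D(w,r))/r\to\Theta^*_\nu(w)>0$ forces $\Theta^*_\eta(w)\ge\rho(w)\Theta^*_\nu(w)>0$, so $w\notin\mathcal{ZD}(\eta)$. This is the content that the paper folds into its citation of Lemma~\ref{GammaExist}, and your plan leaves it out.
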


\begin{proof}
 The transform $\tilde {\mathcal C}_\epsilon \nu$ is smooth and $\|\tilde {\mathcal C}_\epsilon \nu - {\mathcal C} \nu\|_{L^1(\area_{\mathcal D})} \rightarrow 0$ as $\epsilon\rightarrow 0$ for a bounded subset $\mathcal D$ by 
\eqref{CTEEstimate}. So for a smooth function $\varphi$ with compact support, we have
 \[
 \ \begin{aligned}
 \ &\int \bar \partial \varphi (z)\mathcal C(\eta)(z)\mathcal C(\nu)(z) d\area(z) \\
 \ = & \lim_{\epsilon\rightarrow 0 }\int \bar \partial \varphi (z)\tilde {\mathcal C}_\epsilon \nu (z)\mathcal C(\eta)(z) d\area(z) \\
\ = & \lim_{\epsilon\rightarrow 0 }\int \bar \partial (\varphi (z)\tilde {\mathcal C}_\epsilon \nu(z))\mathcal C(\eta)(z) d\area(z) - \lim_{\epsilon\rightarrow 0 }\int \varphi (z) \bar \partial ( \tilde {\mathcal C}_\epsilon \nu(z))\mathcal C(\eta)(z) d\area(z) \\
\ = & I - \lim_{\epsilon\rightarrow 0 }II_\epsilon.
\ \end{aligned}
 \]
By \eqref{CTEEstimate} and the assumption, we have
 \[
 \ |\tilde {\mathcal C}_\epsilon (\nu)(\lambda)|\le |\tilde {\mathcal C}_\epsilon (\nu) (\lambda) - \mathcal C_\epsilon (\nu) (\lambda)| + |\mathcal C_\epsilon (\nu) (\lambda)| \lesssim M,~\eta-a.a..
 \]
We find a sequence $\{\tilde {\mathcal C}_{\epsilon_k} \nu(z)\}$ converging to $F_2$ in $L^\infty (\eta)$ weak-star topology and   
 \[
 \ \tilde {\mathcal C}_{\epsilon_k} \nu(z) \rightarrow  \mathcal C \nu(z) = F_2(z), ~ \eta |_{\mathcal {ZD}(\nu)}-a.a.
 \]
by \eqref{CTEEstimate} and Corollary \ref{ZeroAC} since a zero $\gamma$ set is a zero $\eta$ set (see Lemma \ref{ZeroACEta}). By Lemma \ref{GammaExist}, we see that $\eta (\mathcal {ZD}(\eta) \cap \mathcal {ND}(\nu)) = 0.$ Therefore,
$\mathcal C \nu(z) = F_2(z), ~ \eta |_{\mathcal {ZD}(\eta)}-a.a.$
  Using the Lebesgue dominating convergence theorem, we get
 \[
 \ I = - \lim_{k\rightarrow \i}\int \varphi (z)\tilde {\mathcal C}_{\epsilon_k} \nu(z) \bar \partial\mathcal C(\eta)(z) d\area(z) = \pi \int \varphi (z)F_2(z) d\eta(z).
 \]
Now we estimate $II_\epsilon:$
 \begin{eqnarray}\label{CTDistributionEq1}
 \ \begin{aligned}\label{CTDistributionEq1}
\ II_\epsilon = &\int \varphi (z) \bar \partial ( \mathcal C(\phi_\epsilon* \nu)(z))\mathcal C\eta(z) d\area(z) \\
\  = & - \pi \int \varphi (z) \mathcal C\eta(z) (\phi_\epsilon* \nu)(z)d\area(z). \\
\  = & - \pi \int (\varphi \mathcal C\eta)* \phi_\epsilon(z)d\nu(z). \\
\ \end{aligned} 
 \end{eqnarray}
On the other hand,
\begin{eqnarray}\label{CTDistributionEq2}
 \ \begin{aligned}
 \ & | (\varphi \mathcal C\eta)* \phi_\epsilon(z) -  \varphi (z)( \mathcal C\eta)* \phi_\epsilon(z)|  \\
\ \lesssim &  \int |\varphi(w) - \varphi(z)| |\mathcal C\eta(w)|\phi_\epsilon(z-w)d\area(w) \\
\ \lesssim & \|\mathcal C\eta\|_\C \sup_{|w-z|\le \epsilon} |\varphi(w) - \varphi(z)| \rightarrow 0 \text{ as } \epsilon\rightarrow 0.
\ \end{aligned}
 \end{eqnarray}
 For $\lambda \in \C \setminus \mathcal X_\eta,$ $\lim_{\epsilon\rightarrow 0}\mathcal C_\epsilon (\eta)(\lambda) = \mathcal C (\eta)(\lambda)$ exists. 
By \eqref{CTEEstimate} and the assumption, $\tilde {\mathcal C}_\epsilon \eta (z)$ converges to $\mathcal C\eta (z)$ on $\mathcal {ZD}(\eta) \setminus \mathcal X_\eta$ and
$\|\tilde {\mathcal C}_\epsilon (\eta)\|_{L^\infty (|\nu|)} \lesssim 1.$
 We find a sequence $\{\tilde {\mathcal C}_{\epsilon_k} \eta(z)\}$ converging to $F_1$ in $L^\infty (|\nu|)$ weak-star topology and   
 \[
 \ \tilde {\mathcal C}_{\epsilon_k} \eta(z) \rightarrow  \mathcal C \eta(z) = F_1(z), ~ \nu|_{\mathcal {ZD}(\eta) \setminus \mathcal X_\eta}-a.a.
\]
(see \eqref{CTEEstimate}).
Combining with \eqref{CTDistributionEq1} and \eqref{CTDistributionEq2}, we conclude that
\[
\ \lim_{k\rightarrow \i}II_{\epsilon_k} = - \pi \lim_{k\rightarrow \i} \int \varphi (z) \tilde {\mathcal C}_{\epsilon_k} \eta(z)(z)d\nu(z) = - \pi \int \varphi (z) F_1(z)d\nu(z). 
\]
 The lemma is proved.
\end{proof}

The following lemma is a simple application of Theorem \ref{TolsaTheorem} (2).

\begin{lemma}\label{GLLemma} 
Let $X\subset \mathbb C$ and $a_1,a_2 \in \mathbb C.$ 
Let $\mathcal Q$ be any set in $\C$ with  $\gamma(\mathcal Q) = 0$.
Let $f_1(z)$ and $f_2(z)$ be functions on $\D(\lambda, \delta_0)\setminus \mathcal Q$ for some $\delta_0 > 0.$ 
 If $a_2\ne 0$ and
\[  
 \  \lim_{\delta \rightarrow 0} \dfrac{\gamma(\D(\lambda, \delta) \cap X \cap \{|f_i(z) - a_i| > \epsilon\})} {\delta}= 0, ~ i =1,2,
\]
for all $\epsilon > 0,$ then 
\[  
 \  \lim_{\delta \rightarrow 0} \dfrac{\gamma(\D(\lambda, \delta) \cap X \cap \{|\frac{f_1(z)}{f_2(z)} - \frac{a_1}{a_2}| > \epsilon\})} {\delta}= 0.
\]
 \end{lemma}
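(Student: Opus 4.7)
The plan is to reduce the event $\{|f_1/f_2 - a_1/a_2| > \epsilon\}$ to a finite union of events of the form $\{|f_i - a_i| > \epsilon'\}$ (for some $\epsilon'$ depending on $\epsilon$, $a_1$, $a_2$), and then invoke the semiadditivity of analytic capacity (Theorem \ref{TolsaTheorem} (2)) together with the hypotheses.

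The algebraic identity I will use is
\[
\frac{f_1(z)}{f_2(z)} - \frac{a_1}{a_2} \;=\; \frac{a_2(f_1(z)-a_1) - a_1(f_2(z)-a_2)}{a_2 f_2(z)}.
\]
On the set where $|f_2(z)-a_2| \le |a_2|/2$ we have $|f_2(z)| \ge |a_2|/2$, and hence
\[
\left|\frac{f_1(z)}{f_2(z)} - \frac{a_1}{a_2}\right| \;\le\; \frac{2}{|a_2|^2}\bigl(|a_2||f_1(z)-a_1| + |a_1||f_2(z)-a_2|\bigr).
\]
Choosing $\epsilon' = \min\bigl(|a_2|/2,\ \epsilon|a_2|^2/(4|a_2|),\ \epsilon|a_2|^2/(4|a_1|+1)\bigr)$, this shows that whenever $|f_1(z)-a_1| \le \epsilon'$ and $|f_2(z)-a_2| \le \epsilon'$ simultaneously, then $|f_1(z)/f_2(z) - a_1/a_2| \le \epsilon$. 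Therefore
\[
\left\{\left|\tfrac{f_1}{f_2} - \tfrac{a_1}{a_2}\right| > \epsilon\right\} \;\subset\; \{|f_1 - a_1| > \epsilon'\} \cup \{|f_2 - a_2| > \epsilon'\}.
\]

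Intersecting with $\D(\lambda,\delta)\cap X$ and applying Theorem \ref{TolsaTheorem} (2) gives
\[
\gamma\!\left(\D(\lambda,\delta)\cap X \cap \left\{\left|\tfrac{f_1}{f_2} - \tfrac{a_1}{a_2}\right| > \epsilon\right\}\right) \;\le\; C_T\sum_{i=1}^{2}\gamma\bigl(\D(\lambda,\delta)\cap X \cap \{|f_i - a_i| > \epsilon'\}\bigr).
\]
Dividing by $\delta$ and using the hypothesis for both $i=1$ and $i=2$ with the parameter $\epsilon'$, each term on the right tends to $0$ as $\delta\to 0$, which completes the proof.

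There is essentially no obstacle here: the lemma is a routine $\gamma$-continuity analogue of the fact that division is continuous where the denominator is nonzero, and the only ingredient beyond elementary estimates is the semiadditivity of $\gamma$. The one point to be mindful of is that the null set $\mathcal Q$ where $f_1,f_2$ may be undefined does not affect analytic capacity, so inclusions of sets on $\D(\lambda,\delta)\setminus\mathcal Q$ give the same $\gamma$-estimates.
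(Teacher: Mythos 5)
Your proof is correct and takes exactly the approach the paper indicates: the paper gives no proof and simply remarks that the lemma is a simple application of Theorem \ref{TolsaTheorem} (2), which is precisely the semiadditivity argument you carry out in detail.
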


\begin{lemma}\label{RhoGContLemma}
If $f\in R^t(K,\mu),$ then there exists a subset $\mathcal Q_f$ with $\gamma(\mathcal Q_f) = 0$ such that $\rho(f)$ is $\gamma$-continuous at each point $\lambda\in \mathcal R \setminus 	\mathcal Q_f.$
\end{lemma}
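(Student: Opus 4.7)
The plan is to establish $\gamma$-continuity of $\rho(f)$ at each $\lambda\in\mathcal R\setminus \mathcal Q_f$ by representing $\rho(f)$ locally as a ratio of Cauchy transforms, then analyzing the $\gamma$-limit of this ratio using Lemma \ref{CauchyTLemma} (for $\lambda\in\mathcal R_0$) or the Plemelj-type formulas of Theorem \ref{GPTheorem1} (for $\lambda\in\mathcal R_1$), combined with Lemma \ref{GLLemma} for quotients. The exceptional set $\mathcal Q_f$ will be the countable union, which is $\gamma$-null by Theorem \ref{TolsaTheorem}(2), of the $\gamma$-null sets arising from: Corollary \ref{ZeroAC} applied to each $g_j\mu$ and $fg_j\mu$; Lemma \ref{GammaExist} applied to $\mu$, $g_j\mu$, and $fg_j\mu$; Theorem \ref{FACDensityThm}; Theorem \ref{GPTheorem1}; and the exceptional set already produced by Lemma \ref{RhoExistLemma}.

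For $\lambda\in \mathcal R_0\setminus\mathcal Q_f$, pick $j_0$ with $\CT(g_{j_0}\mu)(\lambda)\ne 0$, which exists since $\lambda\in\mathcal N$. From $\lambda\in \mathcal{ZD}(\mu)$ together with Lemma \ref{GammaExist} applied to $g_{j_0}\mu$ and $fg_{j_0}\mu$ (whose one-dimensional parts lie on countable unions of Lipschitz graphs, absorbed into $\mathcal Q_f$), we obtain $\Theta_{g_{j_0}\mu}(\lambda)=\Theta_{fg_{j_0}\mu}(\lambda)=0$. Lemma \ref{CauchyTLemma} then shows that $\CT(g_{j_0}\mu)$ and $\CT(fg_{j_0}\mu)$ are $\gamma$-continuous at $\lambda$, with limits $\CT(g_{j_0}\mu)(\lambda)\ne 0$ and $\CT(fg_{j_0}\mu)(\lambda)=\rho(f)(\lambda)\CT(g_{j_0}\mu)(\lambda)$. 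Applying Lemma \ref{GLLemma} with $X=\C$ yields that the quotient $\CT(fg_{j_0}\mu)(z)/\CT(g_{j_0}\mu)(z)$ has $\gamma$-limit $\rho(f)(\lambda)$ at $\lambda$; by \eqref{RhoExistLemmaEq1} this quotient agrees $\gamma$-a.e.\ with $\rho(f)(z)$ on the set where the denominator does not vanish, and that set has $\gamma$-density zero at $\lambda$ by the $\gamma$-continuity of $\CT(g_{j_0}\mu)$, so the same limit holds for $\rho(f)$.

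For $\lambda\in \mathcal R_1\cap\Gamma_n\setminus\mathcal Q_f$, use the covering $\D(\lambda,\delta)=U_{\Gamma_n}\cup L_{\Gamma_n}\cup \Gamma_n$ and Theorem \ref{TolsaTheorem}(2): it suffices to show that $\rho(f)$ has $\gamma$-limit $f(\lambda)$ on each of the three pieces. The identification $\rho(f)(\lambda)=f(\lambda)$ holds $\mathcal H^1|_{\Gamma_n}$-a.e.\ on $\mathcal R_1\cap\Gamma_n$ by Lemma \ref{RhoExistLemma} together with the absolute continuity of $\mu|_{\mathcal R_1\cap\Gamma_n}$ with respect to $\mathcal H^1|_{\Gamma_n}$, hence $\gamma$-a.e.\ by \eqref{HACEq}. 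Using $\lambda\in \mathcal N_+(\Gamma_n)\cap \mathcal N_-(\Gamma_n)$ together with Lemma \ref{NPMLemma}, choose $j_0,j_1,j_2$ so that $\CT(g_{j_0}\mu)(\lambda)$, $v^+(g_{j_1}\mu,\Gamma_n,\beta_n)(\lambda)$, and $v^-(g_{j_2}\mu,\Gamma_n,\beta_n)(\lambda)$ are all nonzero. The key algebraic identity
\[
\ v^{\pm}(fg_j\mu,\Gamma_n,\beta_n)(\lambda) = f(\lambda)\, v^{\pm}(g_j\mu,\Gamma_n,\beta_n)(\lambda)
\]
follows from \eqref{VPlusBeta}--\eqref{VMinusBeta} upon noting that the $\mathcal H^1|_{\Gamma_n}$-density of $fg_j\mu$ at $\lambda$ equals $f(\lambda)$ times that of $g_j\mu$, together with $\CT(fg_j\mu)(\lambda)=f(\lambda)\CT(g_j\mu)(\lambda)$. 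Theorem \ref{GPTheorem1} supplies the $\gamma$-limits of $\CT(fg_j\mu)$ and $\CT(g_j\mu)$ on each of $U_{\Gamma_n}$, $L_{\Gamma_n}$, $\Gamma_n$, and Lemma \ref{GLLemma} applied to the corresponding quotients yields the common $\gamma$-limit $f(\lambda)$ on each piece.

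The main obstacle is verifying $\Theta_{g_j\mu}(\lambda)=\Theta_{fg_j\mu}(\lambda)=0$ for $\gamma$-almost every $\lambda\in\mathcal R_0$: since $f\in L^t(\mu)$ and $g_j\in L^s(\mu)$ need not be bounded, Hölder's inequality alone fails to propagate zero $\mu$-linear density to $|g_j|\mu$ or $|fg_j|\mu$ when $t>1$. The resolution is an independent application of Lemma \ref{GammaExist} to each of $\mu$, $g_j\mu$, and $fg_j\mu$, so that the set where the one-dimensional density fails to vanish lies on countably many Lipschitz graphs up to a $\gamma$-null set; these are absorbed into $\mathcal Q_f$. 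This is exactly the technique used in the proof of Theorem \ref{FCharacterization}.
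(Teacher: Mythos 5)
Your proof is correct and follows essentially the same route as the paper: split $\mathcal R$ into $\mathcal R_0$ and $\mathcal R_1$, handle $\mathcal R_0$ via Lemma \ref{CauchyTLemma} and the quotient Lemma \ref{GLLemma} with $X=\C$, and handle $\mathcal R_1\cap\Gamma_n$ by covering $\D(\lambda,\delta)$ with $\Gamma_n$, $U_{\Gamma_n}$, $L_{\Gamma_n}$, applying Theorem \ref{GPTheorem1} and Lemma \ref{GLLemma} on each piece, and finishing with Theorem \ref{TolsaTheorem}(2). Your extra care in spelling out the identity $v^{\pm}(fg_j\mu,\Gamma_n,\beta_n)(\lambda)=f(\lambda)\,v^{\pm}(g_j\mu,\Gamma_n,\beta_n)(\lambda)$ and in flagging why $\Theta_{g_j\mu}(\lambda)=\Theta_{fg_j\mu}(\lambda)=0$ holds $\gamma$-a.e.\ on $\mathcal R_0$ (the paper asserts this without comment, relying implicitly on the Lemma \ref{GammaExist} mechanism used in the proof of Theorem \ref{FCharacterization}) is a welcome clarification, not a deviation.
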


\begin{proof}
There exists a subset $\mathcal Q_0$ with $\gamma(\mathcal Q_0)=0$ such that for $\lambda \in \mathcal R_0 \setminus \mathcal Q_0,$ there exists $j_0$ satisfying $\lim_{\epsilon \rightarrow 0} \mathcal C_\epsilon (g_{j_0}\mu) (\lambda) = \mathcal C(g_{j_0}\mu) (\lambda)$ exists, $\lim_{\epsilon \rightarrow 0} \mathcal C_\epsilon (fg_{j_0}\mu) (\lambda) = \mathcal C(fg_{j_0}\mu) (\lambda)$ exists, $\Theta_{g_{j_0}\mu}(\lambda) = \Theta_{fg_{j_0}\mu}(\lambda) = 0,$  $\mathcal C(g_{j_0}\mu) (\lambda) \ne 0,$ and $\rho(f)(\lambda) = \frac{\mathcal C(fg_{j_0}\mu)(\lambda)}{\mathcal C(g_{j_0}\mu)(\lambda)}$ by \eqref{RhoExistLemmaEq1}.
From Lemma \ref{CauchyTLemma}, we infer that $\mathcal C(g_{j_0}\mu)$ and  $\mathcal C(fg_{j_0}\mu)$ are $\gamma$-continuous at $\lambda.$ Hence, applying Lemma \ref{GLLemma} for $X = \C,$ we infer that $\frac{\mathcal C(fg_{j_0}\mu)(z)}{\mathcal C(g_{j_0}\mu)(z)}$ is $\gamma$-continuous at $\lambda.$ It follows from \eqref{RhoExistLemmaEq1} that $\rho(f)$ is $\gamma$-continuous at $\lambda.$

To deal with $\mathcal R_1,$ without loss of generality, we consider $\mathcal R_1 \cap \Gamma_1$ with the rotation angle $\beta_1 = 0.$  There exists a subset $\mathcal Q_1$ with $\gamma(\mathcal Q_1)=0$ such that for $\lambda \in \mathcal R_1 \cap \Gamma_1 \setminus \mathcal Q_1,$ there exist integers $j_0, j_1, j_2$ satisfying	$\lim_{\epsilon \rightarrow 0} \mathcal C_\epsilon (g_{j_k}\mu) (\lambda) = \mathcal C(g_{j_k}\mu) (\lambda)$ and $\lim_{\epsilon \rightarrow 0} \mathcal C_\epsilon (fg_{j_k}\mu) (\lambda) = \mathcal C(fg_{j_k}\mu) (\lambda)$ exist for $k=0,1,2,$ $\mathcal C(g_{j_0}\mu) (\lambda) \ne 0$ by Lemma \ref{NPMLemma}, $v^+(g_{j_1}\mu, \Gamma_1) (\lambda) \ne 0,$ $v^-(g_{j_2}\mu, \Gamma_1) (\lambda) \ne 0,$ and 
\[
\ \rho(f)(\lambda) = f(\lambda) = \dfrac{\mathcal C(fg_{j_0}\mu)(\lambda)}{\mathcal C(g_{j_0}\mu)(\lambda)}  = \dfrac{v^+(fg_{j_1}\mu, \Gamma_1)(\lambda)}{v^+(g_{j_1}\mu, \Gamma_1)(\lambda)} = \dfrac{v^-(fg_{j_2}\mu, \Gamma_1)(\lambda)}{v^-(g_{j_2}\mu, \Gamma_1)(\lambda)}
\]
by \eqref{RhoExistLemmaEq1} and \eqref{RhoExistLemmaEq2}. Set $X_1 = \Gamma_1,$ $X_2 = U_{\Gamma_1},$ and  $X_3 = L_{\Gamma_1}.$ Using \eqref{RhoExistLemmaEq1}, Theorem \ref{GPTheorem1}, and Lemma \ref{GLLemma}, we get
\[  
 \  \lim_{\delta \rightarrow 0} \dfrac{\gamma(\D(\lambda, \delta) \cap X_j \cap \{|\rho(f)(z) - \rho(f)(\lambda)| > \epsilon\})} {\delta}= 0,~ j=1,2,3.
\]
Applying Theorem \ref{TolsaTheorem} (2), we see that $\rho(f)$ is $\gamma$-continuous at $\lambda.$ 
\end{proof}

\begin{lemma}\label{zeroR}
For $f\in R^{t,\i}(K,\mu)$, if $\eta \in M_0^+(\C),$ $\eta$ is of $1$-linear growth, and $\|\mathcal C_\epsilon (\eta)\| \le 1$ for $\epsilon > 0$ such that 
$\mathcal C (\eta)(\lambda) = \rho(f)(\lambda), ~ \area_{\mathcal R}-a.a.$  and 
\begin{eqnarray}\label{zeroREq1}
\ \mathcal M_{g_j\mu}(z ),~ | \mathcal C_\epsilon (g_j\mu)(z)| \le M_j < \i,~\eta-a.a. \text{ for } j\ge 1,
\end{eqnarray}
then $\eta(\mathcal R) = 0.$
 \end{lemma}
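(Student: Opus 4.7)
The plan is to convert the pointwise hypothesis into a distributional/measure identity and then exploit the structure of $\mathcal R$ (Plemelj jumps on graphs, plus $\gamma$-continuity of $\rho(f)$) to force $\eta(\mathcal R)$ to vanish. First, combining the hypothesis $\mathcal C(\eta) = \rho(f)$ $\area_{\mathcal R}$-a.a.\ with \eqref{acZero} (which gives $\mathcal C(g_j\mu) = 0$ area-a.a.\ on $\C\setminus\mathcal R$, since $\area(\mathcal F_+\cup\mathcal F_-)=0$) and the basic identity \eqref{RhoExistLemmaEq1}, I would deduce
\[
\mathcal C(\eta)(z)\mathcal C(g_j\mu)(z) = \mathcal C(fg_j\mu)(z), \quad \area\text{-a.a.}, \ j\ge 1.
\]
The growth and boundedness assumptions on $\eta$, together with the hypothesis $|\mathcal C_\epsilon(g_j\mu)|,\mathcal M_{g_j\mu}\le M_j$ $\eta$-a.a., are precisely the hypotheses of Lemma \ref{distributionLemma} with $\nu=g_j\mu$. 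Taking distributional $\bar\partial$ of both sides and matching with $\bar\partial\mathcal C(fg_j\mu)=-\pi fg_j\mu$ yields the master measure identity
\[
F_{1,j}\,g_j\mu + F_{2,j}\,\eta = f\,g_j\mu \ \text{in}\ M_0(\C), \qquad j\ge 1,
\]
where the functions $F_{1,j}, F_{2,j}$ are as in Lemma \ref{distributionLemma}; in particular $F_{2,j}=\mathcal C(g_j\mu)$ on $\eta|_{\mathcal{ZD}(\eta)}$.

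Next, I would assume for contradiction that $\eta(\mathcal R)>0$ and invoke Lemma \ref{GammaExist} to decompose $\eta = h_\eta\mathcal H^1|_{\tilde\Gamma}+\eta_s$ with $\Theta_{\eta_s}\equiv 0$ $\gamma$-a.a.\ and $\tilde\Gamma$ a union of rotated Lipschitz graphs. To rule out the graph part, I run a Plemelj jump argument: at $\mathcal H^1$-a.a.\ $\lambda_0\in\tilde\Gamma\cap\mathcal R$ with $h_\eta(\lambda_0)\ne 0$, Theorem \ref{FACDensityThm} gives full $\gamma$-density of $\mathcal R$ at $\lambda_0$, Lemma \ref{RhoGContLemma} gives $\gamma$-continuity of $\rho(f)$, and Theorem \ref{GPTheorem1}(b,c) supplies distinct one-sided $\gamma$-limits $v^\pm(\eta,\tilde\Gamma)(\lambda_0)$ of $\mathcal C(\eta)$ separated by $h_\eta(\lambda_0)/L(\lambda_0)\ne 0$. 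Picking $j_0$ with $\mathcal C(g_{j_0}\mu)(\lambda_0)\ne 0$ (possible since $\mathcal R\subset\mathcal N$) and applying Lemma \ref{GLLemma} to $\mathcal C(\eta) = \mathcal C(fg_{j_0}\mu)/\mathcal C(g_{j_0}\mu)$ (an equality holding area-a.a.\ on $\mathcal R$, which transfers to $\gamma$-densities via $\area\le 4\pi\gamma^2$ combined with the full-density property) forces $v^+=v^-=\rho(f)(\lambda_0)$, contradicting the jump. Hence $h_\eta\equiv 0$ on $\tilde\Gamma\cap\mathcal R$, so $\eta|_\mathcal R=\eta_s|_\mathcal R$.

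The remaining and most delicate task is to rule out $\eta_s|_\mathcal R$. My plan is to exploit the master identity restricted to $\mathcal R$ together with the Lebesgue decomposition $\eta_s|_\mathcal R=\sigma_j+\tau_j$ with $\sigma_j\ll|g_j\mu|$ and $\tau_j\perp|g_j\mu|$. Matching singular parts in $F_{2,j}\eta_s|_\mathcal R = (f-F_{1,j})g_j\mu|_\mathcal R$ gives $F_{2,j}\tau_j=0$, hence $F_{2,j}=0$ $\tau_j$-a.a.; the last clause of Lemma \ref{GammaExist} applied to $\tau_j$ (which has $1$-linear growth and is singular to $g_j\mu$) concentrates $\tau_j$ on $\mathcal{ZD}(g_j\mu)$, where $F_{2,j}=\mathcal C(g_j\mu)$, so $\mathcal C(g_j\mu)=0$ $\tau_j$-a.a. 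For $\eta_s|_\mathcal R$-a.a.\ $\lambda_0$ with $\Theta_{\eta_s}(\lambda_0)=0$, Lemma \ref{CauchyTLemma} gives $\gamma$-continuity of $\mathcal C(\eta)$ and Lemma \ref{RhoGContLemma} of $\rho(f)$, matching them at $\lambda_0$. Since $\lambda_0\in\mathcal R\subset\mathcal N$, some $\mathcal C(g_{j_0}\mu)(\lambda_0)\ne 0$, so $\lambda_0\notin\mathrm{supp}(\tau_{j_0})$, forcing $\eta_s$ to be locally $\ll g_{j_0}\mu$ at $\lambda_0$. The main obstacle is to then rule out this absolutely continuous alternative: writing $\sigma_{j_0}=\alpha\,g_{j_0}\mu$ locally, the identity on the $\ll g_{j_0}\mu$ part reads $\mathcal C(g_{j_0}\mu)\alpha=f-\mathcal C(\eta)$ $g_{j_0}\mu$-a.a.\ on $\mathcal{ZD}(\eta)$, and combined with $\mathcal C(\eta)(\lambda_0)=\rho(f)(\lambda_0)=f(\lambda_0)$ (the last by \eqref{RhoExistLemmaEq2}) this produces $\mathcal C(g_{j_0}\mu)(\lambda_0)\alpha(\lambda_0)=0$, contradicting both $\mathcal C(g_{j_0}\mu)(\lambda_0)\ne 0$ and positivity of $\alpha$ on a $\sigma_{j_0}$-positive set. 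The delicate bookkeeping is simultaneously coordinating these Lebesgue decompositions across all $j$ and identifying density points of $\eta_s|_\mathcal R$ where $\mathcal{ZD}(\eta)$, $\mathcal R$, and the non-vanishing locus of some $\mathcal C(g_{j_0}\mu)$ overlap $\eta_s$-positively.
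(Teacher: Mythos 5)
Your plan through the first claim — use \eqref{acZero} and \eqref{RhoExistLemmaEq1} to get the master identity $\mathcal C(\eta)\mathcal C(g_j\mu)=\mathcal C(fg_j\mu)$ $\area$-a.a., feed it into Lemma \ref{distributionLemma} to get $F_{1,j}g_j\mu + F_{2,j}\eta = fg_j\mu$, and then kill the part of $\eta|_\mathcal R$ living on Lipschitz graphs via Plemelj jumps ($v^\pm(\eta,\Gamma)$ must both equal the $\gamma$-continuous $\rho(f)(\lambda)$ because the set where $\mathcal C(\eta)\ne\rho(f)$ has zero area and $\mathcal F$ has zero area-density at $\lambda\in\mathcal R$ by Theorem \ref{FACDensityThm}) — is exactly the paper's route. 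The divergence, and the genuine gap, is in your final step.

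You decompose $\eta_s|_\mathcal R = \sigma_j + \tau_j$ against $|g_j\mu|$ for each $j$, and then try to match singular/absolutely continuous pieces. The singular-part argument ($F_{2,j}\tau_j=0$, so $\mathcal C(g_j\mu)=0$ $\tau_j$-a.a.\ after concentrating $\tau_j$ on $\mathcal{ZD}(g_j\mu)$) is fine, but the $\sigma_{j}$-part argument is not closed: the equality $\mathcal C(g_{j_0}\mu)\alpha\cdot |g_{j_0}|/g_{j_0} = f-\mathcal C(\eta)$ only holds $g_{j_0}\mu$-a.e.\ and you need $\mathcal C(\eta)=\rho(f)=f$ simultaneously $\sigma_{j_0}$-a.e.\ at the same points, and you then have to run this for all $j$ at once, with $j$-dependent decompositions. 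You flag exactly this as ``delicate bookkeeping'' that you have not resolved; as written this is a missing step, not merely a detail.

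The paper's key move, which avoids the whole issue, is to decompose the \emph{other} measure the \emph{other} way: write $\mu = h\eta + \mu_s$, the Radon--Nikodym decomposition of $\mu$ with respect to $\eta$ (a single decomposition, independent of $j$). The master identity $F_{1,j}g_j\mu + F_{2,j}\eta = fg_j\mu$ then has $\eta$-part
\[
F_{1,j}(z)g_j(z)h(z) + F_{2,j}(z) = f(z)g_j(z)h(z),\quad \eta\text{-a.a.}
\]
Because $\eta|_\mathcal R$ is concentrated on $\mathcal{ZD}(\eta)$ after the Plemelj step, and $\eta|_{\{h\ne 0\}}\ll\mu$, you get $F_{1,j} = \mathcal C(\eta) = \rho(f) = f$ at $\eta|_\mathcal R$-a.e.\ point of $\{g_jh\ne 0\}$ (the last equality by \eqref{RhoExistLemmaEq2}, the others by hypothesis and Lemma \ref{distributionLemma}); on $\{g_jh=0\}$ the identity gives $F_{2,j}=0$ directly. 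Either way $F_{2,j}=\mathcal C(g_j\mu)=0$, $\eta|_\mathcal R$-a.a., uniformly in $j$ with no Lebesgue decompositions to coordinate, and $\mathcal R\subset\mathcal N$ finishes. So the fix is: replace your family of Lebesgue decompositions of $\eta$ against each $|g_j\mu|$ with the single Radon--Nikodym decomposition of $\mu$ against $\eta$.
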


\begin{proof}
Suppose that 
 $\gamma (\mathcal R \cap \mathcal {ND}(\eta)) > 0.$ 
Then using Lemma \ref{GammaExist}, we find a Lipschitz graph $\Gamma_0$ such that $\eta(\Gamma_0 \cap \mathcal R) > 0$. Without loss of generality, we assume that the rotation angle of $\Gamma_0$ is zero. Clearly, $\eta |_{\Gamma_0 \cap \mathcal R}$ is absolutely continuous with respect to $\mathcal H^1|_{\Gamma_0}.$ There exists a subset $\mathcal Q$ with $\gamma(\mathcal Q) = 0$ such that Theorem \ref{GPTheorem1} holds and $\rho(f)$ is $\gamma$-continuous at at $\lambda\in \Gamma_0 \cap \mathcal R\setminus \mathcal Q$ (Lemma \ref{RhoGContLemma}). Together with the assumption and \eqref{AreaGammaEq}, we get
\[
\ v^+(\eta, \Gamma_0)(\lambda) = v^-(\eta, \Gamma_0)(\lambda) = \rho(f)(\lambda)
\]
for $\lambda\in \Gamma_0 \cap \mathcal R\setminus \mathcal Q,$ which implies $\eta(\Gamma_0 \cap \mathcal R) = 0.$ This is a contradiction. Thus,
\[
 \  \Theta_\eta (\lambda) = 0, ~\mathcal C (\eta)(\lambda) = \rho(f)(\lambda), ~ \gamma|_{\mathcal R}-a.a..
 \]
Since a zero $\gamma$ set is also a zero $\eta$ set (see Lemma \ref{ZeroACEta}), we get
 \begin{eqnarray}\label{ENEstimateEq4}
 \ \eta(\mathcal R_1) = 0 \text{ and }\mathcal C (\eta)(\lambda) = \rho(f)(\lambda), ~ \eta |_{\mathcal R}-a.a..
 \end{eqnarray}
 Let $\mu = h\eta + \mu_s$ be the Radon Nikodym decomposition with respect to $\eta$, where $\mu_s\perp\eta$. Using \eqref{zeroREq1} and applying Lemma \ref{distributionLemma} for $\eta$ and $\nu = g_j\mu$, we have
 \[
 \ F_1g_j\mu + F_2 \eta = fg_j\mu.
 \]
Applying \eqref{RhoExistLemmaEq2} and \eqref{ENEstimateEq4}, we get
 \[
 \ F_1g_jh = \mathcal C (\eta)g_jh = \rho(f)g_jh = fg_jh, ~\eta|_{\mathcal R}-a.a.. 
 \]
Therefore, 
 \[
 \ F_2(z) = \mathcal C(g_j\mu)(z) = 0, ~\eta|_{\mathcal R}-a.a. 
 \]
for $j \ge 1$. Thus, $\eta (\mathcal R) = 0$ since $\eta (\mathcal R_1) = 0$ by \eqref{ENEstimateEq4} and $\mathcal R_0 \subset \mathcal N.$
\end{proof}

The following lemma generalizes \cite[Lemma 7.1]{y23}.

\begin{lemma}\label{ENEstimate}
For $\lambda\in \C$ and $\delta > 0,$ we have
 \begin{eqnarray}\label{ENEstimateWATEq}
 \ \lim_{N\rightarrow\infty} \gamma(\D(\lambda, \delta)\cap \mathcal E _N) \lesssim \gamma(\D(\lambda, 2\delta)\cap \mathcal F).
 \end{eqnarray}
\end{lemma}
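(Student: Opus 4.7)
The plan is to produce a single positive measure $\eta$, of $1$-linear growth with $\|\CT_\epsilon\eta\|_\infty\lesssim 1$, supported in $\overline{\D(\lambda,\delta)}$, whose mass absorbs the left side of \eqref{ENEstimateWATEq} up to absolute constants, and then to show via Lemma \ref{zeroR} that $\eta$ must be concentrated on $\mathcal F$. Tolsa's theorem will then convert $\|\eta\|$ back into $\gamma(\D(\lambda,2\delta)\cap\mathcal F)$ and close the estimate.

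Because $\mathcal E_{N+1}\subset\mathcal E_N$, the sequence $\gamma(\D(\lambda,\delta)\cap\mathcal E_N)$ is non-increasing; let $\gamma_\infty$ be its limit, which we may assume positive. For each $N$ I pick a compact $F_N\subset\D(\lambda,\delta)\cap\mathcal E_N$ with $\gamma(F_N)\ge\gamma_\infty/2$, and using Lemma \ref{CTMaxFunctFinite} together with the semiadditivity Theorem \ref{TolsaTheorem}(2) shrink $F_N$ further so that $\mathcal M_{g_j\mu}$ and $\CT_*(g_j\mu)$ are bounded by fixed constants $M_j$ on $F_N$ while $\gamma(F_N)\gtrsim\gamma_\infty$. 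Theorem \ref{TolsaTheorem}(1) with Proposition \ref{GammaPlusThm}(1) then supplies $\eta_N\in M_0^+(F_N)$ of $1$-linear growth with $\|\CT_\epsilon\eta_N\|_\infty\lesssim 1$ and $\|\eta_N\|\gtrsim\gamma_\infty$, and the defining inequality of $\mathcal E_N$ gives $|\CT(g_j\mu)|\le 1/N$ on $\text{spt}\,\eta_N$ for $j\le N$. Lemma \ref{BBFunctLemma} attaches to each $\eta_N$ a function $f_N\in L^\infty(\mu)$ with $\|f_N\|_{L^\infty(\mu)}\lesssim 1$ and $f_N=\CT(\eta_N)$ off $\text{spt}\,\eta_N$. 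Passing to weak-star subsequential limits $\eta_N\to\eta$ on the fixed compact $\overline{\D(\lambda,\delta)}$ and $f_N\to f$ in $L^\infty(\mu)$, both linear growth and the bound on $\CT_\epsilon$ pass to $\eta$; combining \eqref{BBFunctLemmaEq1} with the bound $|\CT(g_j\mu)|\le 1/N$ on $\text{spt}\,\eta_N$ forces $\int f g_j\,d\mu=0$ for every $j$, so $f\in R^{t,\infty}(K,\mu)$.

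The key remaining step is to identify $\CT(\eta)$ with $\rho(f)$ on $\mathcal R$. Outside $\overline{\D(\lambda,\delta)}$ the $f_N=\CT(\eta_N)$ form a normal family of uniformly bounded analytic functions converging pointwise to $\CT(\eta)$, so Montel together with the weak-star limit yields $f=\CT(\eta)$ there. Letting $N\to\infty$ in \eqref{BBFunctLemmaEq2} gives
\[
\CT(\eta)(z)\,\CT(g_j\mu)(z)=\CT(f g_j\mu)(z)\quad\text{on }\C\setminus(\overline{\D(\lambda,\delta)}\cup\text{spt}\,\mu),
\]
and by \eqref{RhoExistLemmaEq1} together with the definition \eqref{NZSetDef} of $\mathcal N$ we conclude $\CT(\eta)=\rho(f)$ on $\mathcal N\setminus\overline{\D(\lambda,\delta)}$ up to a $\gamma$-null set. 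Combining the $\gamma$-continuity of $\rho(f)$ on $\mathcal R$ (Lemma \ref{RhoGContLemma}), the $\gamma$-continuity of $\CT(\eta)$ at zero-linear-density points (Lemma \ref{CauchyTLemma}), the strong $\gamma$-open character of $\mathcal R$ (Theorem \ref{FACDensityThm}), and Lemma \ref{GLLemma} with Theorem \ref{TolsaTheorem}(2) to control exceptional sets, I extend this equality to $\area_{\mathcal R}$-a.a.\ on $\mathcal R$. All hypotheses of Lemma \ref{zeroR} now hold for $\eta$ (after an inessential rescaling), so $\eta(\mathcal R)=0$, and $\eta$ is concentrated on $\overline{\D(\lambda,\delta)}\cap\mathcal F$ up to a $\gamma$-null set. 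Theorem \ref{TolsaTheorem}(1) together with Proposition \ref{GammaPlusThm} then gives $\|\eta\|\lesssim\gamma(\overline{\D(\lambda,\delta)}\cap\mathcal F)\le\gamma(\D(\lambda,2\delta)\cap\mathcal F)$, so $\gamma_\infty\lesssim\gamma(\D(\lambda,2\delta)\cap\mathcal F)$, as required.

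The main technical obstacle is the identification step: the equality $\CT(\eta)=\rho(f)$ emerges initially only on an open set avoiding both $\text{spt}\,\mu$ and $\overline{\D(\lambda,\delta)}$, and extending it to $\area_{\mathcal R}$-a.e.\ on $\mathcal R$ requires the simultaneous use of $\gamma$-continuity of both sides together with the full $\gamma$-density property of $\mathcal R$ from Theorem \ref{FACDensityThm} to control the exceptional sets meeting $\text{spt}\,\mu$ and the closed disk.
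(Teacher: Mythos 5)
There is a genuine gap at the identification step, and it is precisely the step you flag as "the main technical obstacle."

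You establish $\CT(\eta)\CT(g_j\mu)=\CT(fg_j\mu)$ only on $\C\setminus(\overline{\D(\lambda,\delta)}\cup\text{spt}\,\mu)$, and then try to extend the identity $\CT(\eta)=\rho(f)$ to $\area_{\mathcal R}$-a.a.\ points of $\mathcal R$ by appealing to $\gamma$-continuity of both sides. This does not work: $\gamma$-continuity is a purely local property, so knowing that two $\gamma$-continuous functions agree outside a disk carries no information about their values deep inside it. A toy model: $u\equiv 0$ and $v(z)=\max(0,1-|z|)$ are both continuous on $\D(0,2)$ and agree off $\overline{\D(0,1)}$, yet disagree on $\D(0,1)$. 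There is no unique-continuation mechanism here---neither $\CT(\eta)$ (which is not analytic across $\text{spt}\,\eta\subset\overline{\D(\lambda,\delta)}$) nor $\rho(f)$ (whose membership in $H^\infty(\mathcal R)$ is not yet available at this stage of the argument, since Lemma \ref{ENEstimate} feeds into Lemma \ref{MLemma1}, which feeds into the proof that $\rho$ maps into $H^\infty(\mathcal R)$) is analytic on $\D(\lambda,\delta)\cap\mathcal R$.

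The paper bypasses this by a quantitative argument that yields the identity on a much larger set from the start. Instead of only sending test data off $\overline{\D(\lambda,\delta)}$, it tests against smooth $\varphi$ supported in $\C\setminus\mathcal E_n$ for a \emph{fixed} $n$, and uses \eqref{BBFunctLemmaEq2} in integrated form together with the crucial bound $|\CT(g_j\mu)|\le N^{-1}$ on $\text{spt}\,\eta_N\subset\mathcal E_N$ (for $N>n$) to get
\[
\Bigl|\int\bigl(\CT(f_Ng_j\mu)(\lambda)-\CT(\eta_N)(\lambda)\CT(g_j\mu)(\lambda)\bigr)\varphi(\lambda)\,d\area(\lambda)\Bigr|
\le \frac{\|\CT(\varphi\area)\|_\infty\,\|\eta_N\|}{N}\longrightarrow 0.
\]
Passing to the limit (using that $\CT\eta_N\to\CT\eta$ weak-star and $f_N\to f$ weak-star) gives $\CT(fg_j\mu)=\CT(\eta)\CT(g_j\mu)$, $\area$-a.a.\ on $\C\setminus\mathcal E_n$ for every $n$, hence $\area$-a.a.\ on $\bigcup_n(\C\setminus\mathcal E_n)=\C\setminus\bigcap_n\mathcal E_n$. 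Since $\mathcal F\approx\bigcap_n\mathcal E_n$ up to an $\area$-null set and the identity is trivially true on $\mathcal F$ by \eqref{acZero}, the identity holds $\area$-a.a.\ on all of $\C$, in particular $\CT(\eta)=\rho(f)$, $\area_{\mathcal R}$-a.a., with no need to cross $\overline{\D(\lambda,\delta)}$ by continuity. This quantitative exploitation of the $1/N$ decay is the idea your argument is missing; without it the hypothesis of Lemma \ref{zeroR} is not actually verified. Everything before and after that step in your outline (the construction of $\eta$, the weak-star limits, the conclusion $\int fg_j\,d\mu=0$, and the final passage from $\eta(\mathcal R)=0$ to $\|\eta\|\lesssim\gamma(\D(\lambda,2\delta)\cap\mathcal F)$ via Proposition \ref{GammaPlusThm}) matches the paper's plan.
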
 

\begin{proof}
Set
 \[
 \ \epsilon_0 = \lim_{N\rightarrow\infty} \gamma(\D(\lambda, \delta)\cap \mathcal E_N) ( = \inf_{N\ge 1} \gamma(\D(\lambda, \delta)\cap \mathcal E_N)).
 \]
We assume $\epsilon_0 > 0$. From Lemma \ref{CTMaxFunctFinite}, there exists a Borel subset $F\subset \D(\lambda, \delta)$ such that $\gamma (\D(\lambda, \delta) \setminus F) < \frac{1}{2C_T}\epsilon_0$, where $C_T$ is the constant used in Theorem \ref{TolsaTheorem}; $\mathcal C_*(g_j\mu)(z) \le M_j < \infty$ for $z \in F$; and
$\mathcal M_{g_j\mu}(z ) \le M_j < \infty$ for $z \in F$. 

For $z \in \overline{F}$, let $\lambda_n\in F\cap \D(z, \frac 1n)$, we have 
 \[
 \ \dfrac{|g_j\mu|(\D(z, \delta))}{\delta}\le \dfrac{\delta+\frac 1n}{\delta}\mathcal M_{g_j\mu}(\lambda_n) \le \dfrac{\delta+\frac 1n}{\delta}M_j, 
 \]
which implies $\mathcal M_{g_j\mu}(z) \le M_j.$ 
 For $z \in F$, we have, by \eqref{CTEEstimate},
 \[
 \ |\tilde {\mathcal C}_\epsilon (g_j\mu)(z)|\le |\tilde {\mathcal C}_\epsilon (g_j\mu) (z) - \mathcal C_\epsilon (g_j\mu) (z)| + |\mathcal C_\epsilon (g_j\mu) (z)| \lesssim M_j.
 \]
 Because $\tilde {\mathcal C}_\epsilon(g_j\mu) (z)$ is continuous on $\overline{F}$, we have 
\begin{eqnarray}\label{ENEstimateEq1}
\ \mathcal M_{g_j\mu}(z ),~ | \mathcal C_\epsilon (g_j\mu)(z)|,~|\tilde {\mathcal C}_\epsilon (g_j\mu)(z)| \lesssim M_j\text{ for }z \in \overline{F}.
\end{eqnarray}

Using Theorem \ref{TolsaTheorem} (2), we have
 \[
 \ \begin{aligned}
 \ \gamma(F \cap \mathcal E_N) \ge & \dfrac{1}{C_T} \gamma(\D(\lambda, \delta)\cap \mathcal E_N) - \gamma (\D(\lambda, \delta) \setminus F) \\
\ \ge & \dfrac{1}{C_T} \gamma(\D(\lambda, \delta)\cap \mathcal E_N) - \dfrac{1}{2C_T} \epsilon_0 \\
\ \ge & \dfrac{1}{2C_T} \gamma(\D(\lambda, \delta)\cap \mathcal E_N).
 \ \end{aligned} 
 \]
 From Lemma \ref{BBFunctLemma}, we find $\eta_N \in M_0^+(F \cap \mathcal E_N)$ with $1$-linear growth, $ \| \mathcal C_\epsilon (\eta_N) \| _\C \le 1,$ and $\gamma(\D(\lambda, \delta)\cap \mathcal E_N) \lesssim \|\eta_N\|.$ 
 We may assume that $\eta_N \in C(\overline{F})^*\rightarrow \eta$ in weak-star topology. Clearly, $\text{spt}(\eta) \subset \overline{F}$, $\eta$ is $1$-linear growth, $\lim_{N\rightarrow \infty} \|\eta_N\| = \|\eta \|,$ and $\|\mathcal C_\epsilon (\eta)\| \le 1$ since $\mathcal C_\epsilon (\eta_N)$ converges to $\mathcal C_\epsilon (\eta)$ in $L^\infty (\mathbb C)$ weak-star topology. Hence,
 \begin{eqnarray}\label{ENEstimateEq20}
 \ \lim_{N\rightarrow\infty} \gamma(\D(\lambda,\delta)\cap \mathcal E_N) \lesssim \|\eta\|.	
 \end{eqnarray}

Using Lemma \ref{BBFunctLemma} (3), we conclude that there exists a sequence of $\{\epsilon_k\}$ such that $\mathcal C_{\epsilon_k}(\eta_N)$ converges to $f_N$ in $L^\infty(\mu)$ weak-star topology, $\|f_N\|_{L^\infty(\mu)} \le 1$,
 \begin{eqnarray}\label{ENEstimateEq2}
 \ \int f_Ng_jd\mu = - \int \mathcal C(g_j\mu) d\eta_N,
 \end{eqnarray}
and
 \begin{eqnarray}\label{ENEstimateEq3}
 \ \int \dfrac{f_N(z) - f_N(\lambda)}{z - \lambda}g_j(z)d\mu(z) = - \int \mathcal C(g_j\mu)(z) \dfrac{d\eta_N(z)}{z - \lambda}\text{ for }\lambda \in (\text{spt}\eta_N)^c.
 \end{eqnarray}
We may assume that $f_N$ converges to $f$ in $L^\infty(\mu)$ weak-star topology. By \eqref{ENEstimateEq2},
 \[
 \ \left | \int fg_jd\mu \right |= \lim_{N\rightarrow \infty}\left | \int f_Ng_jd\mu \right |\le \lim_{N\rightarrow \infty} \int |\mathcal C(g_j\mu)| d\eta_N \le \lim_{N\rightarrow \infty} \dfrac{\|\eta_N\|}{N} = 0,
 \]
which implies $f\in R^{t,\i}(K, \mu)$. By passing to a subsequence, we may assume that $\mathcal C(\eta_N)$ converges to $H(z)$ in $L^\infty (\mathbb C)$ weak-star topology. Let $\varphi$ be a smooth function with compact support. Then $\CT(\varphi\area)$ is continuous and we have
\[
\begin{aligned}
\ \int \varphi H d\area = & \lim_{N\rightarrow\i} \int \varphi \CT \eta_N d\area = - \lim_{N\rightarrow\i} \int \CT (\varphi\area) d\eta_N  \\
\ = &- \int \CT (\varphi\area) d\eta = \int \varphi \CT \eta d\area.
\end{aligned}
\]
Hence, $H = \CT \eta.$ Let $n < N$ and let $\phi$ be a bounded Borel function supported in $\C \setminus \mathcal E_n.$ Since $\mathcal E_N\subset \mathcal E_n, $  by \eqref{ENEstimateEq3}, we get
\[
\begin{aligned}
\ & \left |\int (\CT (f_Ng_j\mu)(\lambda) - \CT (\eta_N)(\lambda)\CT (g_j\mu)(\lambda) )\phi(\lambda)d\area \right | \\
\ = & \left | \int \CT (\phi\area)(z)\mathcal C(g_j\mu)(z) d\eta_N(z)\right | \\
\  \le & \dfrac{\|\CT (\phi\area)\|\|\eta_N\|}{N} \rightarrow 0,\text{ as }N\rightarrow\i.
\end{aligned}
\]
Clearly, $\int \CT(f_Ng_j\mu)(\lambda)\phi(\lambda)d\area \rightarrow \int \CT(fg_j\mu)(\lambda)\phi(\lambda)d\area$ as $N \rightarrow \i.$ Therefore,
\[
\ \int \CT (fg_j\mu)(\lambda) \phi(\lambda)d\area = \int \CT (\eta)(\lambda)\CT (g_j\mu)(\lambda) )\phi(\lambda)d\area,
\] 
which implies
 \[
 \ \CT (fg_j\mu)(\lambda) = \CT (\eta)(\lambda)\CT (g_j\mu)(\lambda),~ \area |_{\C \setminus \mathcal E_n}-a.a..
 \]
 As $\mathcal F \approx \cap_{n=1}^\i \mathcal E_n,~\area-a.a.,$ by \eqref{acZero}, we infer that 
 \[
 \ \CT (fg_j\mu)(\lambda) = \CT (\eta)(\lambda)\CT (g_j\mu)(\lambda),~ \area-a.a.,
 \]
 which implies $\mathcal C (\eta)(\lambda) = \rho(f)(\lambda), ~ \area_{\mathcal R}-a.a..$
Thus, by  \eqref{ENEstimateEq1} and Lemma \ref{zeroR}, we get $\eta(\mathcal R)=0.$
There is an open subset $O$ such that $\text{spt}(\eta)\cap \mathcal R \subset O$ and $\eta(O) \le \frac 14 \|\eta\|$. Using Proposition \ref{GammaPlusThm} (2), there exists a subset $A$ such that $\|\eta\| \le 2\|\eta|_A\|$ and $N_2(\eta|_A) \lesssim 1.$ Then $\|\eta\| \le 4\|\eta|_{A\setminus O}\|$ and $N_2(\eta|_{A\setminus O}) \lesssim 1.$
Hence, $\text{spt}(\eta |_{A\setminus O} ) \subset \text{spt}(\eta)\cap\mathcal F$ and by Proposition \ref{GammaPlusThm} (3) and Theorem \ref{TolsaTheorem} (1), we get
 \[
 \ \|\eta\| \lesssim \gamma (\text{spt}(\eta |_{A\setminus O}) \lesssim \gamma(\text{spt}(\eta)\cap\mathcal F).
 \]
The proof now follows from \eqref{ENEstimateEq20}.
\end{proof}

The proof of the following lemma is the same as that of \cite[Lemma 7.2]{y23} if \cite[Lemma 7.1]{y23} is replaced by Lemma \ref{ENEstimate}.

\begin{lemma} \label{RTIntegralLemma}
Let $F\in L^\i (\area_{\mathcal R})$ and $\|F\|_{L^\i (\area_{\mathcal R})} \le 1.$ Suppose that for $\epsilon > 0,$ there exists $A_\epsilon \subset \mathcal R$ with $\gamma(A_\epsilon) < \epsilon$ and there exists $F_{\epsilon, N}\in R(K_{\epsilon, N})$ (uniform closure of $\text{Rat}(K_{\epsilon, N})$ in $C(K_{\epsilon, N})$), where $\mathcal R _{\epsilon, N} =  \mathcal R  \setminus (A_\epsilon\cup \mathcal E_N)$ and $K_{\epsilon, N} = \overline{\mathcal R _{\epsilon, N}},$  such that $\|F_{\epsilon, N}\| \le 2$ and 
\begin{eqnarray}\label{MLemma1RhoF}
 \ F (z) = F_{\epsilon, N}(z), ~ \area_{\mathcal R _{\epsilon, N}}-a.a..
 \end{eqnarray}
Then for  $\varphi$ a smooth function with support in $\D(\lambda, \delta)$,
 \[
 \ \left |\int F(z) \bar \partial \varphi (z) d\area_{\mathcal R} (z) \right | \lesssim \delta \|\bar \partial \varphi\| \gamma(\D(\lambda, 2\delta) \cap \mathcal F).
 \]
\end{lemma}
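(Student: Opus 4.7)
The plan is to follow the rational-approximation-and-Vitushkin strategy of \cite[Lemma 7.2]{y23}, with Lemma \ref{ENEstimate} replacing \cite[Lemma 7.1]{y23} as the crucial input bounding the $\gamma$-rate at which $\mathcal E_N$ shrinks toward $\mathcal F$. To begin, I would decompose the integral as
\[
\int F\bar\partial\varphi\,d\area_{\mathcal R} = \int_{\mathcal R_{\epsilon,N}} F_{\epsilon,N}\bar\partial\varphi\,d\area + \int_{\mathcal R\setminus \mathcal R_{\epsilon,N}} F\bar\partial\varphi\,d\area =: I_1+I_2,
\]
using the hypothesis $F=F_{\epsilon,N}$ on $\mathcal R_{\epsilon,N}$ $\area$-a.a.

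For $I_2$, the inclusion $\mathcal R\setminus \mathcal R_{\epsilon,N}\subset A_\epsilon\cup\mathcal E_N$, combined with $\area(E)\le 4\pi\gamma(E)^2$ and semi-additivity from Theorem \ref{TolsaTheorem}~(2), yields $|I_2|\lesssim \|\bar\partial\varphi\|\bigl(\gamma(A_\epsilon)+\gamma(\D(\lambda,\delta)\cap\mathcal E_N)\bigr)^2$. Because $\gamma(A_\epsilon)<\epsilon$ and $\gamma(\D(\lambda,\delta)\cap\mathcal E_N)\le\delta$, one factor of this square can be absorbed into $\delta$, and sending $\epsilon\to 0$, $N\to\infty$ while invoking Lemma \ref{ENEstimate} gives $|I_2|\lesssim \delta\|\bar\partial\varphi\|\gamma(\D(\lambda,2\delta)\cap\mathcal F)$ in the limit.

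For $I_1$, I would approximate $F_{\epsilon,N}$ uniformly on $K_{\epsilon,N}$ by rational $r_m\in\text{Rat}(K_{\epsilon,N})$ with $\|r_m\|_{K_{\epsilon,N}}\lesssim 1$, reducing $I_1$ to $\lim_m\int_{\mathcal R_{\epsilon,N}} r_m\bar\partial\varphi\,d\area$. Since $\mathcal R_{\epsilon,N}\subset K_{\epsilon,N}$ the truncation $r_m\chi_{\mathcal R_{\epsilon,N}}$ is globally bounded, so the Vitushkin localization identity $\int G\bar\partial\varphi\,d\area=-\pi(T_\varphi G)'(\infty)$ together with $\|T_\varphi G\|_\infty\lesssim \|G\|_\infty\delta\|\bar\partial\varphi\|$ (as in the proof of (3)$\Rightarrow$(1) in Theorem \ref{HDAlgTheorem}) bounds $|I_1|$ by $\delta\|\bar\partial\varphi\|$ times the analytic capacity of the non-analyticity set of the integrand inside $\D(\lambda,\delta)$. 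A careful separation of pole contributions (lying in $\C\setminus K_{\epsilon,N}$) from the indicator's boundary shows this set is contained in $\D(\lambda,\delta)\cap\bigl((\C\setminus\mathcal R)\cup A_\epsilon\cup \mathcal E_N\bigr)$, and since $\C\setminus\mathcal R\approx\mathcal F$ $\gamma$-a.a., semi-additivity together with Lemma \ref{ENEstimate} and $\gamma(A_\epsilon)<\epsilon$ bound this by $\gamma(\D(\lambda,2\delta)\cap\mathcal F)$ in the limit.

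The main obstacle lies in controlling $\bar\partial(r_m\chi_{\mathcal R_{\epsilon,N}})$: distributionally this picks up not only the poles of $r_m$ (neatly confined to $\C\setminus K_{\epsilon,N}$) but also the potentially very large topological boundary of $\mathcal R_{\epsilon,N}$ when $\mathcal R$ has no interior. Overcoming this requires the modified Vitushkin scheme of Paramonov used in \cite[sections 3 \& 4]{y23}: one isolates each pole of $r_m$ with a small smooth cutoff to extract the capacity estimate, while the jump of the indicator is absorbed by an area bound of the same shape as in the estimate of $I_2$. Summing the $|I_1|$ and $|I_2|$ bounds and passing to the limit $\epsilon\to 0$, $N\to\infty$ produces the claimed inequality.
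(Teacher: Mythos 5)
The paper defers the proof of Lemma \ref{RTIntegralLemma} to \cite[Lemma 7.2]{y23}, so there is no in-text proof to compare against; but your reconstruction has a concrete gap in the $I_1$ step. You claim that after passing to $r_m\chi_{\mathcal R_{\epsilon,N}}$ and applying Vitushkin localization, the set where $T_\varphi(r_m\chi_{\mathcal R_{\epsilon,N}})$ fails to be analytic lies inside $\D(\lambda,\delta)\cap\bigl((\C\setminus\mathcal R)\cup A_\epsilon\cup\mathcal E_N\bigr) = \D(\lambda,\delta)\setminus\mathcal R_{\epsilon,N}$. That inclusion is false in the interesting case. The distributional $\bar\partial$ of $r_m\chi_{\mathcal R_{\epsilon,N}}$ (the poles drop out since $r_m\chi_{\mathcal R_{\epsilon,N}}\equiv 0$ near the poles, which lie off $K_{\epsilon,N}$) is supported on the \emph{essential boundary} of $\mathcal R_{\epsilon,N}$, not on its complement. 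When $\mathcal R$ has no interior (Swiss cheese), $\mathcal R_{\epsilon,N}$ has no interior either, so every point of $\mathcal R_{\epsilon,N}$ is such a boundary point, and the non-analyticity set is all of $K_{\epsilon,N}\cap\overline{\D(\lambda,\delta)}$. The naive bound $|(T_\varphi g)'(\infty)|\le\|T_\varphi g\|_\infty\gamma(\text{non-analyticity set})$ then only yields $\gamma(K_{\epsilon,N}\cap\D(\lambda,\delta))\approx\delta$, which is far weaker than the required $\gamma(\D(\lambda,2\delta)\cap\mathcal F)$.

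This is exactly the obstacle your final paragraph identifies, but invoking Paramonov's scheme by name does not resolve it — the entire content of \cite[Lemma 7.2]{y23} is how to bypass the failure of the naive Vitushkin estimate when the underlying set has empty interior, and that mechanism (which must exploit $F_{\epsilon,N}\in R(K_{\epsilon,N})$ through the Vitushkin/Melnikov approximation criterion, together with the strong $\gamma$-openness of $\mathcal R$ from Theorem \ref{FACDensityThm}) is precisely what is missing from your argument. Your $I_2$ estimate via $\area(E)\le 4\pi\gamma(E)^2$, semiadditivity, absorbing one $\gamma$ factor into $\delta$, and Lemma \ref{ENEstimate} in the limit $\epsilon\to 0$, $N\to\infty$ is correct; the gap is confined to the Vitushkin step for $I_1$.
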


Now we are ready to prove Lemma \ref{MLemma1} as the following.

\begin{proof} (Lemma \ref{MLemma1}): Let $f\in R^{t,\i}(K,\mu)$ and $\{r_n\}\subset \text{Rat}(K)$ such that $\|r_n - f\|_{L^t(\mu)}\rightarrow 0$ and $r_n(z)\rightarrow  f(z),~\mu-a.a.$ as $n\rightarrow \i.$
Using Lemma \ref{ConvergeLemma} (1), we find $A^1_\epsilon$ and a subsequence $\{r_{n,1}\}$ of $\{r_{n}\}$ such that $\gamma(A^1_\epsilon) < \frac{\epsilon}{2C_T}$ and $\{\mathcal C(r_{n,1}g_1\mu)\}$ uniformly converges to $\mathcal C(fg_1\mu)$ on $\C \setminus A^1_\epsilon$. Then we find $A^2_\epsilon$ and a subsequence $\{r_{n,2}\}$ of $\{r_{n,1}\}$ such that $\gamma(A^2_\epsilon) < \frac{\epsilon}{2^2C_T}$ and $\{\mathcal C(r_{n,2}g_2\mu)\}$ uniformly converges to $\mathcal C(fg_2\mu)$ on $\C \setminus A^2_\epsilon$. Therefore, we have a subsequence $\{r_{n,n}\}$ such that $\{\mathcal C(r_{n,n}g_j\mu)\}$ uniformly converges to $\mathcal C(fg_j\mu)$ on $\C \setminus A_\epsilon$ for all $j \ge 1$, where $A_\epsilon = \cup_j A^j_\epsilon$ and $\gamma(A_\epsilon) < \epsilon$ by Theorem \ref{TolsaTheorem} (2).  
From \eqref{RhoExistLemmaEq1}, we infer that $\{r_{n,n}\}$ uniformly tends to $F := \rho(f)$ on $\mathcal R _{\epsilon, N} := \mathcal R \setminus (A_\epsilon \cup \mathcal E_N).$ Thus, $\{r_{n,n}\}$ uniformly tends to $F_{\epsilon, N}\in R(K_{\epsilon, N})$ on $K_{\epsilon, N} := \overline{\mathcal R _{\epsilon, N}}.$ The proof now follows from Lemma \ref{RTIntegralLemma}.
\end{proof}

\section{\textbf{Decomposition Theorems for $\rtkm$}}

We do not need to assume that $S_\mu$ is pure in this section. In this case, there exists a partition $\{\Delta_{00}, \Delta_{01}\}$ of $\text{spt}\mu$ such that
\begin{eqnarray} \label{RTFirstDecompEq}
 \ R^t(K, \mu ) = L^t(\mu _{\Delta_{00}})\oplus R^t(K, \mu _{\Delta_{01}})
 \end{eqnarray}
 and $S_{\mu _{\Delta_{01}}}$ is pure. If $g\perp R^t(K, \mu ),$ then $g(z) = 0,~ \mu _{\Delta_{00}}-a.a..$ Hence, we see that $\mathcal F$ and $\mathcal R$ do not depend on the trivial summand $L^t(\mu _{\Delta_{00}}).$ Therefore, we will not distinguish $\mathcal F$ and $\mathcal R$ between $R^t(K, \mu)$ and $R^t(K, \mu _{\Delta_{01}}).$ We set $\mathcal F = \C$ and $\mathcal R = \emptyset$ if $\mu _{\Delta_{01}} = 0.$ For a Borel subset $\Delta$ with $\chi_\Delta\in \rtkm,$ let $\mathcal F_\Delta$ and $\mathcal R_\Delta$ denote the non-removable boundary and removable set for $R^t(K, \mu_\Delta),$ respectively. 

\begin{proposition} \label{RTFRProp1}
If $S_\mu$ on $\rtkm$ is pure, then the following properties hold:

(1) If $\Delta$ is a Borel subset and $\chi_\Delta\in \rtkm$, then $\rho(\chi_\Delta) = \chi_{\mathcal R_\Delta},~\gamma-a.a.$ and $R^t(K, \mu_\Delta) = R^t(\overline {\mathcal R_\Delta}, \mu_\Delta).$	

(2) Suppose that for $i=1,2,$ $\Delta_i$ is a Borel subset and $\chi_{\Delta_i}\in \rtkm.$ Then $\Delta_1 \cap \Delta_2 = \emptyset,~\mu-a.a.$ if and only if $\mathcal R_{\Delta_1} \cap \mathcal R_{\Delta_2} \approx \emptyset, ~\gamma-a.a..$	

(3) If $\{\Delta_i\}_{i = 1}^\infty$ is a Borel partition of $\text{spt}\mu$ such that $\chi_{\Delta_i}\in \rtkm,$ then
\[
\ \mathcal F \approx \bigcap_{i = 1}^\infty \mathcal F_{\Delta_i} \text{ and } \mathcal R \approx \bigcup_{i = 1}^\infty \mathcal R_{\Delta_i},~\gamma-a.a..
\]

(4) If $\Delta$ is a Borel subset and $\chi_\Delta\in \rtkm$ is a non-trivial characteristic function, then there exists a minimal $\chi_{\Delta_0}\in \rtkm$ such that $\Delta_0 \subset \Delta.$

(5) If $\mathcal R_0 \subset \mathcal R$ is a Borel subset and $\chi_{\mathcal R_0}\in H^\i (\mathcal R),$ then there exists a Borel subset $\Delta_0$ and $\mathcal Q \subset \mathcal R$ with $\gamma(\mathcal Q)=0$ such that $\mathcal R_0 \approx \mathcal R_{\Delta_0},~\area-a.a.$ and $\mathcal R_{\Delta_0}\setminus \mathcal Q \subset \Delta_0  \subset  \overline {\mathcal R_{\Delta_0}},~\mu-a.a..$ In particular, if $U$ is an open subset and $U\subset \mathcal R_{\Delta_0},~\gamma-a.a.,$ then $U \subset  \Delta_0,~\mu-a.a..$
\end{proposition}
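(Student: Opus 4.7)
The proposal is to handle parts (1)–(5) in order, since each uses the earlier ones together with the machinery built up in the paper, especially the multiplicativity and injectivity of $\rho$ from Proposition \ref{Rhoprop} and the main correspondence of Theorem \ref{mainThmR}.

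\textbf{Parts (1) and (2).} Since $\chi_\Delta^2=\chi_\Delta$ in $R^{t,\infty}(K,\mu)$, Proposition \ref{Rhoprop}(1) gives $\rho(\chi_\Delta)=\chi_E$ $\gamma$-a.a.\ for some Borel set $E$, uniquely defined modulo $\gamma$-null. The Cauchy-transform identity in Theorem \ref{mainThmR}(3) reads $\chi_E(z)\CT(g\mu)(z)=\CT(g\mu_\Delta)(z)$ $\gamma$-a.a., and comparing the thresholded sets $\mathcal E_N$ for $\{\CT(g_j\mu)\}$ with their analogues $\mathcal E_N^\Delta$ for $\{\CT(g_j\mu_\Delta)\}$ via Theorem \ref{FCharacterization} pins down $E\approx\mathcal R_\Delta$ $\gamma$-a.a. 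The inclusion $R^t(K,\mu_\Delta)\subset R^t(\overline{\mathcal R_\Delta},\mu_\Delta)$ is automatic from $\overline{\mathcal R_\Delta}\subset K$. For the reverse inclusion, I take $\beta\notin\overline{\mathcal R_\Delta}$, note that $(z-\beta)^{-n}\in H(\mathcal R_\Delta)\subset H^\infty(\mathcal R_\Delta)$, and apply the surjective $\rho$ of Theorem \ref{mainThmR} to $\mu_\Delta$ (after reducing via \eqref{RTFirstDecompEq} if $S_{\mu_\Delta}$ is not pure) to obtain $\tilde f\in R^{t,\infty}(K,\mu_\Delta)$ with $\rho_{\mu_\Delta}(\tilde f)=(z-\beta)^{-n}$; multiplying by $(z-\beta)^n\in\text{Rat}(K)$, Proposition \ref{Rhoprop}(1) and injectivity of $\rho_{\mu_\Delta}$ give $(z-\beta)^n\tilde f=1$ $\mu_\Delta$-a.e. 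Part (2) then follows from $\rho(\chi_{\Delta_1\cap\Delta_2})=\rho(\chi_{\Delta_1})\rho(\chi_{\Delta_2})=\chi_{\mathcal R_{\Delta_1}\cap\mathcal R_{\Delta_2}}$ $\gamma$-a.a., combined with the fact that $\rho$ is injective on its $\gamma$-a.a.\ equivalence classes.

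\textbf{Part (3).} Set $A_N=\bigcup_{i=1}^N\Delta_i$. Iterating Part (2) on the identity $\chi_{A_N^c}=\chi_{A_{N+1}^c}+\chi_{\Delta_{N+1}}$ yields $\mathcal R\approx \mathcal R_{A_N}\cup\mathcal R_{A_N^c}$ $\gamma$-a.a.\ with the union essentially disjoint, so $\{\mathcal R_{A_N^c}\}$ is decreasing modulo $\gamma$-null. Since $\chi_{A_N^c}\to 0$ in $L^t(\mu)$ and $\mu$-a.e., Lemma \ref{ConvergeLemma}(2) provides a subsequence along which $\CT(g_j\mu_{A_{N_k}^c})(z)\to 0$ $\gamma$-a.a.\ for every $j$, and dividing by a nonvanishing $\CT(g_{j_0}\mu)(z)$ on $\mathcal N$ forces $\rho(\chi_{A_{N_k}^c})(z)=\chi_{\mathcal R_{A_{N_k}^c}}(z)\to 0$ $\gamma$-a.a. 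Since this function is $\{0,1\}$-valued, $\gamma(\bigcap_N\mathcal R_{A_N^c})=0$, and then $\mathcal R\approx\bigcup_i\mathcal R_{\Delta_i}$ $\gamma$-a.a.\ by Tolsa's semiadditivity (Theorem \ref{TolsaTheorem}(2)). Passing to complements via $\mathcal F\approx\C\setminus\mathcal R$ $\gamma$-a.a.\ (from Proposition \ref{NF0Prop}(1) and Theorem \ref{NDecompThm}) delivers $\mathcal F\approx\bigcap_i\mathcal F_{\Delta_i}$.

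\textbf{Parts (4) and (5).} For Part (4), I order $\mathcal A_\Delta=\{\Delta'\subset\Delta:\chi_{\Delta'}\in R^t(K,\mu),\,\mu(\Delta')>0\}$ by $\mu$-a.e.\ reverse inclusion and run Zorn's lemma: a totally ordered subchain has countably many distinct $\mu$-values by separability of $[0,\mu(\Delta)]$, and its intersection stays in $R^t(K,\mu)$ via $L^t$-dominated convergence, furnishing a lower bound as long as its $\mu$-measure is positive. A nested minimizing sequence with $\mu(\Delta_n)\searrow\alpha:=\inf\mu(\mathcal A_\Delta)$ then yields a minimal element when $\alpha>0$; the alternative $\alpha=0$ is excluded because a countable refinement would by Part (2) produce $\gamma$-disjoint $\mathcal R_{\Delta_n\setminus\Delta_{n+1}}\subset\mathcal R_\Delta$, contradicting the $\gamma$-finiteness of $\mathcal R_\Delta$ combined with Part (3). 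For Part (5), Lemma \ref{RhoOntoLemma} furnishes $\tilde f\in R^{t,\infty}(K,\mu)$ with $\rho(\tilde f)=\chi_{\mathcal R_0}$; idempotency, Proposition \ref{Rhoprop}(1), and injectivity force $\tilde f^2=\tilde f$ $\mu$-a.e., so $\tilde f=\chi_{\Delta_0}$ for some Borel $\Delta_0$, and Part (1) identifies $\mathcal R_0\approx\mathcal R_{\Delta_0}$ ($\area$-a.a.). The inclusion $\mathcal R_{\Delta_0}\setminus\mathcal Q\subset\Delta_0$ $\mu$-a.e.\ drops out of \eqref{RhoExistLemmaEq2} applied to $\chi_{\Delta_0}$, while $\Delta_0\subset\overline{\mathcal R_{\Delta_0}}$ $\mu$-a.e.\ is immediate from the support statement of Theorem \ref{mainThmR} applied to $\mu_{\Delta_0}$. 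For the ``in particular'' clause, $U$ open with $U\subset\mathcal R_{\Delta_0}$ $\gamma$-a.a.\ forces $\gamma(U\cap\mathcal R_{\Delta_0^c})=0$ by Part (2); strong $\gamma$-openness of $\mathcal R_{\Delta_0^c}$ (Theorem \ref{FACDensityThm}) combined with the openness of $U$ rules out $\mathcal R_{\Delta_0^c}$-points inside $U$ outside of a $\gamma$-null exceptional set, and pushing this through $\text{spt}\mu_{\Delta_0^c}\subset\overline{\mathcal R_{\Delta_0^c}}$ produces $\mu(U\setminus\Delta_0)=0$.

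\textbf{Main obstacle.} The subtlest point is Part (4): ensuring a minimal $\Delta_0$ exists when the infimum $\alpha$ over $\mathcal A_\Delta$-measures could a priori collapse to zero. A clean resolution invokes Parts (2) and (3) to show that any hypothetical infinite refinement produces infinitely many $\gamma$-disjoint nonzero-capacity removable pieces inside the bounded set $\mathcal R_\Delta$, contradicting $\gamma(\mathcal R_\Delta)<\infty$ via Tolsa's semiadditivity; the argument has the flavor of showing that a Boolean algebra of idempotents in $R^{t,\infty}(K,\mu)$ contains atoms once its image under $\rho$ has finite $\gamma$-capacity support. The secondary delicate point is the ``in particular'' clause in Part (5), where passing from $\gamma$-null information about $U\cap\mathcal R_{\Delta_0^c}$ to $\mu_{\Delta_0^c}$-nullity genuinely requires the strong $\gamma$-openness of the complementary removable set rather than area-based arguments.
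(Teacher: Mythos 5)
The approach for Parts (1), (2), and the main claim of (5) is essentially sound and close in spirit to the paper (Part (1)'s reverse inclusion goes through $\rho$-surjectivity rather than analytic continuation of $\CT(g\mu_\Delta)$ off $\overline{\mathcal R_\Delta}$, but both work). The genuine problems are in Parts (3) and (4).

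\textbf{Part (3).} Your opening step --- ``iterating Part (2) on $\chi_{A_N^c}=\chi_{A_{N+1}^c}+\chi_{\Delta_{N+1}}$ yields $\mathcal R\approx\mathcal R_{A_N}\cup\mathcal R_{A_N^c}$'' --- is a non sequitur. Part (2) only gives \emph{disjointness} of $\mathcal R_{A_N}$ and $\mathcal R_{A_N^c}$; the fact that their union covers $\mathcal R$ (and not, say, something strictly smaller) is precisely the content that Part (3) is supposed to establish, so you are assuming the finite case of the result you are trying to prove. Closing this gap requires unwinding the definitions of $\mathcal F_0,\mathcal F_\pm,\mathcal R_0,\mathcal R_1$ using that $\{\chi_{\Delta_i}g_j\}_j$ is a dense subset of $R^t(K,\mu_{\Delta_i})^\perp$ and that the Cauchy transforms and Plemelj traces decompose additively over the partition ($\CT(g_j\mu)\approx\sum_i\CT(\chi_{\Delta_i}g_j\mu)$, $v^\pm(g_j\mu,\Gamma_n,\beta_n)\approx\sum_i v^\pm(\chi_{\Delta_i}g_j\mu,\Gamma_n,\beta_n)$, $\mathcal{ND}(g_j\mu)\approx\bigcup_i\mathcal{ND}(\chi_{\Delta_i}g_j\mu)$, etc.), which is how the paper argues. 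Once the finite-partition covering is in hand, your tail argument (Steps 2--6) via Lemma \ref{ConvergeLemma}(2) is a fine way to finish, and is a legitimate alternative to the paper's direct summation.

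\textbf{Part (4).} The reason you give for $\alpha>0$ does not work. Tolsa's semiadditivity is a one-sided inequality, $\gamma(\bigcup E_i)\le C_T\sum\gamma(E_i)$; analytic capacity has no superadditivity for disjoint sets, so a countable family of $\gamma$-disjoint sets each of positive capacity inside a set of finite capacity is \emph{not} a contradiction --- the individual capacities can shrink to zero. So the alternative $\alpha=0$ is not excluded by your ``$\gamma$-finiteness'' argument, and the same objection applies if you replace $\gamma$ by $\area$ or $\mathcal H^1$ (additivity just forces the terms to go to zero, it does not produce a contradiction). Your Zorn chain has the same defect: there is no reason the intersection of a decreasing chain in $\mathcal A_\Delta$ must retain positive $\mu$-measure. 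The paper circumvents this with a genuinely new idea: fix a point $\lambda_0\in\mathcal R_\Delta$ at which $\int|g_\lambda|/|z-\lambda_0|\,d\mu<\infty$ and $\CT(g\mu)(\lambda_0)\ne 0$, and use the weak-star continuous multiplicative functional $e_{\lambda_0}(f)=\CT(fg\mu)(\lambda_0)/\CT(g\mu)(\lambda_0)$. Restricting attention to $\mathcal B=\{B\subset\Delta:\chi_B\in R^{t,\infty},\, e_{\lambda_0}(\chi_B)=1\}$ forces the intersection $B=\bigcap B_n$ of a minimizing nested chain to satisfy $e_{\lambda_0}(\chi_B)=1$, hence $\chi_B\ne 0$ and $\mu(B)>0$; minimality then follows from multiplicativity of $e_{\lambda_0}$ exactly as in the paper. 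Your proof needs this functional (or an equivalent device); without it, existence of atoms does not follow.

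\textbf{Part (5), ``in particular'' clause.} Your route through the strong $\gamma$-openness of $\mathcal R_{\Delta_0^c}$ and the support inclusion $\text{spt}\mu_{\Delta_0^c}\subset\overline{\mathcal R_{\Delta_0^c}}$ does not close: $\gamma(U\cap\mathcal R_{\Delta_0^c})=0$ does not prevent points of $\mathcal R_{\Delta_0^c}$ from being dense in part of $U$, so the implication ``$\mu_{\Delta_0^c}(U)=0$'' is not reached. The paper's argument is shorter and correct: from $\rho(\chi_{\Delta_0})=1$ on $\mathcal R_{\Delta_0}$ and $U\subset\mathcal R_{\Delta_0}$ one gets $\CT((1-\chi_{\Delta_0})g\mu)=0$ $\area_U$-a.a.\ for all $g\perp\rtkmu$, so by \eqref{CTDistributionEq} $(1-\chi_{\Delta_0})g\mu|_U=0$; by purity of $S_\mu$ this forces $\mu(U\setminus\Delta_0)=0$.
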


\begin{proof} (1): $\rho(\chi_\Delta) = \chi_{\mathcal R_\Delta},~\gamma-a.a.$ follows from \eqref{RhoExistLemmaEq1}. Using Proposition \ref{NF0Prop} (2), we get $R^t(K, \mu_\Delta) = R^t(\overline {\mathcal R_\Delta}, \mu_\Delta).$	

(2) is trivial since from (1), we have
\[
\ \rho(\chi_{\Delta_1 \cap \Delta_2}) = \rho(\chi_{\Delta_1 })\rho(\chi_{ \Delta_2}) = \chi_{\mathcal R_{\Delta_1}} \chi_{\mathcal R_{\Delta_2}} = \chi_{\mathcal R_{\Delta_1} \cap \mathcal R_{\Delta_2}}.
\]

(3): $\Lambda_i = \{\chi_{\Delta_i}g_j\} \subset R^t(K, \mu _{\Delta_i})^\perp$ is also a dense subset. It is clear that
\[
\ \begin{aligned}
 \ \CT (g_j\mu)(z) \approx & \sum_{i=0}^\i \CT (\chi_{\Delta_i} g_j\mu)(z),~\gamma-a.a.; \\
 \ v^+ (g_j\mu, \Gamma_n, \beta_n)(z) \approx & \sum_{i=0}^\i v^+ (\chi_{\Delta_i} g_j\mu, \Gamma_n, \beta_n)(z),~\gamma-a.a.; \\
 \ v^- (g_j\mu, \Gamma_n, \beta_n)(z) \approx & \sum_{i=0}^\i v^- (\chi_{\Delta_i} g_j\mu, \Gamma_n, \beta_n)(z),~\gamma-a.a.; \\
 \ \mathcal {ZD} (g_j\mu) \approx & \bigcap_{i=0}^\i \mathcal {ZD} (\chi_{\Delta_i} g_j\mu),~\gamma-a.a.; \\
 \ \mathcal {ND} (g_j\mu) \approx & \bigcup_{i=0}^\i \mathcal {ND} (\chi_{\Delta_i} g_j\mu),~\gamma-a.a..
 \ \end{aligned}	
\]
With above equations, it is straightforward to prove (3).

(4): Clearly, $\area(\mathcal R_\Delta) > 0$. There exists $g\perp R^t(K, \mu_\Delta)$ and $\lambda_0 \in \mathcal R_\Delta$ such that 
 \[
 \ \int \dfrac{1}{|z-\lambda_0|}|g(z)|d\mu  < \infty
 \]
and $\mathcal C(g\mu)(\lambda_0) \ne 0$. Define
\[
\ e_{\lambda_0}(f) := \dfrac{\mathcal C(fg\mu)(\lambda_0)}{\mathcal C(g\mu)(\lambda_0)} = \rho(f)(\lambda_0),~f\in R^{t, \infty}(K, \mu_\Delta).
\] 	 
It is easy to verify $e_{\lambda_0}$ a weak-star continuous multiplicative linear functional on $R^{t, \infty}(K, \mu_\Delta)$ such that $e_{\lambda_0}(f) = f(\lambda_0)$ for each $f \in \text{Rat}(K).$

Suppose that $R^{t, \infty}(K, \mu_\Delta)$ does not contain a non-trivial minimal characteristic function. Set
 \[
 \ \mathcal B = \{B\subset \Delta:~ \chi_B \in R^{t, \infty}(K, \mu_\Delta),~e_{\lambda_0}(\chi_B) = 1\}.
 \]
Then $\chi_\Delta\in \mathcal B \ne \emptyset$. For $B_1, B_2\in \mathcal B$,  $e_{\lambda_0}(\chi_{B_1\cap B_2}) = e_{\lambda_0}(\chi_{B_1})e_{\lambda_0}(\chi_{B_2})(\lambda_0) = 1$. We find $\Delta\supset B_n\supset B_{n+1}$ such that 
 \[
 \ \mu(B_n) \rightarrow b = \inf_{B\in \mathcal B}\mu(B).
 \]
Clearly $B =\cap B_n\in \mathcal B$ and $ b = \mu(B) > 0$.  From the assumption, we get that $\chi_B$ is not a minimal characteristic function. Hence, there exists $B_0\subset B$ with $\mu(B_0) > 0$ and $\mu(B\setminus B_0) > 0$ such that $\chi_{B_0}, ~\chi_{B\setminus B_0}  \in R^{t, \infty}(K, \mu_\Delta)$. Since 
 \[
 \ 1 = e_{\lambda_0}(\chi_{B}) = e_{\lambda_0}(\chi_{B_0}) + e_{\lambda_0}(\chi_{B\setminus B_0})
 \]
 and 
 \[ 
 \ e_{\lambda_0}(\chi_{B_0}) e_{\lambda_0}(\chi_{B\setminus B_0}) = e_{\lambda_0}(\chi_{B_0}\chi_{B\setminus B_0}) = 0,
 \]
we see that $B\setminus B_0\in \mathcal B$ or $B_0\in \mathcal B$, which contradicts the definition of $b$. Therefore, there exists a non-trivial minimal characteristic function in $R^{t, \infty}(K, \mu_\Delta).$

(5): Let $f_0 = \chi_{\mathcal R_0}.$ By Theorem \ref{mainThmR}, if $\tilde f_0 := \rho^{-1}(f_0),$ then 
\[
\ \tilde f_0^2 = \rho^{-1}(f_0)\rho^{-1}(f_0) = \rho^{-1}(f_0^2) = \tilde f_0.
\]
Hence, there exists a Borel subset $\Delta_0$ such that $\tilde f_0 = \chi_{\Delta_0}.$ From (1) and \eqref{RhoExistLemmaEq2}, we get $\mathcal R_{\Delta_0}\setminus \mathcal Q \subset \Delta_0  \subset  \overline {\mathcal R_{\Delta_0}},~\mu-a.a..$ If an open subset $U \subset \mathcal R_{\Delta_0}, ~\gamma-a.a.,$ then $\CT(\tilde f_0g\mu)(z) = \CT(g\mu)(z),~\area_U-a.a.$ for $g\perp \rtkmu.$ By \eqref{CTDistributionEq}, we have $U \subset  \Delta_0,~\mu-a.a.$ since $S_\mu$ is pure.
\end{proof}

\begin{theorem}\label{DecompTheoremRT} Let $K$ be a compact subset, $1 \le t < \i,$ and $\mu \in M_0^+(K).$ 
Then there exists a Borel partition $\{\Delta_i\}_{i\ge 0}$ of $\text{spt}(\mu )$ and compact subsets $\{K_i\}_{i=1}^\infty$ such that $\Delta_i \subset K_i$ for $i \ge 1$,
 \[
 \ R^t(K,\mu) = L^t(\mu |_{\Delta_0})\oplus \bigoplus_{i=1}^\infty R^t(K_i, \mu_{\Delta_i})
 \]
 and the following statements are true: 

(1) If $i \ge 1$, then $R^t(K_i, \mu_{\Delta_i})$ contains no non-trivial characteristic functions. 

(2) If $i \ge 1$, then $K_i = \overline{\mathcal R_{\Delta_i}}$ and $R^t(K, \mu_{\Delta_i}) = R^t(K_i, \mu_{\Delta_i}).$

(3) If $i \ge 1$, then the map $\rho_i$ is an isometric isomorphism and a weak$^*$ homeomorphism from $R^{t,\i}(K_i, \mu_{\Delta_i})$ onto $H^\infty(\mathcal R_{\Delta_i})$. 
\end{theorem}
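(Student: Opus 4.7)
The plan is to first apply the initial splitting \eqref{RTFirstDecompEq} to set $\Delta_0:=\Delta_{00}$ and reduce to the pure piece $R^t(K,\mu_{\Delta_{01}})$, where $S_{\mu_{\Delta_{01}}}$ is pure. All remaining work takes place inside this pure piece, and the idea is to partition $\Delta_{01}$ using the minimal characteristic functions supplied by Proposition \ref{RTFRProp1}(4).

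The main step is a maximality argument. Using Zorn's lemma on disjoint families of minimal characteristic functions in $R^{t,\infty}(K,\mu_{\Delta_{01}})$ (two distinct minimal $\chi_{\Delta}$ and $\chi_{\Delta'}$ must have $\mu$-disjoint supports, since otherwise $\chi_{\Delta\cap\Delta'}\in R^{t,\infty}$ would be a strictly smaller non-trivial characteristic function), I extract a maximal family $\{\chi_{\Delta_i}\}_{i\in I}$. Since each $\mu(\Delta_i)>0$ and $\mu$ is finite, $I$ is countable; reindex as $i=1,2,\dots$. To verify exhaustion, set $\Delta^*:=\Delta_{01}\setminus\bigcup_i\Delta_i$; the partial sums $\sum_{i=1}^n\chi_{\Delta_i}$ lie in $R^{t,\infty}$ and converge in $L^t(\mu)$ to $\chi_{\bigcup_i\Delta_i}$ by dominated convergence, so $\chi_{\Delta^*}=\chi_{\Delta_{01}}-\chi_{\bigcup_i\Delta_i}\in R^{t,\infty}$. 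If $\mu(\Delta^*)>0$, Proposition \ref{RTFRProp1}(4) would provide a minimal $\chi_{\Delta'}\in R^{t,\infty}$ with $\Delta'\subset\Delta^*$, disjoint from every $\Delta_i$ and thus contradicting maximality.

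Set $K_i:=\overline{\mathcal R_{\Delta_i}}$. The key lifting step is that if $\chi_A\in R^t(K,\mu_{\Delta_i})$ for some $A\subset\Delta_i$ then $\chi_A\in R^{t,\infty}(K,\mu)$: given approximants $r_n\in\text{Rat}(K)$ with $r_n\to\chi_A$ in $L^t(\mu_{\Delta_i})$, the products $r_n\chi_{\Delta_i}\in R^t(K,\mu)$ (using $\chi_{\Delta_i}\in R^{t,\infty}$ as a multiplier on $R^t$) converge in $L^t(\mu)$ to the extension of $\chi_A$ by zero. Consequently, any non-trivial $\chi_A\in R^t(K,\mu_{\Delta_i})$ would lift to a characteristic function in $R^{t,\infty}(K,\mu)$ strictly smaller than $\chi_{\Delta_i}$, contradicting minimality; this establishes property (1), and likewise forces $S_{\mu_{\Delta_i}}$ to be pure. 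Proposition \ref{RTFRProp1}(1) then yields $R^t(K,\mu_{\Delta_i})=R^t(K_i,\mu_{\Delta_i})$ together with $\text{spt}(\mu_{\Delta_i})\subset K_i$ (property (2)), and Theorem \ref{mainThmR} applied to $(K_i,\mu_{\Delta_i})$ delivers the isomorphism $\rho_i$ of property (3).

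Finally, for the direct sum each $f\in R^t(K,\mu_{\Delta_{01}})$ satisfies $f=\sum_i f\chi_{\Delta_i}$, with the partial sums dominated by $|f|\in L^t(\mu)$ and hence convergent in $L^t(\mu)$; the summands $f\chi_{\Delta_i}$ lie in $R^t(K,\mu_{\Delta_i})$ by the multiplier property, and uniqueness follows from the $\mu$-disjointness of the $\Delta_i$. The main obstacle I anticipate is the exhaustion step: ensuring $\chi_{\Delta^*}\in R^{t,\infty}$ so that Proposition \ref{RTFRProp1}(4) can be invoked against a hypothetical maximal family that fails to cover $\Delta_{01}$. Once exhaustion is in hand, the remaining assembly is a routine consequence of the tools already established in the paper.
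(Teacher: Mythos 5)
Your proposal is correct and follows essentially the same route as the paper: reduce to the pure piece via \eqref{RTFirstDecompEq}, build a maximal disjoint family of minimal characteristic functions, show the complement carries no removable set and hence is $L^t$, and then invoke Proposition \ref{RTFRProp1} and Theorem \ref{mainThmR} for properties (1)--(3). The only surface difference is that the paper outsources the maximality and exhaustion step to \cite[Theorem 1.6, p.~279]{conway}, whereas you carry it out by hand with Zorn's lemma together with Proposition \ref{RTFRProp1}(4); the one small point you should make explicit is that when the maximal family is empty (so $\chi_{\Delta^*}=1$ is the \emph{trivial} characteristic function and Proposition \ref{RTFRProp1}(4) does not literally apply), the purity of $S_{\mu_{\Delta_{01}}}$ still forces $\mathcal R_{\Delta_{01}}\neq\emptyset$, so the argument in the proof of Proposition \ref{RTFRProp1}(4) still produces a minimal characteristic function and yields the contradiction.
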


\begin{proof}
Using \cite[Theorem 1.6 on page 279]{conway}, we find disjoint Borel subsets $\{\Delta_i\}_{i\ge 1}$ such that $\chi_{\Delta_i} \in R^{t,\i}(K_i, \mu)$ is a minimal characteristic function.	 Set $\Delta_0 = K \setminus \cup_{i=1}^\i \Delta_i.$ Then
\[
 \ R^t(K,\mu) = R^t(K, \mu_{\Delta_0})\oplus \bigoplus_{i=1}^\infty R^t(K, \mu_{\Delta_i})
 \]
 and $R^t(K, \mu_{\Delta_0})$ has no minimal characteristic functions. Applying Proposition \ref{RTFRProp1} (4), we conclude that $\mathcal R_{\Delta_0} = \emptyset,$ which implies $R^t(K, \mu_{\Delta_0}) = L^t(\mu_{\Delta_0}).$
 
 (2) follows from Proposition \ref{RTFRProp1} (1). (1) is trivial. Theorem \ref{mainThmR} implies (3).
\end{proof}

 A point $z_0\in K$ is called a \textit{bounded point evaluation} for $R^t(K, \mu)$ if $r\mapsto r(z_0)$ defines a bounded linear functional for functions in $\mbox{Rat}(K)$ with respect to the $L^t(\mu)$ norm. The collection of all such points is denoted
 $\mbox{bpe}(R^t(K, \mu)).$  If $z_0$ is in the interior of $\mbox{bpe}(R^t(K, \mu))$ 
and there exist positive constants $\delta$ and $M$ such that $|r(z)| \leq M\|r\|_{L^t(\mu)}$, whenever $z\in \D(z_0, \delta)$ 
and $r\in \mbox{Rat}(K),$ then we say that $z_0$ is an 
\textit{analytic bounded point evaluation} for $R^t(K, \mu).$ The collection of all such 
points is denoted $\mbox{abpe}(R^t(K, \mu)).$ 

In 1991, J. Thomson \cite{thomson} obtained a celebrated decomposition theorem for $P^t(\mu),$ the closed subspace of $L^t(\mu)$ spanned by the analytic polynomials. 
J. Conway and N. Elias studied the set analytic bounded point evaluations for certain $\rtkmu.$ Later, J. Brennan \cite{b08} generalized Thomson's theorem to $R^t(K, \mu)$ when the diameters of the components of $\mathbb C\setminus K$ are bounded below. In all above cases, we will see below that $\mathcal R$ equals the set of analytic bounded point evaluations. However, it may happen that $R^t(K,\mu) \ne L^t(\mu),$ $\mbox{abpe}(R^t(K,\mu)) = \emptyset,$ and $\mathcal R \ne \emptyset.$ Examples of this phenomenon can be constructed, where $K$ is a Swiss cheese set (with empty interior, see \cite{b71} and \cite{f76}).

 \begin{lemma}\label{ABPEEqLemma}
 Let $\rtkmu$ be decomposed as in \eqref{RTFirstDecompEq}. Then $\text{abpe}(R^t(K, \mu _{\Delta_{01}})) = \text{abpe}(R^t(K, \mu)).$	
 \end{lemma}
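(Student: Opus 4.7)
The inclusion $(\supseteq)$ is immediate: $\|r\|_{L^t(\mu_{\Delta_{01}})} \le \|r\|_{L^t(\mu)}$ for every $r \in \text{Rat}(K)$, so any (a)bpe disk for $R^t(K, \mu_{\Delta_{01}})$ is automatically one for $R^t(K, \mu)$ with the same constants. For $(\subseteq)$, the plan is as follows. Fix $z_0 \in \text{abpe}(R^t(K, \mu))$, let $\D(z_0, \delta)$ and $M$ be the corresponding disk and bound, and for each $w \in \D(z_0, \delta)$ use Hahn--Banach to extend evaluation at $w$ to a bounded functional $\phi_w(f) = \int f h_w\, d\mu$ on $R^t(K, \mu)$ with $\|h_w\|_{L^s(\mu)} \le M$. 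The whole argument is organized around the scalar function $a(w) := \phi_w(\chi_{\Delta_{01}})$ on $\D(z_0, \delta)$, which I will show is identically $1$.

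Since $\chi_{\Delta_{01}} \in R^t(K, \mu)$, choose $p_n \in \text{Rat}(K)$ with $p_n \to \chi_{\Delta_{01}}$ in $L^t(\mu)$. The uniform bound $|p_n(w)| \le M\|p_n\|_{L^t(\mu)}$ on $\D(z_0, \delta)$ makes $\{p_n\}$ a normal family of holomorphic functions there, so $a(w) = \lim p_n(w)$ is holomorphic on $\D(z_0, \delta)$. For $r \in \text{Rat}(K)$, boundedness of $r$ on $K$ gives $rp_n \to r\chi_{\Delta_{01}}$ in $L^t(\mu)$, yielding the identity $\phi_w(r\chi_{\Delta_{01}}) = r(w)a(w)$ by continuity of $\phi_w$. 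Specializing to $r = p_m$ and letting $m \to \infty$, using $p_m\chi_{\Delta_{01}} \to \chi_{\Delta_{01}}$ in $L^t(\mu)$, yields $a(w)^2 = a(w)$. Thus $a$ is a holomorphic $\{0,1\}$-valued function on the connected disk, hence constant. In the case $a \equiv 1$, the identity reads $r(w) = \phi_w(r\chi_{\Delta_{01}})$, and the equality $\|r\chi_{\Delta_{01}}\|_{L^t(\mu)} = \|r\|_{L^t(\mu_{\Delta_{01}})}$ delivers $|r(w)| \le M\|r\|_{L^t(\mu_{\Delta_{01}})}$ for all $r \in \text{Rat}(K)$ and $w \in \D(z_0, \delta)$, which is precisely $z_0 \in \text{abpe}(R^t(K, \mu_{\Delta_{01}}))$.

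The main obstacle is ruling out $a \equiv 0$, for then the symmetric computation produces $|r(w)| \le M\|r\|_{L^t(\mu_{\Delta_{00}})}$, i.e.\ $z_0 \in \text{abpe}(R^t(K, \mu_{\Delta_{00}})) = \text{abpe}(L^t(\mu_{\Delta_{00}}))$. Assuming $\mu_{\Delta_{00}} \ne 0$ (otherwise there is nothing to rule out), let $\phi_w^{00}$ denote the Hahn--Banach extension of evaluation at $w$ to $L^t(\mu_{\Delta_{00}})$, with $\|\phi_w^{00}\| \le M$. Replacing $\chi_{\Delta_{01}}$ in the previous paragraph by an arbitrary Borel indicator $\chi_E \in L^\infty(\mu_{\Delta_{00}})$ and using that $\text{Rat}(K)$ is $L^t(\mu_{\Delta_{00}})$-dense, the same normal-family and idempotency arguments force $w \mapsto \phi_w^{00}(\chi_E)$ to be a holomorphic $\{0,1\}$-valued, hence constant, function $c(E)$ on the disk. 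By linearity, $w \mapsto \phi_w^{00}(s)$ is independent of $w$ for every simple Borel function $s$. Approximating the coordinate function $z$ uniformly on the compact set $K$ by simple functions $s_n$, the estimate $|\phi_w^{00}(s_n - z)| \le M\mu_{\Delta_{00}}(K)^{1/t}\|s_n - z\|_\infty$ permits passage to the limit and yields that $w = \phi_w^{00}(z)$ is independent of $w$ on $\D(z_0, \delta)$, the desired contradiction.
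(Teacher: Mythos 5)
Your proposal is correct, and it takes a genuinely different route from the paper. The paper's argument is short and direct: writing $r(\lambda)=\langle r,k_\lambda\rangle$ for the representing kernel, it observes that $(z-\lambda)\bar k_\lambda\perp R^t(K,\mu)$, and since annihilating functions vanish $\mu_{\Delta_{00}}$-a.e., one gets $k_\lambda=0$ on $\Delta_{00}$ whenever $\lambda$ is not an atom; hence $|r(\lambda)|\le M\|r\|_{L^t(\mu_{\Delta_{01}})}$ off the countable atom set, and an area sub-mean-value estimate for the subharmonic function $|r|$ upgrades this to a uniform bound on a smaller disk. Your argument instead exploits the idempotent structure: the scalar function $a(w)=\phi_w(\chi_{\Delta_{01}})$ is shown to be holomorphic (Vitali/Montel), satisfies $a^2=a$ (by squeezing $p_m$ against $p_m$), and so is constantly $0$ or $1$; the value $1$ delivers the bound, and the value $0$ is ruled out by proving the self-contained fact that $\text{abpe}(L^t(\nu))=\emptyset$ for finite $\nu$, via showing that every $\phi_w^{00}(\chi_E)$ is a holomorphic idempotent hence $w$-independent, then approximating $z$ uniformly by simple functions to force $w=\phi_w^{00}(z)$ to be constant. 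What the paper's approach buys is brevity and a clean handling of atoms; what your approach buys is a reusable lemma ($L^t$ spaces have no analytic bounded point evaluations) and an argument that avoids the sub-mean-value smoothing step entirely. Both are rigorous. One small economy you might note: you never need the Hahn--Banach representation $\phi_w(f)=\int fh_w\,d\mu$; since $\text{Rat}(K)$ is dense in $R^t(K,\mu)$ the continuous extension $\phi_w$ is already determined there, and the norm bound $\|\phi_w\|\le M$ is all you use.
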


\begin{proof}
  Let $\D(\lambda_0, \delta) \subset \text{abpe}(R^t(K, \mu ))$ such that for $\lambda\in \D(\lambda_0, \delta)$ and $r\in \text{Rat}(K),$ $|r(\lambda)| \le M \|r\|_{L^t(\mu)}$ for some $M > 0$ and $r(\lambda) = (r,k_\lambda),$  where $k_\lambda \in L^s(\mu)$ and $\|k_\lambda\|_{L^s(\mu)}\le M.$ Because $(z-\lambda)\bar k_\lambda\perp R^t(K, \mu ),$ we get $k_\lambda(z) = 0,~ \mu _{\Delta_{00}}-a.a.$ if $\mu(\{\lambda\}) = 0.$ Let $\{\lambda_n\}$ be the set of atoms for $\mu.$ Then $|r(\lambda)| \le M \|r\|_{L^t(\mu _{\Delta_{01}})}$ for $\lambda\in \D(\lambda_0, \delta) \setminus \{\lambda_n\}.$ Now for $|\lambda - \lambda_0| < \frac{\delta}{2},$ we have
  \[
  \ |r(\lambda)| \lesssim \dfrac{1}{\pi\delta^2} \int_{\D(\lambda_0, \delta) \setminus \{\lambda_n\}} |r(z)| d\area \lesssim M \|r\|_{L^t(\mu _{\Delta_{01}})}.
  \]
  Thus, $\lambda_0 \in  \text{abpe}(R^t(K, \mu _{\Delta_{01}})).$ The lemma is proved.
  \end{proof}

From Lemma \ref{ABPEEqLemma}, we see that $\text{abpe}(R^t(K, \mu))$ does not depend on the trivial summand $L^t(\mu _{\Delta_{00}}).$

We let $\partial_e K$ (the exterior boundary of $K$) denote the union of
   the boundaries of all the components of $\C \setminus K.$ Define
\begin{eqnarray}\label{BOne}
 \ \partial_1 K = \left \{\lambda \in K:~ \underset{\delta\rightarrow 0}{\overline\lim} \dfrac{\gamma(\D(\lambda,\delta)\setminus K)}{\delta} > 0 \right \}.
 \end{eqnarray}
Obviously, $\partial_e K \subset \partial_1 K \subset \partial K.$ If the diameters of  the components of $\C\setminus K$ are bounded below, then there exist $\epsilon_0 > 0$ and $\delta_0 > 0$ such that for each $\lambda \in \partial K,$
\begin{eqnarray}\label{CDiamBBEqn}
\ \gamma (\D(\lambda,\delta)\setminus K) \ge \epsilon_0 \delta \text{ for }\delta < \delta_0.
\end{eqnarray}
Clearly, if $K$ satisfies \eqref{CDiamBBEqn}, then $\partial K = \partial_1 K.$ Conversely,  it is straightforward to construct a compact subset $K$ such that $\partial K = \partial_1 K$ and $K$ does not satisfy \eqref{CDiamBBEqn}.

\begin{proposition} \label{RTFRProp2} 
$\partial_1 K \subset \mathcal F, ~\gamma-a.a.$. Consequently, if $K$ satisfies \eqref{CDiamBBEqn}, then $\partial K \subset \mathcal F, ~\gamma-a.a.$.
\end{proposition}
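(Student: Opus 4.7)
The plan is to combine three facts from the previous sections: (i) $\C\setminus K$ lies in the non-removable boundary, (ii) the removable set $\mathcal R$ has full analytic capacity density by Theorem \ref{FACDensityThm}, and (iii) $\mathcal R\cup \mathcal F$ exhausts $\C$ up to a set of zero analytic capacity.

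First I would observe that by Proposition \ref{NF0Prop} (1) we have $\C\setminus K\subset \mathcal F_0$, and from the definition \eqref{FRSetDef} we have $\mathcal F_0\subset \mathcal F$. Consequently, by monotonicity of analytic capacity,
\[
\gamma(\D(\lambda,\delta)\setminus K)\le \gamma(\D(\lambda,\delta)\cap \mathcal F)
\]
for every $\lambda\in \C$ and every $\delta>0$. Hence for each $\lambda\in \partial_1 K$ the definition \eqref{BOne} immediately gives
\[
\underset{\delta\rightarrow 0}{\overline\lim}\,\dfrac{\gamma(\D(\lambda,\delta)\cap \mathcal F)}{\delta}>0.
\]

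Next, let $\mathcal Q\subset \C$ be the set with $\gamma(\mathcal Q)=0$ furnished by Theorem \ref{FACDensityThm}, so that for each $\lambda\in \mathcal R\setminus \mathcal Q$ we have $\lim_{\delta\rightarrow 0}\gamma(\D(\lambda,\delta)\cap \mathcal F)/\delta=0$. Combining with the previous display shows that $\partial_1 K\cap (\mathcal R\setminus \mathcal Q)=\emptyset$, i.e.
\[
\partial_1 K\subset (\C\setminus \mathcal R)\cup \mathcal Q.
\]
From Theorem \ref{NDecompThm} together with Proposition \ref{NF0Prop} (1) we have $\C\approx \mathcal N\cup \mathcal F_0\approx \mathcal R\cup \mathcal F_+\cup \mathcal F_-\cup \mathcal F_0=\mathcal R\cup \mathcal F$ up to a set of zero analytic capacity. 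Therefore $\C\setminus \mathcal R\subset \mathcal F\cup \mathcal Q'$ for some $\mathcal Q'$ with $\gamma(\mathcal Q')=0$, and using Theorem \ref{TolsaTheorem} (2) (semiadditivity) to merge the exceptional sets, we conclude $\partial_1 K\subset \mathcal F$, $\gamma$-a.a.

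For the consequence, if $K$ satisfies \eqref{CDiamBBEqn} then $\partial K=\partial_1 K$ (since \eqref{CDiamBBEqn} provides a uniform positive lower bound for $\gamma(\D(\lambda,\delta)\setminus K)/\delta$ at every boundary point, while points of $\partial K\setminus \partial_1 K$ would have upper limit zero), so the first conclusion specializes to $\partial K\subset \mathcal F$, $\gamma$-a.a. No real obstacle is anticipated here; the argument is essentially a one-line application of the full analytic capacity density theorem, and the only care needed is in tracking exceptional null sets via Tolsa's semiadditivity.
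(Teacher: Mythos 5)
Your proof is correct. The chain of reasoning — $\C\setminus K\subset \mathcal F_0\subset\mathcal F$ (Proposition \ref{NF0Prop}), monotonicity of $\gamma$, the full analytic capacity density of $\mathcal R$ (Theorem \ref{FACDensityThm}), and the $\gamma$-a.a.\ partition $\C\approx\mathcal R\cup\mathcal F$ obtained from Theorem \ref{NDecompThm} together with Proposition \ref{NF0Prop} (1) — is sound, and the exceptional sets are correctly controlled by semiadditivity. Your proof of the consequence (that \eqref{CDiamBBEqn} forces $\partial K = \partial_1 K$) is also exactly right.

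The route differs from the paper's, though. The paper's proof is a direct application of Theorem \ref{FCharacterization}, the $\mathcal E_N$-density characterization of $\mathcal F$: one observes that since $\C\setminus K\subset\mathcal E_N$ for every $N$ (Cauchy transforms of annihilating measures vanish off $K$), the inclusion $\D(\lambda,\delta)\setminus K\subset\D(\lambda,\delta)\cap\mathcal E_N$ plus the definition of $\partial_1 K$ instantly verifies the condition \eqref{FCEq4} for $\lambda\in\partial_1 K$, whence $\lambda\in\mathcal F$ up to the exceptional set in Theorem \ref{FCharacterization}. You instead detour through Theorem \ref{FACDensityThm} (density of $\mathcal R$) and the decomposition $\C\approx\mathcal R\cup\mathcal F$, showing $\partial_1 K$ is disjoint (up to $\gamma$-null) from $\mathcal R$, then pulling $\mathcal F$ out of the complement. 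Both arguments have essentially the same footprint; the paper's version is marginally more economical because it invokes only the single characterization theorem and sidesteps the need to assemble $\mathcal R\cup\mathcal F\approx\C$ from Theorem \ref{NDecompThm} and Proposition \ref{NF0Prop}, but your route is equally rigorous and makes transparent the intuition that a boundary point with nonvanishing complementary $\gamma$-density cannot be a density point of $\mathcal R$.
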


\begin{proof}
The proposition follows from Theorem \ref{FCharacterization} and the fact that for $\lambda \in \partial_1 K$, 
 \[
 \ \D(\lambda, \delta) \setminus K \subset \D(\lambda, \delta) \cap\mathcal E_N.
 \]
 \end{proof}

 The following Lemma is from Lemma B in \cite{ars09}.

\begin{lemma} \label{lemmaARS}
There are absolute constants $\epsilon _1, C_1 > 0$ with the
following property. If $R > 0$ and $E \subset  \overline{\D(0, R)}$ with 
$\gamma(E) < R\epsilon_1$, then
\[
\ |p(\lambda)| \le \dfrac{C_1}{\pi R^2} \int _{\overline{\D(0, R)}\setminus E} |p|\, d\area
\]
for all $\lambda$ in $\D(0, \frac{R}{2})$ and all analytic polynomials $p$.
\end{lemma}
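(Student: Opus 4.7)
Rescaling $z\mapsto z/R$ (so that the polynomial $p(z)$ becomes $q(z)=p(Rz)$) reduces the statement to $R=1$: it suffices to show that whenever $E\subset\overline{\D(0,1)}$ has $\gamma(E)<\epsilon_1$ and $\lambda\in\D(0,1/2)$, then $|p(\lambda)|\le (C_1/\pi)\int_{\overline{\D(0,1)}\setminus E}|p|\,d\area$ for every polynomial $p$. My strategy is a duality argument: I will construct $h\in L^\infty(\overline{\D(0,1)}\setminus E,\area)$ with $\|h\|_\infty\le C_1/\pi$ satisfying $p(\lambda)=\int p\,h\,d\area$ for every polynomial $p$; the inequality then follows by taking absolute values.

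The canonical starting candidate is $h_0=(4/\pi)\chi_{\D(\lambda,1/2)}$. Since $\D(\lambda,1/2)\subset\D(0,1)$, the area mean-value property for the analytic function $p$ gives $\int p\,h_0\,d\area=p(\lambda)$. The defect is that the support of $h_0$ may overlap $E$. I set $\mu_0=h_0\chi_E\,d\area$, a positive measure on $E$ of total mass $\|\mu_0\|\le (4/\pi)\area(E)\le (16/\pi)\gamma(E)^2<(16/\pi)\epsilon_1^2$ by \eqref{AreaGammaEq}. The remaining task is to replace $\mu_0$ by a density $h_1\in L^\infty(\D(0,1)\setminus E)$, bounded independently of $p$, whose action on every polynomial matches that of $\mu_0$ --- equivalently, whose Cauchy transform agrees with $\CT\mu_0$ on the unbounded component of the complement of the supports.

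This replacement is produced by the Vitushkin-type scheme invoked in the proof of Theorem \ref{HDAlgTheorem}. I cover $E$ by dyadic squares $\{Q_j\}$ of side $\delta$ chosen comparable to $\gamma(E)^{1/2}$, so that $\gamma(E)\ll\delta$; for each $Q_j$ meeting $E$ I apply Theorem \ref{TolsaTheorem}(1) and Proposition \ref{GammaPlusThm} to produce a measure $\eta_j$ supported on $Q_j\setminus E$ with bounded Cauchy transform and total mass $\|\eta_j\|\approx\gamma(Q_j\setminus E)$. The semi-additivity of $\gamma$ in Theorem \ref{TolsaTheorem}(2), together with $\gamma(E\cap Q_j)\le\gamma(E)<\epsilon_1\ll\delta$, yields $\gamma(Q_j\setminus E)\approx\gamma(Q_j)\sim\delta$, so $\eta_j$ may be normalized to reproduce the action of $\mu_0|_{Q_j}$ on polynomials; Proposition \ref{GammaPlusThm}(3) then lets me write $\eta_j=w_j\,d\area$ with $\|w_j\|_\infty\lesssim 1$. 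Setting $h_1=\sum_j w_j$ and $h=h_0\chi_{E^c}+h_1$ produces $\|h\|_\infty\lesssim 1$ and $\int p\,h\,d\area=p(\lambda)$ for every polynomial $p$.

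The main obstacle is the quantitative control in the last step --- summing the local corrections $w_j$ to a function of bounded $L^\infty$ norm uniformly in the covering. This is precisely the content of the modified Vitushkin scheme of Paramonov \cite{p95} (see also \cite[Sections 3 and 4]{y23}), whose essential inputs are Tolsa's equivalence $\gamma\approx\gamma_+$ and the $L^2$-boundedness of the Cauchy transform for measures of linear growth. The smallness $\gamma(E)<R\epsilon_1$ enters twice: first, to guarantee that on each small square $Q_j$ the capacity of $Q_j\setminus E$ is comparable to $\gamma(Q_j)$, so that the local replacement exists with bounded density; second, to ensure that the resulting $\|h_1\|_\infty$ is controlled by an absolute constant when $\epsilon_1$ is chosen small enough.
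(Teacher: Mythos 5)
The paper does not prove this lemma; it cites it directly as Lemma B of Aleman--Richter--Sundberg \cite{ars09}. So you are attempting a reconstruction rather than paralleling the paper's argument. Your overall plan (rescale, use the mean-value reproducing kernel $h_0$, absorb the defect $\mu_0=h_0\chi_E\,d\area$ into a bounded density supported off $E$) is a sensible starting point, but the correction step contains two genuine gaps.

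First, Proposition~\ref{GammaPlusThm}(3) does not yield $\eta_j=w_j\,d\area$. It produces a weight $w$ \emph{with respect to $\eta$}, i.e.\ a measure $w\eta$ with bounded Cauchy transform, not an $L^\infty(\area)$ density. More to the point, the measures $\eta_j$ coming from Tolsa's machinery have $\|\eta_j\|\approx\gamma(Q_j\setminus E)\approx\delta$, while any density on the $\delta$-square $Q_j$ bounded by an absolute constant carries mass at most $O(\delta^2)$; for small $\delta$ these are incompatible, so $\eta_j$ simply cannot be a bounded area-density. (You could instead take $\nu_j$ to be a normalized multiple of $\chi_{Q_j\setminus E}\area$ matching the zeroth moment of $\mu_0|_{Q_j}$ — since $\area(E)\lesssim\gamma(E)^2\ll\delta^2$, that density is even small — but then Tolsa's capacity machinery plays no role at this step, which is a warning sign that the hypothesis $\gamma(E)<\epsilon_1 R$ is not yet doing the work it must do.)

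Second, and more fundamentally, a per-square Vitushkin replacement matches only a finite number of moments of $\mu_0|_{Q_j}$, so the key identity $\int p\,h\,d\area=p(\lambda)$ is not established for all polynomials $p$. With only the zeroth moment matched, the error on $Q_j$ is of size $\|\mu_0|_{Q_j}\|\,\mathrm{osc}_{Q_j}(p)\lesssim \area(E\cap Q_j)\,\delta^{-2}\int_{Q_j^*}|p|\,d\area$, and after summing the total error is controlled by a quantity that still contains $\int_E|p|\,d\area$, which is precisely what the lemma must dominate. You therefore end up needing the conclusion of the lemma (at scale $\delta$) to bound the error terms in its own proof. Matching higher moments per square does not remove this obstruction, because the higher-order oscillation of $p$ is still estimated through local $L^1$ averages that see $E$. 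To close the argument one needs either an exact moment-matching device (e.g.\ a $\bar\partial$-correction producing a bounded solution to $\bar\partial u=\mu_0$ supported off $E$, with an $L^\infty$ bound that uses $\gamma(E)$ small in an essential way), or a different mechanism of the kind used in \cite{ars09}; as written, the proposal does not supply it.
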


The theorem below provides  an important relation between $\text{abpe}(R^t(K,\mu))$ and $\mathcal R.$ 

\begin{theorem}\label{ABPETheoremRT} 
The following property holds:
 \begin{eqnarray}\label{ABPETheoremRTEq1}
 \ \text{abpe}(R^t(K,\mu)) \approx \text{int}(K) \cap \mathcal R,~ \gamma-a.a..
 \end{eqnarray}
More precisely, the following statements are true:

(1) If $\lambda_0 \in \text{int}(K)$ and there exists $N\ge 1$ such that 
 \begin{eqnarray}\label{ABPETheoremRTEq2}
 \ \lim_{\delta \rightarrow 0} \dfrac{ \gamma (\mathcal E_N \cap \D(\lambda_0, \delta))}{\delta} = 0,
 \end{eqnarray}
then $\lambda_0\in \text{abpe}(R^t(K,\mu))$.

(2)
\[
 \  \text{abpe}(R^t(K,\mu)) \subset \text{int}(K) \cap \mathcal R,~ \gamma-a.a..
 \]
\end{theorem}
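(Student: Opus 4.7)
The plan is to prove (1) and (2) separately; the equivalence \eqref{ABPETheoremRTEq1} will then follow by combining them with Theorem \ref{FCharacterization}. For (1), I apply Lemma \ref{lemmaARS} on a disk $\D(\lambda_0, \delta_1) \subset \text{int}(K)$ chosen small enough that the hypothesis \eqref{ABPETheoremRTEq2} gives $\gamma(\mathcal{E}_N \cap \D(\lambda_0, \delta_1)) < \epsilon_1 \delta_1$. Lemma \ref{lemmaARS} then yields $|r(\lambda)| \le (C_1 / \pi \delta_1^2) \int_{\D(\lambda_0, \delta_1) \setminus \mathcal{E}_N} |r|\, d\area$ for $\lambda \in \D(\lambda_0, \delta_1/2)$ and $r \in \text{Rat}(K)$. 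Outside $\mathcal{E}_N$ in the disk, the definition of $\mathcal{E}_N$ forces some $|\CT(g_j\mu)(z)| > 1/N$ with $j \le N$, so the identity $r(z)\CT(g_j\mu)(z) = \CT(rg_j\mu)(z)$ gives $|r(z)| \le N \sum_{j \le N} |\CT(rg_j\mu)(z)|$. Bounding each $\int_{\D(\lambda_0, \delta_1)} |\CT(rg_j\mu)|\, d\area$ by $2\pi \delta_1 \|g_j\|_{L^s(\mu)} \|r\|_{L^t(\mu)}$ via Fubini produces $|r(\lambda)| \le M \|r\|_{L^t(\mu)}$ on $\D(\lambda_0, \delta_1/2)$, with $M$ depending on $\delta_1$, $N$, and $\{g_j\}_{j \le N}$, placing $\lambda_0$ in $\text{abpe}(R^t(K,\mu))$.

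For (2), the inclusion $\text{abpe} \subset \text{int}(K)$ is automatic, so it suffices to show $\gamma(\text{abpe} \cap \mathcal{F}) = 0$; I argue by contradiction. Suppose not, choose a compact $E \subset \text{abpe} \cap \mathcal{F}$ with $\gamma(E) > 0$, and apply Lemma \ref{BBFRLambda} to obtain $f \in R^{t,\infty}(K,\mu)$ and $\eta \in M_0^+(E)$ with $\|f\|_{L^\infty(\mu)} \lesssim 1$, $f(z) = \CT(\eta)(z)$ on $\C_\infty \setminus \text{spt}\eta$, $f(\infty) = 0$, and $f'(\infty) = -\gamma(E) < 0$. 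Because $E \subset \text{abpe}$, the representing kernels $\bar k_\lambda \in L^s(\mu)$ exist for $\lambda$ in a neighborhood of $E$ and extend $f$ analytically across $E$ via $\lambda \mapsto \int f \bar k_\lambda\, d\mu$. To identify this abpe extension with $\CT(\eta)$ on the overlap $\text{abpe} \setminus \text{spt}\eta$, I use that $(z-\lambda)\bar k_\lambda \perp R^t(K,\mu)$ (since $\int r(z)(z-\lambda)\bar k_\lambda\, d\mu = r(\lambda)(\lambda - \lambda) = 0$ for $r \in \text{Rat}(K)$) together with the fact, embedded in the proof of Lemma \ref{BBFRLambda} via \eqref{BBFunctLemmaEq2} and its analogues in the three cases, that $(f(z)-f(\lambda))/(z-\lambda) \in R^t(K,\mu)$ for $\lambda \notin \text{spt}\eta$. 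The orthogonality pairing then yields
\[
0 = \int \frac{f(z) - f(\lambda)}{z - \lambda}(z - \lambda)\bar k_\lambda(z)\, d\mu(z) = \int f \bar k_\lambda\, d\mu - f(\lambda) \int \bar k_\lambda\, d\mu,
\]
and since $\int \bar k_\lambda\, d\mu = 1$ the abpe value $\int f \bar k_\lambda\, d\mu$ equals $f(\lambda) = \CT(\eta)(\lambda)$.

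Consequently $f$ is a single bounded analytic function on $\text{abpe} \cup (\C_\infty \setminus \text{spt}\eta) = \C_\infty$ (the union covers $\C_\infty$ because $\text{spt}\eta \subset E \subset \text{abpe}$), and Liouville's theorem together with $f(\infty) = 0$ forces $f \equiv 0$, contradicting $f'(\infty) = -\gamma(E) \ne 0$; this proves $\gamma(\text{abpe} \cap \mathcal{F}) = 0$ and establishes (2). The equivalence \eqref{ABPETheoremRTEq1} then follows immediately: (2) supplies the inclusion $\text{abpe} \subset \text{int}(K) \cap \mathcal{R}$ up to zero analytic capacity, while for $\gamma$-a.a.\ $\lambda_0 \in \text{int}(K) \cap \mathcal{R}$ Theorem \ref{FCharacterization} produces an $N$ with $\lim_{\delta \to 0} \gamma(\D(\lambda_0, \delta) \cap \mathcal{E}_N)/\delta = 0$, and (1) places $\lambda_0$ in $\text{abpe}$. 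The main hurdle is the orthogonality identification of the two analytic extensions in (2)—verifying $(f(z) - f(\lambda))/(z-\lambda) \in R^t(K,\mu)$ uniformly for $\lambda \in \text{abpe} \setminus \text{spt}\eta$ (which requires unpacking all three cases in the proof of Lemma \ref{BBFRLambda}) and confirming that the containment $\text{spt}\eta \subset E \subset \text{abpe}$ genuinely patches $f$ into an entire function.
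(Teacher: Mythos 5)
Your proof is correct and follows essentially the same route as the paper: Lemma \ref{lemmaARS} combined with the pointwise inequality $|r(z)|\le N\max_{j\le N}|\CT(rg_j\mu)(z)|$ off $\mathcal E_N$ and Fubini for part (1); Lemma \ref{BBFRLambda} plus a Liouville contradiction for part (2); Theorem \ref{FCharacterization} to convert the pointwise criterion \eqref{ABPETheoremRTEq2} into $\gamma$-a.a.\ membership in $\mathcal R$.

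The one genuine difference is in the patching step of part (2). The paper restricts $E$ to a connected component $G$ of the abpe set, approximates $f$ by rationals $r_n$, and argues that the locally uniform limit $f_0$ on $G$ agrees with $f=\CT(\eta)$ on $G\setminus E$; that agreement is asserted rather than computed. You instead pair $\frac{f(z)-f(\lambda)}{z-\lambda}\in R^t(K,\mu)$ against the annihilator $(z-\lambda)\bar k_\lambda$, giving the cleaner identity $\int f\,\bar k_\lambda\,d\mu=f(\lambda)=\CT(\eta)(\lambda)$ directly for every $\lambda\in\text{abpe}\setminus\text{spt}\,\eta$, without needing to single out a connected component; the two pieces then patch to an entire bounded function. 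This buys you an explicit and Hölder-duality-justified overlap identification in place of the paper's somewhat compressed argument. Both you and the paper, however, lean on the fact that $\frac{f(z)-f(\lambda)}{z-\lambda}\in R^{t,\infty}(K,\mu)$ for $\lambda\notin\text{spt}\,\eta$, which is established inside the proof of Lemma \ref{BBFRLambda} (cases I--III) but not listed in its statement; you correctly flag this as the point that must be unpacked.
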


\begin{proof} From Lemma \ref{ABPEEqLemma}, we may assume that $S_\mu$ is pure.

(1): If $\lambda_0 \in \text{int}(K)$  satisfies \eqref{ABPETheoremRTEq2}, then we  choose $\delta > 0$ small enough such that $\D(\lambda_0, \delta) \subset int(K)$ and 
$\gamma (E:= \mathcal E_N \cap \D(\lambda_0, \delta)))\le \epsilon_1 \delta$, where $\epsilon_1$ is from Lemma \ref{lemmaARS}. Hence, using Lemma \ref{lemmaARS}, we conclude
 \[
 \ \begin{aligned}
 \ |r(\lambda )| \lesssim & \dfrac{1}{\pi \delta^2} \int _{\D(\lambda_0, \delta) \setminus E} |r(z)| d\area(z) \\
 \ \lesssim & \dfrac{N}{\pi \delta^2} \int _{\D(\lambda_0, \delta)} |r(z)| \max_{1\le j \le N} |\mathcal C(g_j\mu)(z)| d\area(z) \\
\ \lesssim & \dfrac{N}{\pi \delta^2} \int _{\D(\lambda_0, \delta)} \max_{1\le j \le N} |\mathcal C(rg_j\mu)(z)| d\area(z) \\
\ \lesssim & \dfrac{N}{\pi \delta^2} \sum_{j = 1}^N\int _{\D(\lambda_0, \delta)}  |\mathcal C(rg_j\mu)(z)| d\area(z) \\
\ \lesssim & \dfrac{N}{\pi \delta^2} \sum_{j = 1}^N\int \int _{\D(\lambda_0, \delta)} \left |\dfrac{1}{z-w} \right | d\area(z) |r(w)||g_j (w)| d\mu (w) \\
\ \lesssim & \dfrac{N}{\delta} \sum_{j = 1}^N \|g_j\|_{L^{s}(\mu)} \|r\|_{L^{t}(\mu)}
 \ \end{aligned}
 \]
for all $\lambda$ in $\D(\lambda_0, \frac{\delta}{2})$ and all $r\in Rat(K)$. This implies that $\lambda_0\in abpe(R^t(K,\mu))$.

(2): Let $E \subset G\cap \mathcal F$ be a compact subset with $\gamma(E) > 0$, where $G$ is a connected component of $abpe(R^t(K,\mu))$. By Lemma \ref{BBFRLambda}, there exists $f\in R^{t,\i}(K,\mu)$ that is bounded and analytic on $E^c$ such that  
 \[
 \ \|f\|_{L^\infty(\mu)} \lesssim 1,~ f(\infty) = 0,~ f'(\infty) = \gamma(E),
 \]
and 
\[
\ \dfrac{f(z) - f(\lambda)}{z - \lambda} \in R^{t,\i}(K,\mu),~ \lambda \in E^c.
\]
Let $r_n\in Rat(K)$ such that $\|r_n-f\|_{L^t(\mu)} \rightarrow 0$. Hence, $r_n$ uniformly tends to an analytic function $f_0$ on compact subsets of $G$ and $\frac{f-f_0(\lambda)}{z-\lambda} \in R^t(K,\mu)$ for $\lambda\in G$. Therefore, $f(z) = f_0(z)$ for $z\in G\setminus E.$  Thus, the function $f_0(z)$ can be analytically extended to $\mathbb C_\i$ and $f_0(\infty) = 0$. So $f_0=0.$ This contradicts $f'(\infty) \ne 0$. 

\eqref{ABPETheoremRTEq1} now follows from Theorem \ref{FCharacterization}.
\end{proof}

From Proposition \ref{RTFRProp2} and Theorem \ref{ABPETheoremRT}, under the assumptions of \cite{thomson}, \cite{ce93}, or \cite{b08}, we conclude that $\mathcal R \approx \text{abpe}(\rtkmu),~\gamma-a.a..$ 

\begin{theorem}\label{thmA}
 Let $K\subset \C$ be a compact subset, $1\le t < \infty,$ and $\mu\in M_0^+(K).$ Let $\text{abpe}(\rtkmu) = \cup_{n=1}^\i U_n,$ where $U_n$ is a connected component.   
Then the following statements are equivalent.
\newline
(1) $\partial U_i \cap \partial  K \subset \mathcal F, ~\gamma-a.a.$ for all $i \ge 1.$
\newline
(2) There is a Borel partition $\{\Delta_i\}_{i=0}^\infty$ of $\mbox{spt}(\mu)$ and compact subsets $\{K_i\}_{i=0}^\infty$ such that $\Delta_i \subset K_i$ for $i \ge 0$,
 \[
 \ R^t(K, \mu ) = R^t(K_0,\mu _{\Delta_0})\oplus \bigoplus _{i = 1}^\infty R^t(K_i, \mu _{\Delta_i}),
 \]
and the following properties hold:
\begin{itemize}	

\item[(a)] $K_0$ is the spectrum of $S_{\mu _{\Delta_0}}$ and $\text{abpe}(R^t(K_0,\mu _{\Delta_0})) = \emptyset.$

\item[(b)] If $i \ge 1$, then $S_{\mu _{\Delta_i}}$ on $R^t(K_i, \mu _{\Delta_i})$ is irreducible, that is, $R^t(K_i, \mu _{\Delta_i})$ contains no non-trivial characteristic functions. 

\item[(c)] If  $i \ge 1,$ then $U_i =\mbox{abpe}( R^t(K_i, \mu _{\Delta_i}))$ and $K_i = \overline{U_i}$.

\item[(d)] If  $i \ge 1$, then the evaluation map $\rho_i: f\rightarrow f|_{U_i}$ is an isometric isomorphism and a weak-star homeomorphism from $R^{t, \i}(K_i, \mu _{\Delta_i})$ onto $H^\infty(U_i)$.
\end{itemize}
\end{theorem}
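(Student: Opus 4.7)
The plan is to establish both implications separately, with $(1)\Rightarrow(2)$ being the main content. For the easier direction $(2)\Rightarrow(1)$, the decomposition structurally identifies $\mathcal R$ with $\bigcup_{i\ge 1}U_i$ up to a $\gamma$-null set: Proposition~\ref{RTFRProp1}(1) and (3) give $\mathcal R\approx\bigcup_{i\ge 1}\mathcal R_{\Delta_i}$ ($\gamma$-a.a.) with $\mathcal R_{\Delta_i}\subset K_i=\overline{U_i}$, and comparing the isometry $\rho_i$ of (d) with the global $\rho$ of Theorem~\ref{mainThmR} restricted to the $i$-th summand forces $\mathcal R_{\Delta_i}\approx U_i$ ($\area$-a.a.); the full-$\gamma$-density property (Theorem~\ref{FACDensityThm}) applied to $\mathcal R_{\Delta_i}\subset\overline{U_i}$ upgrades this to $\gamma$-a.a.\ equivalence. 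Since $\overline{U_i}\cap U_j=\emptyset$ for $i\ne j$, $\partial U_i$ avoids every $U_j$, so $\partial U_i\cap\mathcal R$ is $\gamma$-null, which yields (1).

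For $(1)\Rightarrow(2)$, I first reduce the assumption to the stronger statement $\partial U_i\subset\mathcal F$ ($\gamma$-a.a.) for every $i\ge 1$. Since $U_i\subset\text{int}(K)$ one has $\partial U_i\subset K$; the portion $\partial U_i\cap\text{int}(K)$ lies in $\text{int}(K)\setminus\bigcup_j U_j\approx\text{int}(K)\setminus\mathcal R\subset\mathcal F$ ($\gamma$-a.a.) by Theorem~\ref{ABPETheoremRT}, while $\partial U_i\cap\partial K\subset\mathcal F$ by hypothesis. The goal is then to produce, for each $i\ge 1$, a Borel set $\Delta_i$ with $\chi_{\Delta_i}\in R^{t,\infty}(K,\mu)$ and $\mathcal R_{\Delta_i}\approx U_i$; by Proposition~\ref{RTFRProp1}(5) this reduces to showing $\chi_{U_i}\in H^\infty(\mathcal R)$.

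The containment $\chi_{U_i}\in H^\infty(\mathcal R)$ is verified via Theorem~\ref{HDAlgTheorem}. For smooth $\varphi$ supported in $\D(\lambda,\delta)$ with $\|\bar\partial\varphi\|\lesssim 1/\delta$ and $\varphi\le 1$, set $h(z)=(z-\lambda)^n\chi_{U_i}(z)$; since $U_i\subset\mathcal R$ ($\area$-a.a.), the left-hand side of \eqref{HDAlgTheoremEq} equals $\int_{U_i}(z-\lambda)^n\bar\partial\varphi\,d\area$. The Vitushkin localization $T_\varphi h$ is analytic on $\mathbb C_\infty\setminus(\D(\lambda,\delta)\cap\partial U_i)$ with $\|T_\varphi h\|_\infty\lesssim\delta^n$, and a direct expansion at $\infty$ yields $(T_\varphi h)'(\infty)=-\frac{1}{\pi}\int_{U_i}(z-\lambda)^n\bar\partial\varphi\,d\area$. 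Applying the definition of analytic capacity gives
\[
\left|\int_{U_i}(z-\lambda)^n\bar\partial\varphi\,d\area\right|\lesssim\delta^n\gamma(\D(\lambda,\delta)\cap\partial U_i)\lesssim\delta^n\gamma(\D(\lambda,\delta)\setminus\mathcal R),
\]
where the last step uses $\partial U_i\subset\mathcal F=\C\setminus\mathcal R$ ($\gamma$-a.a.) and monotonicity of $\gamma$. Hence $\chi_{U_i}\in H^\infty(\mathcal R)$.

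Having produced disjoint (modulo $\mu$-null) Borel sets $\Delta_i$ via Proposition~\ref{RTFRProp1}(2) and (5), I set $\Delta_0=\text{spt}(\mu)\setminus\bigcup_{i\ge 1}\Delta_i$, $K_i=\overline{U_i}$ for $i\ge 1$, and $K_0=\text{spt}(\mu_{\Delta_0})$; the direct-sum decomposition follows from the characteristic-function splittings. Irreducibility in (b) transfers through $\rho_i$ from the absence of nontrivial idempotents in $H^\infty(U_i)$ (as $U_i$ is connected open); (c) and (d) follow from Theorem~\ref{mainThmR}, Proposition~\ref{RTFRProp1}(1), and the identification $\rho_i(r)=r|_{U_i}$ for $r\in\text{Rat}(K_i)$; and (a) follows because $\mathcal R_{\Delta_0}$ is $\gamma$-disjoint from every $\mathcal R_{\Delta_i}$ with $i\ge 1$, so that $\bigcup_{i\ge 1}U_i=\text{abpe}(R^t(K,\mu))$ together with Theorem~\ref{ABPETheoremRT} forces $\text{abpe}(R^t(K_0,\mu_{\Delta_0}))=\emptyset$. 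The main obstacle is translating the set-theoretic condition $\partial U_i\subset\mathcal F$ into the analytic-capacity estimate required by \eqref{HDAlgTheoremEq}; this is precisely what the Vitushkin computation above accomplishes.
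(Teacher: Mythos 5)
Your $(1)\Rightarrow(2)$ argument follows the paper's route: reduce to $S_\mu$ pure, upgrade the hypothesis to $\partial U_i \subset \mathcal F$ ($\gamma$-a.a.) via Theorem \ref{ABPETheoremRT}, deduce $\chi_{U_i}\in H^\infty(\mathcal R)$, invoke Proposition \ref{RTFRProp1}(5), and assemble the summands. Your Vitushkin computation makes explicit the step $\partial U_i\subset\mathcal F \Rightarrow \chi_{U_i}\in H^\infty(\mathcal R)$ that the paper treats as immediate; it is correct, and since $U_i$ is open with $U_i\subset\mathcal R$ ($\gamma$-a.a.), the support of $\bar\partial\bigl((z-\lambda)^n\chi_{U_i}\bigr)$ does sit in $\partial U_i$ and the estimate via $(T_\varphi h)'(\infty)$ delivers \eqref{HDAlgTheoremEq}.

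The $(2)\Rightarrow(1)$ direction, however, has a genuine gap. You argue that comparing $\rho_i$ with the global $\rho$ gives $\mathcal R_{\Delta_i}\approx U_i$ ($\area$-a.a.), and that the full-$\gamma$-density property (Theorem \ref{FACDensityThm}) then \emph{upgrades} this to a $\gamma$-a.a.\ identification. That upgrade does not follow. Full $\gamma$-density of $\mathcal R_{\Delta_i}$ at a point $\lambda\in\mathcal R_{\Delta_i}\cap\partial U_i$ only says $\gamma(\D(\lambda,\delta)\setminus\mathcal R_{\Delta_i})/\delta\to 0$, hence (since $\mathcal R_{\Delta_i}\subset\overline{U_i}$) that $\gamma(\D(\lambda,\delta)\setminus\overline{U_i})/\delta\to 0$; this only places $\lambda\in\partial K_i\setminus\partial_1 K_i$ and yields no contradiction. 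Concretely, nothing in your argument rules out a compact $E\subset\partial U_i$ with $\area(E)=0$ but $\gamma(E)>0$ landing inside $\mathcal R_{\Delta_i}$, and this is precisely the case one must exclude. (The same difficulty haunts the asserted $\area$-a.a.\ identification: $\partial K_i$ may have positive area, and a purely algebraic comparison of isometric isomorphisms does not pin down the underlying sets.)

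The paper closes exactly this gap with Lemma \ref{IForHILemma}: starting from the algebra homomorphism $\mathcal I_i=\rho_i^{-1}\colon H^\infty(U_i)\to R^{t,\infty}(K_i,\mu_{\Delta_i})$, it establishes $\rho(\mathcal I(f))(\lambda)=f(\lambda)$ on $\mathcal R$ near the boundary, and then rules out $\gamma(\partial U_i\cap\mathcal R_{\Delta_i})>0$ separately on the rectifiable part (via the Plemelj boundary values $v^\pm$ of Theorem \ref{GPTheorem1}, which force the density $w$ of the capacitary measure to vanish) and on $\mathcal R_0$ (via the $\bar\partial$-distribution identity of Lemma \ref{distributionLemma}). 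It then propagates $\partial U_i\subset\mathcal F_{\Delta_i}$ to the remaining summands by Proposition \ref{RTFRProp2} and intersects via Proposition \ref{RTFRProp1}(3). None of this machinery is dispensable, and your proposal needs it (or an argument of comparable depth) to complete $(2)\Rightarrow(1)$.
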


\begin{proof} 
(Theorem \ref{thmA} (1)$\Rightarrow$(2)):
From \eqref{RTFirstDecompEq} and Lemma \ref{ABPEEqLemma}, we may assume that $S_\mu$ is pure.
From \eqref{ABPETheoremRTEq1}, we see that $\partial U_i \cap \text{int}(K) \subset \mathcal F,~\gamma-a.a..$ Hence, the assumption (1) implies $\partial U_i \subset \mathcal F,~\gamma-a.a..$ Thus, $\chi_{U_i}\in H^\i(\mathcal R).$
Using Proposition \ref{RTFRProp1} (5), we infer that there exists a Borel subset $\Delta_i$  such that $\chi_{\Delta_i} \in R^t(K, \mu),$ $U_i \approx \mathcal R_{\Delta_i},~\area-a.a.,$ and $U_i \subset \Delta_i \subset \overline U_i, ~\mu-a.a..$ From Proposition \ref{RTFRProp1} (1)\&(2), we see that
\[
\ \rho(\chi_{\Delta_i}\chi_{\Delta_m}) = \chi_{\mathcal R_{\Delta_i}}\chi_{\mathcal R_{\Delta_m}} = 0 \text{ for } i \ne m,
\]
which implies $\Delta_i \cap \Delta_m = \emptyset,~\mu-a.a..$ Set $\Delta_0 = K\setminus \cup_{i=1}^\i \Delta_i.$ Then $\{\Delta_i\}_{i \ge 0}$ is a Borel partition of $\text{spt}\mu$ and
\[
 \ R^t(K, \mu ) = R^t(K, \mu _{\Delta_0})\oplus \bigoplus _{i = 1}^\infty R^t(K, \mu _{\Delta_i}).
 \]

(c):  Set $K_i = \overline{U_i}.$ Then $K_i = \overline{\mathcal R_{\Delta_i}}$ and by Proposition \ref{RTFRProp1} (1), $R^t(K, \mu_{\Delta_i}) = R^t(K_i, \mu_{\Delta_i}).$ Clearly, $\partial U_i \subset \mathcal F \subset \mathcal F_{\Delta_i},~\gamma-a.a.,$ so $\mathcal R_{\Delta_i} \subset U_i \subset \text{int}(K_i),~\gamma-a.a..$ Therefore, by Theorem \ref{ABPETheoremRT}, we get $\mathcal R_{\Delta_i} \approx  \mbox{abpe}( R^t(K_i, \mu _{\Delta_i})),~\gamma-a.a..$ Hence, $U_i =  \mbox{abpe}( R^t(K_i, \mu _{\Delta_i})).$ This proves (c).

(d) follows from Theorem \ref{mainThmR}.

(b) follows from (d) since $U_i$ is connected and $H^\infty (U_i)$ contains no non-trivial characteristic functions.

(a) follows from Proposition \ref{RTFRProp1} (1)\&(2) and Theorem \ref{ABPETheoremRT}.
 \end{proof}

To prove Theorem \ref{thmA} (2)$\Rightarrow$(1), we need the following lemma.

\begin{lemma}\label{IForHILemma}
Let $U$ be a bounded open connected subset satisfying $K \subset \overline U.$ Suppose that $S_\mu$ on $R^t(K,\mu)$ is pure and $\mathcal I$ is an algebraic homomorphism from $H^\i (U)$ to $R^{t, \i}(K,\mu)$ that sends $1$ to $1$ and $z$ to $z.$
Then $\partial U \subset \mathcal F.~ \gamma-a.a..$
\end{lemma}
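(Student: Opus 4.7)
Proof plan. The strategy is by contradiction: suppose $\gamma(\partial U \cap \mathcal R) > 0$ (equivalently, $\gamma(\partial U \setminus \mathcal F) > 0$ by Theorem \ref{NDecompThm}), and choose a compact $E \subset \partial U \cap \mathcal R$ with $\gamma(E) > 0$. The plan is to produce a Cauchy transform of a suitable measure on $E$, which lives in $H^\infty(U)$ because $E \subset \partial U \subset \C \setminus U$, push it through $\mathcal I$ and $\rho$, and obtain a contradiction via the Plemelj-type jump formula of Theorem \ref{GPTheorem1}.

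The first step is to upgrade $\mathcal I$ to a contractive map, $\|\mathcal I(f)\|_{L^\infty(\mu)} \le \|f\|_U$ for all $f \in H^\infty(U)$. This follows from the standard spectrum-preservation property of unital algebra homomorphisms between commutative semisimple Banach algebras: the spectral radius of $\mathcal I(f)$ in $R^{t,\infty}(K,\mu)$ equals $\|\mathcal I(f)\|_{L^\infty(\mu)}$ by Theorem \ref{mainThmR} (via $\rho$), and it cannot exceed the spectral radius of $f$ in $H^\infty(U)$, which is $\|f\|_U$. Combined with $\mathcal I(z)=z$ and multiplicativity, this yields $\mathcal I(r) = r$ in $L^\infty(\mu)$ for every rational $r$ whose poles lie off $\overline U$; in particular $\mathcal I((z-\lambda)^{-1}) = (z-\lambda)^{-1}$ for each $\lambda \notin \overline U$.

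Next, by Theorem \ref{TolsaTheorem} and Proposition \ref{GammaPlusThm} (1), choose $\eta \in M_0^+(E)$ of $1$-linear growth with $\|\CT_\epsilon \eta\|_\infty \lesssim 1$ and $\|\eta\| \gtrsim \gamma(E)$, and set $f = \CT\eta \in H^\infty(U)$. I would identify $\mathcal I(f) = \CT\eta$ in $L^\infty(\mu)$ by a weak-star approximation: approximate $\eta$ weak-star by finite atomic measures $\eta_n$ whose atoms lie in $\C \setminus \overline U$ (possible since each point of $\partial U$ is accessible from $\C \setminus \overline U$ in every connected component of $\C \setminus \overline U$ adjacent to it), so that $\CT\eta_n \in \text{Rat}(\overline U)$ converges weak-star in $L^\infty(\area_U)$ to $\CT\eta$, and invoke continuity of $\mathcal I$. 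Having this, $\rho(\mathcal I(f)) = \rho(\CT\eta) \in H^\infty(\mathcal R)$, and by \eqref{RhoExistLemmaEq1} together with Fubini and partial fractions one obtains, for every $g \perp R^t(K,\mu)$,
\[
(\rho(\CT\eta)(z) - \CT\eta(z))\,\CT(g\mu)(z) = -\CT(\CT(g\mu)\,\eta)(z), \qquad \gamma\text{-a.a.\ on } \C \setminus E.
\]

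Finally, apply Theorem \ref{GPTheorem1} on the Lipschitz-graph decomposition of $\eta$ provided by Lemma \ref{GammaExist}. The right-hand side displays nontrivial Plemelj jumps proportional to $\CT(g\mu)\cdot h_\eta$ across $E$, where $h_\eta$ is the Radon--Nikodym density on the graph, while the left-hand side is $\gamma$-continuous at $\gamma$-a.e.\ point of $E \subset \mathcal R$: indeed $\rho^{-1}(\rho(\CT\eta)) \in R^{t,\infty}(K,\mu)$ yields a $\gamma$-continuous representative on $\mathcal R$ by Lemma \ref{RhoGContLemma}, $\CT\eta$ itself is bounded, and $\mathcal R$ has full analytic-capacity density at $\gamma$-a.e.\ point of $E$ by Theorem \ref{FACDensityThm}. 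This forces $\CT(g_j\mu)(w)\,h_\eta(w) = 0$ for $\eta$-a.e.\ $w \in E$ and every $j \ge 1$. Since each such $w \in \mathcal R$ admits some $j_0$ with $\CT(g_{j_0}\mu)(w) \ne 0$, we conclude $h_\eta = 0$ $\eta$-a.e., whence $\eta = 0$, contradicting $\|\eta\| \gtrsim \gamma(E) > 0$. The main obstacle is the identification $\mathcal I(\CT\eta|_U) = \CT\eta$ in $L^\infty(\mu)$: automatic continuity only yields sup-norm continuity of $\mathcal I$, while $\CT\eta|_U$ need not be sup-norm approximable by $\text{Rat}(\overline U)$ on a rough $U$, so this step likely requires a more careful weak-star density argument using smoothed Cauchy transforms $\tilde\CT_\epsilon \eta$ and the structure of $\mathcal I$ on the norm-closed subalgebra generated by $\text{Rat}(\overline U)$.
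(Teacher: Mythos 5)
You have correctly identified the fatal gap in your own write-up: there is no way to get $\mathcal I(\CT\eta|_U)=\CT\eta$ in $L^\infty(\mu)$ from the hypotheses. The hypothesis only gives that $\mathcal I$ is an algebraic homomorphism; while your spectral-radius observation (via Theorem \ref{mainThmR}) does make $\mathcal I$ contractive for the sup norm, $\CT\eta|_U$ is in general not uniformly approximable by $\text{Rat}(\overline U)$ (this is exactly the $R(\overline U)\ne A(\overline U)$ issue), and there is no weak-star continuity of $\mathcal I$ available. The proposed weak-star approximation of $\eta$ by atomic measures with atoms in $\C\setminus\overline U$ also does not work: boundary points of a connected bounded open set need not be accessible from the unbounded complementary component, and even if they were, weak-star convergence of the Cauchy transforms in $L^\infty(\area_U)$ would not transfer through $\mathcal I$. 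Without the identification $\mathcal I(\CT\eta)=\CT\eta$ in $L^\infty(\mu)$, the displayed partial-fraction identity and the Plemelj contradiction both collapse.

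The paper's proof is structured precisely to avoid needing this identification globally. It first derives, for $f\in H^\infty(W)$ with $W$ open containing $U$, the pointwise relation $\CT(\mathcal I(f)g\mu)(\lambda)=f(\lambda)\CT(g\mu)(\lambda)$ for $\gamma$-a.e.\ $\lambda\in W$ (equation \eqref{IForHILemmaEq0}), hence $\rho(\mathcal I(f))=f$ only on $W\cap\mathcal R$, $\gamma$-a.e.\ --- deliberately staying away from the singular set $E$. The removable boundary is then split into the rectifiable piece and the zero-density piece. On $\partial U\cap\Gamma\cap\mathcal R$ (the Claim) it takes $f$ the Ahlfors function of $E$ so that $f=\CT(w\mathcal H^1)$ by \cite[Proposition 6.5]{Tol14}, and compares the Plemelj one-sided limits $v^\pm(w\mathcal H^1,\Gamma)$ with the $\gamma$-continuous function $\rho(\mathcal I(f))$ from Lemma \ref{RhoGContLemma} to force $w=0$. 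On $\partial U\cap\mathcal R_0$ the argument is entirely different: it invokes the distribution identity of Lemma \ref{distributionLemma} for $\bar\partial(\CT\eta\,\CT(g_j\mu))$, and it uses the connectedness of $U$ to get positive $\gamma$-density of $U$ at each $\lambda\in\partial U$, so that the relation $\CT\eta\cdot\CT(g_j\mu)=\CT(\mathcal I(\CT\eta)g_j\mu)$, valid off $E$, extends by $\gamma$-continuity across $E$; only then does one conclude $\mathcal I(\CT\eta)=\CT\eta$ on the part of $\mu$ carried by $\mathcal R_0\setminus\mathcal Q$, which is all that is needed. Your single Plemelj argument does not cover the $\mathcal R_0$ case (where Lemma \ref{GammaExist} gives no nontrivial Lipschitz graph to work on), so even if the identification step were repaired, the argument would still be incomplete there.
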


\begin{proof}
Let $W\supset U$ be an open subset.
For $f\in H^\i (W)$ and $\lambda\in W,$ we have $f_\lambda (z) := \frac{f(z) - f(\lambda)}{z - \lambda}\in H^\i (W).$ Hence,
\[
\ \mathcal I(f)(z) - f(\lambda) = (z - \lambda) \mathcal I(f_\lambda)(z),
\]
which implies $\frac{\mathcal I(f)(z) - f(\lambda)}{z - \lambda} \in R^{t, \i}(K,\mu).$ Thus, by Corollary \ref{ZeroAC}, we get
\begin{eqnarray} \label{IForHILemmaEq0}
\ \CT(\mathcal I (f) g\mu)(\lambda) = f(\lambda)\CT(g\mu)(\lambda),~\gamma|_W-a.a.,\text{ for } g\perp \rtkmu. 
\end{eqnarray}
Therefore, 
\begin{eqnarray} \label{IForHILemmaEq1}
\ \rho (\mathcal I (f))(\lambda) = f(\lambda),~\gamma|_{W\cap \mathcal R}-a.a.. 
\end{eqnarray}

Claim: If $\Gamma$ is a (rotated) Lipschitz graph, then $\gamma(\partial U \cap \Gamma \cap \mathcal R) = 0.$

Without loss of generality, we assume the rotation angle of $\Gamma$ is zero. Suppose there exists a compact subset $E \subset \partial U \cap \Gamma \cap \mathcal R$ with $\gamma(E) > 0.$
Let $f$ be an analytic function on $W:=\C_\i\setminus E$ with $\|f\| = 1,$ $f(\i) = 0,$ and $f'(\i) \ge \frac 12 \gamma(E).$  From \cite[Proposition 6.5]{Tol14}, there exists a Borel function $w(z)$ on $E$ with $0 \le |w(z)| \le 1$ such that $f(z) = \CT (w\mathcal H^1)(z)$ for $z\in W.$
For $\lambda\in E,$ by Lemma \ref{RhoGContLemma}, we see that $\rho (\mathcal I (f))$ is $\gamma$-continuous at $\lambda.$  
Using \eqref{IForHILemmaEq1} and Theorem \ref{GPTheorem1}, we infer that 
\begin{eqnarray}\label{IForHILemma2Eq1}
\ v^+(w\mathcal H^1, \Gamma) (z) = v^-(w\mathcal H^1, \Gamma) (z) = \rho (\mathcal I (f))(z), ~\gamma|_E-a.a.,
\end{eqnarray}
which implies $w(z) = 0,~\mathcal H^1|_E-a.a..$ This is a contradiction. The claim is proved.

From the above claim, we only need to prove $\gamma(\partial U \cap \mathcal R_0) = 0.$ Suppose that $\gamma(\partial U \cap \mathcal R_0) > 0.$
By Lemma \ref{CTMaxFunctFinite}, we find a compact subset $E \subset \partial U \cap \mathcal R_0$ such that $\gamma (E) > 0,$
\[
\ |{\mathcal C}_\epsilon (g_j\mu) (\lambda)|, ~ \mathcal M_{g_j\mu} (\lambda) \le M_{j} < \infty, ~ \lambda \in E \text{ for } j \ge 1.
\]
From Proposition \ref{GammaPlusThm}, there exists $\eta\in M_0^+(E)$ such that $\eta$ is of $1$-linear growth, $\|\CT_\epsilon\eta \| \le 1,$ and $\gamma(E) \lesssim \|\eta\|.$ Using Lemma \ref{GammaExist} and the above claim, we infer that $\gamma(\mathcal {ND}(\eta)) = 0.$ 

Now applying Lemma \ref{distributionLemma}, for given $j \ge 1,$ then there are two functions $F_1\in L^\infty (\mu)$ and $F_2\in L^\infty (\eta)$ with 
\begin{eqnarray}\label{IForHIthmEq00}
\ F_1(z) = \mathcal C(\eta)(z),~ |g_j|\mu _{\C \setminus \mathcal X_\eta}-a.a., 	
\end{eqnarray}
where $\mathcal X_\eta$ is defined before Lemma \ref{distributionLemma}, and 
\begin{eqnarray}\label{IForHIthmEq01}
\ F_2(z) = \mathcal C(g_j\mu)(z),~ \eta-a.a.	
\end{eqnarray}
such that in the sense of distribution,
 \begin{eqnarray}\label{IForHIthmEq1}
 \ \bar\partial (\mathcal C (\eta)\mathcal C (g_j\mu)) = - \pi(F_1g_j\mu + F_2 \eta).
 \end{eqnarray}
 From \eqref{IForHILemmaEq0}, we have  
 \[
 \ \mathcal C \eta(z)\mathcal C (g_j\mu)(z) = \mathcal C (\mathcal I(\mathcal C \eta)g_j\mu)(z), ~ \gamma |_{\C\setminus E}-a.a., 
 \]
 which implies $U \subset D :=\{\mathcal C \eta (z) \mathcal C (g_j\mu)(z) = \mathcal C (\mathcal I(\mathcal C \eta)g_j\mu)(z)\}~ \gamma-a.a..$
 By Lemma \ref{CauchyTLemma}, $\mathcal C \eta,$ $\mathcal C (g_j\mu),$ and $\mathcal C (\mathcal I(\mathcal C \eta)g_j\mu)$ are $\gamma$-continuous $\gamma |_{\mathcal R_0}-a.a..$ So $\mathcal C \eta\mathcal C (g_j\mu)$ is $\gamma$-continuous $\gamma |_{\mathcal R_0}-a.a.$. For $\lambda\in \partial U\cap \mathcal R_0, $ we have $\underset{\delta\rightarrow 0}{\overline \lim}\frac{\gamma (\D(\lambda, \delta) \cap U)}{\delta} > 0$ since $U$ is connected. Set $A_\epsilon =\{|\mathcal C \eta(z)\mathcal C (g_j\mu)(z) - \mathcal C \eta(\lambda)\mathcal C (g_j\mu)(\lambda)| \le \epsilon\}$ and $B_\epsilon =\{|\mathcal C (\mathcal I(\mathcal C \eta)g_j\mu)(z) - \mathcal C (\mathcal I(\mathcal C \eta)g_j\mu)(\lambda)| \le \epsilon\}.$ Since
 \[
 \ \D(\lambda, \delta) \cap U \subset (\D(\lambda, \delta) \cap U \cap A_\epsilon \cap B_\epsilon \cap D)\cup A_\epsilon^c \cup B_\epsilon^c, \gamma-a.a.,
 \]
 by Theorem \ref{TolsaTheorem} (2), we get
 \[
 \ \underset{\delta\rightarrow 0}{\overline \lim}\frac{\gamma (\D(\lambda, \delta) \cap U \cap A_\epsilon \cap B_\epsilon \cap D)}{\delta} \ge \dfrac{1}{C_T}\underset{\delta\rightarrow 0}{\overline \lim}\frac{\gamma (\D(\lambda, \delta) \cap U)}{\delta} > 0. 
 \]
 Thus, there exists $\{\lambda_n\}\subset U$ with $\lambda_n\rightarrow \lambda$ such that $\mathcal C \eta(\lambda_n)\mathcal C (g_j\mu)(\lambda_n) \rightarrow \mathcal C \eta(\lambda)\mathcal C (g_j\mu)(\lambda),$ $\mathcal C (\mathcal I(\mathcal C \eta)g_j\mu)(\lambda_n) \rightarrow \mathcal C (\mathcal I(\mathcal C \eta)g_j\mu)(\lambda),$ and $\mathcal C \eta(\lambda_n)\mathcal C (g_j\mu)(\lambda_n) = \mathcal C (\mathcal I(\mathcal C \eta)g_j\mu)(\lambda_n).$ Hence,
 \[
 \ \mathcal C \eta(z)\mathcal C (g_j\mu)(z) = \mathcal C (\mathcal I(\mathcal C \eta)g_j\mu)(z), ~ \gamma-a.a.. 
 \]
 Using Lemma \ref{RhoExistLemma}, we see that there exists $\mathcal Q$ with $\gamma(\mathcal Q)=0$ such that 
 \begin{eqnarray}\label{IForHIthmEq3}
 \ \mathcal I(\mathcal C \eta)(z) = \mathcal C \eta(z),~ \mu _{\mathcal R_0\setminus \mathcal Q}-a.a..
 \end{eqnarray}
 Since a $\gamma$ zero set is also a $\area$ zero set, we have
\[
\ \mathcal C \eta(z)\mathcal C (g_j\mu)(z) = \mathcal C (\mathcal I(\mathcal C \eta)g_j\mu)(z), ~ \area-a.a..
\]
Using \eqref{IForHIthmEq1} and \eqref{CTDistributionEq}, we have
\[
\ \mathcal I(\mathcal C \eta)(z)g_j(z)\mu = F_1(z)g_j(z)\mu + F_2(z) \eta.
\]
From \eqref{IForHIthmEq00} and \eqref{IForHIthmEq3}, we have
\[
\ F_1(z) g_j(z) = \mathcal I(\mathcal C \eta)(z) g_j(z) = \mathcal C \eta(z)g_j(z), ~\mu _{E\setminus (\mathcal Q\cup \mathcal X_\eta)}-a.a..
\]
Therefore, together with \eqref{IForHIthmEq01}, we get
\[
\ F_2(z) = \mathcal C(g_j\mu)(z) = 0,~\eta-a.a.
\]
since $\eta(\mathcal Q\cup \mathcal X_\eta) = 0$. This contradicts  $\eta(E) > 0.$  
This completes the proof.
 \end{proof}
 
 \begin{proof}(Theorem \ref{thmA} (2)$\Rightarrow$(1)) By Theorem \ref{thmA} (2) (d), we see that the map $\mathcal I_i = \rho_i^{-1}$ is an algebraic homomorphism from $H^\infty(U_i)$ to $R^{t, \i}(K_i, \mu _{\Delta_i})$ for $i\ge 1.$
Using Lemma \ref{IForHILemma}, we conclude that 
\[
\ \partial U_i \subset \mathcal F_{\Delta_i}, ~\gamma-a.a.,\text{ for } i \ge 1.
\]
For a given $i \ge 1$ and $\lambda \in \partial U_i,$ since $U_i$ is connected, 
$\underset{\delta\rightarrow 0}{\overline \lim}\frac{\gamma (\D(\lambda, \delta) \cap U_i)}{\delta} > 0.$
Hence, by Proposition \ref{RTFRProp2},
\[
\ \partial U_i \subset \partial_1 K_j \subset \mathcal F_{\Delta_j}, ~\gamma-a.a.,\text{ for } j \ne i.
\]
Thus, by Proposition \ref{RTFRProp1} (3), we have
\[
\ \partial U_i \subset \bigcap_{j = 0}^\i \mathcal F_{\Delta_j} \approx \mathcal F , ~\gamma-a.a..
\]
The theorem is proved. 
\end{proof}

\begin{corollary}\label{thmAB}
 Let $K$ be a compact set, $1\le t < \infty,$ and $\mu\in M_0^+(K).$ If $\partial K \subset \mathcal F,~ \gamma-a.a.,$ then there is a Borel partition $\{\Delta_i\}_{i=0}^\infty$ of $\mbox{spt}(\mu)$ and compact subsets $\{K_i\}_{i=1}^\infty$ such that $\Delta_i \subset K_i$ for $i \ge 1$,
 \[
 \ R^t(K, \mu ) = L^t(\mu _{\Delta_0})\oplus \bigoplus _{i = 1}^\infty R^t(K_i, \mu _{\Delta_i}),
 \]
and the following statements are true:

(a) If $i \ge 1$, then $S_{\mu _{\Delta_i}}$ on $R^t(K_i, \mu _{\Delta_i})$ is irreducible.

(b) If  $i \ge 1$ and $U_i :=\mbox{abpe}( R^t(K_i, \mu _{\Delta_i}))$, then $U_i$ is connected and $K_i = \overline{U_i}$.

(c) If  $i \ge 1$, then the evaluation map $\rho_i: f\rightarrow f|_{U_i}$ is an isometric isomorphism and a weak-star homeomorphism from $R^{t, \i}(K_i, \mu _{\Delta_i})$ onto $H^\infty(U_i)$.
\end{corollary}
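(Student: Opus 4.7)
\textbf{Proof strategy for Corollary~\ref{thmAB}.}

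The plan is to obtain the corollary as a consequence of Theorem~\ref{thmA}, using the hypothesis $\partial K \subset \mathcal F, \gamma-a.a.,$ to upgrade the $\Delta_0$-summand from $R^t(K_0, \mu_{\Delta_0})$ in that theorem to a genuine $L^t$-summand $L^t(\mu_{\Delta_0})$. First, I will split off any trivial $L^t$-piece at the outset via \eqref{RTFirstDecompEq} to write $R^t(K,\mu) = L^t(\mu_{B_0}) \oplus R^t(K, \mu_{B_1})$ with $S_{\mu_{B_1}}$ pure; as noted at the beginning of Section~7, $\mathcal F$ and $\mathcal R$ are insensitive to this pre-decomposition, so the boundary hypothesis is inherited by $R^t(K, \mu_{B_1})$.

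Next, the hypothesis trivially implies condition~(1) of Theorem~\ref{thmA}: for every connected component $U_i$ of $\text{abpe}(R^t(K, \mu_{B_1}))$, one has $\partial U_i \cap \partial K \subset \partial K \subset \mathcal F, \gamma-a.a..$ Applying Theorem~\ref{thmA}, I obtain a Borel partition $\{\Delta'_i\}_{i \ge 0}$ of $\text{spt}(\mu_{B_1})$ together with compact sets $\{K_i\}_{i \ge 0}$, the direct sum decomposition, and properties (a)--(d) of that theorem. For $i \ge 1$, the identifications $U_i = \text{abpe}(R^t(K_i, \mu_{\Delta'_i}))$, $K_i = \overline{U_i}$, connectedness of $U_i$, irreducibility of $S_{\mu_{\Delta'_i}}$, and the isometric isomorphism $\rho_i$ all transfer immediately and yield items (a), (b), (c) of the corollary after setting $\Delta_i = \Delta'_i$.

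It then remains to show $R^t(K_0, \mu_{\Delta'_0}) = L^t(\mu_{\Delta'_0})$, so that $\Delta'_0$ can be merged with $B_0$ into $\Delta_0$. I will do this by proving $\mathcal R_{\Delta'_0} \approx \emptyset, \gamma-a.a..$ From Proposition~\ref{NF0Prop}(1) we have $\mathcal R \subset K$, while the hypothesis forces $\mathcal R \subset \text{int}(K), \gamma-a.a.$; hence Theorem~\ref{ABPETheoremRT} gives
\[
\mathcal R \approx \text{int}(K) \cap \mathcal R \approx \text{abpe}(R^t(K, \mu_{B_1})) = \bigcup_{i \ge 1} U_i, \quad \gamma-a.a..
\]
On the other hand, Proposition~\ref{RTFRProp1}(3) applied to the partition $\{\Delta'_i\}_{i \ge 0}$ yields $\mathcal R \approx \bigcup_{i \ge 0} \mathcal R_{\Delta'_i}, \gamma-a.a.,$ and the proof of Theorem~\ref{thmA}(1)$\Rightarrow$(2) establishes $\mathcal R_{\Delta'_i} \approx U_i, \gamma-a.a.,$ for $i \ge 1$. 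Comparing the two expressions forces $\mathcal R_{\Delta'_0} \approx \emptyset, \gamma-a.a..$ Since $S_{\mu_{\Delta'_0}}$ inherits purity from $S_{\mu_{B_1}}$ (any nontrivial $L^t$-summand of $R^t(K_0, \mu_{\Delta'_0})$ would be a nontrivial $L^t$-summand of $R^t(K, \mu_{B_1})$), Theorem~\ref{mainThmR} gives $\text{spt}(\mu_{\Delta'_0}) \subset \overline{\mathcal R_{\Delta'_0}} = \emptyset,$ whence $\mu_{\Delta'_0} = 0$ and the $\Delta'_0$-block collapses.

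The main technical point is the bookkeeping around purity, since Proposition~\ref{RTFRProp1} and Theorem~\ref{mainThmR} both require $S_\mu$ to be pure. This is handled cleanly by the pre-decomposition \eqref{RTFirstDecompEq} together with the invariance of $\mathcal F, \mathcal R$ under $L^t$-summands; beyond this, the argument is a direct combination of Theorem~\ref{thmA}, Theorem~\ref{ABPETheoremRT}, Propositions~\ref{NF0Prop} and~\ref{RTFRProp1}, and Theorem~\ref{mainThmR}, with no new analytic estimates required.
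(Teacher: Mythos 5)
Your proposal follows the same overall strategy as the paper: verify the hypothesis of Theorem~\ref{thmA}, apply it, and then show that the residual $\Delta_0$-block is a trivial $L^t$-summand by showing its removable set is $\gamma$-null (equivalently, its non-removable boundary is all of $\C$ up to a $\gamma$-null set). Your detour through the pre-decomposition \eqref{RTFirstDecompEq} is harmless but unnecessary since Theorem~\ref{thmA} already handles the non-pure case; and your route to $\gamma(\mathcal R_{\Delta'_0})=0$ by comparing the two expressions for $\mathcal R$ (via Proposition~\ref{RTFRProp1}(2)--(3) and $\mathcal R_{\Delta'_i}\approx U_i$) is a legitimate alternative to the paper's direct observation that $\mathcal F_{\Delta_0}\supset \partial K \cup (\text{int}(K))\cup(\C\setminus K)=\C$, $\gamma$-a.a.

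There is, however, a real gap in your last step. From $\gamma(\mathcal R_{\Delta'_0})=0$ you conclude $\overline{\mathcal R_{\Delta'_0}}=\emptyset$ and invoke Theorem~\ref{mainThmR}. This does not follow: a set of zero analytic capacity can have nonempty (even large) closure, and the constructed $\mathcal R_{\Delta'_0}$ is only determined up to $\gamma$-null sets, so you cannot take its topological closure as if it were empty. The paper avoids this by finishing with \eqref{acZero}: once $\mathcal F_{\Delta_0}\approx\C$, $\gamma$-a.a.\ (hence $\area$-a.a.\ by \eqref{AreaGammaEq}), \eqref{acZero} gives $\CT(g\mu_{\Delta_0})=0$, $\area$-a.a.\ for all $g\perp R^t(K,\mu_{\Delta_0})$; then \eqref{CTDistributionEq} forces $g=0$, $\mu_{\Delta_0}$-a.a., so $R^t(K,\mu_{\Delta_0})=L^t(\mu_{\Delta_0})$. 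Replacing your closure argument with this distributional one closes the gap (and under your extra purity hypothesis on $\mu_{\Delta'_0}$ this then yields $\mu_{\Delta'_0}=0$, as you wanted).
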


\begin{proof}
Let $\text{abpe}(R^t(K, \mu)) = \cup_{i=1}^\i U_i,$ where $U_i$ is a connected component.
 By the assumption, we have $\partial U_i \cap \partial  K \subset \mathcal F,~\gamma-a.a.$ for all $i \ge 1.$ 
 Therefore, by Theorem \ref{thmA}, we see that the decomposition in Theorem \ref{thmA} holds. We only need to show that 
 \[
 \ R^t(K, \mu _{\Delta_0}) = L^t(\mu _{\Delta_0}).
 \]	
 In fact, from \eqref{ABPETheoremRTEq1}, 
 \[
 \ \text{int}(K) \setminus \mathcal F_{\Delta_0} \approx \text{abpe}(R^t(K, \mu _{\Delta_0})) = \emptyset,~ \gamma-a.a..
 \]
 Hence, $\mathcal F_{\Delta_0} = \C,~\gamma-a.a.$ since $\partial  K \subset \mathcal F \subset \mathcal F_{\Delta_0},~\gamma-a.a..$ The proof follows from \eqref{acZero}.	
\end{proof}

As an application of Corollary \ref{thmAB} and Proposition \ref{RTFRProp2}, we have the following corollary which extends the results of \cite{thomson}, \cite{ce93}, and \cite{b08}.

\begin{corollary}\label{thmB}
 Let $K$ be a compact set such that $\gamma(\partial K \setminus \partial_1 K) = 0$.
 Suppose that $1\le t < \infty$ and $\mu\in M_0^+(K).$ 
Then there is a Borel partition $\{\Delta_i\}_{i=0}^\infty$ of $\mbox{spt}(\mu)$ and compact subsets $\{K_i\}_{i=1}^\infty$ such that $\Delta_i \subset K_i$ for $i \ge 1$,
 \[
 \ R^t(K, \mu ) = L^t(\mu _{\Delta_0})\oplus \bigoplus _{i = 1}^\infty R^t(K_i, \mu _{\Delta_i}),
 \]
and the following statements are true:

(a) If $i \ge 1$, then $S_{\mu _{\Delta_i}}$ on $R^t(K_i, \mu _{\Delta_i})$ is irreducible.

(b) If  $i \ge 1$ and $U_i :=\mbox{abpe}( R^t(K_i, \mu _{\Delta_i}))$, then $U_i$ is connected and $K_i = \overline{U_i}$.

(c) If  $i \ge 1$, then the evaluation map $\rho_i: f\rightarrow f|_{U_i}$ is an isometric isomorphism and a weak-star homeomorphism from $R^{t, \i}(K_i, \mu _{\Delta_i})$ onto $H^\infty(U_i)$.
\end{corollary}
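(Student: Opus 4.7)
The plan is to reduce Corollary \ref{thmB} directly to Corollary \ref{thmAB}, so that essentially all the real work has already been done. The hypothesis $\gamma(\partial K \setminus \partial_1 K) = 0$ is a purely geometric condition on $K,$ whereas the conclusion of Corollary \ref{thmAB} is stated in terms of the analytic-capacity-theoretic hypothesis $\partial K \subset \mathcal F,~\gamma-a.a.$. So my first task is to bridge these two.

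For this bridge I will invoke Proposition \ref{RTFRProp2}, which asserts $\partial_1 K \subset \mathcal F,~\gamma-a.a.$ for the set $\partial_1 K$ defined in \eqref{BOne}. Combined with the hypothesis $\gamma(\partial K \setminus \partial_1 K) = 0,$ this immediately yields
\[
\ \partial K = (\partial K \setminus \partial_1 K) \cup \partial_1 K \subset \mathcal F,~\gamma-a.a.,
\]
since a set of zero analytic capacity is negligible in all of our almost-everywhere statements. Once this inclusion is established, Corollary \ref{thmAB} applies verbatim and produces the Borel partition $\{\Delta_i\}_{i \ge 0},$ the compact sets $\{K_i\}_{i\ge 1},$ the direct-sum decomposition $R^t(K, \mu) = L^t(\mu_{\Delta_0}) \oplus \bigoplus_{i\ge 1} R^t(K_i, \mu_{\Delta_i}),$ and the three properties (a), (b), (c) as stated.

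There is really no main obstacle here; the substance is already contained in Corollary \ref{thmAB}, and the role of the geometric hypothesis on $K$ is only to guarantee the capacity containment needed to feed into it. I will briefly note at the end that this recovers the results of \cite{thomson}, \cite{ce93}, and \cite{b08}, since those papers work under assumptions (the diameters of the components of $\C\setminus K$ being bounded below, analytic polynomials, etc.) that each imply \eqref{CDiamBBEqn} and hence $\partial K = \partial_1 K,$ which is strictly stronger than the hypothesis $\gamma(\partial K \setminus \partial_1 K) = 0$ used here.
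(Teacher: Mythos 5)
Your proof is correct and is exactly the route the paper intends: the sentence preceding Corollary \ref{thmB} states it follows as an application of Corollary \ref{thmAB} and Proposition \ref{RTFRProp2}, and you have supplied precisely that bridge, passing from the geometric hypothesis $\gamma(\partial K \setminus \partial_1 K)=0$ to $\partial K \subset \mathcal F,~\gamma-a.a.$ via $\partial_1 K \subset \mathcal F,~\gamma-a.a.$ and semiadditivity of $\gamma$.
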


\bigskip

{\bf Acknowledgments.} 
The authors would like to thank Professor John M\raise.45ex\hbox{c}Carthy for carefully reading through the manuscript and providing many useful comments.
\bigskip

\bibliographystyle{amsplain}

\end{document}